\def\notdiv{\nmid}
\def\div{\,\vert\,}
\def\too{\relbar\lien\rightarrow}
\def\tooo{\relbar\lien\relbar\lien\too}
\let\ds=\displaystyle
  \def\N{\mathbb{N}}
  \def\C{\mathbb{C}}
  \def\Q{\mathbb{Q}}
  \def\Z{\mathbb{Z}}
  \def\F{\mathbb{F}}
 \def\zz{\mathbb{Z}}
\def\virg{\raise 2pt \hbox{,}\,\,}
\def\rond{{\scriptstyle\circ}}
\def\No{{\rm N}}
\def\Sauf{\!\setminus\!}
\def\Cl{{\mathcal C}\hskip-2pt{\ell}}
\def\Pl{{\mathcal P}\hskip-2pt{\ell}}
\def\Frac#1#2{\hbox{\footnotesize $\displaystyle \frac{#1}{#2}$}}
\def\plus{\ds\mathop{\raise 2.0pt \hbox{$\bigoplus $}}\limits}
\def\mult{\ds\mathop{\raise 2.0pt \hbox{$\bigotimes$}}\limits}
\def\prd{ \ds\mathop{\raise 2.0pt \hbox{$  \prod   $}}\limits}
\def\Cap{ \ds\mathop{\raise 2.0pt \hbox{$\bigcap   $}}\limits}
\def\Cup{ \ds\mathop{\raise 2.0pt \hbox{$\bigcup   $}}\limits}
\def\sm{  \ds\mathop{\raise 2.0pt \hbox{$  \sum    $}}\limits}
\def\ev{\emptyset}
\def\fin{\vbox{\hrule\hbox to 7.2pt{\vrule height 7pt\hfil\vrule}\hrule}}
\def \tensorZp{\otimes{\raise -0.8pt \hbox{\!\!$_{_{\zz_{\!p}}}$}}}
\def \tensorZ{\otimes{\raise -0.8pt \hbox{\!\!$_{_{\zz}}$}}}
\def\fin{\vbox{\hrule\hbox to 7.2pt{\vrule height 7pt\hfil\vrule}\hrule}}
\def\lien{\mathrel{\mkern-4mu}}
\def\wt{\widetilde}
\def\wh{\widehat}
\newenvironment{example}{\begin{enonce}{Exemple}}{\end{enonce}}
\theoremstyle{plain}
\begin{document}
 
 \title{Les $\theta$-r\'egulateurs locaux \\ d'un nombre alg\'ebrique \\ Conjectures $p$-adiques} 

\author[Georges  Gras ]{Georges  {\sc Gras}}

\curraddr{Villa la Gardette, chemin Ch\^ateau Gagni\`ere,  F--38520 Le Bourg d'Oisans.}

\email{g.mn.gras@wanadoo.fr $\bullet$ {\it url : }\url{http://monsite.orange.fr/maths.g.mn.gras/}}

\begin{abstract}
Soit  $K/\Q$ une extension Galoisienne finie de degr\'e $n$, de groupe de Galois $G$, et soit $\eta \in K^\times$.
Pour tout $p$ premier assez grand (\'etranger \`a $n$, ${\rm Disc}(K)$, et $\eta$),
nous d\'efinissons, par utilisation du th\'eor\`eme de Frobenius sur les d\'eterminants de groupes, la famille 
$\big(\Delta_p^\theta (\eta) \in \F_p \big)_\theta$ des $\theta$-r\'egulateurs locaux de~$\eta$,
index\'ee par les caract\`eres $\Q_p\,$-irr\'eductibles $\theta$ de $G$. 

\smallskip
A chaque $\Delta_p^\theta (\eta)$ est associ\'ee une repr\'esentation lin\'eaire ${\mathcal L}^\theta \simeq \delta\, V_\theta$, $0\leq \delta \leq \varphi(1)$
(o\`u $V_\theta$ est la repr\'esentation de caract\`ere  $\theta$ et $\varphi \div \theta$ absolument irr\'eductible)~; la repr\'esentation ${\mathcal L}^\theta$ caract\'erise des propri\'et\'es de $\Delta_p^\theta (\eta)$, dont  sa nullit\'e \'equivalente \`a $\delta \geq 1$ (Th\'eor\`eme \ref{theo24},\,\S\ref{sub15}).

\smallskip
Lorsque $\eta\in\Q^\times$ et $\theta = 1$, $\Delta_p^1 (\eta)$ est le $p$-quotient de Fermat de $\eta$. Lorsque~$\eta$ est une ``unit\'e de Minkowski'' (pour $K$ r\'eel), chaque $\Delta_p^\theta (\eta)$, $\theta \ne 1$, donne le r\'esidu modulo $p$ de la 
$\theta$-composante ${\rm Reg}_p^\theta (\eta)$ dans la factorisation $\prod_{\theta\ne 1}({\rm Reg}_p^\theta (\eta))^{\varphi(1)}$
de $p^{1-n}  {\rm Reg}_p(K)$, o\`u ${\rm Reg}_p(K)$ est  le r\'egulateur $p$-adique classique de $K$.

\smallskip
Nous sugg\'erons, \`a partir d'une propri\'et\'e g\'en\'erale des d\'eterminants de groupes et de l'existence de la repr\'esentation 
${\mathcal L}^\theta$, que la ``proba\-bilit\'e'' de nullit\'e de $\Delta_p^\theta(\eta)$ avec ${\mathcal L}^\theta \simeq \delta\, V_\theta$ est en $\frac{O(1)}{p^{f \delta^2}}$ (cf.\,\S\ref{HP}), o\`u $f$ est le degr\'e r\'esiduel de $p$ dans le corps des valeurs des caract\`eres  $\varphi \div \theta$, et que les $\Delta_p^\theta(\eta)$ sont des variables ind\'ependantes.

\smallskip
Nous conjecturons alors que $p^{1-n} {\rm Reg}_p(K)$, qui mesure l'ordre du $p$-groupe de torsion en $p$-ramification Ab\'elienne au-dessus de $K$, est pour $p$ assez grand une unit\'e $p$-adique sauf peut-\^etre pour un ensemble de nombres premiers $p$ de densit\'e nulle. Pour ces cas dits ``de $p$-divisibilit\'e minimale $p^{\varphi(1)}$''  (i.e.,  un unique $\Delta_p^\theta (\eta)$, tel que $\delta= f=1$, est nul), il reste  possible, $\eta$ \'etant alors  une ``puissance $p$-i\`eme locale partielle'' en $p$, de proposer, en lien avec  la conjecture $ABC$, une conjecture plus forte aboutissant \`a la m\^eme conclusion pour tout $p$ assez grand (Section 7). 
D'autres aspects conjecturaux sur le quotient de Fermat sont discut\'es.

\smallskip
Nous pr\'ecisons et v\'erifions ces propri\'et\'es par des \'etudes num\'eriques portant sur des corps cubiques et quintiques cycliques et sur un corps non Ab\'elien (groupe~$D_6$), et publions les programmes ``PARI'' correspondants.

\vspace{1.0cm}
\end{abstract}
\begin{altabstract} Let $K/\Q$ be a finite Galois extension of degree $n$, of Galois group $G$, and let $\eta\in K^\times$.
For all large enough prime~$p$ (prime to $n$, ${\rm Disc}(K)$, and $\eta$),
we define, by use of the Frobenius theorem on group determinants, the family  
$\big(\Delta_p^\theta(\eta) \in \F_p \big)_\theta$ of local $\theta$-regulators of  $\eta$,  indexed by the $\Q_p\,$-irreducible characters $\theta$ of $G$. 

\smallskip
At each  $\Delta_p^\theta (\eta)$ is associated a linear representation ${\mathcal L}^\theta\simeq \delta \, V_\theta$, $0\leq \delta  \leq \varphi(1)$ 
(where $V_\theta$ is the  representation of character $\theta$ and $\varphi \div \theta$ absolutely irreducible); the representation ${\mathcal L}^\theta$ characterizes some properties of $\Delta_p^\theta (\eta)$, including its nullity equivalent~to~$\delta \geq 1$ 
(Theorem \ref{theo24},\,\S\ref{sub15}).

\smallskip
 When $\eta\in\Q^\times$ and $\theta = 1$, $\Delta_p^1 (\eta)$ is the $p$-Fermat quotient of $\eta$. When~$\eta$ is a ``Minkowski unit'' (for $K$ real), each $\Delta_p^\theta (\eta)$,  $\theta \ne 1$, gives the residue modulo $p$ of the $\theta$-component 
${\rm Reg}_p^\theta (\eta)$ in the factorization $\prod_{\theta\ne 1}({\rm Reg}_p^\theta (\eta))^{\varphi(1)}$ of
$p^{1-n}  {\rm Reg}_p(K)$, where ${\rm Reg}_p(K)$ is the classical $p$-adic regulator  of $K$.

\smallskip
 We suggest, starting from a general property of group determinants and the existence of the representation ${\mathcal L}^\theta$, 
that  the ``probability'' of nullity of $\Delta_p^\theta(\eta)$ with ${\mathcal L}^\theta \simeq \delta \, V_\theta$ is about $\frac{O(1)}{p^{f \delta^2}}$ (cf.\,\S\ref{HP}), where $f$ is the residue degree of $p$ in the field of values of the characters $\varphi \div \theta$, and that the $\Delta_p^\theta(\eta)$ are independent variables.

\smallskip
We conjecture that $p^{1-n} {\rm Reg}_p(K)$, which measures the order of the $p$-torsion group in Abelian $p$-ramification over $K$, is for $p$ large enough a $p$-adic unit except perhaps for a set of prime numbers of zero density. For these cases said 
``of minimal $p$-divisibility $p^{\varphi(1)}$'' (i.e.,  a single $\Delta_p^\theta (\eta)$, such that $\delta = f = 1$, is zero),  it remains  possible, $\eta$ being then a ``partial local $p$th power'' at $p$, to propose, in connection with the $ABC$ conjecture, a stronger conjecture leading to the same conclusion for all large enough $p$ (Section 7). Some other conjectural aspects on the Fermat quotient are discussed.

\smallskip
We precise and verify these properties through numerical studies on particular cyclic cubic and quintic fields and a non-Abelian field
(group $D_6$), and we publish the corresponding ``PARI'' programs.
\end{altabstract}

\date{3 Avril  2014}

\thanks{Je remercie G\'erald Tenenbaum pour un \'echange relatif \`a la th\'eorie probabiliste des nombres et pour d'utiles renseignements.  Je remercie Henri Lombardi pour le signalement d'ambigu\"it\'es et pour ses commentaires. }

\keywords{Frobenius group determinants; characters; $p$-adic regulators; $p$-adic rank; Leopoldt's conjec\-ture; Fermat quotient; Abelian $p$-ramification;  probabilistic number theory}

\subjclass{Primary 11F85; 11C20;11R27; 11S05; Secondary 11R37; 11R29}

\maketitle

\vspace{-1.0cm}
\section{Introduction}
Soit $K/\Q$ une extension Galoisienne de degr\'e fini $n \geq 1$, de groupe de Galois $G$.

\smallskip
On consid\`ere $\eta  \in K^\times$ et on d\'esigne par $F$ le $\Z[G]$-module engendr\'e par $\eta$.  On utilise la notation exponentielle pour la conjuguaison de $\eta$ par $\sigma \in G$ 
(loi de module \`a gauche) qui se traduit par l'\'ecriture $(\eta^\sigma)^\tau =: \eta^{\tau \sigma}$ pour tout 
$\sigma, \tau \in G$.

\smallskip
Nous supposerons toujours que le nombre premier $p$ consid\'er\'e est assez grand, ce qui fait que l'on peut supposer $p$ impair, 
non diviseur de $ n $,
non ramifi\'e dans $K/\Q$, et \'etranger \`a $\eta$. Par exemple, dans le cas o\`u $\eta$ est une unit\'e dont les conjugu\'es engendrent un sous-groupe d'indice fini dans le groupe des unit\'es de $K$ (unit\'e de Minkowski), on peut supposer que $p$ ne divise pas cet indice.

\smallskip
D\'esignons par $Z_K$ (resp.  $Z_{K,(p)}$) l'anneau des entiers (resp. des $p$-entiers) de~$K$~; pour $K=\Q$, on obtient 
$\Z$ (resp. $\Z_{(p)}$).
Pour toute place $v \div p$, on d\'esigne par ${\mathfrak p}_v$ l'id\'eal premier associ\'e \`a $v$. 
Pr\'ecisons une fois pour toutes que dans $Z_{K,(p)}$, toute congruence de la forme
$\beta \equiv 0 \pmod p$ est globale et doit se lire $\beta \equiv 0 \pmod{ \prod_{v\div p}{\mathfrak p}_v }$
($p$ non ramifi\'e)~; elle implique que si $\beta = \sum_{i=1}^n A_i \,e_i$ est \'ecrit sur une $\Z_{(p)}$-base $(e_i)_{i=1,\ldots,n}$
de $Z_{K,(p)}$,  on a $A_i \equiv 0 \pmod p$ pour $i= 1, \ldots, n$.

\smallskip
Si $n_p$ est le degr\'e r\'esiduel commun des places $v \div p$ dans $K/\Q$, les groupes multiplicatifs des corps r\'esiduels sont d'ordres $p^{n_p}-1$ et on a la congruence~:
$$\hbox{$\eta^{p^{n_p}-1} \equiv 1 \pmod{ {\mathfrak p}_v}$ pour tout $v \div p$ ; }$$

par hypoth\`ese  $\prod_{v\div p}{\mathfrak p}_v = (p)$, d'o\`u $\eta^{p^{n_p}-1} \equiv 1 \pmod p$ et~:
$$\eta^{p^{n_p}-1} = 1 + p\,\alpha_p(\eta), \ \ \alpha_p(\eta)\in Z_{K,(p)},  $$
ce qui conduit par Galois \`a la relation~:
$$\alpha_p(\eta^\sigma) \equiv  \alpha_p(\eta)^\sigma  \!\!  \pmod p,  \ \ \forall \sigma \in G, $$
et aux propri\'et\'es ``logarithmiques'' suivantes ($\eta, \eta' \in K^\times$, $\lambda \in \Z$)~:
$$\alpha_p(\eta\,\eta') \equiv \alpha_p(\eta) + \alpha_p(\eta') \!\! \pmod p \ \ \&\ \ 
\alpha_p(\eta^\lambda) \equiv \lambda\, \alpha_p(\eta)  \!\!  \pmod p. $$

\section{R\'egulateurs ${\rm Reg}_p^G (\eta), {\rm Reg}_p^\chi (\eta)$ --  R\'egulateurs locaux $\Delta_p^\theta(\eta)$}

\subsection{Logarithme $p$-adique -- R\'egulateurs $p$-adiques} \label{sub1}
Soit $p$ un nombre premier fix\'e v\'erifiant les hypoth\`eses pos\'ees dans l'Introduction. On suppose que $K$ est consid\'er\'e comme un sous-corps de $\C_p$. Ainsi tout ``plongement'' de $K$ dans $\C_p$ n'est autre qu'un $\Q$-automorphisme 
$\sigma \in G$. 

\smallskip
Soit  ${\mathfrak p}={\mathfrak p}_{v_0}$ un id\'eal premier de $K$ au-dessus de $p$
et soit $D_{\mathfrak p}$ son groupe de d\'ecom\-po\-sition.
Alors les places $v\div p$ conjugu\'ees de $v_0$ correspondent aux $(G : D_{\mathfrak p})$ id\'eaux premiers distincts 
${\mathfrak p}_v := {\mathfrak p}^{\sigma_v}$, o\`u $(\sigma_v)_{v\div p}$ est  un syst\`eme exact 
de plongements repr\'esentant $G/D_{\mathfrak p}$.

\smallskip
On consid\`ere le $\Q_p$-espace vectoriel $\prd_{v \div p} K_v$, de dimension $n$, o\`u $K_v = \sigma_v (K)\,\Q_p$ 
est le compl\'et\'e en $v$ de~$K$~; comme $K/\Q$ est Galoisienne, $K_v/\Q_p$ est ind\'ependante de $v\div p$ mais la notation
$K_v$ rappelle que cette extension locale est {\it munie} du plongement $\sigma_v\, : \,K \too \C_p$, ce qui permet le plongement diagonal d'image dense $i_p : K \too \prd_{v \div p} K_v$ qui \`a $x \in K$ associe $(\sigma_v(x))_{v\div p}$.

\subsubsection{Logarithmes $p$-adiques}\label{log}
Le logarithme $p$-adique ${\rm log}_p ~:  K^\times \too K\Q_p \subset \C_p$ est d\'efini sur l'ensemble des
$1+p\,x$, $x\in Z_{K,(p)}$ par la s\'erie usuelle~:
$${\rm log}_p (1+p\, x)~= \sm_{i \ge 1}\, (-1)^{i+1} \, \Frac{(px)^i}{i} \equiv p\,x \pmod{p^2}. $$

Dans le cas du nombre $\eta \in K^\times$, on utilise la relation fonctionnelle~:
$${\rm log}_p(\eta)~= \Frac{1}{p^{n_p}-1} {\rm log}_p(\eta^{p^{n_p}-1}) = 
\Frac{1}{p^{n_p}-1}{\rm log}_p(1 + p\,\alpha_p(\eta) ) \equiv  - p\,\alpha_p(\eta)  \pmod {p^2} . $$

Cette fonction ${\rm log}_p$, vue modulo $p^N$, $N\geq 2$, est repr\'esent\'ee par des \'el\'ements
de $Z_{K,(p)}$ et est un homomorphisme de $G$-modules pour la loi d\'efinie par~:
$$\sigma \big ({\rm log}_p(\eta)\!\!\!\! \pmod {p^N} \big) := {\rm log}_p(\eta^{\sigma})\!\!\! \pmod {p^N}, $$
 en consid\'erant la congruence~:
\begin{eqnarray*}
\sigma\big ({\rm log}_p(\eta)\!\!\!\! \pmod {p^N} \big)  &\equiv&  \Frac{1}{p^{n_p}-1}  \ 
\sigma \,\Big(\sm_{1 \leq i \leq N}\, (-1)^{i+1} \, \Frac{(p\,\alpha_p(\eta))^i}{i}\Big) \\
&\equiv&  \Frac{1}{p^{n_p}-1}  
\sm_{1 \leq i \leq N} \, (-1)^{i+1} \, \Frac{(p\,\alpha_p(\eta)^\sigma)^i}{i} \!  \pmod {p^N} ,  
\end{eqnarray*}
d\'efinissant un \'el\'ement de $Z_{K,(p)}$ qui approche $\sigma\big ({\rm log}_p(\eta)\big)$ modulo $p^N$.

\smallskip
Soit ${\rm log}_{(p)} ={\rm log}_p \rond \, i_p$ l'application d\'efinie sur le groupe $K_{(p)}^\times$ 
des $\eta$ \'etrangers \`a $p$~par~:
$$\begin{array}{ccc}  K_{(p)}^\times & \tooo  &\ds  \hspace{-0.3cm} \prd_{v \div p} K_v \ .\\
 \eta  & \longmapsto &\ \ \big ( {\rm log}_p (\eta^{\sigma_v}) \big)_{v \div p}^{} 
\end{array}$$
On appelle rang $p$-adique du $\Z[G]$-module $F$ engendr\'e par $\eta$, l'entier~:
$${\rm rg}_p(F)~:= {\rm dim}^{}_{\, \Q_p}(\Q_p {\rm log}_{(p)}(F)) \ \ \hbox{(cf. \cite{Gr1}, III.3.1.2).} $$

\begin{lemm}\label {lemm01}  Soit $p$ premier impair non ramifi\'e dans $K/\Q$ et soit $\lambda \in Z_{K,(p)}$. 
Si $\lambda \notin p\, Z_{K,(p)}$, il existe $u \in K^\times$, \'etranger \`a $p$, tel que 
${\rm Tr}_{K/\Q}(\lambda u) \not\equiv 0 \!\!\pmod p$.
\end{lemm}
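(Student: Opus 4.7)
The statement is the refinement of the non-degeneracy of the trace form on $Z_{K,(p)}/pZ_{K,(p)}$ (which holds precisely because $p$ is unramified in $K/\Q$, hence $p \nmid {\rm Disc}(K)$) by the extra demand that the witness $u$ be a $p$-unit, not merely any linear-algebra test element. I plan to work locally at $p$, use the CRT decomposition of $Z_{K,(p)}/pZ_{K,(p)}$, and then settle the unit constraint by a short contradiction argument exploiting that $p$ is odd.

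Since $p$ is unramified, CRT gives the $\F_p$-algebra isomorphism $Z_{K,(p)}/pZ_{K,(p)} \simeq \prod_{v\mid p} k_v$, where $k_v$ is the residue field at $\mathfrak{p}_v$; under it the reduction of ${\rm Tr}_{K/\Q}$ becomes $(\bar x_v)_v \mapsto \sum_v {\rm Tr}_{k_v/\F_p}(\bar x_v)$, and each local trace is a non-zero $\F_p$-linear form on $k_v$. Write $\bar\lambda = (\bar\lambda_v)_v$; the hypothesis $\lambda \notin pZ_{K,(p)}$ says $\bar\lambda \neq 0$, so I can fix some $v_0\mid p$ with $\bar\lambda_{v_0}\neq 0$. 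My goal is to produce $\bar u = (\bar u_v)_v \in \prod_v k_v^\times$ with $\sum_v {\rm Tr}_{k_v/\F_p}(\bar\lambda_v \bar u_v)\neq 0$, because any lift $u\in Z_{K,(p)}$ of such a tuple is automatically a unit of $Z_{K,(p)}$, i.e.\ an element of $K^\times$ coprime to $p$, and the global trace ${\rm Tr}_{K/\Q}(\lambda u)$ reduces to the expression just arranged to be non-zero mod $p$.

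To build $\bar u$, choose any unit lifts $\bar u_v = 1\in k_v^\times$ for $v\neq v_0$ and set $S := \sum_{v\neq v_0} {\rm Tr}_{k_v/\F_p}(\bar\lambda_v)$; the task reduces to finding $\bar u_{v_0}\in k_{v_0}^\times$ with ${\rm Tr}_{k_{v_0}/\F_p}(\bar\lambda_{v_0}\bar u_{v_0})\neq -S$. Suppose for contradiction that ${\rm Tr}_{k_{v_0}/\F_p}(\bar\lambda_{v_0} \bar u) = -S$ for every $\bar u\in k_{v_0}^\times$. Evaluating at $\bar u = 1$ and at $\bar u = 2$ (both units since $p$ is odd) and using linearity gives $-S = -2S$, whence $S = 0$ and ${\rm Tr}_{k_{v_0}/\F_p}(\bar\lambda_{v_0} \bar u) = 0$ for all $\bar u\in k_{v_0}^\times$; since the case $\bar u = 0$ is trivial, the form ${\rm Tr}_{k_{v_0}/\F_p}(\bar\lambda_{v_0}\,\cdot\,)$ vanishes identically on $k_{v_0}$, and non-degeneracy of ${\rm Tr}_{k_{v_0}/\F_p}$ forces $\bar\lambda_{v_0} = 0$, the desired contradiction.

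\textbf{Expected obstacle.} There is no real conceptual difficulty; the only subtlety is that the constraint ``$u$ coprime to $p$'' prevents a one-line appeal to non-degeneracy of the trace form on $Z_{K,(p)}/p$ (e.g.\ testing against a $\Z_{(p)}$-basis would not give a $p$-unit). The local decomposition above isolates the issue in each factor $k_v$, and the hypothesis $p$ odd, i.e.\ $2\in k_v^\times$, is consumed in precisely this last step.
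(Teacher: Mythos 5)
Your proof is correct and follows essentially the same route as the paper's: decompose the trace over the places above $p$, take $u\equiv 1$ away from a place $v_0$ where $\lambda$ is a unit, and exploit the non-degeneracy/surjectivity of the residual trace at $v_0$ together with $p>2$. The only (cosmetic) difference is in the last step: the paper constructs $u$ at $v_0$ explicitly so that the local contribution hits $1-a$ (or $1$ when $a\equiv 1\pmod p$), making the total trace $1$ or $2\pmod p$, whereas you reach the same conclusion by a contradiction argument testing the units $1$ and $2$.
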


\begin{proof} Pour tout $u \in K^\times$, \'etranger \`a $p$, consid\'erons le plongement diagonal de $\lambda u$ dans $\prod_{v \div p} K_v$, et soient ${\rm Tr}_v$ les traces locales  ${\rm Tr}_{K_v/\Q_p}$. On a ${\rm Tr}_{K/\Q}(\lambda u) = \sum_{v \div p} {\rm Tr}_v(\sigma_v(\lambda u))$. 
Par hypoth\`ese, il existe un ensemble non vide $\Sigma$ de places $v \div p$
telles que $\sigma_v(\lambda)$ (donc $\sigma_v(\lambda u)$ pour tout $u$ \'etranger \`a $p$) est une unit\'e de $K_v$. 
Pour $v_0 \in\,\Sigma$, on \'ecrit~:
$${\rm Tr}_{K/\Q}(\lambda u) = \sm_{v \div p,\, v\ne v_0} {\rm Tr}_v(\sigma_v(\lambda u)) +
 {\rm Tr}_{v_0}(\sigma_{v_0}(\lambda u)) =: a +  {\rm Tr}_{v_0}(\sigma_{v_0}(\lambda u)). $$

\vspace{-0.2cm}
Comme $p$ est non ramifi\'e dans $K/\Q$, les traces r\'esiduelles en $p$ sont surjectives et puisque $\sigma_{v_0}(\lambda u)$ est une unit\'e, il suffit de prendre $u \equiv 1 \pmod {\prod_{v ,\,v \ne v_0} {\mathfrak p}_v}$ (auquel cas $a$ ne d\'epend plus de $u$) et 
$u \equiv u_0 \pmod{ {\mathfrak p}_{v_0}}$ convenable
tel que par exemple $ {\rm Tr}_{v_0}(\sigma_{v_0}(\lambda u)) \equiv 1 - a \!\pmod p$ si $a\not\equiv 1\!\! \pmod p$
 (resp. $1\!\! \pmod p$ si $a \equiv 1\pmod p$). 
D'o\`u ${\rm Tr}_{K/\Q}(\lambda u) \equiv 1 \hbox{\ (resp. 2)} \pmod p$.\,\footnote{Pour $p=2$, $K = \Q(\sqrt{17})$,
$\lambda = 1+2 \sqrt {17}$, il n'y a pas de solution $u$ \'etranger \`a 2.}
\end{proof}

Le lemme suivant, valable pour tout $p>2$ \'etranger \`a $\eta$, nous sera particuli\`erement utile (d'apr\`es \cite{Wa},\,\S\,5.5, and  proof of Theorem\,5.31)~: 

\begin{lemm} \label{lemm1} Soit $\eta \in K^\times$ et soient $\lambda(\sigma)$, $\sigma \in G$, des coefficients $p$-entiers
de $K\Q_p$, non tous divisibles par $p$. On suppose que l'on a une relation de d\'ependance modulo $p^N$, $N \geq 2$, 
des vecteurs $\ell_\sigma  = (\ldots,{\rm log}_p (\eta^{\tau\sigma^{\!-1} }), \ldots)_\tau$~:

\smallskip
\centerline{$\sm_{\sigma \in G} \lambda(\sigma){\rm log}_p (\eta^{\tau\sigma^{\!-1}}) \equiv 0\!\pmod {p^N}\ $ pour tout $\tau \in G$. }

 Alors il existe des coefficients $\lambda'(\sigma) \in \Z_{(p)}$, non tous divisibles par $p$, tels que l'on ait la relation 
$\sm_{\sigma \in G} \lambda'(\sigma){\rm log}_p (\eta^{\tau\sigma^{\!-1}}) \equiv 0 \pmod {p^N}$ pour tout $\tau \in G$.

Il en r\'esulte aussi la relation $\sm_{\sigma \in G} \lambda'(\sigma)\alpha_p(\eta)^{\tau\sigma^{\!-1}} \equiv 0 \pmod {p}$ 
pour tout $\tau \in G$.
\end{lemm}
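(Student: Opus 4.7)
The strategy is to obtain the rational coefficients $\lambda'(\sigma)$ by trace-averaging, using Lemma~\ref{lemm01} to secure nonvanishing at $\sigma_0$ and the (exact) Galois equivariance of $\alpha_p$ to transport the relation.

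First, since $K$ is dense in $K\Q_p$, one may approximate each $\lambda(\sigma) \in \mathcal{O}_{K\Q_p}$ by an element of $Z_{K,(p)}$ modulo $p^N$; as $\log_p(\eta^g) \in p\,\mathcal{O}_{K\Q_p}$, the hypothesis is preserved, and we may assume $\lambda(\sigma) \in Z_{K,(p)}$. Similarly, truncating the series of \S\ref{log} at sufficiently high order yields $L(g) \in Z_{K,(p)}$ with $L(g) \equiv \log_p(\eta^g) \pmod{p^N}$. The key observation is that the identity $\alpha_p(\eta^g) = ((\eta^g)^{p^{n_p}-1}-1)/p$, combined with the compatibility of Galois conjugation with integer powers, gives $\alpha_p(\eta^g)^\rho = \alpha_p(\eta^{\rho g})$ \emph{exactly} in $Z_{K,(p)}$ (not merely modulo $p$); consequently $L(g)^\rho = L(\rho g)$ is an identity in $Z_{K,(p)}$.

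Next, pick $\sigma_0$ with $\lambda(\sigma_0) \notin p\,Z_{K,(p)}$; Lemma~\ref{lemm01} furnishes $u \in K^\times$ coprime to $p$ with $t := \mathrm{Tr}_{K/\Q}(u\,\lambda(\sigma_0)) \not\equiv 0 \pmod p$. Define
\[
\lambda'(\sigma)\, :=\, \mathrm{Tr}_{K/\Q}(u\,\lambda(\sigma))\, =\, \sum_{\rho\in G} u^\rho\,\lambda(\sigma)^\rho\ \in\ \Z_{(p)},
\]
so that $\lambda'(\sigma_0) = t \not\equiv 0 \pmod p$. For each $\rho \in G$, apply the Galois automorphism $\rho$ to the hypothesis $R_{\rho^{-1}\tau}$: using $L(\rho^{-1}\tau\sigma^{-1})^\rho = L(\tau\sigma^{-1})$ this yields
\[
\sum_{\sigma}\lambda(\sigma)^\rho\,L(\tau\sigma^{-1}) \equiv 0 \pmod{p^N}.
\]
Multiplying by $u^\rho$, summing over $\rho \in G$, and reinstating $L \equiv \log_p$ then gives the desired relation $\sum_\sigma \lambda'(\sigma) \log_p(\eta^{\tau\sigma^{-1}}) \equiv 0 \pmod{p^N}$.

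The $\alpha_p$ version is then immediate from $\log_p(\eta^g) \equiv -p\,\alpha_p(\eta^g) \pmod{p^2}$ (valid since $N \geq 2$): dividing by $-p$ leaves $\sum_\sigma \lambda'(\sigma)\,\alpha_p(\eta)^{\tau\sigma^{-1}} \equiv 0 \pmod p$. I expect the subtle point to be the Galois transport in the second paragraph---i.e., justifying that the $\rho$-conjugate of a congruence a priori located at the single prime $v_0$ really does combine, after weighting by $u^\rho$, into a usable congruence at $v_0$; the exact (not mod $p$) equivariance $L(g)^\rho = L(\rho g)$ in the integral ring $Z_{K,(p)}$, together with the $G$-invariance of the final coefficients $\lambda'(\sigma)$, is precisely what makes the averaging work coherently.
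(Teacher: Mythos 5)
Your proposal is correct and follows essentially the same route as the paper: reduce the coefficients to $Z_{K,(p)}$ modulo $p^N$, invoke Lemme \ref{lemm01} to find a $p$-unit $u$ making $\mathrm{Tr}_{K/\Q}(u\,\lambda(\sigma_0))$ invertible mod $p$, then conjugate the relation over $G$ and sum so that the new coefficients are the traces $\lambda'(\sigma)=\mathrm{Tr}_{K/\Q}(u\,\lambda(\sigma))\in\Z_{(p)}$. Your explicit justification of the Galois equivariance of the truncated logarithm and the passage to the $\alpha_p$ congruence (dividing by $-p$, using $N\geq 2$) only makes explicit what the paper leaves implicit.
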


\begin{proof}  On peut toujours, modulo $p^N$, supposer que $\lambda(\sigma) \in Z_{K,(p)}$ pour tout $\sigma \in G$.
On se ram\`ene (par exemple) \`a ${\rm Tr}_{K/\Q}(\lambda(1)) \equiv 1\pmod p$ en multipliant la congruence par $u$
\'etranger \`a $p$ convenable (cf. Lemme \ref{lemm01}). 
Par conjugaison par $\nu \in G$ on obtient
$\sm_{\sigma \in G} \lambda(\sigma)^\nu {\rm log}_p (\eta^{\nu\tau\sigma^{\!-1}}) \equiv 0 \pmod {p^N}$  pour tout $\tau \in G$,
\'equivalent \`a $\sm_{\sigma \in G} \lambda(\sigma)^{\nu} {\rm log}_p (\eta^{s \,\sigma^{\!-1}}) \equiv 0 \pmod {p^N}$ pour tout $s\in G$. 

En prenant la trace dans $K/\Q$ des coefficients (sommation sur $\nu$), on obtient des $\lambda'(\sigma)$ rationnels $p$-entiers
pour tout $\sigma\in G$, avec $\lambda'(1) \equiv 1 \pmod p$.
\end{proof}

On peut donc supposer que de telles relations de  $Z_{K,(p)}$-d\'ependance lin\'eaire modu\-lo $p^N$ 
sont \`a coefficients dans $\Z_{(p)}$, car les deux notions de rang co\"incident. 

\smallskip
En prenant la limite sur $N$, on passe de l'anneau complet $Z_{K}\Z_p$ \`a $\Z_p$.

\subsubsection{R\'egulateurs}\label{reg}
En utilisant l'isomorphisme $\prod_{v \div p} K_v \simeq K\otimes \Q_p \simeq \Q_p[G]$, on en d\'eduit que
le rang $p$-adique de $F$ est \'egal au rang sur $\Z_p$ (au sens pr\'ec\'edent) du d\'eterminant de Frobenius
(ou r\'egulateur $p$-adique de $\eta$)~:
$${\rm Det}^G ( {\rm log}_p(\eta)) := {\rm det} \big( {\rm log}_p(\eta^{\tau\sigma^{\!-1} }) \,\big)_{\sigma, \tau \in G}  $$
(voir \cite{Gr1}, III.3.1.8, o\`u il est aussi fait allusion \`a  la conjecture de Schanuel).

\smallskip
Si ce rang est donn\'e par un plus grand mineur $M$ non nul, $M$ est non nul modulo toute puissance $p^N$ assez grande. 

\smallskip
Le $\Z[G]$-module $F$ est  monog\`ene au sens rappel\'e
dans \cite{J},\,\S\,1, ou \cite{Gr1}, III.3.1.2 (ii), auquel cas la conjecture de Jaulent \cite{J},\,\S\,2, affirme que le rang $p$-adique du groupe $F$ est \'egal \`a son $\Z$-rang ${\rm rg}(F) := {\rm dim}^{}_{\, \Q} (F \otimes \Q)$. 

\smallskip
Afin de proc\'eder \`a des calculs concrets, il convient d'am\'enager les aspects $p$-adiques ci-dessus en se ramenant \`a des congruences modulo $p^N$ dans $Z_{K,(p)}$.
On notera d'abord que tout mineur d'ordre $r$ est de fa\c con canonique divisible par $p^r$, pour tout $p>2$ non ramifi\'e, 
puisque l'on a, dans $Z_{K,(p)}$, la congruence~:
$${\rm log}_p(\eta) \equiv \Frac{1}{p^{n_p}-1}  p\,\alpha_p(\eta) \equiv -p\,\alpha_p(\eta)  \pmod {p^2}. $$
 D'o\`u la d\'efinition suivante~:

\begin{defi} \label{defimodif} (i)
On consid\`ere (pour $p$ assez grand) le d\'eterminant  \`a coef\-ficients entiers~:
$${\rm Det}^G \Big(\Frac{-1}{p}\, {\rm log}_p(\eta)\Big) = \Big(\Frac{-1}{p}\Big)^n\, {\rm Det}^G ( {\rm log}_p(\eta)) = {\rm det}
\Big(\Frac{-1}{p}\, {\rm log}_p(\eta^{\tau\sigma^{\!-1} }) \Big)_{\sigma, \tau \in G}. $$

Ce d\'eterminant de Frobenius, d\'esign\'e dans l'article par ${\rm Reg}_p^G (\eta)$, est appel\'e le r\'egulateur $p$-adique normalis\'e~de~$\eta$.

\smallskip
On a ${\rm Reg}_p^G (\eta) \equiv \Delta_p^G (\eta) \pmod p$, o\`u~:
$$\Delta_p^G (\eta) := {\rm Det}^G(\alpha_p(\eta) ) ={\rm det}\big( \alpha_p(\eta)^{\tau\sigma^{\!-1}}\big)_{\sigma, \tau \in G}$$ 
est  appel\'e le r\'egulateur (normalis\'e) local de $\eta$
 car uniquement d\'efini modulo~$p$.

\smallskip
(ii) Pour $K$ Galoisien r\'eel, le r\'egulateur $p$-adique usuel ${\rm Reg}_p(K)$ est donn\'e par un mineur d'ordre $n-1$ de 
 ${\rm Det}^G ( {\rm log}_p(\varepsilon)) =
 {\rm det}\big( {\rm log}_p(\varepsilon^{\tau\sigma^{\!-1} })\big)_{\sigma, \tau \in G}$, o\`u $\varepsilon$ est une unit\'e de Minkowski, 
et l'entier $p$-adique $p^{1-n} {\rm Reg}_p(K) = {\rm det}\Big(\Frac{-1}{p}\, {\rm log}_p(\varepsilon^{\tau\sigma^{\!-1} })\Big)_{\sigma \ne 1, \tau \ne 1}$ est appel\'e le r\'egulateur $p$-adique normalis\'e de~$K$ ($p$ assez grand).
\end{defi}

Le plongement diagonal de $K$ dans $ \prod_{v \div p} K_v$, au moyen de $i_p = (\sigma_v)_{v\div p}$, \'etant dense, on peut consid\'erer
que, modulo $p^N$, la fonction $\frac{-1}{p}{\rm log}_p$ est \`a valeurs dans~$K$.
D'apr\`es le Lemme \ref{lemm1},\,\S\ref{log}, on se ram\`ene \`a des raisonnement d'alg\`ebre lin\'eaire sur 
$\Z/p^N\Z$~; en particulier, le rang $p$-adique de $F$ est le $\Z/p^N\Z$-rang de la matrice
$\big ( \frac{-1}{p}{\rm log}_p (\eta^{\tau\sigma^{\!-1}}) \!\!\pmod {p^N}\big )_{\sigma, \tau \in G}$, pour tout $N$ assez grand.

Si un mineur $M$ d'ordre ${\rm rg}(F)$ de cette matrice est non nul,
 alors il donne le rang $p$-adique de $F$ et c'est le point de vue pratique choisi que nous limiterons \`a $N=1$,
donc aux $\alpha_p(\eta)$ modulo $p$~; dans ce cas, le rang $p$-adique de $F$ est a priori sup\'erieur ou \'egal
au $\Z/p\Z$-rang de $\big(\alpha_p(\eta)^{\tau\sigma^{\!-1}}\!\!\pmod p\big)_{\sigma, \tau \in G}$.

\smallskip
Dans \cite{Hat}, Hatada a proc\'ed\'e, pour certains corps de nombres, \`a une \'etude statistique de la nullit\'e 
modulo $p$ du r\'egulateur normalis\'e en utilisant le fait que sa $p$-valuation est celle de $p\,\zeta_K(2-p)$,
o\`u $\zeta_K$ est la fonction z\^eta de~$K$.

\begin{rema}\label{global} (a) {\it Analyse locale.}  On ne fait aucune hypoth\`ese sur le $\Z$-rang de $F$.
Si l'on a dans $F$ la relation $\prod_{\sigma \in G} (\eta^{\sigma^{\!-1}})^{\lambda(\sigma)} =1$,
$\lambda(\sigma) \in \Z$, alors, pour tout $p$ \'etranger \`a~$\eta$  on a
$\sum_{\sigma \in G}\lambda(\sigma)\, {\rm log}_p (\eta^{\sigma^{\!-1}}) =0$, ce qui conduira
pour certains caract\`eres rationnels $\chi$ \`a des $\chi$-r\'egulateurs  
(not\'es ${\rm Reg}_p^\chi(\eta)$) identiquements nuls pour tout~$p$.

\smallskip
Ces relations se transmettent pour tout $p$ en les relations plus faibles~:
$$\sm_{\sigma \in G}\lambda(\sigma)\, \alpha_p(\eta)^{\sigma^{\!-1}} \equiv 0  \pmod p$$ 

entre les conjugu\'es de $\alpha_p(\eta)$~; elles sont dites triviales 
(elles ne sont pas d\^ues \`a une circonstance num\'erique pour le $p$ consid\'er\'e, mais \`a l'existence
d'une relation dans $F$ donn\'ee par des $\lambda(\sigma)$ ind\'ependants de $p$, ce qui sera caract\'eris\'e au
Lemme \ref{nul},\,\S\ref{sub5}). Ceci n'a pas lieu si $F \otimes \Q \simeq \Q[G]$.

\smallskip
Inversement, si l'on se donne \`a partir de $\eta$ quelconque la famille de congruences~:

\medskip
\centerline{\hspace{3.5cm} $\big(\sm_{\sigma \in G} \lambda(\sigma)\, \alpha_p(\eta)^{\sigma^{\!-1}} \equiv 0  \pmod p\big)_p$\hspace{3.5cm} (*)}

(ensemble de conditions locales car les $\alpha_p(\eta)$ ne sont connus que modulo $p$), 
la question est de savoir si 
elles sont globalisables sous la forme $\prod_{\sigma \in G} (\eta^{\sigma^{\!-1}})^{\lambda(\sigma)} =1$.
Supposons ne disposer que des congruences
$\sm_{\sigma \in G} \lambda(\sigma)\, \alpha_p(\eta)^{\sigma^{\!-1}} \equiv 0  \pmod p$
 pour presque tout $p$ (\'even\-tuel\-lement au sens du\,\S\ref{prem}), avec des $\lambda(\sigma) \in \Z\,$ ind\'ependants de $p$. Soit
$\Lambda := \prod_{\sigma \in G} (\eta^{\sigma^{\!-1}})^{\lambda(\sigma)} \in F$~; alors ${\rm log}_p(\Lambda)  \equiv 0  \pmod {p^2}$  et $\Lambda$ est, dans $\prod_{v \div p} K_v$, de la forme $\xi\,(1+ \beta\,p)^p$,
o\`u $\xi$ est de torsion d'ordre \'etranger \`a $p$ (pour tout $p$ assez grand)~; 
donc $\Lambda \in \prod_{v \div p} K_v^{\times p}$ pour presque tout $p$.
Conjecturalement $\Lambda$ est une racine de l'unit\'e de $K$ (cf. Conjecture \ref{conj31},\,\S\ref{sub38}). 

\medskip
(b) {\it Analyse globale.} \label{clj} Par comparaison, supposons que l'on ait, dans un cadre limite projective, 
des coefficients  $\wh\lambda(\sigma) \in \wh\Z$ (pour tout $\sigma \in G$), tels que~:
$$\sm_{\sigma \in G} \wh\lambda_p(\sigma) {\rm log}_{(p)} (\eta^{\sigma^{\!-1}}) = 0 , 
\ \ \hbox{o\`u ${\rm log}_{(p)} = {\rm log}_p \rond\, i_p$, }$$ 

\vspace{-0.3cm}
pour tout $p$ \'etranger \`a $\eta$, o\`u  pour tout $\sigma \in G$, $\wh\lambda_p(\sigma)$ est la $p$-composante de $\wh\lambda(\sigma)$.

\smallskip
Soit $i := (i_v)_{v,\,v(\eta)=0}$ le plongement diagonal $F \otimes \wh\Z \too \wh U$, o\`u l'on a pos\'e~:

\centerline{$\wh U =\!\! \prd_{p,\, (p,\eta)=1} \!\Big(\prd_{  v\vert p} U_v^1 \times \prd_{ v\notdiv p,\, v(\eta)=0} \mu_p(K_v) \Big)$, }

 avec des notations \'evidentes ($U_v^1 = \mu_p(K_v) \times U'$, $U'$ $\Z_p$-libre).

\smallskip
 On pose $\wh\Lambda := \prod_{\sigma \in G} (\eta^{\sigma^{\!-1}})^{\wh\lambda(\sigma)} \in F \otimes \wh\Z$ et on d\'esigne par $\wh\Lambda_p = \prod_{\sigma \in G} (\eta^{\sigma^{\!-1}})^{\wh\lambda_p(\sigma)} $ la $p$-composante de $\wh\Lambda$ 
($p$ \'etranger \`a $\eta$).

\smallskip
Comme ${\rm log}_{(p)} (\wh\Lambda_p)=0$ pour tout $p$ \'etranger \`a $\eta$, on a pour toute place $v$ de $K$ \'etrang\`ere \`a $\eta$, $i_v(\wh\Lambda) = \xi_v$, o\`u en g\'en\'eral $\xi_v$ est une racine de l'unit\'e d'ordre diviseur de $\ell^{n_\ell}-1$, o\`u $\ell$ est la caract\'eristique r\'esiduelle de $v$ (les places $v \div p$ telles que $\xi_v$ est d'ordre divisible par $p$ sont en nombre fini).
On peut donc \'ecrire~:
$$i(\wh\Lambda) \in i (F \otimes \wh\Z) \ \Cap \prd_{p,\, (p,\eta)=1} \!\Big( \prd_{ v ,\, v(\eta)=0} \mu_p(K_v) \Big). $$

\vspace{-0.2cm}
En supposant l'analogue pour $F$ de la caract\'erisation locale--globale de la conjecture de 
Leopoldt--Jaulent (cf. \cite{J},\,\S\,2) pour les nombres premiers~$p$ (voir aussi \cite{Gr1}, III.3.6.6 pour le cas du groupe des unit\'es), on peut affirmer (sous cette conjecture pour tout $p$) que
$i (F \otimes \wh\Z)\  \Cap  \prd_{p,\, (p,\eta)=1} \!\Big( \prd_{ v ,\, v(\eta)=0} \mu_p(K_v) \Big) = i (\mu (K))$.

\vspace{-0.1cm}
On en d\'eduit que $\wh\Lambda_p$ est une racine de l'unit\'e $\zeta_p$ de $K$ pour tout $p$ \'etranger \`a $\eta$. 

\smallskip
Si l'on suppose $\wh\lambda_p(\sigma) \equiv  \lambda(\sigma) \pmod p$ pour tout $\sigma \in G$ et tout $p$ \'etranger \`a $\eta$, 
o\`u les $\lambda(\sigma)$ sont des entiers rationnels donn\'es,
alors $\Lambda := \prod_{\sigma \in G} (\eta^{\sigma^{\!-1}})^{\lambda(\sigma)} \in F$ est \'egal \`a $\wh\Lambda_p$ \`a une puissance $p$-i\`eme locale pr\`es, donc $\Lambda = \zeta_p \,\gamma_p^p$, $\gamma_p\in \prod_{v \div p} K_v^{\times p}$ pour tout $p$ \'etranger \`a $\eta$~; on obtient la situation de la Remarque \ref{global} puisque $\zeta_p\ne1$ pour un nombre fini de $p$, et pour $p$ assez grand il y a bien co\"incidence.

\smallskip
On peut donc voir notre d\'emarche comme un affaiblissement de ce contexte $p$-adique classique concernant la conjecture de 
Leopoldt--Jaulent~; mais en contre\-partie, on a d\^u supposer l'existence de la famille d'entiers rationnels 
$(\lambda(\sigma))_{\sigma \in G}$ ind\'ependante de $p$ v\'erifiant la relation (*).
\end{rema}

Notre principal objectif est de voir sous quelles conditions et avec quelle ``probabi\-lit\'e'' le r\'egulateur normalis\'e 
 ${\rm Reg}_p^G (\eta)$ est divisible par $p$ assez grand.
D'apr\`es ce qui pr\'ec\`ede et le Lemme \ref{lemm1},\,\S\ref{log}, on a  ${\rm Reg}_p^G (\eta)   \equiv 0  \pmod p$ 
si et seulement si il existe une relation de $\Z/p\Z$-d\'ependance des vecteurs-lignes 
$\ell_\sigma = (\ldots, \frac{-1}{p} {\rm log}_p (\eta^{\tau{\sigma^{\!-1}}}), \ldots)_\tau \equiv
 (\ldots, \alpha_p(\eta)^{\tau{\sigma^{\!-1}}}, \ldots)_\tau \pmod p$, pour $\sigma \in G$ 
(ceci sera repris au \S\ref{sub8}). 

\smallskip
Comme ${\rm Reg}_p^G (\eta)$ se factorise en produit de puissances de $\chi$-r\'egu\-lateurs 
${\rm Reg}_p^\chi (\eta)$ (pour les caract\`eres rationnels irr\'eductibles $\chi$ de $G$), on \'etudie la divisibilit\'e par $p$ de ces facteurs.
Cette factorisation ne d\'epend pas de $p$, par contre on peut ensuite factoriser ${\rm Reg}_p^\chi (\eta)$ en produit de $\theta$-composantes  ${\rm Reg}_p^\theta (\eta)$ (pour les caract\`eres $p$-adiques irr\'eductibles $\theta \div \chi$), cette factorisation d\'ependant du degr\'e r\'esiduel de $p$ dans le corps des valeurs des caract\`eres absolument irr\'eductibles $\varphi \div \chi$ de $G$. On aura ensuite ${\rm Reg}_p^\theta (\eta) \equiv \Delta_p^\theta (\eta) \pmod p$ en termes de $\theta$-r\'egulateurs locaux de la forme $\Delta_p^\theta(\eta) := {\rm Det}^\theta (\alpha_p(\eta) )$ (d\'efinis seulement modulo $p$
et \'etudi\'es au \S\ref{sub6}).

\smallskip
Une premi\`ere \'etude montrera que la probabilit\'e d'avoir ${\rm Reg}_p^\theta (\eta)$ et ${\rm Reg}_p^{\theta'} (\eta)$ nuls modulo $p$, pour $\theta \ne \theta'$, est le produit des probabilit\'es (ind\'ependance), donc
au plus en $\Frac{O(1)}{p^2}$, et on pourra ``exclure'' cette \'eventualit\'e pour tout $p$ assez grand et se limiter au cas d'une unique $\theta$-composante nulle modulo $p$ (cf.\,\S\S\ref{sub9}, \ref{sub12}).

\subsection{Les d\'eterminants de groupes (ou de Frobenius)}\label{sub2}
 Pour nos objectifs heuristiques 
nous utilisons le cas Ab\'elien tr\`es classique (cf. \cite{Wa}, Lemma\,5.26 ou \cite{C},\,\S\,2)
ainsi que le cas des groupes non commutatifs pour lesquels les aspects probabilistes sont plus d\'elicats au niveau de leurs caract\`eres irr\'eductibles de degr\'es $\geq 2$~; \`a cette fin nous 
commen\c cons par un rappel g\'en\'eral en termes de repr\'esentations  (pour un expos\'e sur les repr\'esentations 
et les caract\`eres, voir~\cite{Se1}).

\subsubsection{Notations g\'en\'erales}\label{subnot}
En tant que repr\'esentation r\'eguli\`ere, on a l'isomor\-phisme $\C[G] \simeq \bigoplus_\rho {\rm deg}(\rho) \,. \,V_\rho$, 
o\`u $(\rho, V_\rho)$ d\'ecrit l'ensemble des repr\'esentations absolument irr\'eductibles de $G$ et o\`u ${\rm deg} (\rho)$
est le degr\'e ($\C$-dimension de $V_\rho$). 

\smallskip
On d\'esigne par $\varphi$ le caract\`ere de $\rho$~; par cons\'equent, ${\rm deg}(\rho) 
= \varphi(1)$.  Nous convenons d'indexer les objets d\'ependant de $\rho$ par la lettre $\varphi$ (e.g. $V_\varphi$)
et de ne conserver $\rho = \rho_\varphi$ qu'en tant qu'homomorphisme de $G$ dans ${\rm End}(V_\varphi)$.

\smallskip 
Comme alg\`ebre d'endomorphismes $E \in \C[G]$ agissant sur la base des $\nu \in G$ par multiplication~: 
$\nu \mapsto  E \,.\, \nu$, 
on a l'isomorphisme $\C[G] \simeq \bigoplus_\varphi {\rm End}(V_\varphi)$, avec 
${\rm End}(V_\varphi) \simeq e_\varphi \C[G]$, o\`u les $e_\varphi  = \Frac{\varphi (1)}{\vert G \vert} 
\sm_{\nu \in G} \varphi (\nu^{\!-1} )\,\nu$ sont les idempotents centraux orthogonaux de  $\C[G]$.
Pour la d\'ecomposition de $e_\varphi \C[G]$ en somme directe de $\varphi (1)$ repr\'esentations irr\'educ\-tibles isomorphes \`a $V_\varphi$,
on  utilise les projecteurs issus d'une repr\'esentation matricielle  $M(\rho_\varphi(\nu)) = \big( a_{ij}^\varphi(\nu) \big)_{i,j}$
 (cf. \cite{Se1},\,\S\,I.2.7)~:
$$\hbox{$\pi_i^\varphi =  \Frac{\varphi(1)}{ n } \sm_{\nu \in G} a_{ii}^\varphi(\nu^{\!-1}) \,\nu$, $\ \,i= 1, \ldots, \varphi(1)$, }$$
qui forment un syst\`eme d'idempotents orthogonaux au-dessus de $e_\varphi = \sum_i \pi_i^\varphi$.

\subsubsection{Rappels sur les d\'eterminants de groupes {\rm (d'apr\`es \cite{C})}}\label{sub3} 
Soit $G$ un groupe fini et soit ${\rm Det}^G(X) = {\rm det}\big( X_{\tau{\sigma^{\!-1}}} \big)_{\sigma, \tau \in G}$
le d\'eterminant du groupe $G$, ou d\'eterminant de Frobenius, en les ind\'etermin\'ees~$X := (X_\nu)_{\nu \in G}$.
 On a alors la formule~:
$${\rm Det}^G(X) = \prd_\varphi {\rm det} 
\Big( \sm_{\nu \in G} X_\nu \rho_\varphi (\nu^{\!-1}) \Big)^{\varphi(1)}. $$

Il en r\'esulte  l'existence de polyn\^omes homog\`enes $P^\varphi(X)$,
de degr\'es $\varphi(1)$, tels que~:
$${\rm Det}^G(X) = \prd_\varphi P^\varphi (X)^{\varphi(1)}.$$

\vspace{-0.2cm}
On regroupe en produits partiels associ\'es aux caract\`eres $\chi$, irr\'eductibles sur~$\Q$, ou aux 
caract\`eres $p$-adiques $\theta$,  irr\'eductibles sur~$\Q_p$ (cf. D\'efinition \ref{defi01},\,\S\ref{calp})~; ceci d\'efinit 
${\rm Det}^\chi(X)$ et ${\rm Det}^\theta(X)$.
La sp\'ecialisation $X_\nu\! \mapsto \frac{-1}{p} {\rm log}_p(\eta^{\nu })\!$ conduit \`a la factorisation 
(cf. D\'efinition \ref{defimodif},\,\S\ref{sub1})~:
$${\rm Det}^G\big( \hbox{ $\frac{-1}{p} $ }   {\rm log}_p(\eta ) \big) =   {\rm Reg}_p^G(\eta) =  \prd_\varphi {\rm det} \Big(\sm_{\nu \in G}\hbox{ $\frac{-1}{p} $ }  {\rm log}_p(\eta^{\nu })\, \rho_\varphi(\nu^{\!-1}) \Big)^{\varphi(1)},$$
et on pose ${\rm Reg}_p^\varphi(\eta) = 
 {\rm det} \big(\!\sm_{\nu \in G}$$\frac{-1}{p} {\rm log}_p(\eta^{\nu })\, \rho_\varphi(\nu^{\!-1}) \!\big ),\,
{\rm Reg}_p^\chi(\eta) = \prd_{\varphi \div \chi}{\rm Reg}_p^\varphi(\eta)$, et
${\rm Reg}_p^\theta(\eta) \!=\! \prd_{\varphi \div \theta}{\rm Reg}_p^\varphi(\eta)$~;
on aura des r\'esidus modulo $p$ de ces r\'egulateurs via les con\-gruences $\frac{-1}{p} {\rm log}_p(\eta^{\nu })
\equiv \alpha_p(\eta)^{\nu } \!\pmod p$, $\nu \in G$, conduisant aux $\Delta_p^\theta(\eta)$.

De fait l'\'etude modulo $p$ de chaque ${\rm Reg}_p^\theta (\eta)$ donne une information beaucoup 
plus pr\'ecise que l'\'etude modulo $p$ du r\'egulateur normalis\'e ${\rm Reg}_p^G(\eta) = \prd_{\theta} {\rm Reg}_p^\theta (\eta)^{\varphi(1)}$.

On \'etudiera plus g\'en\'eralement  la sp\'ecialisation $X_\nu \mapsto \alpha^\nu$, $\alpha \in Z_K$ donn\'e 
ind\'ependamment de tout $\eta$ et $p$, dont nous noterons  le r\'esultat sous la forme~:
$${\rm Det}^G(\alpha) := {\rm det}(\alpha^{\tau{\sigma^{\!-1}}})_{\sigma, \tau \in G} = \prd_\varphi
 {\rm det} \Big(\!\sm_{\nu \in G} \alpha^\nu \, \rho_\varphi(\nu^{\!-1}) \!\Big )^{\varphi(1)}  , $$
o\`u l'on rappelle que $\alpha^{\tau{\sigma^{\!-1}}} = (\alpha^{{\sigma^{\!-1}}})^\tau$.
Par exemple, dans le contexte logarithmique pr\'ec\'edent relatif \`a un nombre alg\'e\-brique $\eta \in K^\times$ 
pour lequel $\alpha \equiv \alpha_p(\eta) \pmod p$, nous utiliserons les r\'egulateurs locaux (car ici $\alpha$ 
n'est d\'efini que modulo $p$)~:
$$\Delta^G_p(\eta) :=  {\rm det}(\alpha^{\tau{\sigma^{\!-1}}}\!\!\!\!\pmod p)_{\sigma, \tau \in G} , $$
via leurs factorisations en $\theta$-composantes (les $\theta$-r\'egulateurs locaux $\Delta^\theta_p(\eta)$).

\subsubsection{Calcul pratique des $P^\varphi(X)$}\label{calp}
Les polyn\^omes $P^\varphi(X)$ sont obtenus de la fa\c con suivante~:
\`a partir de l'espace vectoriel $V = \C[G]$ (muni de la base des $\nu\in G$), on consid\`ere
l'endomorphisme de $V[X] := V[\ldots, X_\nu, \ldots]$
d\'efini par $L(X) = \sm_{\nu \in G} X_\nu \,\nu^{\!-1}$.
 Il est tel que~:

\smallskip
\centerline{$\big(\sm_{\nu \in G} X_\nu \,\nu^{\!-1}\big)\,.\,\tau = \sm_{\nu \in G} X_\nu \,\nu^{\!-1} \tau = 
\sm_{\sigma \in G} X_{\tau \sigma^{\!-1}} \, \sigma, \ \  \,\forall \tau \in G$. }

\medskip
Donc le d\'eterminant de cet endomorphisme dans la base des $\tau \in G$ est le d\'eterminant de Frobenius 
(d\'efini au signe pr\`es).

\smallskip
Soit $(\rho_\varphi, V_\varphi)$ la famille des repr\'esentations absolument irreductibles non isomorphes.
On prendra pour $ {\rm End}(V_\varphi)$ la composante $e_\varphi\C[G]$ associ\'ee au
caract\`ere~$\varphi$.\footnote{Par exemple, pour $G= D_6$, une base de $e_\varphi \C[G]$, pour le caract\`ere irr\'eductible de degr\'e 2,  est donn\'ee
par $\{ E_1=e_\varphi(1-\sigma),  E_\sigma=e_\varphi \sigma (1-\sigma), E_\tau=e_\varphi \tau(1-\sigma), 
 E_{\tau\sigma}=e_\varphi \tau \sigma (1-\sigma)  \}$. }

\smallskip
On utilise l'isomorphisme d'alg\`ebres $\wt \rho : V \too \prd_\varphi {\rm End}(V_\varphi)$ d\'efini par~:

\centerline{$\sm_{\nu \in G} a(\nu)\, \nu^{\!-1} \longmapsto \Big(\sm_{\nu \in G} a(\nu) \, \rho_\varphi (\nu^{\!-1})\Big)_\varphi$. }

\smallskip
o\`u $\rho_\varphi(\nu^{\!-1}) = e_\varphi \,\nu^{\!-1}$ dans l'identification pr\'ec\'edente.
D'apr\`es le th\'eor\`eme de Maschke on a, au niveau de l'endomorphisme $L(X)$~:

\smallskip
\centerline{${\rm det}_V(L(X)) = \prd_\varphi \big( {\rm det}_{V_\varphi}(L^\varphi(X)) \big)^{\varphi(1)}$, } 

o\`u $L^\varphi(X) = \sm_{\nu \in G} X_\nu\,\rho_\varphi(\nu^{\!-1}) \in {\rm End} (V_\varphi [X])$. 
On pose  $P^\varphi(X) :=  {\rm det}_{V_\varphi}(L^\varphi (X))$.  

\smallskip
Avec une r\'ealisation matricielle $M(\rho_\varphi(\nu) )= \big( a^\varphi_{ij}(\nu) \big)_{i,j}$
des $\rho_\varphi(\nu)$,  la matrice asso\-ci\'ee \`a $L^\varphi (X)$ est
$M^\varphi(X) = \big(\sum_{\nu \in G}a^\varphi_{ij}(\nu^{\!-1})\, X_\nu \big)_{i,j}$, de d\'eterminant $P^\varphi(X)$.
On peut \'egalement revenir au proc\'ed\'e direct de Frobenius \`a partir des caract\`eres.
Soit $g$ le plus petit commun multiple des ordres des \'el\'ements de $G$~: on sait que les repr\'esentations sont 
r\'ealisables sur le corps $C = \Q(\mu_g)$ des racines $g$-i\`emes de l'unit\'e (cf.  \cite{Se1},\,\S\,12.3).
On pourra donc toujours supposer que les $a^\varphi_{ij}(\nu)$ sont des nombres alg\'ebriques $p$-entiers 
pour tout $p$ assez grand.

\smallskip
Soit  $\Gamma := {\rm Gal}(C/\Q)$.
Etant donn\'e une repr\'esentation  absolument irr\'eductible
$\rho_\varphi : G \mapsto {\rm End_{C}}(V_\varphi)$, on d\'efinit ses conjugu\'ees de fa\c con Galoisienne, de sorte que 
 pour tout $s \in \Gamma$, $\rho^s_\varphi$ est la repr\'esentation $G \mapsto {\rm End_{C}}(V_{\varphi^s})
\simeq e_{\varphi^s} C[G]$ de caract\`ere $\varphi^s$ d\'efini par $\varphi^s(\nu) = (\varphi(\nu) )^s$, pour tout $s \in \Gamma$.
On a, pour tout $s \in \Gamma$, $\varphi^s(\nu) = 
\varphi(\nu^{\omega(s)} )$, o\`u $\omega$ est le caract\`ere $\Gamma \to(\Z/g\Z)^\times$ de l'action de $\Gamma$ sur~$\mu_g$.
On pose aussi  $\varphi^t(\nu) = \varphi(\nu^t)$ pour tout entier $t$ \'etranger \`a $g$ ($\Gamma$-conjugaison).

\begin{defi}\label{defi01} (i) {\it Caract\`eres rationnels}.
Comme $\Gamma$ op\`ere sur les les caract\`eres irr\'eductibles $\varphi$, on pose~:

\medskip
\centerline{$\chi = \sm_{s\in {\rm Gal}(C_\chi/\Q)}  \varphi^s \ \  {\rm et} \ \  P^\chi (X) := \prd_{s\in {\rm Gal}(C_\chi/\Q)} P^{\varphi^s}(X), $}

\smallskip
 o\`u $C_\chi \subseteq C$ est le corps des valeurs de n'importe quel $\Q$-conjugu\'e de $\varphi$.

\smallskip
(ii) {\it Caract\`eres $p$-adiques}. Si $p\notdiv g$ est un nombre premier on d\'esigne, pour $\chi$ fix\'e, par $L$
et $D$ le corps et le groupe de d\'ecomposition de $p$ dans $C_\chi/\Q$. On d\'esigne par $f  = \vert D \vert$ le degr\'e
r\'esiduel de $p$ dans $C_\chi/\Q$ et par $h = [L : \Q]$ le nombre d'id\'eaux premiers au-dessus de $p$~; ainsi 
$[C_\chi : \Q] = h\,f$.

\smallskip
Soit $\varphi$ un caract\`ere absolument irr\'eductible dont la somme des $\Q$-conjugu\'es est~$\chi$. 

\smallskip
On consid\`ere, pour tout $\nu \in G$, $\Theta(\nu) := \sm_{s\in D} \varphi^s(\nu) \in L$ puis l'image diagonale
$\big( t_{\mathfrak p}(\Theta(\nu))  \big)_{{\mathfrak p} \div p} \in \prd_{{\mathfrak p} \div p} L_{\mathfrak p}= 
\prd_{{\mathfrak p} \div p} \Q_p$ o\`u les $t_{\mathfrak p}$ repr\'esentent ${\rm Gal}(C_\chi/\Q)/D$.
On d\'esigne par
$\theta(\nu)$ la ${\mathfrak p}$-composante $t_{\mathfrak p}(\Theta(\nu))$ qui est dans $\Z_p$~; par ce proc\'ed\'e on d\'efinit les caract\`eres $\theta$ (que l'on devrait indexer par les ${{\mathfrak p} \div p}$), via des congruences modulo ${\mathfrak p}$ de la forme $\theta(\nu) \equiv r_{\mathfrak p} (\nu)\pmod {\mathfrak p}$, $r_{\mathfrak p} (\nu) \in \Z$ pour tout $\nu \in G$.
Comme $\theta$ est  \`a valeurs dans $\Z_p$, on \'ecrira par abus $\theta(\nu) \equiv r_{\mathfrak p} (\nu)\pmod p$.

\smallskip
Si l'on fixe un couple $(\theta, {\mathfrak p})$, les autres sont les $h$ conjugu\'es $(\theta^t, {\mathfrak p}^t)$ ainsi d\'efinis~:
si $\theta = \sum_{s \in D}  \varphi^{s}$, les $h$ conjugu\'es de $\theta$ sont les $\theta^{t} = \sum_{s \in D} 
 (\varphi^{t})^{s}$, $t\in {\rm Gal}(C_\chi/\Q)/ D$, et on aura $\theta^t(\nu) \equiv r_{{\mathfrak p}^t} (\nu)\pmod p$, o\`u les
rationnels $r_{{\mathfrak p}^t} (\nu)$ d\'ependent num\'eriquement des images r\'esiduelles en les ${\mathfrak p} \div p$ des traces relatives des $\varphi(\nu)$, $\varphi \div \chi$.

\smallskip
Pour $p$ fix\'e, l'entier $f$ ne d\'epend que de $\chi$
et est appel\'e par abus le degr\'e r\'esiduel des caract\`eres $\varphi, \theta$ et~$\chi$. 
On a par $\Gamma$-conjugaison, $\varphi^{p^i}(\nu) = \varphi(\nu^{p^i}) = \varphi(\nu)^{s_p^i}$, o\`u $s_p$
est l'automorphisme de Frobenius (d'ordre $f$) dans $C_\chi/\Q$.
On pose alors~:

\medskip
\centerline{ $P^\theta (X) := \prd_{s\in D}  P^{\varphi^s}(X)$. }

\smallskip
(iii) {\it Idempotents.} Ecrivons $\varphi \div \theta$, $\theta \div \chi$ pour indiquer que $\varphi$ (resp. $\theta$) est un terme de $\theta$ (resp.~$\chi$). 
On pose $e_\chi = \sum_{\varphi \div \chi}e_\varphi$ et $e_\theta = \sum_{\varphi \div \theta}e_\varphi$~; d'o\`u
$e_\chi = \sum_{\theta \div \chi} e_\theta$. Les $e_\theta$ (resp. les $e_\chi$) forment un syst\`eme fondamental d'idempotents orthogonaux de $\Q_p[G]$ (resp. $\Q[G]$), \`a savoir  $\Q_p[G] = \bigoplus_\theta e_\theta \Q_p[G]$, et $\ \Q[G] = \bigoplus_\chi e_\chi \Q[G]$. On peut remplacer $\Q_p$ (resp. $\Q$) par $\Z_p$ (resp. $\Z_{(p)}$) car $p \notdiv g$.
\end{defi}

De la formule $P^\varphi(X) =  {\rm det}_{V_\varphi}(L^\varphi(X))$ on d\'eduit que $P^{\varphi^s}(X) =  {\rm det}_{V_{\varphi^s}}(L^{\varphi^s} (X))$ o\`u $L^{\varphi^s}(X) = \sum_{\nu \in G} X_\nu\,\rho_\varphi^s(\nu^{\!-1})$ qui est donn\'e via les $a^\varphi_{ij} (\nu^{\!-1})^s$, ce qui d\'efinit le 
conjugu\'e par $s$ du polyn\^ome $P^\varphi(X)$ (i.e., de ses coefficients).

\begin{lemm}\label{lemm2} Pour tout $p$ assez grand,
les polyn\^omes $P^\chi (X)$ (resp. $P^\theta (X)$) sont \`a coefficients $p$-entiers rationnels 
(resp. entiers $p$-adiques). 

\smallskip
Pour tout caract\`ere irr\'eductible $\varphi$, on a $P^\varphi (\ldots,X_{\pi \nu}, \ldots) = \zeta_\pi\, P^\varphi (\ldots,X_{\nu}, \ldots)$ pour tout $\pi \in G$, o\`u $\zeta_\pi$ est une racine de l'unit\'e d'ordre diviseur de $g$.
\end{lemm}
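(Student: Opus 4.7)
The plan splits naturally into the two assertions, and both reduce to manipulations already set up in \S\ref{calp}.

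For the integrality, I would first recall that all the absolutely irreducible representations $\rho_\varphi$ are realisable over $C = \Q(\mu_g)$ (see \cite{Se1},\,\S\,12.3), so the matrix coefficients $a^\varphi_{ij}(\nu)$ lie in $\Z[\mu_g]$ up to a common denominator $N$. For any prime $p$ not dividing $N g$ (which is what ``$p$ assez grand'' means here), each $a^\varphi_{ij}(\nu)$ becomes $p$-integral in $C$, so the determinant $P^\varphi(X) = \det_{V_\varphi}(L^\varphi(X))$ has coefficients in $\Z_{(p)}[\mu_g]$. The product $P^\chi(X) = \prod_{s\in\Gal(C_\chi/\Q)} P^{\varphi^s}(X)$ is by construction stable under $\Gal(C_\chi/\Q)$, hence under $\Gal(C/\Q)$ after completing the product over absolute conjugates, so its coefficients lie in $\Q \cap \Z_{(p)}[\mu_g] = \Z_{(p)}$. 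For $P^\theta(X) = \prod_{s\in D}P^{\varphi^s}(X)$, the analogous stability holds only under $D$, so the coefficients lie in $L = C_\chi^D$; but the chosen prime $\mathfrak p$ of $C_\chi$ above $p$ has decomposition group $D$, so its restriction to $L$ is unramified of residue degree $1$ and $L_{\mathfrak p \cap L} = \Q_p$. Via the embedding $L \hookrightarrow \Q_p$ attached to $\mathfrak p$, the coefficients of $P^\theta(X)$ are $p$-adic integers, as required.

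For the transformation formula, I would carry out the substitution $X_\nu \mapsto X_{\pi\nu}$ in the endomorphism $L^\varphi(X) = \sum_{\nu\in G} X_\nu\, \rho_\varphi(\nu^{-1})$. Reindexing by $\mu = \pi\nu$ gives
\[
L^\varphi(\ldots,X_{\pi\nu},\ldots) = \sm_{\mu\in G} X_\mu\, \rho_\varphi(\mu^{-1}\pi) = L^\varphi(X)\circ \rho_\varphi(\pi),
\]
and taking $\det_{V_\varphi}$ yields
\[
P^\varphi(\ldots,X_{\pi\nu},\ldots) = \det\bigl(\rho_\varphi(\pi)\bigr)\,\cdot\, P^\varphi(\ldots,X_\nu,\ldots).
\]
Setting $\zeta_\pi := \det(\rho_\varphi(\pi))$ gives the factor announced: since $\pi$ has order dividing $g$, so does $\zeta_\pi$, because $\zeta_\pi^{|\pi|} = \det(\rho_\varphi(\pi^{|\pi|})) = \det(\mathrm{Id}) = 1$.

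The only subtle point is the $\Z_p$-integrality of $P^\theta(X)$; everything else is formal. The temptation is to state integrality over the ring of integers of $L$, but one must instead pass to the local completion at the chosen prime $\mathfrak p$ (equivalently, the embedding $C_\chi \hookrightarrow \C_p$ fixed with $K$ at the beginning of \S\ref{sub1}), and use that $p$ is unramified in $C_\chi$ (guaranteed by ``$p\notdiv g$''), so that $L_{\mathfrak p \cap L} = \Q_p$. This identifies the $\mathfrak p$-component of $P^\theta(X)$, a priori in $\mathcal O_L$, with an element of $\Z_p$.
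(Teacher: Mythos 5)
Your proof is correct and follows essentially the same route as the paper: Galois (resp. $D$-) invariance of the product for the integrality claim, and the reindexing identity $\sm_{\nu\in G} X_{\pi\nu}\,\rho_\varphi(\nu^{\!-1}) = \big(\sm_{\nu\in G} X_\nu\,\rho_\varphi(\nu^{\!-1})\big)\rho_\varphi(\pi)$ followed by multiplicativity of the determinant for the transformation formula. The only difference is that you spell out the first point (common denominator of the matrix coefficients, descent to $\Z_{(p)}$ resp.\ to $\Z_p$ via the residue degree of $\mathfrak p$ over $L$), which the paper simply declares clear, and you justify that $\zeta_\pi$ is a root of unity by $\zeta_\pi^{|\pi|}=\det(\mathrm{Id})=1$ rather than by diagonalising $\rho_\varphi(\pi)$.
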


\begin{proof} Comme $P^\chi (X) = \prod_{s \in {\rm Gal}(C_\chi/\Q)} P^{\varphi^s} (X)$ est invariant par Galois,
 le premier point est clair. De m\^eme pour $P^\theta (X) = \prod_{s \in D} P^{\varphi^s} (X)$.

\smallskip
Pour $\pi \in G$ appelons $[\pi]$ l'op\'erateur de $C[X]$ dans $C[X]$ d\'efini par $[\pi] X_\nu = X_{(\pi\nu)}$
pour tout $\nu \in G$. Alors $[\pi]$ et $\wt \rho : V[X] \too \prod_\varphi {\rm End}(V_\varphi[X])$ commutent~; 
de plus, puisque $\rho_\varphi$ est un homomorphisme, on a la formule suivante~:
$$[\pi]\Big (\sm_{\nu \in G} X_\nu \,\rho_\varphi(\nu^{\!-1}) \Big)= \sm_{\nu \in G} X_{\pi\nu} \,\rho_\varphi(\nu^{\!-1}) =
\Big (\sm_{\nu \in G} X_\nu \,\rho_\varphi(\nu^{\!-1})\Big )\,\rho_\varphi (\pi) . $$
Ensuite, comme le d\'eterminant de $\rho_\varphi(\pi) \in {\rm End} (V_\varphi)$
est celui d'une matrice diago\-nale dont la diagonale est form\'ee de racines de l'unit\'e, on a~:
$$ {\rm det} \Big([\pi]\Big (\sm_{\nu \in G} X_\nu \rho_\varphi(\nu^{\!-1})\Big ) \Big) = 
\zeta_\pi \, {\rm det} \Big(\sm_{\nu \in G} X_\nu \rho_\varphi(\nu^{\!-1})\Big), $$

\vspace{-0.1cm}
o\`u $\zeta_\pi$ est d'ordre diviseur de l'ordre de $\rho_\varphi(\pi)$ lequel est un diviseur de $g$.
\end{proof}

\begin{coro} \label{coro3} 
Pour tout $\pi \in G$ et tout caract\`ere absolument irr\'eductible $\varphi$,  $P^\varphi (\ldots, \alpha^{\pi\nu }, \ldots) = \zeta_\pi\, P^\varphi (\ldots, \alpha^\nu, \ldots)$ par sp\'ecialisation $X_\nu \mapsto \alpha^\nu$, $\alpha \in Z_K$.

\smallskip
Par cons\'equent, $P^\chi (\ldots, \alpha^{\pi\nu}, \ldots) = \pm P^\chi (\ldots, \alpha^\nu, \ldots)$ pour tout $\pi \in G$.\,\footnote{\,Signe $+$ sauf si $\chi = \varphi $ est quadratique et $\varphi (\pi) = -1$
 (cf. Lemme \ref{lemm7}\,(ii),\,\S\ref{sub4}).}

De m\^eme, $P^\theta (\ldots, \alpha^{\pi\nu}, \ldots) = \zeta'_\pi P^\theta (\ldots, \alpha^\nu, \ldots)$  pour tout $\pi \in G$,
o\`u $\zeta'_\pi$ est d'ordre diviseur de p.g.c.d.\,$(g, p-1)$. 
\end{coro}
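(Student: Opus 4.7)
Le plan est de d\'eduire ce corollaire par simple sp\'ecialisation du Lemme \ref{lemm2}, puis de contr\^oler la constante multiplicative apr\`es regroupement en produits sur les $\Q$- ou $\Q_p$-orbites de caract\`eres. La premi\`ere assertion, sur $P^\varphi$, est imm\'ediate~: la sp\'ecialisation $X_\nu \mapsto \alpha^\nu$ dans l'identit\'e $P^\varphi(\ldots, X_{\pi\nu}, \ldots) = \zeta_\pi\, P^\varphi(\ldots, X_\nu, \ldots)$ du Lemme \ref{lemm2} envoie $X_{\pi\nu}$ sur $\alpha^{\pi\nu}$ et fournit directement la relation souhait\'ee, la constante $\zeta_\pi = {\rm det}(\rho_\varphi(\pi))$ \'etant une racine de l'unit\'e d'ordre divisant~$g$.

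Pour $P^\chi = \prd_{s \in {\rm Gal}(C_\chi/\Q)} P^{\varphi^s}$, j'applique la premi\`ere assertion \`a chaque conjugu\'e $\varphi^s$. L'\'etape-cl\'e est l'identification de la constante associ\'ee comme le conjugu\'e $\zeta_\pi^s$~: d'apr\`es la D\'efinition \ref{defi01}, les coefficients de toute r\'ealisation matricielle de $\rho_{\varphi^s}$ sont les $s$-conjugu\'es de ceux de $\rho_\varphi$, d'o\`u ${\rm det}(\rho_{\varphi^s}(\pi)) = \zeta_\pi^s$. Le facteur multiplicatif global vaut alors $\prd_s \zeta_\pi^s = N_{C_\chi/\Q}(\zeta_\pi)$, \`a la fois racine de l'unit\'e et \'el\'ement rationnel, donc \'egal \`a $\pm 1$. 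L'analyse pr\'ecise du signe (n\'egatif uniquement dans le cas quadratique, suivant la note de l'\'enonc\'e) se ram\`ene \`a une \'etude \'el\'ementaire des caract\`eres d\'eterminants de $G$ renvoy\'ee \`a un lemme ult\'erieur.

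Pour $P^\theta = \prd_{s \in D} P^{\varphi^s}$, je proc\`ede de m\^eme, avec le facteur constant
$\zeta'_\pi = \prd_{s \in D} \zeta_\pi^s = N_{C_\chi/L}(\zeta_\pi)$, o\`u $L = C_\chi^D$ est le corps de d\'ecomposition de $p$. Apr\`es localisation en la place ${\mathfrak p} \div p$ de $L$ choisie au \S\ref{calp}, $\zeta'_\pi$ est un \'el\'ement de $L_{\mathfrak p} = \Q_p$ (puisque $p$ est totalement d\'ecompos\'e dans $L$ par d\'efinition de~$D$)~; c'est donc une racine de l'unit\'e de $\Z_p$, d'ordre divisant $p-1$. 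Comme ce m\^eme ordre divise aussi $g$, en tant que produit de racines de l'unit\'e d'ordre divisant $g$, il divise finalement ${\rm p.g.c.d.}(g, p-1)$. L'unique v\'erification d\'elicate dans l'ensemble de l'argument est la commutation entre $\Gamma$-conjugaison des coefficients matriciels et formation du d\'eterminant, qui l\'egitime l'identification des facteurs~; tout le reste se r\'eduit \`a des calculs standards de normes de racines de l'unit\'e dans des sous-corps de $\Q(\mu_g)$.
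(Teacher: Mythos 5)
Votre preuve est correcte et suit exactement la d\'emarche implicite du texte~: le corollaire est une sp\'ecialisation directe du Lemme \ref{lemm2}, les facteurs pour $P^\chi$ et $P^\theta$ \'etant les normes $\No_{C_\chi/\Q}(\zeta_\pi)=\pm1$ et $\No_{C_\chi/L}(\zeta_\pi)$, cette derni\`ere \'etant une racine de l'unit\'e de $L$ dont l'ordre divise \`a la fois $g$ et $p-1$ via le plongement $L\hookrightarrow L_{\mathfrak p}=\Q_p$. L'identification ${\rm det}(\rho_{\varphi^s}(\pi))=\zeta_\pi^s$ et le renvoi du signe au Lemme \ref{lemm7}\,(ii) correspondent pr\'ecis\'ement \`a ce que le papier sous-entend.
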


\subsubsection{D\'eterminants, $\chi$-d\'eterminants num\'eriques}\label{sub111}
 Dans cette partie, il n'y a pas r\'ef\'erence \`a un nombre premier $p$ et les caract\`eres consid\'er\'es sont absolument irr\'eductibles ou rationnels. Ce qui pr\'ec\`ede conduit \`a d\'efinir les $\chi$-d\'eterminants num\'eriques (i.e., ind\'ependants 
de la donn\'ee de $\eta \in K^\times$), \`a partir d'un $\alpha \in Z_K$ quelconque (ce ne sont donc pas des r\'egulateurs au sens classique du terme).
 
\begin{defi} \label{defi4}  Soit $G$ un groupe fini et soit~:
$${\rm Det}^G(X) = {\rm det}(X_{\tau\sigma^{\!-1}})_{\sigma, \tau \in G}  = \prd_\varphi {\rm det} \big( \sm_{\nu \in G} X_\nu\, \rho_\varphi(\nu^{\!-1}) \big)^{\varphi(1)} = \prd_\varphi P^\varphi(X)^{\varphi(1)} $$

\vspace{-0.1cm}
 le d\'eterminant de groupe associ\'e.
Les $\chi$-d\'eterminants (avec ind\'etermin\'ees et num\'eriques) sont par d\'efini\-tion les expressions (sur les conjugu\'es de~$\alpha$)~:
$${\rm Det}^\chi (X)= \prd_{\varphi \div \chi} P^\varphi (X) \ \,  {\rm et } \ \,
{\rm Det}^\chi (\alpha) = \prd_{\varphi \div \chi} P^\varphi (\ldots, \alpha^\nu,\ldots) , \ \, \alpha \in Z_K , $$

\vspace{-0.2cm}
de sorte que ${\rm Det}^G(\alpha) = \prd_\chi \, ({\rm Det}^\chi (\alpha))^{\varphi(1)}$ (o\`u pour chaque $\chi$, $\varphi \div \chi$).
\end{defi}
 
\vspace{-0.45cm}

\begin{example}\label{ex5}  {\rm Dans le cas du groupe $D_6 = \{1, \sigma, \sigma^2, \tau, \tau\sigma, \tau\sigma^2  \}$, on a les 
$\chi$-d\'eterminants num\'eriques suivants~:

\smallskip
${\rm Det}^1(\alpha) = \alpha +\alpha^{\sigma} +\alpha^{\sigma^2} +\alpha^{\tau} +\alpha^{\tau\sigma} +\alpha^{\tau\sigma^2}$,

\smallskip
${\rm Det}^{\chi_1}(\alpha) = \alpha +\alpha^{\sigma} +\alpha^{\sigma^2} -\alpha^{\tau} -\alpha^{\tau\sigma} -\alpha^{\tau\sigma^2}$,

\smallskip
${\rm Det}^{\chi_2}(\alpha) = \alpha^{2} +\alpha^{2}{}^{\sigma} +\alpha^{2}{}^{\sigma^2} -\alpha^{2}{}^{\tau} -\alpha^{2}{}^{\tau\sigma} -\alpha^{2}{}^{\tau\sigma^2}  - \alpha  \alpha^{\sigma} - \alpha^{\sigma}  \alpha^{\sigma^2} -\alpha^{\sigma^2}  \alpha$

\hfill $+ \alpha^{\tau}  \alpha^{\tau\sigma}+ \alpha^{\tau\sigma}  \alpha^{\tau\sigma^2} +
\alpha^{\tau\sigma^2}  \alpha^{\tau}$.

\smallskip
Les deux derniers sont de la forme ${\rm Det}'\,\sqrt m$, ${\rm Det}' \in \Q$, o\`u $k = \Q(\sqrt m)$ est le sous corps quadratique de $K$ et on n\'eglige le facteur $\sqrt m$~; mais ${\rm Det}^{\chi_2}(\alpha)$ figure au carr\'e dans le d\'eterminant ${\rm Det}^G(\alpha)$ et le r\'esultat est rationnel, ce qui n'est pas le cas de ${\rm Det}^{\chi_1}(\alpha)$. Ceci est sp\'ecifique des seuls  caract\`eres quadratiques. }
\end{example}

Pour les calculs, on peut revenir aux r\'ealisations matricielles (ici $C = \Q$)~:
$$\rho_\varphi(1) = \left(\begin{matrix} 
1&0\\
0&1\end{matrix}\right), \ \ \rho_\varphi(\sigma) = \left(\begin{matrix} 
-1&-1\\
1&0\end{matrix}\right), \ \ \rho_\varphi(\sigma^2) = \left(\begin{matrix} 
0&1\\
-1&-1\end{matrix}\right), $$
$$\rho_\varphi(\tau) = \left(\begin{matrix} 
1&0\\
-1&-1\end{matrix}\right), \ \ \rho_\varphi(\tau\sigma) = \left(\begin{matrix} 
-1&-1\\
0&1\end{matrix}\right),\ \  \rho_\varphi(\tau\sigma^2) = \left(\begin{matrix} 
0&1\\
1&0\end{matrix}\right), $$

qui conduisent \`a~:

\vspace{-0.4cm}
$$\sm_{\nu \in G}X_\nu \rho_\varphi(\nu^{\!-1}) = \left(\begin{matrix} 
X_1-X_{\sigma^2}  +X_{\tau}-X_{\tau\sigma}  & X_{\sigma} - X_{\sigma^2}-X_{\tau\sigma} +X_{\tau\sigma^2}     \\
-X_{\sigma} + X_{\sigma^2}-X_{\tau} + X_{\tau\sigma^2}  & X_1-X_{\sigma}  -X_{\tau} + X_{\tau\sigma}      \end{matrix}\right), $$
ou encore, par sp\'ecialisation et en prenant le d\'eterminant~:
$${\rm Det}^{\chi_2}(\alpha) = \left \vert\begin{matrix} 
\alpha-\alpha^{\sigma^2}  +\alpha^{\tau}-\alpha^{\tau\sigma}  & \alpha^{\sigma} - \alpha^{\sigma^2}-\alpha^{\tau\sigma} +\alpha^{\tau\sigma^2}     \\
-\alpha^{\sigma} + \alpha^{\sigma^2}-\alpha^{\tau} + \alpha^{\tau\sigma^2}  & \alpha-\alpha^{\sigma}  -\alpha^{\tau} + \alpha^{\tau\sigma}      \end{matrix}\right \vert. $$

Toujours pour le caract\`ere $\chi_2$ (de degr\'e 2) et la repr\'esentation $e_{\chi_2}\,\Q[G] \simeq2\,V_\varphi$, il existe deux projecteurs orthogonaux $\pi_1, \pi_2$, de somme $e_{\chi_2} = \frac{1}{3}(2 - \sigma - \sigma^2)$ (cf.\,\S\ref{subnot}),
ce qui donne~ici~:
$$\pi_1 = \Frac{1}{3}(1-\sigma^2 + \tau - \tau\sigma)\ \ \& \ \  \pi_2 = \Frac{1}{3}(1-\sigma - \tau + \tau\sigma). $$

\subsubsection{Cas des groupes Ab\'eliens}\label{sub4}
Ce cas se d\'eduit des \'etudes pr\'ec\'edentes, mais il est possible d'\^etre plus pr\'ecis
et d'en rester au point de vue classique des caract\`eres de degr\'e 1.

\smallskip
Soit $G$ un groupe Ab\'elien fini d'ordre $n$.
On d\'esigne par $\wh G$ le groupe des caract\`eres $\varphi : G \to \C^\times$~; l'image de $\varphi$ est le groupe $\mu_d$ des racines de l'unit\'e d'ordre \'egal \`a l'ordre $d$ de~$\varphi$.
On a alors
${\rm Det}^G(X) = {\rm det} (X_{\tau\sigma^{\!-1}})_{\sigma, \tau \in G} = \prd_{\varphi \in \wh G}\, \Big(
\sm_{\nu\in G} \varphi (\nu) X_{\nu^{\!-1}} \Big)$.

Si $G$ est produit direct de groupes cycliques $G_i$, alors en \'ecrivant que 
$\wh G = \prd_i \wh G_i$, il vient
${\rm Det}^G(X) = \prd_i  \prd_{\varphi \in \wh G_i}\Big(
\sm_{\nu\in G_i} \varphi (\nu) X_{\nu^{\!-1}} \Big)$.
Il en r\'esulte que l'on peut se ramener au cas o\`u $G$ est cyclique d'ordre $n$. 
On a alors une d\'ecomposition par classes de caract\`eres de m\^eme ordre $d\div n$ (i.e., $\Q$-conjugu\'es dans 
$\Q(\mu_d)$)~:
on obtient les caract\`eres rationnels $\chi$ qui regroupent les $\varphi^t$, o\`u $\varphi$ est fix\'e d'ordre $d$ et
o\`u $t$ parcourt $(\Z/d\Z)^\times$. On peut donc \'ecrire~:
$${\rm Det}^G(X)= \prd_{\chi } \prd_{\varphi \div \chi} \Big(\sm_{\nu\in G} \varphi (\nu) X_{\nu^{\!-1}} \Big). $$

 Soit $G$ un groupe cyclique d'ordre $n \geq 1$ et soit $\chi$ un caract\`ere rationnel irr\'eductible de $G$ d'ordre $d \div n$
(par abus, l'ordre commun des $\varphi \div \chi$).
Dans ce cas les $\chi$-d\'eterminants (avec ind\'etermin\'ees et num\'eriques) sont les expressions~:
$${\rm Det}^\chi (X) := \prd_{\varphi \div \chi} \!\Big(\sm_{\nu\in G} \varphi (\nu) \, X_{\nu^{\!-1}} \Big) \, \ {\rm et} \, \ 
{\rm Det}^\chi(\alpha)  := \prd_{\varphi \div  \chi}\!\Big(\sm_{\nu\in G} \varphi (\nu) \, \alpha^{\nu^{\!-1}} \Big), \ \alpha \in Z_K. $$

\begin{lemm}\label{lemm7}
Soit $C_\chi = \Q(\mu_d)$ le corps  des valeurs des caract\`eres irr\'eductibles $\varphi \div \chi$ d'ordre $d$. 

\smallskip
(i) On a ${\rm Det}^\chi (X) = \No_{C_\chi /\Q} \Big( \sm_{\nu\in G} \varphi (\nu)\,X_{\nu^{\!-1}} \Big)
\in \Z[X]$.

\smallskip
(ii) Par sp\'ecialisation $X_\nu \mapsto \alpha^\nu$, $\alpha \in Z_K$, on obtient  ${\rm Det}^\chi(\alpha) \in \Z$, sauf si $\chi = \varphi$ 
est un caract\`ere quadratique correspondant \`a un sous-corps quadratique $\Q(\sqrt m) \subseteq K$, auquel cas 
${\rm Det}^\chi(\alpha)  = {\rm Det}'{}^\chi (\alpha)\, .\,  \sqrt m$, avec ${\rm Det}'{}^\chi (\alpha)\in \Z$.
\end{lemm}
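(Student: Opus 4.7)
Mon plan consiste \`a identifier ${\rm Det}^\chi(X)$ comme une norme $\No_{C_\chi/\Q}\bigl(\sum_{\nu \in G} \varphi(\nu) X_{\nu^{-1}}\bigr)$, puis, apr\`es sp\'ecialisation $X_\nu \mapsto \alpha^\nu$, \`a exploiter l'invariance Galoisienne sous l'action conjugu\'ee de $G$ (ab\'elien) et de $\Gamma := {\rm Gal}(C_\chi/\Q) \simeq (\Z/d\Z)^\times$.

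Pour (i), je rappellerais d'abord que les $\Q$-conjugu\'es de $\varphi$ sont pr\'ecis\'ement les $\varphi^t$, $t \in (\Z/d\Z)^\times$~: pour $s \in \Gamma$ de caract\`ere cyclotomique $t(s)$, comme $\varphi$ est un homomorphisme \`a valeurs dans $\mu_d$, on a $s(\varphi(\nu)) = \varphi(\nu)^{t(s)} = \varphi^{t(s)}(\nu)$, d'o\`u
$$s\Bigl(\sm_{\nu \in G} \varphi(\nu) X_{\nu^{-1}}\Bigr) = \sm_{\nu \in G} \varphi^{t(s)}(\nu) X_{\nu^{-1}}.$$
Le produit sur $\varphi \div \chi$ co\"incide donc avec la norme annonc\'ee, qui est \`a coefficients dans $\Q$. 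Comme les $\varphi(\nu)$ sont des racines de l'unit\'e, donc des entiers alg\'ebriques, cette norme appartient en fait \`a $\Z[X]$.

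Pour (ii), la sp\'ecialisation fournit un entier alg\'ebrique dans $K \cdot C_\chi$ (produit d'entiers alg\'ebriques). Un changement d'indice $\nu \mapsto \nu\pi^{-1}$, utilisant la commutativit\'e de $G$, donne l'identit\'e cl\'e
$$\sm_{\nu \in G} \varphi(\nu)(\alpha^\pi)^{\nu^{-1}} = \varphi(\pi)\,\sm_{\nu \in G} \varphi(\nu)\alpha^{\nu^{-1}},$$
d'o\`u ${\rm Det}^\chi(\alpha^\pi) = \varphi(\pi)^S\, {\rm Det}^\chi(\alpha)$ avec $S := \sum_{t \in (\Z/d\Z)^\times} t$. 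Pour $d > 2$, l'appariement $t \leftrightarrow d - t$ donne $S \equiv 0 \pmod d$, donc ${\rm Det}^\chi(\alpha)$ est $G$-invariante, donc dans $\Q$, donc dans $\Z$ (entier alg\'ebrique rationnel). Le cas $d = 1$ est trivial (simple trace).

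Le point le plus d\'elicat est le cas quadratique $d = 2$ (o\`u $C_\chi = \Q$). Ici ${\rm Det}^\chi(\alpha)$ est fix\'e par $H := \ker\varphi$, donc appartient au sous-corps quadratique $\Q(\sqrt m) = K^H$~; sous un $\pi \in G \setminus H$, il est chang\'e en son oppos\'e, de sorte que sa trace dans $\Q(\sqrt m)/\Q$ est nulle, ce qui force ${\rm Det}^\chi(\alpha) = c\sqrt m$ pour un certain $c \in \Q$. Il reste \`a v\'erifier que $c \in \Z$ (et non simplement $c \in \frac{1}{2}\Z$) lorsque $m \equiv 1 \pmod 4$~: tout entier de $\Q(\sqrt m) = \Z\bigl[\frac{1+\sqrt m}{2}\bigr]$ s'\'ecrit $u + v\,\frac{1+\sqrt m}{2}$ avec $u, v \in \Z$, et sa partie rationnelle $u + v/2$ est nulle si et seulement si $v = -2u$, d'o\`u $c = v/2 = -u \in \Z$. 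On pose alors ${\rm Det}'{}^\chi(\alpha) := c$.
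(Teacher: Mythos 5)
Votre d\'emonstration est correcte et suit pour l'essentiel la m\^eme voie que celle du texte : identification du produit sur les $\varphi \div \chi$ avec la norme $\No_{C_\chi/\Q}$, invariance sous $G$ obtenue via $\varphi(\pi)^S = 1$ avec $S=\sum_{t} t \equiv 0 \pmod d$ pour $d>2$ (le texte invoque de fa\c con \'equivalente que la norme d'une racine de l'unit\'e vaut $1$ d\`es que $C_\chi \ne \Q$), puis traitement s\'epar\'e du cas quadratique par annulation de la trace dans $\Q(\sqrt m)/\Q$. Vous ajoutez seulement la v\'erification, laiss\'ee implicite dans le texte, que le coefficient de $\sqrt m$ reste dans $\Z$ lorsque $m \equiv 1 \pmod 4$.
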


\begin{proof} Le premier point est \'evident puisque le produit d\'efinissant ${\rm Det}^\chi(\alpha)$ porte sur tous les 
$\Q$-conjugu\'es de $\varphi$.\,\footnote{\,A ce sujet, signalons 
que les deux op\'erations ``conjuguaison dans $C_\chi /\Q$'' et ``\'evaluation par les $G$-conjugu\'es de $\alpha$'' 
ne commutent pas, sauf s'il y a disjonction lin\'eaire de $C_\chi $ et $K$ sur $\Q$. }

\smallskip
Les sommes $\sm_{\nu \in G} \varphi (\nu)\, X_{\nu^{\!-1}}$ sont des {\it r\'esolvantes  de Hilbert} pour lesquelles 
on a les relations
$[\tau] \Big ( \sm_{\nu \in G} \varphi (\nu)\, X_{\nu^{\!-1}} \Big ) = 
\varphi (\tau)\, \sm_{\nu \in G} \varphi (\nu)\, X_{\nu^{\!-1}},\ \ \hbox{pour tout $\tau \in G$}$ 
(cf. Lem\-me~\ref{lemm2},\,\S\ref{calp})
et $\No_{C_\chi /\Q} \Big( [\tau] \Big(\sm_{\nu\in G} \varphi (\nu)\,X_{\nu^{\!-1}} \Big) \Big)
= \No_{C_\chi /\Q} \Big( \sm_{\nu\in G} \varphi (\nu)\,X_{\nu^{\!-1}} \Big)$ si $C_\chi  \ne \Q$,
 puisque la norme d'une racine de l'unit\'e est 1. D'o\`u dans ce cas par sp\'ecialisation avec $\alpha \in Z_K$~:
$$\No_{C_\chi /\Q} \Big(\sm_{\nu\in G} \varphi (\nu)\,\alpha^{\tau{\nu^{\!-1}}} \Big) = 
\No_{C_\chi /\Q}\Big(\sm_{\nu\in G} \varphi (\nu)\,\alpha^{\nu^{\!-1}} \Big) = {\rm Det}^\chi(\alpha). $$

Donc, ${\rm Det}^\chi(\alpha)$, invariant par tout $s\in {\rm Gal}(C_\chi /\Q)$ et tout 
$\tau \in {\rm Gal}(K/\Q)$, est un entier rationnel.

\smallskip
Si la norme dans $C_\chi /\Q$ est triviale (auquel cas $\chi = \varphi$ est d'ordre 1 ou 2), on a les deux 
$\chi$-d\'eterminants num\'eriques possibles suivants~:

\smallskip
(i) $\ \,{\rm Det}^1(\alpha) = \sm_{\sigma \in G} \alpha^{\sigma^{\!-1}} \in \Z$, pour $\chi=1$.

(ii) ${\rm Det}^{\chi}(\alpha) = \sm_{\sigma \in G} \chi(\sigma)\, \alpha^{\sigma{\!-1}}$, pour $\chi$ quadratique~;
alors il existe un corps quadratique $k  = \Q(\sqrt m\,)\subseteq K$ (correspondant au noyau de $\chi$) et $\tau \in G$ 
tels que $\tau {\rm Det}^{\chi}(\alpha) = -{\rm Det}^{\chi}(\alpha)$~; d'o\`u 
${\rm Det}^\chi(\alpha) = {\rm Det}'{}^\chi (\alpha)\, .\,  \sqrt m$, o\`u ${\rm Det}'{}^\chi(\alpha) \in \Z$. 
\end{proof}

Le cas d'un caract\`ere quadratique \'etant la seule exception, on convient de n\'egliger le facteur $\sqrt m$
 et d'en rester \`a la notation ${\rm Det}^{\chi}(\alpha)$ pour le rationnel correspondant.

\begin{example}\label{ex8}
{\rm  Dans le cas $n=2$ ($K = \Q(\sqrt m)$), si $\alpha, \alpha'$ sont les conjugu\'es de 
$\alpha = u + v\, \sqrt m \in Z_K$ donn\'e, alors on a le d\'eterminant num\'erique~:

\footnotesize
\smallskip
${\rm Det}^G(\alpha) = \left \vert\begin{matrix} 
\alpha& \alpha'  \\
\alpha'& \alpha 
\end{matrix}\right \vert = (\alpha +\alpha' ) (\alpha -\alpha' )$,
\normalsize

\smallskip
et par d\'efinition les deux $\chi$-d\'eterminants~:
$${\rm Det}^1(\alpha) = \alpha +\alpha'  = 2u , \ \ {\rm Det}^\chi(\alpha)  = \alpha -\alpha' = 2v\, \sqrt m . $$

Si $\alpha \notin \Q$ est par exemple tel que $\alpha+\alpha' = 0$, le $1$-d\'eterminant ${\rm Det}^1(\alpha)$ 
est  nul. Par contre,  le $\chi$-d\'eterminant
${\rm Det}^\chi(\alpha) = 2 \alpha = 2v\, \sqrt m$ n'est pas nul (puisque $\alpha-\alpha' \ne 0$) 
et on pourra \'etudier ses \'eventuelles nullit\'es modulo $p$ pour tout $p$ assez grand. 

\smallskip
De m\^eme ${\rm Det}^\chi(\alpha)$ est nul si $\alpha \in \Z$, sans que ${\rm Det}^1(\alpha)$ ne soit nul si $\alpha \ne 0$. }
\end{example}

\begin{example}\label{ex9}
{\rm  Consid\'erons le cas $n=3$ ($K$ corps cubique cyclique) et soient $\alpha, \alpha', \alpha''$ les conjugu\'es de 
$\alpha\in Z_K$~; alors on a le d\'eterminant num\'erique~:

\footnotesize
\smallskip
$\left \vert\begin{matrix} 
\alpha& \alpha'& \alpha'' \\
\alpha'& \alpha''& \alpha \\
\alpha''& \alpha& \alpha' 
\end{matrix}\right \vert \  = (\alpha +\alpha' + \alpha'') \,\No_{\Q(j)/\Q}(\alpha +j^{-1}\alpha' + j^{-2}\alpha'')= 3\,\alpha \alpha'  \alpha'' - (\alpha^3 +\alpha'{}^3 + \alpha''{}^3). $

\normalsize
\medskip
On a donc par d\'efinition les deux $\chi$-d\'eterminants~:

\smallskip
${\rm Det}^1(\alpha) \ = \alpha +\alpha' + \alpha'',   $

\smallskip
${\rm Det}^\chi(\alpha) = \No_{\Q(j)/\Q}(\alpha +j^{-1}\alpha' + j^{-2}\alpha'') =
\alpha^2 +\alpha'{}^2 + \alpha''{}^2 -\alpha\alpha' - \alpha'\alpha''  -  \alpha''\alpha . $

\smallskip
Si par exemple ${\rm Det}^1(\alpha) = 0$ avec $\alpha \notin \Z$, alors~:
$${\rm Det}^\chi(\alpha) = \No_{\Q(j)/\Q}(\alpha +j^{-1}\alpha' + j^{-2}\alpha'') = 
3\,(\alpha^2+\alpha \alpha'+ \alpha'{}^2)\ne 0. $$ }
\end{example}

\subsubsection{Crit\`ere de nullit\'e triviale des $\chi$-r\'egulateurs locaux}\label{sub5}
 Soit $\eta \in K^\times$ et soit $F$ le $\Z[G]$-module engendr\'e par $\eta$.

\begin{rema}\label{rema51}
Dans la d\'ecomposition ${\rm Det}^G(\alpha) = \prod_\chi ({\rm Det}^\chi(\alpha))^{\varphi(1)}$ et
lorsque la sp\'eciali\-sation est donn\'ee par $\alpha \equiv \alpha_p(\eta) \equiv \frac{-1}{p}{\rm log}_p(\eta)\pmod p$, 
certains $\chi$-r\'egulateurs locaux $\Delta_p^\chi(\eta) := {\rm Det}^\chi(\alpha)$ sont nuls modulo $p$ 
pour tout $p$ d\`es qu'il existe une relation  multiplicative de la forme 
$\ \prod_{\nu \in G}(\eta^{\nu^{\!-1}})^{\lambda(\nu)} = 1$, $\lambda(\nu) \in \Z$,  qui conduit \`a
$\sum_{\nu \in G} \lambda (\nu) \,\alpha^{\nu^{\!-1}} \equiv 0 \pmod p$ pour tout $p$ \'etranger \`a $\eta$.

\smallskip
Cette relation globale $U = \sum_{\nu \in G} \lambda (\nu) \,\nu^{\!-1} \ne 0$ de $\Z[G]$
 (ind\'ependante de $p$) est \`a distinguer
des relations locales $\sum_{\nu \in G} u (\nu) \,{\nu^{\!-1}} \in \Z_{(p)}[G]$ (o\`u les $u (\nu)$ d\'ependent de $p$) lorsque
$\sum_{\nu \in G}u (\nu) \,\alpha^{\nu^{\!-1}} \equiv 0 \pmod p$ pour certains $p$.
On rappelle l'aspect  ``r\'eciproque''  (cf.  Remarque \ref{global},\,\S\ref{reg}), purement conjectural.
\end{rema}

\begin{lemm} \label{nul} Si l'on a ${\rm dim}_\Q ((F \otimes \Q)^{e_\chi} )< {\rm dim}_\Q (e_\chi \Q[G]) = [C_\chi : \Q]\, \varphi(1)^2$ 
(i.e., il existe $U\in \Q[G]$ telle que $\eta^{U_\chi} = 1$, avec $U_\chi \ne 0$), alors
les $\chi$-r\'egulateurs locaux $\Delta_p^\chi(\eta) :={\rm Det}^\chi(\alpha)$ sont nuls modulo~$p$ pour tout $p$ 
 assez grand.\,\footnote{Il suffit de remarquer que toute relation globale 
$\eta^{U_\chi} = 1$ dans $F\otimes \Q$, $U_\chi \ne 0$, permet d'en d\'eduire que, pour tout $p$ assez grand, on est dans les conditions d'application des Lemmes du\,\S\ref{sub9} (crit\`ere de nullit\'e modulo $p$ des $\Delta_p^\theta(\eta)$) pour $\theta \div \chi$, \`a partir chaque fois de $\alpha \equiv \alpha_p(\eta) \pmod p$. }
\end{lemm}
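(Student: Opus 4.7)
The plan is to convert the algebraic relation implied by the hypothesis into a $p$-adic vanishing via the Frobenius factorization of the regulator, and then reduce modulo $p$.

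First, I extract the relation. The surjection $\Q[G] \twoheadrightarrow F\otimes \Q$, $U \mapsto \eta^U$, is $\Q[G]$-equivariant, so the assumption $\dim_\Q(F\otimes\Q)^{e_\chi} < \dim_\Q e_\chi\Q[G]$ produces a non-zero $U_\chi \in e_\chi \Q[G]$ with $\eta^{U_\chi} = 1$ in $F\otimes\Q$. After scaling by a positive integer $m$ to clear denominators and then by the exponent of $\mu(K)$, one may suppose $U_\chi \in e_\chi\Z[G]$ and $\eta^{U_\chi} = 1$ in $K^\times$.

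Second, I transport this to the $p$-adic side. For any prime $p$ coprime to $\eta$, $n\,{\rm Disc}(K)$, $m$, and $|\mu(K)|$, applying ${\rm log}_p$ gives $U_\chi \cdot {\rm log}_p(\eta) = {\rm log}_p(\eta^{U_\chi}) = 0$ in $K\otimes\Q_p$. Via the normal-basis isomorphism $K\otimes\Q_p \simeq \Q_p[G]$ of Galois modules (valid since $p$ is unramified in $K/\Q$), the element ${\rm log}_p(\eta)$ corresponds to some $\ell \in \Q_p[G]$, and the preceding identity becomes $U_\chi \ell = 0$ (left multiplication inside the algebra).

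Third, I exploit this in each isotypic component. Projecting to $e_\chi \Q_p[G] = \bigoplus_{\theta\div\chi} e_\theta \Q_p[G]$, the equation $U_\chi \cdot e_\chi\ell = 0$ shows that $e_\theta\ell$ has a non-trivial left annihilator in each Wedderburn factor $e_\theta\Q_p[G]$ and so is singular there; equivalently, for each absolutely irreducible $\varphi\div\chi$, $\rho_\varphi(\ell) \in {\rm End}(V_\varphi)$ has determinant~$0$. Via the Frobenius factorization of \S\ref{sub3} this yields $P^\varphi\bigl({\rm log}_p(\eta)\bigr) = 0$ for every $\varphi\div\chi$, hence
$${\rm Reg}_p^\chi(\eta) = \prd_{\varphi\div\chi} P^\varphi\Bigl(\tfrac{-1}{p}{\rm log}_p(\eta)\Bigr) = 0.$$
The congruence ${\rm Reg}_p^\chi(\eta) \equiv \Delta_p^\chi(\eta) \pmod p$, obtained from Definition~\ref{defimodif} by comparing the matrices $\bigl(\tfrac{-1}{p}{\rm log}_p(\eta^{\tau\sigma^{-1}})\bigr)$ and $\bigl(\alpha_p(\eta)^{\tau\sigma^{-1}}\bigr)$ modulo~$p$, then gives $\Delta_p^\chi(\eta) \equiv 0 \pmod p$, as required.

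The main delicate point lies in the third step: one has to verify carefully that the left-annihilation $U_\chi\cdot\ell = 0$ in $e_\chi\Q_p[G]$ truly transfers to the singularity of each $\rho_\varphi(\ell)$, and that this singularity coincides with the vanishing of the $P^\varphi$ appearing in the Frobenius factorization. This is a routine application of Schur's lemma and the Wedderburn decomposition of $\Q_p[G]$ over $\Q_p$, together with the compatibility of the normal-basis identification with the regular-representation matrix $L^\varphi(X) = \sum_\nu X_\nu\rho_\varphi(\nu^{-1})$ of \S\ref{calp}. The remaining ingredients (Galois equivariance of $\Psi$, elimination of finitely many bad primes, absorption of the normalization $\tfrac{-1}{p}$) are straightforward.
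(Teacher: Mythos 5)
Votre démonstration est correcte et repose au fond sur le même mécanisme que celui auquel renvoie la note de bas de page du papier (les Lemmes du \S\ref{sub9}, en particulier le calcul $U_{\varphi}\,.\,E_\varphi = e_\varphi\sum_\tau (U_\varphi\,.\,\alpha)^\tau\tau^{-1}$ du Lemme \ref{lemm16} et le Corollaire \ref{coro161}) : un annulateur à gauche non nul dans $e_\varphi C_\chi[G]\simeq{\rm End}(V_\varphi)$ force la singularité de $L^\varphi$, donc $P^\varphi=0$. La différence est que vous exécutez cet argument en caractéristique z\'ero, avec la relation exacte $U_\chi\cdot{\rm log}_p(\eta)=0$ issue de $\eta^{U_\chi}=1$, au lieu de la relation modulo $p$ (ou $\mathfrak p$) utilisée dans les lemmes du papier ; cela vous donne gratuitement la conclusion plus forte ${\rm Reg}_p^\chi(\eta)=0$ identiquement (et pas seulement $\Delta_p^\chi(\eta)\equiv 0\pmod p$), ce qui est d'ailleurs cohérent avec la Remarque \ref{global}\,(a) sur les $\chi$-régulateurs \emph{identiquement} nuls. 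Deux points mineurs : la non-nullité de \emph{chaque} $U_\varphi=e_\varphi U_\chi$ pour $\varphi\div\chi$ (nécessaire pour conclure sur tous les facteurs $P^\varphi$) mérite d'être justifiée par la conjugaison sous ${\rm Gal}(C_\chi/\Q)$ des composantes d'un élément rationnel, comme dans la Remarque \ref{rema511}\,(iii) ; et l'isomorphisme de base normale $K\otimes\Q_p\simeq\Q_p[G]$ n'est pas vraiment nécessaire, puisque la factorisation de Frobenius du \S\ref{calp} s'applique directement à la spécialisation $X_\nu\mapsto{\rm log}_p(\eta^\nu)$.
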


Dans ce cas, on dira que les $\Delta_p^\chi(\eta)$ sont {\it trivialement nuls modulo $p$}. Ceci entra\^ine la 
nullit\'e triviale de certains $\Delta_p^\theta(\eta)$, $\theta \div \chi$, \`a savoir ceux pour lesquels 
$U_\theta := e_\theta U \not\equiv 0 \pmod {\mathfrak p}$, pour ${\mathfrak p} \div p$ associ\'e \`a $\theta$.

\begin{rema}\label{rema511}
(i)  Si $\varphi(1) = 1$, $\Delta_p^\chi(\eta)$ est trivialement nul modulo $p$ si $\eta^{e_\chi} = 1$ (i.e.,  $U_\chi = e_\chi$),
auquel cas $\Delta_p^\theta(\eta) \equiv 0 \!\!\pmod p$ trivialement pour tout~$\theta \div \chi$.

\smallskip
(ii) Si $\varphi(1) > 1$, $\Delta_p^\chi(\eta)$ est  nul modulo $p$ s'il existe $i$, $1 \leq i \leq \varphi(1)$, 
tel que, par extension des scalaires, on ait, dans $F \otimes C_\chi$, $\eta^{\pi_i^\varphi} = 1$ pour $\varphi \div \chi$ (cf.\,\S\ref{subnot}).

\smallskip
Par exemple, pour $G = D_6$ et $\varphi =\chi= \chi_2$, les \'el\'ements $\pi_1^\varphi = \frac{1}{3}(1-\sigma^2 + \tau -\tau\sigma)$ et $\pi_2^\varphi = \frac{1}{3}(1-\sigma - \tau +\tau\sigma)$ sont
tels que $e_\chi \pi_i^\varphi = \pi_i^\varphi$, pour $i=1, 2$, $\pi_1^\varphi+\pi_2^\varphi = e_\chi$, et $\pi_1^\varphi\pi_2^\varphi = 0$ (cf. Exemple \ref{ex5},\,\S\ref{sub111}). 

\smallskip
 Ainsi on pourrait avoir la $\varphi$-relation non triviale $\eta^{U_1} :=  \eta^{1-\sigma^2 + \tau -\tau\sigma} = 1$
tandis que $\eta^{U_2} := \eta^{1-\sigma - \tau +\tau\sigma} \ne 1$ 
(i.e., ${\rm dim}_\Q (F \otimes \Q)^{e_\chi}=2$ pour ${\rm dim}_\Q (e_\chi \Q[G])=4$))~; on aurait donc
$\eta^{e_\chi .\,(U_1+U_2)}  = \eta^{3 e_\chi}  =\eta^{3 U_2} \ne 1$, or on v\'erifie que le $\chi$-r\'egulateur
$\Delta_p^\chi(\eta)$ est nul modulo $p$ pour tout $p$ assez grand en raison de la premi\`ere relation.

\smallskip
 Le fait de supposer $F$ de $\Z$-rang $n$ \'evite cet inconv\'enient. On peut toujours s'y ramener 
en modifiant convenablement $\eta$.

\smallskip
(iii) Pour $U \in \Z_{(p)}[G]$, on a $U_\chi = \sum_{\varphi \div \chi} U_\varphi$  et $U_\varphi = e_\varphi U_\chi$.
On a $U_\chi  \equiv 0 \pmod p$ si et seulement si $U_\varphi  \equiv 0 \pmod p$ pour au moins un (donc tout)
$\varphi \div \chi$ (car les $\varphi \div \chi$ sont conjugu\'es par ${\rm Gal}(C_\chi/\Q)$).
Ces congruences (mod $p$) dans les alg\`ebres de groupes signifient selon les cas $\!\!\!\pmod{p \,\Z_{(p)}[G]}$ ou
$\!\!\!\pmod{p \,Z_{\chi,(p)} [G]}$ (o\`u $Z_\chi$ est l'anneau des entiers du corps des valeurs $C_\chi$).

\smallskip
Ceci n'a pas lieu pour $U_\chi = \sum_{\theta\div \chi} U_\theta$ et $U_\theta = e_\theta U_\chi$ car 
$U_\theta \equiv 0 \pmod p$ (dans $\Z_p[G]$) signifie $U_\theta \equiv 0 \pmod {\mathfrak p}$
dans $L[G]$, seulement \'equivalent \`a $U_\varphi \equiv 0 \pmod  {\mathfrak p}$ pour tout $\varphi \div \theta$
(cf. D\'efinition \ref{defi01}\,(ii), \S\ref{calp}, et \ref{defi11}\,(iii), \S\ref{sub155}).
\end{rema}

\begin{example}\label{ex1} {\rm
a) Soit $G$ cyclique d'ordre $n$ et soit $\chi$ d'ordre $d \div n$~; alors
les \'el\'ements $\eta \in K^\times$ tels que $\eta^{e_\chi} = 1$ correspondent \`a la nullit\'e triviale modulo $p$ de
$\Delta_p^\chi(\eta) = \No_{C_\chi /\Q} \big( \sum_{\nu\in G} \varphi (\nu)\,\alpha^{\nu^{\!-1}} \big)$
pour $\alpha\equiv \alpha_p (\eta) \pmod p$.

\smallskip
Par exemple, pour $n = 3$, on a les deux idempotents rationnels~:
$$e_1= \Frac{1}{3} (1+ \sigma + \sigma^2), \ \ 
e_\chi= \Frac{1}{3} (2-\sigma- \sigma^2) . $$
Soit $\alpha \equiv \alpha_p (\eta)\pmod p$ et consid\'erons les nullit\'es triviales des $\Delta_p^\chi(\eta)$ modulo~$p$.

\smallskip
(i) Les \'el\'ements $\eta \in K^\times$ tels que $\eta^{e_1} = 1$
(i.e., de norme 1 dans $F\otimes\Q$), correspondent \`a la nullit\'e triviale de
$\Delta_p^1 (\eta) = \alpha + \alpha^\sigma+\alpha^{\sigma^2}$.

\smallskip
(ii) Les \'el\'ements $\eta \in K^\times$ tels que $\eta^{e_\chi} = 1$ ou $\No_{K/\Q}(\eta) = \eta^3$
(dans $F\otimes\Q$), donc tels que $\eta \in \Q^\times$, correspondent \`a la nullit\'e de
$\Delta_p^\chi(\eta) = \No_{\Q(j)/\Q}(\alpha +j^2\,\alpha^{\sigma^{1}} + j\,\alpha^{\sigma^{2}})$.

\medskip
b) Les trois idempotents relatifs au groupe $D_6$ sont~:
\footnotesize
\begin{eqnarray*}
e_1 &=& \Frac{1}{6} (1+ \sigma + \sigma^2 + \tau + \tau \sigma +\tau \sigma^2), \\
e_{\chi_1} &=&  \Frac{1}{6} (1+ \sigma + \sigma^2 - (\tau + \tau \sigma +\tau \sigma^2)), \\
e_{\chi_2} &=&  \Frac{2}{6} (2 - \sigma - \sigma^2).
\end{eqnarray*}

\normalsize
(i) Les  $\eta$ tels que $\eta^{e_1} = 1$ correspondent \`a la nullit\'e triviale de
$\Delta_p^1 (\eta) = {\rm Tr}_{K/\Q} (\alpha)$.

\smallskip
(ii)  Les \'el\'ements $\eta$ tels que $\eta^{e_{\chi_1} }= 1$ sont tels que $\No_{K/k}(\eta) \in\Q^\times$, o\`u $k$ est le sous-corps quadratique de $K$, et correspondent \`a la nullit\'e triviale de~:

\smallskip
$\Delta_p^{\chi_1} (\eta) =  \alpha +\alpha^{\sigma} +\alpha^{\sigma^2} -\alpha^{\tau} -\alpha^{\tau\sigma} -\alpha^{\tau\sigma^2} = (1-\tau)\, {\rm Tr}_{K/k} (\alpha)$.

\smallskip
(iii) Les \'el\'ements $\eta$ tels que $\eta^{U_{\chi_2}}=1$ pour $U_{\chi_2} \in e_{\chi_2} \Q[G]\,\Sauf \{0\}$
 (Lemme \ref{nul} pr\'ec\'edent) conduisent \`a la nullit\'e triviale de~: 

\smallskip
$\Delta_p^{\chi_2} (\eta) = \alpha^{2} +\alpha^{2}{}^{\sigma} +\alpha^{2}{}^{\sigma^2} -\alpha^{2}{}^{\tau} -\alpha^{2}{}^{\tau\sigma} -\alpha^{2}{}^{\tau\sigma^2}  - \alpha  \alpha^{\sigma} - \alpha^{\sigma}  \alpha^{\sigma^2} -\alpha^{\sigma^2}\alpha$

\hfill$ + \alpha^{\tau}  \alpha^{\tau\sigma} + \alpha^{\tau\sigma}  \alpha^{\tau\sigma^2} +\alpha^{\tau\sigma^2}  \alpha^{\tau}$. }
\end{example}

\subsection{D\'efinition et \'etude des $\theta$-r\'egulateurs locaux}\label{sub6}
Soit $\eta \in K^\times$ et soit $p$ assez grand. On a les congruences suivantes (cf.\,\S\ref{sub1})~:
$$\Frac{-1}{p}\, {\rm log}_p(\eta) \equiv  \alpha_p(\eta)\!\! \!\pmod{p} \ \ {\rm et}\ \  \alpha_p(\eta^\nu) \equiv \alpha_p(\eta)^\nu \!\!\!\pmod p \ \  {\rm pour \ tout}\   \nu \in G. $$

\subsubsection{G\'en\'eralit\'es}\label{gene1}
On fixe un nombre alg\'ebrique $\alpha \in Z_K$ d\'efini par $\alpha \equiv \alpha_p(\eta) \pmod p$.
 On obtient le d\'eterminant  \`a coefficients dans $Z_K$, uniquement d\'efini modulo $p$,
mais repr\'esent\'e dans $Z_K$~:
$$\Delta^G_p(\eta) =  {\rm Det}^G(\alpha) = {\rm det} \big( \alpha^{\tau\sigma^{\!-1}}\big)_{\sigma, \tau \in G}. $$

 Le groupe $G$ op\`ere sur $\Delta^G_p(\eta)$ par permutations des lignes ; donc $\Delta^G_p(\eta)^2$
est invariant par Galois et est donc rationnel. Si $\Delta^G_p(\eta) \notin \Q$, on retrouve l'existence d'un facteur $\sqrt m$
que l'on sait provenir de la r\'esolvante d'un caract\`ere quadratique de $G$ (cf. Lemme \ref {lemm7},\,\S\ref{sub4}) et que l'on n\'eglige.
On consid\`ere donc~:

\vspace{-0.2cm}
$$\Delta^G_p(\eta) =  \prd_\chi \ \prd_{\theta \div \chi} \ \prd_{\varphi \div \theta} P^{\varphi} 
(\ldots, \alpha^\nu,\ldots )^{\varphi(1)}
=: \prd_\chi  \Delta^\chi_p(\eta)^{\varphi(1)}  =: \prd_\chi \ \prd_{\theta \div \chi} \Delta^\theta_p(\eta)^{\varphi(1)} , $$

o\`u pour chaque $\chi$ (resp. $\theta$), $\varphi$ est un caract\`ere  irr\'eductible divisant $\chi$  (resp. $\theta$).

\begin{defi}\label{defi10}
 Pour tout $p$ assez grand et 
 pour chaque caract\`ere $\Q_p$-irr\'eductible $\theta$ de $G$, on appelle $\theta$-r\'egulateur local de $\eta$, l'entier $p$-adique d\'efini par $\Delta_p^\theta(\eta) := \prd_{\varphi \div \theta} P^{\varphi} (\ldots, \alpha^\nu,\ldots )$ pour $\alpha \equiv \alpha_p(\eta) := \frac{1}{p} (\eta^{p^{n_p}-1} -1)\pmod p$.

Pour $\theta \div \chi$ ($\chi$ fix\'e), les $\theta$-r\'egulateurs locaux corres\-pondants d\'ependent de 
la d\'ecomposition de $p$ dans $C_\chi/\Q$ et sont au nombre de $h = \frac{[C_\chi : \Q]}{f}$, o\`u $f$ est 
leur degr\'e r\'esiduel (cf. D\'efinition \ref{defi01} (ii),\,\S\ref{calp}). Ils ne sont d\'efinis que modulo $p$.
\end{defi}

\begin{rema}\label{rema110} On peut de la m\^eme fa\c con \'ecrire (pour $p$ assez grand) que le r\'egulateur normalis\'e
${\rm Reg}_p^G(\eta)$ est \'egal \`a $\prd_\chi {\rm Reg}_p^\chi(\eta)^{\varphi(1)}    = \prd_\theta {\rm Reg}_p^\theta(\eta)^{\varphi(1)}$ o\`u~:
\vspace{-0.1cm}
$${\rm Reg}_p^\theta(\eta) = \prd_{\varphi \div \theta} P^{\varphi}\big  (\ldots, \hbox{$\frac{-1}{p}$} {\rm log}_p(\eta^\nu) ,\ldots\big ). $$

\vspace{-0.1cm}
On a alors les congruences ${\rm Reg}_p^\theta(\eta) \equiv \Delta_p^\theta(\eta) \pmod p$~; ainsi on a 
$p$ divise ${\rm Reg}_p^\theta(\eta)$ si et seulement si $\Delta_p^\theta(\eta) 
\equiv 0 \pmod p$~; mais dans ce cas $p^{\varphi(1)}$ divise ${\rm Reg}_p^G(\eta)$
puisque ${\rm Reg}_p^G(\eta) = \prd_\theta {\rm Reg}_p^\theta(\eta)^{\varphi(1)}$, o\`u chaque fois $\varphi\div \theta$ 
(cf.\,\S\ref{sub3}).
\end{rema}

\begin{rema}\label{rema11} (i) Dans tous les cas, 
$\Delta_p^\chi (\eta) :=\No_{C_\chi/\Q}\big(  P^{\varphi} (\ldots, \alpha^\nu ,\ldots ) \big ) \in \Z$
($\varphi \div \chi$ fix\'e), avec la convention sur la notation $\No_{C_\chi/\Q}$, 
notamment lorsque $K$ et $C_\chi$ ne sont pas lin\'eairement disjoints. On rappelle l'exception $\chi$ quadratique.

\smallskip
De m\^eme $\Delta_p^\theta (\eta) :=\No_{\mathfrak p}\big  ( P^{\varphi}  (\ldots, \alpha^\nu ,\ldots ) \big )$,
o\`u pour ${\mathfrak p} \div p$ dans $L$ (ou $C_\chi$), ${\mathfrak p}$ associ\'e \`a $\theta$,
$\No_{\mathfrak p} $ d\'esigne la norme locale absolue
dans le compl\'et\'e de $C_\chi$ en ${\mathfrak p}$~; on retrouve $\Delta_p^\chi (\eta)$ comme produit des normes locales correspondantes.

\smallskip
 M\^emes relations en rempla\c cant $\Delta$ par ${\rm Reg}$ et $\alpha$ par $\frac{-1}{p} {\rm log}_p(\eta)$.

\medskip
(ii) Si $H = \{\nu \in G,\, \varphi(\nu) = \varphi(1) \}$ est le noyau de $\varphi \div \theta \div \chi$ (qui ne d\'epend que de~$\chi$) 
et si $K'$ est le sous-corps de $K$
fixe par $H$, on a $\Delta_p^\theta (\eta)= \Delta_p^{\theta'} (\No_{K/K'}(\eta) )$ o\`u $\theta'$ est le caract\`ere fid\`ele
issu de $\theta$. 
Quitte \`a remplacer $\eta$ par $\eta' := \No_{K/K'}(\eta)$ on peut toujours supposer que $\theta$ 
est un caract\`ere fid\`ele.
\end{rema}

\subsubsection{Cas des $\chi$-r\'egulateurs locaux, $\chi$ de degr\'e 1,  d'ordre 1 ou 2}\label{sub7}
Soit $\eta \in K^\times$ et soit $\alpha \equiv  \alpha_p(\eta) \pmod p$, $\alpha \in Z_K$.

\medskip
 (i) Si $\chi =\theta =1$, le $\theta$-r\'egulateur  correspond \`a  $\No_{K/\Q}(\eta) = a \in \Q^\times$ et est donn\'e par 
${\rm Tr}_{K/\Q}(\alpha)$, autrement dit~:
$$\Delta^1_p (\eta) \equiv \Frac{-1}{p} {\rm log}_p(a) \equiv \Frac{1}{p}(a^{p-1} - 1) \equiv q_p \pmod p \ \  
\hbox{($p$-quotient de Fermat  de $a$)}.$$

Le programme PARI correspondant est le suivant (pour $a = 659$)~:\,\footnote{\,Dans tous les programmes PARI \cite{P} propos\'es, la compatibilit\'e avec TeX oblige \`a \'ecrire les symboles {\it par}, $\&$, {\it hfill},  avec un antislash, \`a placer 
des {\it \$} et des {\it  \{  \}} pour les exposants\ldots}
\footnotesize 

\medskip
$\{$$e=659; print("e = ",e);  for(n=1, 5*10^8, p=2*n+1; $\par
$if(isprime(p)==1 \& Mod(e,p)!=0,  $\par
$qp= Mod(e,p^2)^{(p-1)}-1;  if(qp==0, print(p)) ))$$\}$
\normalsize

\medskip
Pour $p< 10^9$, on ne trouve que les solutions $p = 23, 131,2221, 9161, 65983$.

\medskip
(ii)  Si $\chi=\theta$ est quadratique et si $k= \Q(\sqrt m\,)$ est le sous-corps quadratique de $K$ fixe par le noyau de $\chi$,
 on obtient un $\theta$-r\'egulateur qui correspond au cas o\`u  $\No_{K/k}(\eta) \in k^\times\,\Sauf\Q^\times$~; 
si ${\rm Tr}_{K/k}(\alpha) =: u + v\,\sqrt m \in k$, il est donn\'e par~:
$$\Delta^\theta_p (\eta) \equiv (1-\tau) ( u + v\,\sqrt m)\equiv 2v\,\sqrt m \pmod p . $$

 Si $K$ est un corps quadratique r\'eel d'unit\'e fondamentale $\varepsilon$, en raison de la relation de
d\'ependance multiplicative $\varepsilon^{1+\sigma} =\pm 1$, les  $1$-r\'egulateurs $\Delta_p^1(\varepsilon)$
sont tous nuls.  Le $\theta$-r\'egulateur pour le caract\`ere quadratique est
$\Delta^\theta_p(\varepsilon) \equiv 2\,v\,\sqrt m \pmod p$ (calcul\'e via $\varepsilon^{p^{n_p}-1}  \equiv 1 +p\, v\,\sqrt m \pmod{p^2}$). 

\smallskip
Au moyen du programme PARI ci-dessous on calcule le $\theta$-r\'egulateur $\Delta^\theta_p(\varepsilon)$
de l'unit\'e fondamentale $\ \varepsilon = 5+2\,\sqrt 6$, pour tout $p< 10^9$ ($p \ne 2, 3$), mais le programme 
est valable pour un corps quadratique arbitraire et $\eta \in K^\times \,\Sauf \,\Q^\times$ 
de norme quelconque (en modifiant $m, aa, bb$))~:

\medskip
\footnotesize 
$\{$$m=6; aa=5; bb=2; N=aa^2-m*bb^2 ; print("m = ",m,"   ","norme = ",N);  $\par
$for (n=1,  5*10^8, p=2*n+1; if ( isprime (p)==1  \&  Mod(m*N,p)!=0, p2= p^2;$\par
$a=Mod(aa,p2); b=Mod(bb,p2); P2=x^2- Mod(m,p2); $\par
$y = Mod(a+b*x,P2); z=y^{(p-1)}; U=z^{(p+1)}-1;  $\par
$RP=  component(U,2); rp=component(RP,2)\,\footnote{\,Attention, exceptionnellement on doit prendre 
le coefficient de $x = \sqrt m$ et non le terme constant.};$\par
$if (rp==0, print (p)) ))$$\}$.
\normalsize

\smallskip
On ne trouve un $\theta$-r\'egulateur $\Delta^\theta_p(\varepsilon)$ nul modulo $p$, avec $p<10^9$, 
que pour $p= 7, 523$, ce qui constitue une seconde observation sur la raret\'e du ph\'enom\`ene.

\medskip
Soit $\eta  = 7 + 2\,\sqrt 6$ de norme $25$. Le $\Z$-rang du groupe $F$ engendr\'e par $\eta$ est ici \'egal \`a 2 (pas de nullit\'es triviales). On v\'erifie que les $p$-quotients de Fermat $\Delta^1_p(\eta)$ de $25$ 
sont tous non nuls modulo $p$ dans l'intervalle test\'e.
Les solutions $p$ obtenues pour $\Delta^\theta_p(\eta) \equiv 0 \pmod p$, $\theta \ne 1$, sont $p=11, 37,  163, 4219$.

\smallskip
Pour $\eta  = 16 + \,\sqrt {123}$, de norme $133$, on trouve pour $\theta\ne 1$  les deux solutions $p = 5, 751$.

\smallskip
Pour  $\eta=1+5 \sqrt{-1}$ de norme $26$, on trouve pour $\theta\ne 1$ les deux solutions $p = 73, 12021953$.

Pour le nombre d'or $\Frac{1+ \sqrt 5}{2}$ on ne trouve aucune solution dans l'intervalle test\'e.

\vspace{-0.3cm}
\section{Existence de relations $\F_p$-lin\'eaires entre les conjugu\'es de $\alpha$}

Soit $\eta \in K^\times$ fix\'e et soit $p$ assez grand. Soit $\alpha_p(\eta) := \frac{1}{p}\big( \eta^{p^{n_p} - 1}-1 \big)\in Z_{K,(p)}$~; modulo $p$, on pourra toujours repr\'esenter $\alpha_p(\eta)$ dans $Z_K$. 
On s'int\'eresse \`a \'etablir la
relation entre la nullit\'e modulo $p$ de certains $\Delta_p^\theta(\eta)$ et l'existence de relations 
$\F_p$-lin\'eaires entre les conjugu\'es de $\alpha_p(\eta)$.

\smallskip
 On suppose implicitement que le $\Z$-rang de $F$ est \'egal \`a $n$.

\subsection{$\F_p$-ind\'ependance}\label{sub8}
Soit $\alpha \in K$ quelconque (donc $\alpha \in Z_{K,(p)}$ pour tout $p$ assez grand).
Nous dirons que les $\alpha^\nu$ sont $\F_p$-ind\'ependants si,  pour toute famille de coefficients $u(\nu) \in \Z_{(p)}$, la relation $\sm_{\nu\in G} u(\nu) \, \alpha^{\nu^{\!-1}} \equiv 0 \pmod p$, dans l'anneau $Z_{K,(p)}$, implique $u(\nu) \equiv 0 \pmod p$ pour tout $\nu \in G$. On dira aussi (cf. D\'efinition \ref{defi11},\,\S\ref{sub155}) que toute relation $U = \sm_{\nu\in G} u(\nu) {\nu^{\!-1}}$, associ\'ee \`a $\alpha$, est nulle modulo $p\,\Z_{(p)}[G]$. 
Ceci est ind\'ependant de la classe de $\alpha$ modulo $p$.

\smallskip
On a le r\'esultat \'el\'ementaire suivant o\`u l'on rappelle que ${\rm Det}^G (\alpha)= 
{\rm det}\big(\alpha^{\tau \sigma^{\!-1}} \big)_{\sigma, \tau}$~:

\begin{prop}\label{prop13} On suppose $p$ assez grand afin que $\alpha \in Z_{K,(p)}$.

\smallskip
  (i)  Les $\alpha^\nu$ sont $\F_p$-ind\'ependants si et seulement si $\alpha$ est une $\Z_{(p)}$-base normale de $Z_{K,(p)}$.

\smallskip
(ii) Les $\alpha^\nu$ sont $\F_p$-ind\'ependants si et seulement si ${\rm Det}^G(\alpha) $ est \'etranger \`a $p$.
\end{prop}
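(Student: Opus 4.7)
Le plan est de d\'eduire les deux assertions de la libert\'e du $\Z_{(p)}$-module $Z_{K,(p)}$ et de la relation classique entre le carr\'e du d\'eterminant de Frobenius ${\rm Det}^G(\alpha)$ et le discriminant de la famille des conjugu\'es de $\alpha$.

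Pour (i), je consid\'ererais le morphisme de $\Z_{(p)}$-modules libres de m\^eme rang $n$
\[
\phi : \Z_{(p)}[G] \too Z_{K,(p)}, \qquad \sm_{\nu \in G} u(\nu)\, \nu^{\!-1} \longmapsto \sm_{\nu \in G} u(\nu)\, \alpha^{\nu^{\!-1}}.
\]
Par d\'efinition m\^eme, la $\F_p$-ind\'ependance des $\alpha^{\nu^{\!-1}}$ \'equivaut \`a l'injectivit\'e de $\phi$ modulo $p$. Comme $\Z_{(p)}$ est local et que source et but sont libres de m\^eme rang $n$, cette injectivit\'e \'equivaut \`a la bijectivit\'e de $\phi$ (lemme de Nakayama, ou simple argument de dimension sur $\F_p$), i.e., au fait que $(\alpha^\nu)_{\nu \in G}$ soit une $\Z_{(p)}$-base de $Z_{K,(p)}$, ce qui est pr\'ecis\'ement la d\'efinition de $\alpha$ comme g\'en\'erateur d'une base normale en $p$.

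Pour (ii), je relierais ${\rm Det}^G(\alpha)$ au discriminant des conjugu\'es. De l'identit\'e ${\rm Tr}_{K/\Q}(\alpha^\nu \alpha^\mu) = \sm_{\tau \in G} \alpha^{\tau\nu}\, \alpha^{\tau\mu}$, on tire que la matrice de Gram $\big({\rm Tr}_{K/\Q}(\alpha^\nu \alpha^\mu)\big)_{\nu,\mu}$ s'\'ecrit $N^t\, N$ avec $N = (\alpha^{\tau\nu})_{\tau,\nu}$, matrice qui ne diff\`ere de $(\alpha^{\tau\sigma^{\!-1}})_{\sigma,\tau}$ que par la permutation d'indices $\sigma \mapsto \sigma^{\!-1}$ et a donc m\^eme d\'eterminant au signe pr\`es. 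Par cons\'equent,
\[
{\rm Disc}(\alpha^\nu : \nu \in G) = \pm\,\big({\rm Det}^G(\alpha)\big)^2.
\]
D'autre part, si $\alpha^\nu = \sm_i c_{i,\nu}\, e_i$ dans une $\Z_{(p)}$-base $(e_i)$ fix\'ee de $Z_{K,(p)}$, alors ${\rm Disc}(\alpha^\nu : \nu\in G) = (\det(c_{i,\nu}))^2\cdot {\rm Disc}(K)$, o\`u ${\rm Disc}(K)$ est inversible dans $\Z_{(p)}$ puisque $p$ est non ramifi\'e dans $K/\Q$. Ainsi $\det(c_{i,\nu})$ est inversible modulo $p$ --- ce qui, par (i), caract\'erise la $\F_p$-ind\'ependance des $\alpha^\nu$ --- si et seulement si ${\rm Det}^G(\alpha)$ est \'etranger \`a $p$ (vu dans $Z_K$).

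Il n'y a pas d'obstacle substantiel~: l'argument combine le lemme de Nakayama et la relation classique discriminant $=$ (d\'eterminant)$^2$, l'ingr\'edient essentiel \'etant la non-ramification de $p$ dans $K/\Q$, qui assure ${\rm Disc}(K) \in \Z_{(p)}^\times$ et permet ainsi de lire la $\F_p$-ind\'ependance directement sur ${\rm Det}^G(\alpha)$.
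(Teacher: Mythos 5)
Votre d\'emonstration est correcte et repose sur les m\^emes ingr\'edients essentiels que celle de l'article (caract\'erisation par la base normale, identit\'e discriminant $=$ carr\'e du d\'eterminant de groupe, non-ramification de $p$), mais l'organisation diff\`ere sur deux points. Pour (i), l'article proc\`ede de fa\c con plus explicite~: il montre d'abord que la $\F_p$-ind\'ependance entra\^ine l'ind\'ependance $\Q$-lin\'eaire des conjugu\'es (en \'ecrivant une relation enti\`ere de p.g.c.d. 1 et en la r\'eduisant modulo $p$), puis que les d\'enominateurs $d$ apparaissant dans l'\'ecriture $d\,\beta = \sum_\nu r(\nu)\,\alpha^{\nu^{-1}}$ sont \'etrangers \`a $p$~; votre argument via le morphisme $\phi : \Z_{(p)}[G] \to Z_{K,(p)}$ et le lemme de Nakayama est plus structurel et court-circuite ces deux \'etapes. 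Pour (ii), l'article utilise le m\^eme fait cl\'e pour le sens direct (il affirme que le discriminant de la base normale est le carr\'e du d\'eterminant de Frobenius, sans expliciter la factorisation de Gram $N^t N$ que vous d\'etaillez, ce qui est un petit plus de votre r\'edaction), mais traite la r\'eciproque s\'epar\'ement~: une relation non triviale entre les conjugu\'es fournit une d\'ependance des lignes de la matrice $(\alpha^{\tau\sigma^{-1}})$, donc l'annulation du d\'eterminant modulo $p$. Vous obtenez les deux sens d'un coup en combinant l'identit\'e de discriminant avec (i), ce qui est l\'eg\`erement plus \'economique. Seule impr\'ecision, sans cons\'equence~: dans votre formule ${\rm Disc}(\alpha^\nu) = \det(c_{i,\nu})^2\,{\rm Disc}(K)$, le second facteur est en fait le discriminant de la base $(e_i)$ choisie, \'egal \`a ${\rm Disc}(K)$ au carr\'e d'une unit\'e de $\Z_{(p)}$ pr\`es, ce qui suffit puisque seul importe le fait que $p$ ne le divise pas.
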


\begin{proof}
Si $\alpha$ est une $\Z_{(p)}$-base normale de $Z_{K,(p)}$, toute relation de la forme $\sum_{\nu\in G} u(\nu) \, \alpha^{\nu^{\!-1}} \equiv 0 \pmod p$, $u(\nu) \in \Z_{(p)}$, conduit \`a $u(\nu) \equiv 0 \pmod p$ pour tout $\nu \in G$.

\smallskip
 Supposons alors que les $\alpha^\nu$ soient $\F_p$-ind\'ependants et  qu'il existe une relation non triviale de d\'ependance
$\Q$-lin\'eaire entre les conjugu\'es de $\alpha$~; il en r\'esulte une relation de la forme 
$\sum_{\nu \in G}{r(\nu)}\alpha^{\nu^{\!-1}}\!\! = 0$ avec des entiers $r(\nu)$ non tous nuls, tels~que 

${\rm p.g.c.d. }(r(\nu)) = 1$~;
d'o\`u $r(\nu) \equiv 0 \pmod p$ pour tout $\nu \in G$ (absurde). 
Par cons\'equent $\alpha$ est d\'ej\`a une $\Q$-base normale de $K$. 

Si $\beta \in Z_{K,(p)}$, il existe des $r(\nu) \in \Z$ et un entier rationnel $d$ \'etranger \`a  ${\rm p.g.c.d. }(r(\nu))$ 
tels que $d\, \beta = \sum_{\nu \in G}{r(\nu)}\alpha^{\nu^{\!-1}}$. Il est clair que $p\notdiv d$ sinon tous les $r(\nu)$ 
sont divisibles par $p$. Ainsi $\alpha$ est une $\Z_{(p)}$-base normale de $Z_{K,(p)}$.

(ii) Supposons que les $\alpha^\nu$ soient $\F_p$-ind\'ependants~; comme $\alpha = \frac{1}{d} \beta$,
$\beta \in Z_K$, $d\in \Z$, $p\notdiv d$, on peut se ramener au cas d'un $\alpha$ entier.
Comme $p$ est assez grand, il ne divise pas le discriminant de $K/\Q$, et le discriminant de la $\Z_{(p)}$-base normale
$\alpha$, de  $Z_{K,(p)}$, est \'etranger \`a $p$ (en effet, si $Z_K$ est l'anneau des entiers de $K$, le conducteur 
${\mathfrak f} \in \Z$ tel que ${\mathfrak f} \, Z_K \subseteq \bigoplus_\nu \Z\,\alpha^\nu$ est non divisible par $p$ et les deux discriminants co\"incident \`a une unit\'e $p$-adique pr\`es).
Or le discriminant de la base normale $\alpha$ est le carr\'e du d\'eterminant de Frobenius ${\rm Det}^G(\alpha)  =
 {\rm det}\big(\alpha^{\tau \sigma^{\!-1}} \big)_{\sigma, \tau \in G}$.

Supposons ${\rm Det}^G(\alpha)$ \'etranger \`a $p$, et supposons qu'il existe des $\lambda(\sigma) \in \Z_{(p)}$, non tous divisibles par $p$, tels que $sum_{\sigma \in G} \lambda(\sigma)\, \alpha^{\sigma^{\!-1}}  \equiv 0 \pmod p$. Alors, par toutes les conjugaisons par $\tau \in G$, on obtient une relation $\Z_{(p)}$-lin\'eaire sur les lignes de la forme
$\sum_{\sigma \in G} \lambda(\sigma) (\ldots, \alpha^{\tau \sigma^{\!-1}} , \ldots)_\tau \equiv (\ldots, 0 , \ldots)_\tau \pmod p$,
d'o\`u ${\rm Det}^G(\alpha) \equiv 0 \pmod p$ (absurde).
\end{proof}

\begin{coro}\label{coro14} Soit $\eta \in K^\times$ et soit $\alpha = \alpha_p(\eta)$.
Si pour $p$ assez grand l'un au moins des $\theta$-r\'egulateurs locaux $\Delta_p^\theta(\eta)$
est nul modulo $p$, alors  les $\alpha^\nu$ ne sont pas $\F_p$-ind\'ependants et il existe au moins une relation $\F_p$-lin\'eaire de la forme $\sm_{\nu\in G} u(\nu) \, \alpha^{\nu^{\!-1}} \equiv 0 \pmod p$,  $u(\nu)  \in \Z_{(p)}$ non tous divisibles par $p$.
\end{coro}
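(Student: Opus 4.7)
Le plan consiste \`a ramener cet \'enonc\'e \`a la Proposition \ref{prop13}\,(ii) via la factorisation du d\'eterminant de Frobenius en produit de $\theta$-r\'egulateurs locaux. Je commencerais par rappeler la formule \'etablie au\,\S\ref{gene1}
$$\Delta_p^G(\eta) = {\rm Det}^G(\alpha) = \prd_\chi \prd_{\theta \div \chi} \Delta_p^\theta(\eta)^{\varphi(1)},$$
valable pour $\alpha \equiv \alpha_p(\eta) \pmod p$ (on n\'eglige, s'il y a lieu, le facteur $\sqrt m$ provenant d'un \'eventuel caract\`ere quadratique, selon le Lemme \ref{lemm7}). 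Chaque $\Delta_p^\theta(\eta)$ \'etant d\'efini comme un \'el\'ement de $\Z_p$ (modulo $p$) via la d\'ecomposition de $p$ dans $C_\chi/\Q$, la nullit\'e modulo $p$ d'un seul facteur $\Delta_p^\theta(\eta)$ suffit \`a assurer $\Delta_p^G(\eta) \equiv 0 \pmod p$.

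Il ne reste alors qu'\`a invoquer la Proposition \ref{prop13}\,(ii), qui affirme que la $\F_p$-ind\'ependance des conjugu\'es $\alpha^\nu$ \'equivaut au fait que ${\rm Det}^G(\alpha)$ soit \'etranger \`a $p$. La contrapos\'ee, combin\'ee \`a l'\'etape pr\'ec\'edente, livre imm\'ediatement l'existence de coefficients $u(\nu) \in \Z_{(p)}$, non tous divisibles par $p$, v\'erifiant $\sm_{\nu\in G} u(\nu)\, \alpha^{\nu^{\!-1}} \equiv 0 \pmod p$, ce qui est la conclusion voulue.

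La preuve est donc imm\'ediate une fois admise la Proposition \ref{prop13}. Le seul point conceptuellement d\'elicat concerne l'interpr\'etation de la congruence $\Delta_p^\theta(\eta) \equiv 0 \pmod p$~: a priori $\Delta_p^\theta(\eta)$ est une norme locale \`a valeurs dans un compl\'et\'e ${\mathfrak p}$-adique du corps des valeurs $C_\chi$, mais $p$ \'etant non ramifi\'e dans $C_\chi$, l'id\'eal ${\mathfrak p}\,\Z_p$ co\"incide avec $p\,\Z_p$, ce qui l\`eve toute ambigu\"it\'e sur le passage \`a la divisibilit\'e par $p$ dans $\Z_p$, puis dans $\Z$ au niveau du produit $\Delta_p^G(\eta)$. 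Notons enfin que ce corollaire ne pr\'ecise pas la \emph{nature} (en termes du caract\`ere $\theta$ particulier responsable) de la relation $\F_p$-lin\'eaire obtenue~; le raffinement attribuant \`a chaque $\theta$ sa propre relation sera l'objet du \S\ref{sub9}, o\`u l'analyse par composantes isotypiques devra \^etre men\'ee.
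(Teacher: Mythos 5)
Votre d\'emonstration est correcte et suit exactement la voie que le papier laisse implicite~: la factorisation $\Delta_p^G(\eta)=\prod_\theta \Delta_p^\theta(\eta)^{\varphi(1)}$ du \S\ref{gene1} entra\^ine $\Delta_p^G(\eta)\equiv 0 \pmod p$ d\`es qu'un facteur s'annule, et la Proposition \ref{prop13}\,(ii) (par contrapos\'ee) fournit la relation $\F_p$-lin\'eaire non triviale. Vos remarques compl\'ementaires sur la non-ramification de $p$ dans $C_\chi$ et sur le facteur $\sqrt m$ sont pertinentes mais non essentielles~; rien \`a redire.
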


Nous allons pr\'eciser les propri\'et\'es des $\theta$-composantes de $U = \sum_{\nu\in G} u(\nu) \,{\nu^{\!-1}}$, dite relation associ\'ee, et faire le lien avec la nullit\'e ou non des $\Delta_p^\theta(\eta)$ modulo $p$.

\subsection{Crit\`ere de nullit\'e modulo $p$ des $\Delta_p^\theta(\eta)$}\label{sub9}
Soit $C_\chi$ le corps des valeurs des caract\`eres absolument irr\'eductibles $\varphi \div \chi$
de $G$, pour un caract\`ere rationnel irr\'eductible $\chi$. On suppose pour simplifier que $K \cap C_\chi = \Q$. 
On notera $Z_{\chi,(p)} := Z_{C_\chi,(p)}$ l'anneau des valeurs, sur $\Z_{(p)}$, des $\varphi\div \theta \div \chi$.

\subsubsection{Principaux lemmes}\label{sub155}
On d\'esigne par $e_\varphi = \frac{\varphi(1)}{ n } \sum_{\sigma \in G} \varphi(\sigma^{\!-1})\,\sigma$ 
les idempotents centraux orthogonaux de $C_\chi [G]$, et par ${\rm End}_{C_\chi}(V_\varphi) \simeq e_\varphi \,C_\chi [G]$ les anneaux d'endomorphismes des repr\'esentations irr\'eductibles $V_\varphi$ correspondantes.

\smallskip
On a, par sommes de conjugu\'es convenables de $\varphi$,  les idempotents $p$-adiques et rationnels
$e_\theta = \frac{\varphi(1)}{ n } \sum_{\sigma \in G} \theta(\sigma^{\!-1})\,\sigma\in \Z_{(p)}[G]$ et $e_\chi = \frac{\varphi(1)}{ n } \sum_{\sigma \in G} \chi (\sigma^{\!-1})\,\sigma\in \Z_{(p)}[G]$ (cf. D\'efinition \ref{defi01},\,\S\ref{calp} utilisant le 
corps de d\'ecomposition $L$ de $p$ dans $C_\chi/\Q$ et $D = {\rm Gal}(C_\chi/L)$). 

\begin{defi}\label{defi11} (i) Si $\sum_{\nu \in G} u(\nu)\,\alpha^{\nu^{\!-1}} \equiv 0 \pmod p$, $u(\nu) \in \Z_{(p)}$
pour tout $\nu \in G$, on appelle {\it relation associ\'ee} l'\'el\'ement $U = \sum_{\nu \in G} u(\nu)\,{\nu^{\!-1}} \in \Z_{(p)}[G]$, et on d\'efinit
 les $\varphi$-relations $U_\varphi := e_\varphi .\,U\in Z_{\chi,(p)}[G]$, les $\theta$-relations $U_\theta := e_\theta .\,U\in \Z_p[G]$, 
ou encore les $\chi$-relations $U_\chi := e_\chi .\,U\in \Z_{(p)}[G]$.

\smallskip
Quel que soit l'anneau $\mathcal Z$ des coefficients, les \'ecritures ci-dessus ne sont d\'efinies que modulo~$p\,\mathcal Z$
(en raison de la d\'efinition de $\alpha \equiv \alpha_p(\eta) \pmod p$).

\smallskip
(ii) On d\'esigne par $\mathcal L$ le $G$-module des relations associ\'ees $U \in \Z_{(p)} [G]$ relatives \`a~$\alpha$ et $p$.
Par abus on peut aussi voir $\mathcal L$ dans $\F_p [G]$.

\smallskip
 Vu dans $\F_p [G]$, on a $\mathcal L = \{0\}$ si et seulement si
les $\alpha^\nu$ sont $\F_p$-ind\'ependants (cf.\,\S\ref{sub8}) et on a ${\mathcal L} = \F_p [G]$ si et seulement si 
$\alpha \equiv 0 \pmod p$.

\smallskip
(iii) Pour tout caract\`ere $p$-adique $\theta$, on d\'esigne par ${\mathcal L}^\theta \simeq \delta \, V_\theta$ la $\theta$-composante $e_\theta\,{\mathcal L}$, o\`u $V_\theta$ (de $\F_p$-dimension $f \varphi(1)$) est la repr\'esentation irr\'eductible de 
caract\`ere~$\theta$~; on a $0\leq \delta \leq \varphi(1)$. Soit ${\mathfrak p}\div p$ l'id\'eal premier de $C_\chi$ 
(ou $L$) associ\'e \`a $\theta$. On rappelle que $\theta(\nu)=\sum_{s\in D} \varphi^s (\nu) \in  Z_{L,(p)}$ 
(pour $\varphi \div \theta$) est d\'efini via $\theta(\nu) \equiv r_{\mathfrak p}  (\nu) \pmod {\mathfrak p}$, 
$r_{\mathfrak p}  (\nu) \in \Z$~; donc  si $U\in \Z_{(p)}[G]$, $U_\theta\in Z_{L,(p)}[G]$ est congrue modulo ${\mathfrak p}$
\`a un \'el\'ement de $\Z_{(p)}[G]$.
On verra donc $U_\theta \!\!\pmod p$ dans $\Z_{(p)}[G]$ ou dans $Z_{L,(p)}[G] \!\!\pmod {\mathfrak p}$ selon le contexte.
\end{defi}

Soit $U \in \Z_{(p)}[G]$ une relation~; alors on obtient $U_\varphi  = \sum_{\nu \in G} u_\varphi(\nu)\,\nu^{\!-1} \in  Z_{\chi,(p)}[G]$, avec $u_\varphi(\nu) = \frac{\varphi(1)}{ n }  \sum_{\tau\in G} \varphi(\tau^{\!-1}) u(\nu \tau)$.
On rappelle que l'on a $U_\chi := e_\chi U \equiv 0 \pmod p$ si et seulement si $U_\varphi  \equiv 0 \pmod p$ 
pour au moins un (donc tout) $\varphi \div \chi$. 
Par contre, $U_\theta \equiv 0 \pmod p$ dans $\Z_p$ (de fait $U_\theta \in {\mathfrak p}   Z_{L,(p)}$) si et seulement si $U_\varphi  \equiv 0 \pmod {\mathfrak p} $ pour un (donc tout) $\varphi \div \theta$. Donc $U_\theta\equiv 0 \pmod {\mathfrak p}$ n'entra\^ine pas $U_\chi \equiv 0 \pmod p$, car on ne peut conjuguer que par $D$.

\begin{lemm}\label{lemm15} Si $\sm_{\nu\in G} u(\nu)  \alpha^{\nu^{\!-1}} \!\!\! \equiv 0 \!\!\pmod p$, $u(\nu)\!\in \! \Z_{(p)}$,
et si $U = \!\sm_{\nu\in G} u(\nu)\,\nu^{\!-1}$, alors $U_\varphi\,.\, \alpha := \sm_{\nu\in G} u_\varphi(\nu)\,\alpha^{\nu^{\!-1}} 
\equiv 0 \pmod p$ pour tout caract\`ere irr\'eductible $\varphi$. 
\end{lemm}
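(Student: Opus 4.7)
The plan is to deduce the lemma from a single application of Galois equivariance to the hypothesis, combined with the character-weighted averaging encoded by the idempotent $e_\varphi$. The key manipulation I would carry out first is to rewrite $U_\varphi\cdot\alpha$ using the explicit formula $u_\varphi(\mu)=\Frac{\varphi(1)}{n}\sm_{\tau\in G}\varphi(\tau^{\!-1})u(\mu\tau)$ stated in the paragraph preceding the lemma; after the substitution $\nu=\mu\tau$ (so $\mu^{\!-1}=\tau\nu^{\!-1}$), the double sum rearranges into
$$U_\varphi\cdot\alpha \ = \ \Frac{\varphi(1)}{n}\sm_{\tau\in G}\varphi(\tau^{\!-1})\,\Big(\sm_{\nu\in G} u(\nu)\,\alpha^{\tau\nu^{\!-1}}\Big),$$
which exhibits $U_\varphi\cdot\alpha$ as a $C_\chi$-linear combination of the $G$-conjugates of the left-hand side of the hypothesis.

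The next step is to show that each inner bracketed sum vanishes modulo $p$. Since $p$ is unramified in $K/\Q$, the ideal $p\,Z_{K,(p)}$ is stable under $G$, and the coefficients $u(\nu)\in\Z_{(p)}$ are fixed by every $\tau\in G$; hence applying $\tau$ to the hypothesis $\sm_{\nu\in G}u(\nu)\alpha^{\nu^{\!-1}}\equiv0\!\pmod p$ immediately yields the conjugated relation $\sm_{\nu\in G}u(\nu)\alpha^{\tau\nu^{\!-1}}\equiv0\!\pmod p$ for every $\tau\in G$. Combining these congruences with the weights $\varphi(\tau^{\!-1})\in Z_{\chi,(p)}$ (together with the unit scalar $\Frac{\varphi(1)}{n}$, which is $p$-adically invertible because $p\notdiv n$) through the identity above then delivers $U_\varphi\cdot\alpha\equiv0\!\pmod p$.

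The only subtle point, and the one I would spend most care on, is not arithmetic but notational: since $u_\varphi(\nu)$ a priori lives in $Z_{\chi,(p)}$ rather than in $\Z_{(p)}$, the congruence $U_\varphi\cdot\alpha\equiv0\!\pmod p$ must be interpreted inside the compositum $Z_{\chi,(p)}\cdot Z_{K,(p)}$. This requires $p$ to be unramified in $C_\chi/\Q$, which holds once $p\notdiv g$, together with the choice of a prime ${\mathfrak p}\div p$ of $C_\chi$ associated to the character $\theta\div\chi$ at hand, following the convention fixed in Definition \ref{defi11}\,(iii),\,\S\ref{sub155}. Once this bookkeeping is in place, the whole proof reduces to the one-line Galois equivariance described above plus $Z_{\chi,(p)}$-linearity of reduction modulo ${\mathfrak p}$.
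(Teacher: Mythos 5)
Your proof is correct and follows exactly the paper's argument: rewrite $U_\varphi\cdot\alpha$ via the formula for $u_\varphi$ as $\Frac{\varphi(1)}{n}\sm_{\tau\in G}\varphi(\tau^{\!-1})\big(\sm_{\sigma\in G}u(\sigma)\,\alpha^{\tau\sigma^{\!-1}}\big)$ and observe that each inner sum is the $\tau$-conjugate of the hypothesis, hence $\equiv 0\pmod p$. The closing remark about interpreting the congruence in $Z_{\chi,(p)}$ is a reasonable (and harmless) precision that the paper leaves implicit.
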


\begin{proof} On a 
$U_\varphi\,.\, \alpha  = \frac{\varphi(1)}{ n } \sum_{\tau\in G} \,\varphi(\tau^{\!-1}) \big( \sum_{\sigma\in G} u(\sigma)  
\alpha^{\tau\sigma^{\!-1}} \big) \!\! \equiv\!\! 0 \pmod p$, par conjugaisons par $\tau$ de $\sum_{\sigma\in G} u(\sigma)  \alpha^{\sigma^{\!-1}} \equiv 0 \pmod p$.
\end{proof}

\begin{lemm}\label{lemm16} Soient  $U \in \mathcal L$ et $\varphi$ tels que $U_\varphi \not\equiv 0 \!\pmod {\mathfrak p}$.
Alors l'endomor\-phisme $E_\varphi :=  e_\varphi  \sm_{\nu\in G}  \alpha^\nu \nu^{\!-1}\!
\in {\rm End}_{KC_\chi} \!(V_\varphi)$ est non inversible modulo ${\mathfrak p}$.
\end{lemm}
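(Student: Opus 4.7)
L'id\'ee est de traduire la relation scalaire $\sm_{\nu\in G} u(\nu)\,\alpha^{\nu^{\!-1}} \equiv 0 \pmod p$ en une identit\'e multiplicative dans l'alg\`ebre de groupe $Z_{K,(p)}[G]$. Posons $L := \sm_{\nu\in G} \alpha^\nu\, \nu^{\!-1} \in Z_{K,(p)}[G]$, de sorte que $E_\varphi = e_\varphi L$ par d\'efinition. Je commencerais par observer que, puisque les coefficients $u(\nu)$ sont rationnels et invariants par Galois, et puisque $K/\Q$ est Galoisienne, on peut appliquer chaque automorphisme $k \in G$ \`a la relation pour obtenir, pour tout $k\in G$~:
$$\sm_{\nu\in G} u(\nu)\,\alpha^{k^{\!-1} \nu^{\!-1}} \equiv 0 \pmod p. $$

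Ensuite je calculerais le produit $L\cdot U$ dans l'alg\`ebre de groupe~: regroupant en $k = \nu^{\!-1}\mu^{\!-1}$, le coefficient de $k \in G$ dans $L\cdot U$ vaut $\sm_{\mu\in G} u(\mu)\,\alpha^{k^{\!-1}\mu^{\!-1}}$, qui est pr\'ecis\'ement le conjugu\'e par $k^{\!-1}$ de la relation initiale. D'apr\`es l'\'etape pr\'ec\'edente, chaque coefficient est donc divisible par $p$, soit $L\cdot U \equiv 0 \pmod p$ dans $Z_{K,(p)}[G]$.

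Enfin, comme l'idempotent $e_\varphi$ est central dans $KC_\chi[G]$ et satisfait $e_\varphi^2 = e_\varphi$, on a~:
$$E_\varphi \cdot U_\varphi = (e_\varphi L)(e_\varphi U) = e_\varphi\, LU \equiv 0 \pmod {\mathfrak p}$$
dans l'anneau $e_\varphi\, Z_{KC_\chi, ({\mathfrak p})}[G] \simeq {\rm End}_{KC_\chi} (V_\varphi)$. Par hypoth\`ese $U_\varphi \not\equiv 0 \pmod {\mathfrak p}$, donc $U_\varphi$ est non nul modulo ${\mathfrak p}$ dans cette alg\`ebre de matrices~; la relation $E_\varphi\, U_\varphi \equiv 0 \pmod {\mathfrak p}$ montre que $E_\varphi \!\!\pmod {\mathfrak p}$ est un diviseur de z\'ero dans ${\rm End}_{KC_\chi} (V_\varphi) \otimes \F_{\mathfrak p}$, et donc n'est pas inversible modulo ${\mathfrak p}$.

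Le point principal \`a surveiller est la v\'erification que \emph{tous} les coefficients de $L\cdot U$ sont divisibles par $p$ (et pas seulement celui de l'identit\'e), ce qui repose de fa\c con essentielle sur la $G$-invariance des coefficients $u(\nu) \in \Z_{(p)}$ et sur le fait que $K/\Q$ est Galoisienne~; une fois cette \'etape acquise, le passage \`a la $\varphi$-composante et la conclusion sur la non-inversibilit\'e sont formels.
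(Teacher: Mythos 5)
Votre d\'emarche est, pour l'essentiel, celle du texte~: on multiplie dans l'alg\`ebre de groupe l'\'el\'ement $L=\sum_{\nu\in G}\alpha^\nu\nu^{-1}$ par la relation $U$, on constate que tous les coefficients du produit sont des conjugu\'es Galoisiens de la relation initiale, donc divisibles par $p$, puis on projette par l'idempotent central $e_\varphi$ et on conclut que $E_\varphi$ est un diviseur de z\'ero dans ${\rm End}_{KC_\chi}(V_\varphi)$ modulo ${\mathfrak p}$, donc non inversible. C'est exactement le calcul de la preuve du texte, qui \'ecrit $U_\varphi\,.\,E_\varphi\equiv 0\pmod p$ en s'appuyant sur le Lemme \ref{lemm15}.

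Une r\'eserve cependant, qui porte pr\'ecis\'ement sur le point que vous signalez vous-m\^eme comme ``\`a surveiller''~: l'ordre du produit. Avec la convention $(\eta^\sigma)^\tau=\eta^{\tau\sigma}$, le coefficient de $k$ dans $L\cdot U$ est $\sum_{\mu}u(\mu)\,\alpha^{\mu^{-1}k^{-1}}=\sum_{\mu}u(\mu)\,(\alpha^{k^{-1}})^{\mu^{-1}}$, c'est-\`a-dire la relation \'evalu\'ee sur le conjugu\'e $\alpha^{k^{-1}}$ (autrement dit la translat\'ee \`a droite $Uk^{-1}$ appliqu\'ee \`a $\alpha$), qui n'a aucune raison d'\^etre nulle modulo $p$ lorsque $G$ n'est pas Ab\'elien, ${\mathcal L}$ n'\'etant qu'un module \`a gauche. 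La formule que vous \'ecrivez, $\sum_{\mu}u(\mu)\,\alpha^{k^{-1}\mu^{-1}}=\big(\sum_{\mu}u(\mu)\,\alpha^{\mu^{-1}}\big)^{k^{-1}}$, qui elle est bien $\equiv 0\pmod p$ par conjugaison, est en r\'ealit\'e le coefficient de $k$ dans $U\cdot L$. Il faut donc calculer $U\cdot L$ (la relation \`a gauche), comme le fait le texte~; la conclusion est inchang\'ee, un annulateur non nul d'un seul c\^ot\'e suffisant tout autant \`a exclure l'inversibilit\'e dans l'alg\`ebre de dimension finie $e_\varphi\,KC_\chi[G]$. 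Une fois cette permutation corrig\'ee, votre argument co\"incide avec celui du texte.
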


\begin{proof} Raisonnons par transposition des endomorphismes (ce qui ne change pas les d\'eterminants). On a~:
\begin{eqnarray*}
 U_{\varphi} \,.\, E_\varphi &=& e_\varphi  \sm_{\nu\in G}  U_{\varphi}\, \alpha^\nu\, \nu^{\!-1}
    = e_\varphi   \sm_{\nu\in G}  \alpha^\nu \, \sm_{\sigma\in G} u_{\varphi} (\sigma)\sigma^{\!-1}  \nu^{\!-1} \\
  &=&e_\varphi   \sm_{\tau\in G}  \Big( \sm_{\nu\in G} u_{\varphi}(\nu^{\!-1}\tau) \,\alpha^\nu\Big)\tau^{\!-1} \\
&=&  e_\varphi   \sm_{\tau\in G}  \big( U_{\varphi} \,.\,\alpha \big)^{\tau} \tau^{\!-1}\equiv 0 \pmod p,
\end{eqnarray*}
d'apr\`es le Lemme \ref{lemm15} ci-dessus.
\end{proof}

Comme $E_\varphi$ est un endomorphisme de $V_\varphi$ sur $KC_\chi$, il existe un id\'eal premier 
${\mathfrak P}\div {\mathfrak p}$ 
de $KC_\chi $ tel que ${\rm det}(E_\varphi)\equiv 0\! \pmod {\mathfrak P}$. 
Mais toute conjugaison par $\tau \in G$ donne $E_\varphi^\tau =  e_\varphi \sm_{\nu\in G}  \alpha^{\tau\nu} \, \nu^{\!-1} = 
e_\varphi \sm_{\nu\in G} \alpha^{\nu}\, \nu^{\!-1}  \, .\, (e_\varphi  \tau)=  E_\varphi\, \rond\, e_\varphi \tau$, et on obtient
${\rm det}(E_\varphi^\tau) = {\rm det}(E_\varphi)\,  {\rm det}(e_\varphi \tau) \equiv 0 \pmod {\mathfrak P^\tau}$,
d'o\`u ${\rm det}(E_\varphi)  \equiv 0 \pmod {\prod_{\tau \in G} \mathfrak P^\tau}$ puisque les  
$ {\rm det}(e_\varphi \tau)$ sont~inversibles.

\smallskip
Donc pour l'id\'eal premier ${\mathfrak p} \div p$ de $Z_\chi$ tel que $U_\varphi \not\equiv 0 \pmod {\mathfrak p}$ et
 ${\rm det}(E_\varphi)
\equiv 0 \pmod {\mathfrak p}$ (\'etendu \`a $KC_\chi$), on est conduit \`a 
$P^{\varphi} (\ldots, \alpha^\nu, \ldots) \equiv 0 \pmod {\mathfrak p}$, donc \`a 
$\Delta_p^\varphi(\eta) \equiv 0 \pmod {\mathfrak p}$.
Comme $\Delta_p^\theta(\eta)$ est la norme locale en ${\mathfrak p}$  de $\Delta_p^\varphi(\eta)$,
on obtient~:

\begin{coro}\label{coro161} Si $U_\varphi \not\equiv 0 \!\pmod {\mathfrak p}$, on a
$\Delta_p^\theta(\eta) \equiv 0 \pmod p$ pour le caract\`ere $p$-adique $\theta$ (au-dessus de $\varphi$) associ\'e \`a 
${\mathfrak p}$ (cf. D\'efinition~\ref{defi01},\,\S\ref{calp})
\end{coro}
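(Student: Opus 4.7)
The starting point is Lemme~\ref{lemm16}, which gives that the endomorphism $E_\varphi = e_\varphi \sum_{\nu \in G} \alpha^\nu\, \nu^{-1}$ is non-invertible modulo $\mathfrak{p}$ in ${\rm End}_{KC_\chi}(V_\varphi)$. The first step would therefore be to conclude that $\det(E_\varphi)$ vanishes modulo some prime $\mathfrak{P}$ of $KC_\chi$ above $\mathfrak{p}$: indeed, over the local ring at $\mathfrak{P}$ a matrix is non-invertible if and only if its determinant is non-unit, and to non-invertibility over the global ring corresponds vanishing at at least one such $\mathfrak{P}$.

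The key step — and the main technical point — is to upgrade this vanishing, which a priori only holds at a single $\mathfrak{P}$, into a congruence modulo the whole ideal $\mathfrak{p}\, Z_{KC_\chi,(p)}$. For this I would exploit Galois symmetry. Conjugating $E_\varphi$ by $\tau \in G$ yields
$$E_\varphi^\tau = e_\varphi \sm_{\nu \in G} \alpha^{\tau \nu}\, \nu^{-1} = E_\varphi \,\rond\, (e_\varphi\, \tau),$$
and since $e_\varphi\tau$ acts on $V_\varphi$ as an element of finite order dividing $g$, its determinant is a root of unity, hence a $p$-adic unit for $p$ large enough. Thus $\det(E_\varphi^\tau)$ and $\det(E_\varphi)$ differ by a unit; applying $\tau$ to the congruence $\det(E_\varphi) \equiv 0 \!\pmod{\mathfrak{P}}$ gives $\det(E_\varphi) \equiv 0 \!\pmod{\mathfrak{P}^\tau}$ for every $\tau \in G$. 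Since the orbit $\{\mathfrak{P}^\tau\}_{\tau \in G}$ covers all primes of $KC_\chi$ above $\mathfrak{p}$, one obtains $\det(E_\varphi) \equiv 0 \!\pmod{\mathfrak{p}\, Z_{KC_\chi,(p)}}$, and then by intersecting with $Z_{\chi,(p)}$ one gets $\det(E_\varphi) \equiv 0 \!\pmod{\mathfrak{p}}$.

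Last, I would identify $\det(E_\varphi)$ with $P^{\varphi}(\ldots, \alpha^\nu, \ldots)$, as this is precisely the determinant of the matrix representation of $\sum_\nu X_\nu \rho_\varphi(\nu^{-1})$ evaluated at $X_\nu = \alpha^\nu$ (cf.\,\S\ref{calp}). Hence $\Delta_p^\varphi(\eta) \equiv 0 \!\pmod{\mathfrak{p}}$. The conclusion then follows from the description of the $\theta$-regulator as a local norm, namely $\Delta_p^\theta(\eta) = \No_{\mathfrak{p}}(\Delta_p^\varphi(\eta))$ (cf.\,Remarque~\ref{rema11}(i)): one factor of this local norm being already divisible by $\mathfrak{p}$ forces the norm itself, which lies in $\Z_p$, to be divisible by $p$, giving $\Delta_p^\theta(\eta) \equiv 0 \!\pmod{p}$ as claimed. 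The delicate point throughout is to keep track of the ring in which each congruence lives ($KC_\chi$, then $C_\chi$, then $\Q_p$), and to use the hypothesis $p$ large enough (and in particular $p \nmid g$) to ensure that the auxiliary determinants of $e_\varphi\tau$ are units.
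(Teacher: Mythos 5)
Votre démonstration est correcte et suit essentiellement le même chemin que celle du texte : non-inversibilité de $E_\varphi$ modulo ${\mathfrak p}$ (Lemme~\ref{lemm16}), propagation de la congruence $\det(E_\varphi)\equiv 0 \pmod{\mathfrak P}$ à tous les conjugués ${\mathfrak P}^\tau$ via l'identité $E_\varphi^\tau = E_\varphi\,\rond\, e_\varphi\tau$ et l'inversibilité des $\det(e_\varphi\tau)$, identification $\det(E_\varphi)=P^\varphi(\ldots,\alpha^\nu,\ldots)=\Delta_p^\varphi(\eta)$, puis passage à $\Delta_p^\theta(\eta)=\No_{\mathfrak p}(\Delta_p^\varphi(\eta))$. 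Rien à redire.
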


\begin{lemm}\label{lemm161} Si l'endomorphisme $E_\varphi := e_\varphi  \sm_{\nu\in G}  \alpha^{\nu} \, \nu^{\!-1} \in
{\rm End}_{KC_\chi}(V_\varphi)$ est non inversible modulo ${\mathfrak p}$, alors il existe une $\varphi$-relation
non nulle modulo ${\mathfrak p}$, de la forme $W = \sm_{\sigma \in G} w(\sigma) \sigma^{\!-1} \in e_\varphi \, Z_{\chi,(p)}[G]$, 
telle que $W.\, \alpha \equiv  0 \pmod  {\mathfrak p}$.
\end{lemm}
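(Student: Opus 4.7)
The strategy is to invert the calculation that established Lemma~\ref{lemm16}, using the algebra isomorphism $e_\varphi (KC_\chi)[G]\simeq \mathrm{End}_{KC_\chi}(V_\varphi)\simeq M_{\varphi(1)}(KC_\chi)$ given by $\rho_\varphi$ (as recalled in \S\ref{calp}). Under this isomorphism, $E_\varphi$ corresponds to the matrix $L^\varphi(\alpha) = \sum_\nu \alpha^\nu \rho_\varphi(\nu^{-1})$, whose determinant is $P^\varphi(\ldots,\alpha^\nu,\ldots)$. The hypothesis ``$E_\varphi$ non-invertible modulo $\mathfrak{p}$'' therefore means $\det L^\varphi(\alpha)\equiv 0\pmod{\mathfrak{P}}$ for some prime $\mathfrak{P}$ of $KC_\chi$ above $\mathfrak{p}$, and hence (by the Galois-conjugation argument given just before the lemma) for every such conjugate prime.

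\textbf{First step.} I would extract a left annihilator by the classical adjugate: let $N := \mathrm{adj}\bigl(L^\varphi(\alpha)\bigr)\in M_{\varphi(1)}(Z_{KC_\chi,(p)})$. Then $N\cdot L^\varphi(\alpha) = \det L^\varphi(\alpha)\cdot I \equiv 0\pmod{\mathfrak{P}}$, and $N\not\equiv 0\pmod{\mathfrak{P}}$ provided $L^\varphi(\alpha)$ has rank at least $\varphi(1)-1$ modulo $\mathfrak{P}$; otherwise one replaces $N$ by a suitable non-zero minor matrix (equivalently, by lifting any non-zero vector in the left kernel of $\overline{L^\varphi(\alpha)}$ over the residue field $\kappa(\mathfrak{P})$ and embedding it as the first row of a matrix with remaining rows zero). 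Transporting back, one obtains $\widetilde W\in e_\varphi (KC_\chi)[G]$ with $\widetilde W\cdot E_\varphi\equiv 0\pmod{\mathfrak{P}}$ and $\widetilde W\not\equiv 0\pmod{\mathfrak{P}}$.

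\textbf{Second step.} Descend the coefficients from $Z_{KC_\chi,(p)}$ to $Z_{\chi,(p)}$. Choosing a $Z_{\chi,(p)}$-basis $(\beta_i)$ of $Z_{KC_\chi,(p)}$, decompose $\widetilde W = \sum_i \beta_i W_i$ with $W_i\in e_\varphi Z_{\chi,(p)}[G]$; since $\widetilde W\not\equiv 0\pmod{\mathfrak{P}}$, at least one $W_i$ is non-zero modulo $\mathfrak{p}$. An averaging/trace argument analogous to the one used in Lemma~\ref{lemm01} (multiplying $\widetilde W$ beforehand by a suitable element of $Z_{KC_\chi,(p)}$ to guarantee a non-vanishing residue) then isolates a single $W\in e_\varphi Z_{\chi,(p)}[G]$ with $W\not\equiv 0\pmod{\mathfrak{p}}$ and $W\cdot E_\varphi\equiv 0\pmod{\mathfrak{p}}$.

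\textbf{Third step.} Invoke the formula established inside the proof of Lemma~\ref{lemm16}, namely $W\cdot E_\varphi = e_\varphi \sum_\tau (W\cdot\alpha)^\tau\, \tau^{-1}$. In the matrix picture this element corresponds precisely to $L^\varphi(W\cdot\alpha)$, so $W\cdot E_\varphi\equiv 0\pmod{\mathfrak{p}}$ is the same as $L^\varphi(W\cdot\alpha)\equiv 0\pmod{\mathfrak{p}}$ as a matrix identity. Reducing (by Remark~\ref{rema11}(ii), replacing $\eta$ by $\mathrm{N}_{K/K'}(\eta)$ if needed) to the case where $\varphi$ is faithful, the image $\{\rho_\varphi(\tau^{-1})\}_{\tau\in G}$ spans $\mathrm{End}(V_\varphi)$ rigidly enough to force $W\cdot\alpha\equiv 0\pmod{\mathfrak{p}}$ from the vanishing of $L^\varphi(W\cdot\alpha)$, completing the proof.

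\textbf{Main obstacle.} The delicate point is the combination of the second and third steps: the descent of coefficients must preserve simultaneously the non-vanishing of $W$ and the vanishing of $W\cdot E_\varphi$ modulo $\mathfrak{p}$, and the final extraction in the third step relies on the faithfulness reduction to avoid the fact that the map $\beta\mapsto L^\varphi(\beta)$ from $KC_\chi$ to $M_{\varphi(1)}(KC_\chi)$ has a non-trivial kernel whenever $\varphi\neq 1$ is non-faithful (e.g.\ $\mathbb{Q}\subseteq\ker L^\varphi$ for $\varphi\neq 1$). Careful bookkeeping of the ideals $\mathfrak{p}\subset C_\chi$ and $\mathfrak{P}\subset KC_\chi$ throughout is essential.
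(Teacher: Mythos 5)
Your first two steps retrace the paper's own argument: the paper likewise produces $W\in e_\varphi\,Z_{\chi,(p)}[G]$, $W\not\equiv 0\pmod{\mathfrak p}$, in the kernel of the transpose of $E_\varphi$ (your adjugate/kernel-lifting construction is one concrete way to exhibit it), descends the coefficients to $Z_{\chi,(p)}$ by the trace argument of the Lemme \ref{lemm1}, and passes from $\mathfrak P$ to $\mathfrak p$ by conjugating the congruence through $E_\varphi^\tau=E_\varphi\circ e_\varphi\tau$. Be aware that your literal decomposition $\widetilde W=\sum_i\beta_i W_i$ does not by itself transmit the annihilation to a single $W_i$ (the coefficients of $W_iE_\varphi$ lie in $KC_\chi$, so the components mix); it is the Lemme \ref{lemm1}-type linear algebra over the residue field that does the work, as you half-suspect.

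The genuine gap is in your third step. You want to deduce $W\cdot\alpha\equiv 0\pmod{\mathfrak p}$ from the vanishing of the matrix $L^\varphi(W\cdot\alpha)=\sum_\tau (W\cdot\alpha)^\tau\rho_\varphi(\tau^{-1})$, arguing that for faithful $\varphi$ the family $\{\rho_\varphi(\tau^{-1})\}_\tau$ spans $\mathrm{End}(V_\varphi)$ ``rigidly enough''. This is not true: the map $\beta\mapsto\sum_\tau\beta^\tau\rho_\varphi(\tau^{-1})$ from $KC_\chi$ to $\mathrm{End}(V_\varphi)$ is, up to the identification $K\otimes C_\chi\simeq C_\chi[G]$, the projection onto the $e_\varphi$-isotypic component; its kernel is the sum of all the other isotypic components, hence non-trivial for every $\varphi$ as soon as $n>1$ --- for instance every $\beta\in\Q^\times$ is killed whenever $\varphi\ne 1$, faithful or not. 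So the reduction via $\No_{K/K'}$ to a faithful character does not repair the step, and as written the proof does not close.

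The correct conclusion --- and it is the paper's --- needs no injectivity statement at all. Compute $W\cdot E_\varphi$ as a product in the group algebra: since $e_\varphi$ is a central idempotent and $We_\varphi=W$, one has
$$W\cdot E_\varphi \;=\; W\cdot\sum_{\nu\in G}\alpha^\nu\,\nu^{-1}\;=\;\sum_{t\in G}\Big(\sum_{\sigma\in G}w(\sigma)\,\alpha^{t^{-1}\sigma^{-1}}\Big)\,t\;=\;\sum_{t\in G}(W\cdot\alpha)^{t^{-1}}\,t ,$$
with no residual $e_\varphi$-projection: the right-hand side already lies in $e_\varphi(KC_\chi)[G]$, so the $e_\varphi$ in your formula $W\cdot E_\varphi=e_\varphi\sum_\tau(W\cdot\alpha)^\tau\tau^{-1}$ acts as the identity on this particular element. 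A congruence $\equiv 0\pmod{\mathfrak p}$ in the group algebra is read coefficient by coefficient on the basis $\{t\}_{t\in G}$; hence $(W\cdot\alpha)^{t^{-1}}\equiv 0\pmod{\mathfrak p}$ for every $t$, and $t=1$ gives $W\cdot\alpha\equiv 0\pmod{\mathfrak p}$. The obstacle you single out as the delicate point is therefore not present, but the argument you substitute for it is incorrect and must be replaced by this coefficient extraction.
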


\begin{proof} Le Lemme  \ref{lemm1},\,\S\ref{sub1} ramenant \`a des raisonnements $Z_{\chi,(p)}$-lin\'eaires, il existe $W \in  e_\varphi  Z_{\chi,(p)} [G]$ tel que $W \not\equiv 0 \pmod {\mathfrak p}$ est dans le 
noyau de la transpos\'ee de $E_\varphi$, et on a la congruence $ W \,.\, E_\varphi \,.\,\equiv  0 \pmod {\mathfrak P}$ 
pour ${\mathfrak P} \div {\mathfrak p}$.

\smallskip
La relation  $E_\varphi^\tau = E_\varphi\,\rond\,  e_\varphi \tau$ et le fait que  $W$ est \`a coefficients dans $Z_{\chi,(p)}$
 montrent, par conjugaisons, que la congruence a lieu modulo  ${\mathfrak p}$ (\'etendu).

\smallskip
Posons $W = \sm_{\sigma \in G} w (\sigma) \sigma^{\!-1}$, $w (\sigma) \in Z_{\chi,(p)}$ pour tout $\sigma \in G$~;
la congruence $W \, .\, E_\varphi  \equiv  0 \pmod {\mathfrak p}$ s'\'ecrit (puisque $ e_\varphi W = W$)~:
$$\sm_{\nu \in G} \sm_{\sigma \in G}  w (\sigma) \alpha^\nu \sigma^{\!-1} \,\nu^{\!-1} \equiv 
 \sm_{\sigma \in G}  w (\sigma)  \sm_{t \in G} \alpha^{t^{\!-1} \sigma^{\!-1}}  \,t \equiv  0 \pmod {\mathfrak p}, $$
d'o\`u~:
$$\sm_{t \in G}\big(\sm_{\sigma \in G} w(\sigma) \alpha^{t^{\!-1}\sigma^{\!-1}} \big)\,t \equiv 0 \pmod {\mathfrak p}~; $$

il en r\'esulte $\sm_{\sigma \in G}  w(\sigma) \alpha^{t^{\!-1}\sigma^{\!-1}} \!\!\equiv  0\pmod {\mathfrak p}$, pour tout $t\in G$, 
puis $\sm_{\sigma \in G}  w(\sigma) \alpha^{\sigma^{\!-1}} \!\!\equiv  0 \! \pmod {\mathfrak p}$, 
ce qui donne la $\varphi$-relation associ\'ee non triviale modulo ${\mathfrak p}$~:
$$W = \sm_{\sigma \in G}  w(\sigma)\,\sigma^{\!-1} \in e_\varphi Z_{\chi,(p)}[G]$$

(ceci n'entra\^inant pas $W \in e_\varphi \Z_{(p)}[G]$), telle que $W.\, \alpha \equiv  0 \pmod  {\mathfrak p}$).
\end{proof}

\subsubsection{Un cas particulier}\label{sub150}  Soit $\eta \in K^\times$ et soit $\alpha = \alpha_p(\eta)$.
Dans le $G$-module~${\mathcal L}$,  consid\'erons $U = \sm_{\nu \in G} u(\nu) \nu^{\!-1}$. 
On dispose par conjugaison des $n$ congruences~:
$$\sm_{\nu \in G} u(\nu)  \,\alpha^{\tau \nu^{\!-1}} = \sm_{\nu \in G} u(\nu \tau)
 \,\alpha^{\nu^{\!-1}}\equiv 0 \pmod p, \ \  \forall \tau \in G, $$

mais les relations $\sm_{\nu \in G} u(\nu \tau) \,\nu^{\!-1}$ (vues dans $\F_p[G]$) ne sont pas
n\'ecessairement $\F_p$-ind\'ependantes.

\begin{lemm}\label{lemm20} Le syst\`eme des  $n$ relations 
$\big(\sm_{\nu \in G} u(\nu \tau) \,\nu^{\!-1}\big)_{\tau \in G}$, $u(\sigma) \in \F_p$ pour tout 
$\sigma \in G$,  est de $\F_p$-rang 1 si et seulement si $u : G \too \F_p^\times$ est un homomorphisme de groupes.
\end{lemm}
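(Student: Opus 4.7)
The plan is to work directly with coefficient matching in $\F_p[G]$: writing $R_\tau := \sm_{\nu\in G} u(\nu\tau)\,\nu^{-1}$, the system has $\F_p$-rank $1$ iff there exist scalars $c_\tau \in \F_p$ (with at least one $R_\tau \ne 0$) such that $R_\tau = c_\tau R_1$ for every $\tau \in G$. The rest is then pure combinatorics on the function $u$.

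For the easy direction ($\Leftarrow$), if $u:G \to \F_p^\times$ is a homomorphism, then $u(\nu\tau) = u(\nu)u(\tau)$ immediately gives
$$R_\tau = \sm_{\nu\in G} u(\nu)\,u(\tau)\,\nu^{-1} = u(\tau)\,R_1.$$
Since $u$ takes only non-zero values, $R_1$ has no zero coefficient, hence $R_1 \ne 0$, and each $R_\tau = u(\tau)R_1$ is a non-zero multiple of $R_1$. The $\F_p$-rank is therefore exactly $1$.

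For the converse ($\Rightarrow$), assume the rank is $1$. A direct one-line computation (change of variable $\sigma = \nu\tau^{-1}$) shows that $R_\tau = \tau \cdot R_1$ in $\F_p[G]$ under the left regular action; in particular $R_1 \ne 0$. Writing $R_\tau = c_\tau R_1$ with $c_1 = 1$ and comparing the coefficient of $\nu^{-1}$ on each side yields the functional equation
$$u(\nu\tau) = c_\tau\,u(\nu), \qquad \forall\,\nu,\tau \in G.$$
I would first show $u$ is nowhere zero on $G$: if $u(\nu_0) = 0$, then for every $\sigma\in G$ the choice $\nu=\nu_0$, $\tau=\nu_0^{-1}\sigma$ forces $u(\sigma) = c_\tau u(\nu_0) = 0$, whence $R_1 = 0$, contradicting rank $1$. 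Hence $u:G \to \F_p^\times$. Setting $\nu = 1$ determines $c_\tau = u(\tau)\,u(1)^{-1}$; substituting back gives $u(1)\,u(\nu\tau) = u(\nu)\,u(\tau)$, which says exactly that $u/u(1)$ is a group homomorphism $G \to \F_p^\times$.

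The one delicate point, rather than a genuine obstacle, is the role of the constant $u(1)$: the family $\{R_\tau\}$ and its rank are insensitive to a uniform rescaling $u \mapsto cu$ for $c \in \F_p^\times$, so the converse strictly delivers $u$ \emph{proportional} to a homomorphism. Replacing $u$ by $u(1)^{-1}u$ changes neither the system nor its rank and forces $u(1_G) = 1$, at which point $u$ is itself the required homomorphism $G \to \F_p^\times$ of the statement.
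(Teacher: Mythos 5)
Your proof is correct and follows essentially the same route as the paper: proportionality of the translated relations yields the functional equation $u(\nu\tau)=c_\tau\,u(\nu)$, and normalizing at $\nu=1$ gives multiplicativity. You are in fact more complete than the paper, which only writes out the direction (rank $1\Rightarrow$ homomorphism) and passes quickly over the normalization $u(1)=1$ and the non-vanishing of $u$, both of which you justify explicitly.
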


\begin{proof}
Si  pour tout $\tau \in G$, il existe $\lambda(\tau)\in \F_p$ tel que $u(\nu \tau) = \lambda(\tau) u(\nu) $ pour tout $\nu \in G$, 
on a (en se ramenant par exemple \`a $u(1)=1$ puisque $U \ne 0$), 
$u(\tau) = \lambda(\tau)$ pour tout $\tau$, d'o\`u $u(\nu \tau) =u(\nu)  u(\tau)$ pour tout $\tau, \nu \in G$~; 
donc $u : G \too \F_p^\times$ est un homomorphisme de groupes. 
\end{proof}

Par cons\'equent, l'image de $u$ est un groupe cyclique d'ordre $d$ diviseur de $p-1$ isomorphe \`a $G/H$ o\`u $H$ 
(le noyau de $u$) fixe un sous-corps $K'$ de $K$ cyclique sur $\Q$ de degr\'e $d$.
 Si l'on pose $\eta' = \No_{K/K'} (\eta)$,
$r := u(s)$ (d'ordre $d$ modulo $p$), o\`u  $s$ est g\'en\'erateur dans ${\rm Gal}(K'/\Q)$, 
on est ramen\'e \`a une relation de la forme $\sm_{i=1}^d r^{i} \,\alpha'{}^{s^{-i}} 
\equiv 0 \pmod p$ avec $\alpha' = {\rm Tr}_{K/K'} (\alpha)$. 

Ceci traduit la nullit\'e modulo $p$ de
$\Delta_p^{\theta'}(\eta')$ pour le caract\`ere $\theta'$ de $K'$ d\'efini par $\theta'(s) \equiv r \pmod p$.

\begin{coro}\label{coro21} Si le $G$-module  ${\mathcal L}$ des 
 relations $\sm_{\nu \in G} u(\nu) \,\nu^{\!-1} \in \Z_{(p)} [G]$ relatives \`a $\alpha$ est de $\F_p$-rang~1, alors
il existe un unique caract\`ere $p$-adique $\theta$ de degr\'e 1 de~$G$, dont le noyau fixe un sous-corps $K'$ cyclique de $K$ 
de degr\'e $d$, et qui est tel que $\Delta_p^{\theta'}(\eta')\equiv \sm_{i=1}^d r^{i} \alpha'{}^{s^{-i}} \equiv 0 \pmod p$
o\`u $\theta'$ est le caract\`ere fid\`ele  issu de $\theta$.
\end{coro}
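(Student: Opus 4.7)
The plan is to interpret the hypothesis ``$\F_p$-rank of $\mathcal{L}$ equals $1$'' via the action of $G$ on $\mathcal{L}$, apply Lemma \ref{lemm20} to force a multiplicative structure, and then read off the character $\theta$.

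First, I would fix a nonzero generator $U = \sm_{\nu \in G} u(\nu)\,\nu^{\!-1}$ of $\mathcal L$ viewed in $\F_p[G]$. Since $\mathcal{L}$ is stable under $G$ (it is the $G$-module of relations), each conjugate relation $\sm_{\nu \in G} u(\nu \tau)\,\nu^{\!-1}$ obtained in\,\S\ref{sub150} lies in $\mathcal{L}$, hence must be an $\F_p$-multiple of $U$. Thus the system of $n$ relations $(\sm_\nu u(\nu\tau)\,\nu^{\!-1})_{\tau\in G}$ has $\F_p$-rank exactly $1$, and Lemma \ref{lemm20} applies: normalizing so that $u(1)=1$, the map $u : G \to \F_p^\times$ is a group homomorphism.

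Next I would extract the arithmetic data. Let $H = \ker u$ and $K' = K^H$; since $u$ has cyclic image of order $d \div p-1$, $\mathrm{Gal}(K'/\Q) \simeq G/H$ is cyclic of order $d$, with some generator $s$ and $r := u(s)$ of order exactly $d$ modulo $p$. Setting $\eta' = \No_{K/K'}(\eta)$ and $\alpha' = {\rm Tr}_{K/K'}(\alpha)$, the congruence $\sm_{\nu \in G} u(\nu)\,\alpha^{\nu^{\!-1}} \equiv 0 \pmod p$ groups along the cosets of $H$ (on which $u$ is constant) and becomes $\sm_{i=1}^d r^i\,\alpha'^{s^{-i}} \equiv 0 \pmod p$, exactly as in the reduction preceding the corollary. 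Since $r$ has order $d$, the formula $\theta(s^i) \equiv r^i \pmod p$ defines a unique $p$-adic character $\theta$ of $G$ of degree $1$ with kernel $H$, and $\theta'$ (its faithful pull-back to $\mathrm{Gal}(K'/\Q)$) satisfies $\alpha' \equiv \alpha_p(\eta') \pmod p$ (compatibility of $\alpha_p$ with traces, cf.\,\S\ref{log}) so that $\Delta_p^{\theta'}(\eta') \equiv \sm_{i=1}^d r^i \alpha'^{s^{-i}} \equiv 0 \pmod p$ by Definition \ref{defi10}.

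Finally, for uniqueness: any other $p$-adic character $\theta_1 \ne \theta$ of degree $1$ with $\Delta_p^{\theta_1}(\eta_1') \equiv 0 \pmod p$ would, via Corollary \ref{coro161} read in the reverse direction (or directly from the explicit form of a degree-$1$ regulator), yield a relation $U_1 \in \mathcal{L}$ whose restriction to the cyclic quotient corresponding to $\theta_1$ is nonzero; this relation cannot be proportional to $U$, because the associated multiplicative character would differ from $u$. This contradicts $\mathrm{rank}_{\F_p}(\mathcal{L}) = 1$, so $\theta$ is unique. The main (and essentially only) obstacle is checking that the homomorphism property of $u$ and the resulting folding along $H$-cosets really do recast the single $\F_p$-linear dependence as a $\theta'$-regulator at the subfield $K'$; the rest is bookkeeping with idempotents and the definition of $\Delta_p^{\theta'}$ for a degree-$1$ character.
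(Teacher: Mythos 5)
Your proposal is correct and follows the paper's own route: the rank-$1$ hypothesis plus $G$-stability puts you in the situation of Lemme \ref{lemm20}, so $u$ is a homomorphism, and folding the relation along the cosets of $H=\ker u$ yields exactly the reduction to $\eta'=\No_{K/K'}(\eta)$, $\alpha'={\rm Tr}_{K/K'}(\alpha)$ and the character $\theta$ with $\theta(s)\equiv r \pmod p$ described before the corollary (cf. Remarque \ref{rema21} for the identification of $(\theta,\mathfrak p)$). Your uniqueness step is spelled out a bit more explicitly than in the paper, but it rests on the same observation that a second vanishing degree-$1$ regulator would produce a relation in ${\mathcal L}$ not proportional to $U$, contradicting the rank-$1$ hypothesis.
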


\begin{rema}\label{rema21} Comme $d\div p-1$, $p$ est totalement d\'ecompos\'e dans le corps $C_\chi = \Q(\mu_d)$. 
Donc $\theta$ est issu d'un caract\`ere absolument irr\'eductible $\varphi$ unique.
Posons $\varphi (s) = \zeta$~; alors l'id\'eal ${\mathfrak p} = (p, \zeta- r)$ est un id\'eal premier au-dessus de $p$ dans $C_\chi$,
ce qui identifie le couple $(\theta, {\mathfrak p})$ (cf. D\'efinition \ref{defi01} (ii),\,\S\ref{calp}).
\end{rema}

\subsubsection{R\'esultats principaux}\label{sub15}
Les r\'esultats techniques des\,\S\S\ref{sub155}, \ref{sub150}  conduisent \`a l'\'enonc\'e sui\-vant (o\`u $p$ est suppos\'e assez grand)~:

\begin{theo}\label{theo24} Soit $K/\Q$ une extension Galoisienne de groupe de Galois~$G$.
Soit $\eta \in K^\times$ tel que le $\Z[G]$-module engendr\'e par $\eta$
soit de $\Z$-rang $n=\vert G \vert$. 

\smallskip
Pour tout $p$ on pose $\eta_1 := \eta^{p^{n_p} - 1} = 
1 + p\,\alpha_p(\eta)$, $\alpha_p(\eta) \in Z_{K,(p)}$, o\`u  $n_p$ est le degr\'e r\'esiduel de $p$ dans $K/\Q$. 

\smallskip
Soit ${\mathcal L}$ le $G$-module des relations $U \in \Z_{(p)} [G]$ relatif \`a $\alpha_p(\eta)$, i.e.,  telles que~:

\smallskip
\centerline{$\sm_{\nu \in G} u(\nu)\, \alpha_p(\eta)^{\nu^{\!-1}} \equiv 0 \pmod p, \ \,u(\nu) \in \Z_{(p)} \ 
\hbox{(cf. D\'efinition \ref{defi11},\,\S\ref{sub155})}. $}

\smallskip
On a alors les r\'esultats suivants~:

\smallskip
(i) Il existe une relation $U \not\equiv 0 \pmod {p\,\Z_{(p)} [G]$} (i.e., ${\mathcal L}$ est non trivial) si et seule\-ment si le r\'egulateur normalis\'e ${\rm Reg}_p^G(\eta)$ est divisible par $p$ (cf. Remarque~\ref{rema110},\,\S\ref{gene1}).

\smallskip
(ii) Soit $\theta$ un caract\`ere $p$-adique irr\'eductible de $G$ et soit $f$ son degr\'e r\'esiduel (i.e., celui de $p$ dans le corps des valeurs des caract\`eres absolument irr\'eductibles~$\varphi \div \theta$). 

\smallskip
Alors, vu dans $\F_p[G]$, le $G$-module ${\mathcal L}^\theta := e_\theta {\mathcal L}$ est de $\F_p$-dimension non nulle si et seulement si le $\theta$-r\'egulateur local $\Delta_p^\theta(\eta)$ (cf.\,\S\ref{sub6}) est divisible par $p$. Dans ce cas, la 
$\F_p$-dimension de ${\mathcal L}^\theta$ est $\delta f \varphi(1)$, $1 \leq \delta \leq \varphi(1)$.\,\footnote{\,On verra que l'on doit attribuer \`a cette \'eventualit\'e la probabilit\'e $\frac{1}{p^{f\delta^2}}$ (cf.\,\S\ref{HP}). }

\smallskip
(iii) Si ${\rm dim}_{\F_p} ({\mathcal L} )= 1$, alors $\theta$ est  un caract\`ere de degr\'e 1 relatif \`a un 
sous-corps cyclique $K'$ de $K$, et ${\mathcal L}$ est engendr\'e par la $\theta$-relation $\sum_{i=0}^{d-1} \, r^{i}\, \alpha_p(\eta')^{s^{-i}} \equiv 0 \pmod p$, o\`u $\eta' := \No_{K/K'}(\eta)$, $d=[K' : \Q]\, \big \vert \, p-1$,
o\`u $r \in \Z$ et $s$ g\'en\'erateur de ${\rm Gal}(K'/\Q)$ sont tels que 
$\theta(s) \equiv r \pmod p$ (cf. Lemme \ref{lemm20}, Corollaire~\ref{coro21} et Remarque \ref{rema21},\,\S\ref{sub150}).
\end{theo}

\begin{proof} Il reste donc \`a prouver le point (ii) de l'\'enonc\'e.

\smallskip
 (a)  Si ${\mathcal L}^\theta \ne \{0\}$, il existe $U=\sum_{\nu\in G} u(\nu) \,\nu^{\!-1} \in \mathcal L$
 tel que $U_\theta  \not\equiv 0 \pmod {\mathfrak p}$~; donc $U_\varphi  \not\equiv 0 \pmod {\mathfrak p}$ pour tout 
 $\varphi \div \theta$. 
D'apr\`es le Lemme  \ref{lemm16} et le Corollaire \ref{coro161},\,\S\ref{sub155}, on a  $\Delta_p^\theta(\eta) \equiv 0 \pmod p$.

\smallskip
(b)  Supposons que $\Delta_p^\theta(\eta) \equiv 0 \pmod p$~;
par la nullit\'e modulo $p$ de ${\rm Det}^G(\alpha)$ qui en r\'esulte,
 il existe une relation de $\F_p$-d\'ependance de la forme $\sum_{\nu \in G} u(\nu)\, \alpha^{\nu^{\!-1}} \equiv 0 \pmod p$, 
$u(\nu) \in \Z_{(p)}$ non tous divisibles par~$p$,  et on a $U = \sum_{\nu \in G} u(\nu)\, \nu^{\!-1}  \in  {\mathcal L}$
(cf. Corollaire \ref{coro14},\,\S\ref{sub8}), mais il convient d'en d\'eduire ${\mathcal L}^\theta \ne \{0\}$.

\smallskip
Le cas Ab\'elien (cf.\,\S\ref{sub27}) sera r\'esolu de fa\c con plus pr\'ecise.
D'apr\`es le Lemme~\ref{lemm161}, \S\ref{sub155}, il existe un caract\`ere irr\'eductible $\varphi \div \theta$, et une 
$\varphi$-relation non triviale modu\-lo ${\mathfrak p}$ de la forme $W := \sum_{\nu \in G}  w(\nu)\nu^{\!-1}$, $w(\nu) \in Z_{\chi,(p)}$, telle que $W.\,\alpha \equiv 0\!\! \pmod {\mathfrak p}$.  

\smallskip
Soit $L$ le corps de d\'ecomposition de $p$ dans $C_\chi/\Q\,$ et soit $D = {\rm Gal}(C_\chi/L)$.
 Si $\{z, \ldots, z^{f}\}$ est une $Z_{L,(p)}$-base de $Z_{\chi,(p)}$, on a $w(\nu) = \sum_{i=1}^{f} a_i (\nu) z^i$
 pour tout $\nu \in G$, $a_i(\nu) \in Z_{L,(p)}$, d'o\`u $\sum_{\nu \in G}  \sum_{i=1}^{f} a_i(\nu) z^i\, \alpha^{\nu^{\!-1}} \equiv 0 \pmod  {\mathfrak p}$~; donc par identification sur la base des $z^i$ on obtient le syst\`eme de relations~:

\medskip
\centerline {$\sm_{\nu \in G} a_i(\nu) \alpha^{\nu^{\!-1}} \equiv 0 \pmod  {\mathfrak p}, \ i = 1, \ldots, f. $}

\smallskip
Pour tout $i= 1, \ldots, f $, et tout $\nu \in G$, 
il existe des $r_{\mathfrak p}^i (\nu) \in \Z$ tels que $ a_i(\nu)  \equiv r_{\mathfrak p}^i (\nu) \pmod  {\mathfrak p}$, d'o\`u
$\sum_{\nu \in G} a_i(\nu) \alpha^{\nu^{\!-1}}\equiv  \sum_{\nu \in G}r_{\mathfrak p}^i (\nu) \alpha^{\nu^{\!-1}}\equiv  0 \pmod {\mathfrak p}$,
et comme $\sum_{\nu \in G}r_{\mathfrak p}^i (\nu) \alpha^{\nu^{\!-1}} \in K$, il vient 
$\sum_{\nu \in G}r_{\mathfrak p}^i (\nu) \alpha^{\nu^{\!-1}}\equiv  0 \pmod p$.

\smallskip
Puisque $W$ est non triviale modulo ${\mathfrak p}$, les $r_{\mathfrak p}^i (\nu)$ ne sont pas tous nuls modulo $p$ et il existe 
une relation non triviale $\sum_{\nu \in G}r_{\mathfrak p}^i (\nu)\,\alpha^{\nu^{\!-1}}$, $i \in \{1,\ldots,f\}$.
Comme $W$ est une $\varphi$-relation, ceci se transmet \`a $\sum_{\nu \in G} a_i(\nu)\,\alpha^{\nu^{\!-1}}$ et par cons\'equent,
$\sum_{\nu \in G} r_{\mathfrak p}^i (\nu) \, {\nu^{\!-1}}$ est une $\theta$-relation non triviale de $\mathcal L$.
De fait, la matrice $\big ( r_{\mathfrak p}^i (\nu) \big)_{i,\nu}$ est de rang $f$.
\end{proof}

\begin{rema}\label{remadim}
La $\F_p$-repr\'esentation irr\'eductible $V_\theta$ de caract\`ere $\theta$ est de 
dimension $f \varphi(1)$ et par extension des scalaires, on a $V_\theta \simeq f V_\varphi$~; on a ${\mathcal L}^\theta \simeq \delta  V_\theta$, $0 \leq \delta \leq \varphi(1)$, et ${\mathcal L}^\theta$ est de $\F_p$-dimension $\delta  f \varphi(1)$.

\smallskip
Il est clair que $\delta =\varphi(1)$ \'equivaut \`a ${\mathcal L}^\theta= e_\theta \,\F_p[G]$ qui correspond \`a la relation
$e_\theta \,\alpha \equiv 0 \pmod p$, autrement dit \`a un maximum de $\theta$-relations.
\end{rema}

\begin{coro}\label{coro25} (i) Lorsque ${\mathcal L}^\theta \ne \{0\}$, on obtient des rel\`evements de la forme 
$\eta^{U_\theta} \in \prod_{v \div p} K_v^{\times p}$ pour toute $\theta$-relation $U_\theta \in {\mathcal L}^\theta$.

(ii) Sous la condition (iii) du th\'eor\`eme (${\mathcal L} = {\mathcal L}^\theta$ de $\F_p$-dimension 1) et lorsque $\theta$ est un carac\-t\`ere fid\`ele, on a $\eta^{U_\theta} \in \prod_{v \div p} K_v^{\times p}$, o\`u $U_\theta = \sum_{k=0}^{n-1} r^{k}\,s^{-k}$ et  o\`u $n = \vert G\vert$ divise $p-1$ (cas totalement d\'ecompos\'e).
\end{coro}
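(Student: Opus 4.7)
Mon plan pour (i) est le suivant. Soit $U_\theta = \sm_{\nu \in G} u_\theta(\nu)\, \nu^{-1} \in {\mathcal L}^\theta$ une $\theta$-relation ; par définition, $\sm_\nu u_\theta(\nu)\, \alpha_p(\eta)^{\nu^{-1}} \equiv 0 \pmod p$. J'invoquerais alors les propriétés logarithmiques de $\alpha_p$ rappelées au \S\ref{sub1} (additivité et $\Z_{(p)}$-linéarité modulo $p$), combinées avec $\alpha_p(\eta^\sigma) \equiv \alpha_p(\eta)^\sigma \pmod p$, pour en déduire $\alpha_p(\eta^{U_\theta}) \equiv 0 \pmod p$. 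Ceci équivaut, par la définition même de $\alpha_p$, à la congruence globale $(\eta^{U_\theta})^{p^{n_p}-1} \equiv 1 \pmod{p^2}$.

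Le reste est un argument local place par place. À chaque $v \mid p$, puisque $\eta$ est étranger à $p$, l'image de $\eta^{U_\theta}$ dans $K_v^\times$ se décompose sous la forme $\zeta \cdot y$ avec $\zeta \in \mu_{p^{n_p}-1}(K_v)$ et $y \in 1 + \mathfrak{p}_v$ (pour $p$ assez grand non ramifié, $\mu(K_v) = \mu_{p^{n_p}-1}$). La congruence globale précédente donne $y^{p^{n_p}-1} \equiv 1 \pmod{\mathfrak{p}_v^2}$ ; écrivant $y = 1 + p\beta$ et utilisant $p^{n_p}-1 \equiv -1 \pmod p$, on obtient $y^{p^{n_p}-1} \equiv 1 - p\beta \pmod{p^2}$, d'où $\beta \equiv 0 \pmod{\mathfrak{p}_v}$ et donc $y \in 1 + \mathfrak{p}_v^2$. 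Pour $p$ impair et non ramifié, ce dernier groupe coïncide avec $(1 + \mathfrak{p}_v)^p$ (calcul immédiat via le développement du binôme : les termes $\binom{p}{k}(p\beta)^k$ sont de valuation $\geq 3$ pour $k \geq 2$). Quant à $\zeta$, d'ordre étranger à $p$, c'est automatiquement une puissance $p$-ième dans $\mu(K_v)$. On conclut $\eta^{U_\theta} \in K_v^{\times p}$ pour toute place $v \mid p$.

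Pour (ii), il suffit d'appliquer (i) à la $\theta$-relation explicite $U_\theta = \sm_{k=0}^{d-1} r^k\, s^{-k}$ qui engendre ${\mathcal L} = {\mathcal L}^\theta$ d'après le point (iii) du Théorème \ref{theo24} ; l'hypothèse de fidélité de $\theta$ entraîne la trivialité de son noyau, donc $K' = K$, $\eta' = \eta$, $d = n$, et $n \mid p-1$ (ce qui correspond bien au cas où $p$ est totalement décomposé dans $C_\chi = \Q(\mu_n)$, conformément à la Remarque \ref{rema21}). L'obstacle principal, assez modeste ici, est le passage soigneux de la congruence modulo $p$ sur $\alpha_p$ à l'appartenance locale à $K_v^{\times p}$ ; il repose entièrement sur la structure de $K_v^\times$ pour $p$ impair non ramifié et sur l'absence de racines $p$-ièmes de l'unité dans $K_v$, garanties pour $p$ assez grand.
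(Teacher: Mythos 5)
Votre démonstration est correcte et suit essentiellement la même voie que celle de l'article~: le c\oe ur en est la congruence $\eta_1^{U_\theta}\equiv 1+p\,U_\theta\cdot\alpha\equiv 1\pmod{p^2}$ (que vous formulez via $\alpha_p(\eta^{U_\theta})\equiv U_\theta\cdot\alpha_p(\eta)\equiv 0\pmod p$), suivie du fait que, pour $p$ impair non ramifié, une unité principale congrue à $1$ modulo $p^2$ est une puissance $p$-ième locale. L'article condense le second point par l'identité $\eta=\eta^{p^{n_p}}\eta_1^{-1}$ au lieu de votre décomposition explicite $\zeta\cdot y$ place par place, mais les deux arguments sont équivalents.
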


\begin{proof}
Soit $\alpha := \alpha_p(\eta)$.
On a $\eta_1^{U_\theta} \!= (1+p\,\alpha)^{U_\theta} \equiv 1 + p\, U_\theta \cdot \alpha\! \pmod {p^2}$, et
comme $U_\theta\, \cdot\,\alpha \equiv  0 \pmod p$ par d\'efinition, il vient
$\eta_1^{U_\theta} = 1 +p^2 \beta$, $\beta \in Z_{K,(p)}$.
Donc $\eta_1^{U_\theta} = (1+p\,\gamma)^p$, $\gamma \in \prod_{v \div p} K_v$, et
$\eta = \eta^{p^{n_p}} \eta_1^{-1}$ implique $\eta^{U_\theta} \in \prod_{v \div p} K_v^{\times p}$.
Le cas (ii) est imm\'ediat.
\end{proof}

\begin{rema}\label{rema26} D'apr\`es la Remarque \ref{rema11}\,(ii),\,\S\ref{gene1}, le fait de remplacer $\eta$ par 
$\eta' = \No_{K/K'}(\eta)$ ne modifie pas la question du nombre (fini ou non)
de premiers $p$ tels que $\Delta^G_p(\eta) \equiv 0 \pmod p$, car le nombre de sous-corps $K'$ de $K$ est fini
auquel cas si $\Delta^G_p(\eta) \equiv 0 \pmod p$ pour une infinit\'e de $p$, il existe un caract\`ere rationnel fid\`ele $\chi'$
pour lequel $\Delta^{\chi'}_p(\eta') \equiv 0 \pmod p$ une infinit\'e de fois.
\end{rema}

\subsection{Cons\'equences pratiques}\label{sub10}
Donnons  quelques exemples concrets.
Il r\'esulte de ce qui pr\'ec\`ede, lorsque $\Delta^{\theta}_p(\eta) \equiv 0 \pmod p$, que l'on peut trouver indif\'eremment une $\theta$-relation 
$\sum_{\sigma \in G} u(\sigma)\,\sigma^{\!-1}$ \`a coefficients $p$-entiers rationnels,
ou, pour $\varphi \div \theta$, une $\varphi$-relation $\sum_{\sigma \in G} u_\varphi(\sigma)\,\sigma^{\!-1}$
 \`a coef\-ficients dans $Z_{\chi,(p)}$.

\begin{example}\label{ex19}  {\rm  Pour un corps cubique cyclique $K$, on a $e_\chi = \frac{1}{3}(2-\sigma-\sigma^2)$
pour $\chi \ne 1$. On a $C_\chi = \Q(j)$.

\smallskip
(i) Si $p$ est inerte dans $C_\chi/\Q$,  $\chi = \theta = \varphi+\varphi^s$, $f=2$, et on montrera (cf.\,\S\ref{sub19}) 
que l'on obtient les $\theta$-relations ind\'ependantes $\alpha-\alpha^\sigma \equiv 0 \pmod p$ et 
$\alpha^\sigma-\alpha^{\sigma^2} \equiv 0 \pmod p$, conjugu\'ees ($f=2$). 
Donc $U= 1-\sigma$ est telle que (avec $\varphi(\sigma) = j$)~:
\begin{eqnarray*}
 U_\varphi := e_\varphi \,U&\!\!=\!\!& \Frac{1}{3}(1+ j^{-1}\sigma +  j^{-2}\sigma^2)(1-\sigma) 
= \Frac{1}{3}(1-j + (j^2-1)\, \sigma + (j-j^2)\,\sigma^2 ) \\
&\!\!=\!\!&  \Frac{1}{3}(1-j)\,(1+ j^{-1}\sigma +  j^{-2}\sigma^2),
\end{eqnarray*}
qui donne, \`a une unit\'e $p$-adique pr\`es, $U_\varphi = 1+ j^{-1}\sigma +  j^{-2}\sigma^2$
qui engendre la $\varphi$-composante ${\mathcal L}^\varphi$  associ\'ee \`a la relation 
$\Delta_p^\varphi (\eta) = \alpha+ j^{-1}\alpha^\sigma +  j^{-2}\alpha^{\sigma^2}\equiv 0 \pmod p$
et on a $\Delta_p^\theta (\eta) = \No_{C_\chi/\Q} (\Delta_p^\varphi (\eta) ) \equiv 0 \pmod p$.
Par identification sur la base $\{1, j\}$, on retrouve les $\theta$-relations \`a coefficients rationnels 
$\alpha-\alpha^\sigma \equiv 0 \pmod p$ et $\alpha^\sigma - \alpha^{\sigma^2} \equiv 0 \pmod p$.

\smallskip
(ii) Dans le cas $p$ d\'ecompos\'e, il y a deux caract\`eres $p$-adiques, dont $\theta = \varphi$, et
on obtiendra une unique $\theta$-relation de la forme $\Delta_p^\theta (\eta) =\alpha+r^{-1}\alpha^\sigma  +r^{-2}\alpha^{\sigma^2}\equiv 0 \pmod p$ dont les conjugu\'ees lui sont proportionnelles car $r^3 \equiv 1 \pmod p$. 

\smallskip
D'o\`u $U = 1+r^{-1}\sigma  +r^{-2}{\sigma^2}$, ce qui conduit (avec
$j \equiv r \pmod {\mathfrak p}$ pour l'id\'eal premier $ {\mathfrak p} \div p$ associ\'e \`a $\theta = \varphi$) \`a~:
\begin{eqnarray*} 
U_\varphi = e_\varphi \,U & \!\!=\!\!  & \Frac{1}{3}(1+ j^{-1}\sigma +  j^{-2}\sigma^2)(1+r^{-1}\sigma  +r^{-2}{\sigma^2}) \\
&=& \Frac{1}{3} (1+ r^{-1}j^{-2} +r^{-2}j^{-1} )\,(1+ j^{-1}\sigma +  j^{-2}\sigma^2) \\
\!\!&\equiv& \!\!  1+ j^{-1}\sigma +  j^{-2}\sigma^2 \pmod {\mathfrak p},
\end{eqnarray*}
soit $U_\varphi = 1+ j^{-1}\sigma +  j^{-2}\sigma^2$ engendrant ${\mathcal L}^\varphi$. 
On retrouve la $\theta$-relation de d\'epart
\`a partir de $\alpha+ j^{-1}\alpha^\sigma +  j^{-2}\alpha^{\sigma^2} \equiv 0 \pmod {\mathfrak p}$ (\'etendu \`a $KC_\chi$), 
 qui donne ici $\alpha+ r^{-1}\alpha^\sigma +  r^{-2}\alpha^{\sigma^2} \equiv 0 \pmod {\mathfrak p}$ donc 
$\equiv 0\pmod p$ (premier membre dans $K$). 

\smallskip
Le conjug\'e $\theta^s$ de $\theta$ correspondrait \`a la congruence $j \equiv r \pmod { {\mathfrak p}^s}$
(\'equivalente \`a $j \equiv r^2 \pmod  {\mathfrak p}$), mais on n'a pas n\'ecessairement et simultan\'ement  
la relation $\Delta_p^{\theta^s} (\eta) =\alpha+ r^{-2}\alpha^\sigma +  r^{-1}\alpha^{\sigma^2} \equiv 0 \pmod p$, 
ce qui conduirait \`a
$\alpha \equiv \alpha^\sigma \equiv \alpha^{\sigma^2} \pmod p$ et la nullit\'e modulo $p$ de 
$\Delta^{\theta}_p(\eta)$ et de $\Delta^{\theta^s}_p(\eta)$ (probabilit\'e en $\frac{O(1)}{p^2}$).}
\end{example}

\begin{example}\label{ex18} {\rm Prenons un exemple num\'erique (cf.\,\S\ref{p=61}) avec le 
corps $K$ d\'efini par le polyn\^ome $Q=x^6+9x^4-4x^3+27x^2+36x+31$ (groupe $D_6$),
pour $\chi =\theta= \chi_2$ de degr\'e 2. Soit $\eta = x^5-2x^4+ 4x^3 -3x^2+x-1$.

\smallskip
Pour $p=61$,  on obtient le $G$-module ${\mathcal L}$ \`a partir des trois relations 
$\F_p$-lin\'eaires ind\'ependantes (par programme)~:
\begin{eqnarray*}
 19\alpha+56\alpha^\sigma+46\alpha^{\sigma^2}+\alpha^\tau  & \equiv& 0 \pmod p\\
46\alpha+19\alpha^\sigma+56\alpha^{\sigma^2}+\alpha^{\tau \sigma^2} &\equiv& 0 \pmod p\\
56\alpha+46\alpha^\sigma+19\alpha^{\sigma^2}+\alpha^{\tau \sigma} &\equiv& 0 \pmod p\,.
\end{eqnarray*}

L'idempotent ``trace'' $e_{1}$ donne la relation triviale, donc le $p$-quotient de Fermat 
$\Delta_p^{1} (\eta)$ est non nul modulo $p$.
On obtient la ${\chi_1}$-relation correspondante \`a l'idempotent $e_{\chi_1}$ ($\chi_1$ non trivial de degr\'e~1) en faisant
la somme des trois relations, ce qui donne
$\alpha+\alpha^\sigma+\alpha^{\sigma^2}-\alpha^\tau-\alpha^{\tau\sigma}-\alpha^{\tau\sigma^2}\equiv 0 \pmod p$
(d'o\`u pour $\theta=\chi_1$ la nullit\'e du $\theta$-r\'egulateur $\Delta_p^{\theta} (\eta)$). De fait, il s'agit d'une nullit\'e 
triviale car le programme trouve que tout $p$ est solution pour $\Delta_p^{\theta} (\eta)\equiv 0 \pmod p$
 (les conjugu\'es de $\eta$ v\'erifient $\eta^{1+\sigma+\sigma^2-\tau-\tau\sigma-\tau\sigma^2}=1$).
Les choix de $\eta$ \'etant faits au hasard, ce fait est une pure co\"incidence !

\smallskip
Pour $\theta=\chi_2$ (de degr\'e 2), le $\theta$-r\'egulateur 
$\Delta_p^{\theta} (\eta)$ est nul modulo $p$ (non trivialement) et cela cor\-respond \`a
la $\theta$-relation suivante en appliquant $e_{\theta} = \frac{1}{3}(2-\sigma-\sigma^2)$~:
$$-\alpha + 36\alpha^\sigma+ 26 \alpha^{\sigma^2}+21\alpha^\tau + 20 \alpha^{\tau\sigma} + 20\alpha^{\tau\sigma^2}
\equiv 0 \pmod p. $$

Notons que par conjugaison, cette derni\`ere relation engendre un $\F_p$-espace de dimension 2 
(autrement dit ${\mathcal L}^\theta \simeq V_\theta$).
En effet,  pour la $\theta$-relation correspondante~:

\smallskip
\centerline{$U=-1 + 36\sigma+ 26{\sigma^2}+21\tau + 20{\tau\sigma} + 20{\tau\sigma^2}$, }

\smallskip
on a par d\'efinition $\sigma^2 U= -U - \sigma U$ et on trouve les relations suivantes~:

\smallskip
\centerline{$\tau U = 24U+51\sigma U$,\ \ \  $\tau\sigma  U = 27U+37\sigma U$, \ \ \ $\tau\sigma ^2 U = 10U+34\sigma U$.} }
\end{example}

Par la suite, nous supposerons que le $\Z[G]$-module $F$ engendr\'e par $\eta$ est de $\Z$-rang $n = \vert G\vert$
(afin qu'il n'y ait pas de nullit\'es triviales).

\smallskip
Ensuite, nous n\'egli\-gerons les premiers $p$ pour lesquels au moins deux $\theta$-r\'egulateurs sont divisible par $p$, une telle probabilit\'e \'etant au plus en $\frac{O(1)}{p^2}$, vu l'ind\'ependance des $\theta$-r\'egulateurs associ\'es \`a plusieurs caract\`eres $p$-adiques (cf.~\S\ref{sub12} pour des statistiques num\'eriques).
Il restera alors le cas $\Delta^\theta_p(\eta) \equiv 0 \pmod p$ pour un unique caract\`ere $p$-adiquel $\theta$ de $G$ 
sous r\'eserve d'avoir $f=1$ et la repr\'esentation ${\mathcal L}^\theta$ minimale (tout cas contraire donnant une probabilit\'e au plus en $\frac{O(1)}{p^2}$).

\smallskip
 L'obstruction essentielle viendrait alors des $p$ satisfaisant \`a la d\'efinition suivante~:

\begin{defi}\label{defidec}  Un nombre premier $p$ constitue un cas de {\it $p$-divisibilit\'e mini\-male $p^{\varphi(1)}$}
(pour le r\'egulateur normalis\'e ${\rm Reg}_p^G (\eta)$) s'il existe un unique $\theta$ tel que~: 
$\Delta^\theta_p(\eta) \equiv 0 \pmod p$, $p$ est totalement d\'ecompos\'es dans le corps des valeurs de 
$\varphi \div \theta$ (i.e., $f =1$),  ${\mathcal L}^\theta \simeq \delta  V_\theta$ est de $\F_p$-dimension minimale (i.e., $\delta =1$),
et ${\rm Reg}_p^\theta (\eta) \sim p$~; dans ce cas, ${\rm Reg}_p^G (\eta) \sim p^{\varphi(1)}$ 
(cf. Remarque \ref{rema110}, \S\,\ref{gene1}).

\smallskip
Si $G$ est Ab\'elien, il s'agit des  $p \equiv 1 \pmod n$, o\`u $n$ est l'ordre de $\varphi$.
\end{defi}

\vspace{-0.2cm}
\section{Exp\'erimentations et consid\'erations heuristiques}

\subsection{M\'ethodes probabilistes en th\'eorie des nombres}\label{sub11} 
Si des \'ev\'ene\-ments $E_p$, index\'es par les nombres premiers, sont ind\'ependants et de probabilit\'es ${\rm Pr} (E_p)$, on peut appliquer le principe heuristique de Borel--Cantelli qui consiste \`a dire que si la s\'erie $\sum_p {\rm Pr} (E_p)$ converge, alors
la conjecture naturelle est que les \'ev\'enements $E_p$ sont r\'ealis\'es un nombre fini de fois et que si elle diverge 
ils sont r\'ealis\'es une infinit\'e de fois avec une densit\'e en rapport (cf. \cite{T}, Chap.\,III.1). 

\smallskip
Dans notre cas, $E_p$ est pour $\eta  \in K^\times$ fix\'e l'\'ev\'enement ``$\Delta_p^G(\eta) \equiv 0 \pmod p$'' ou  ``$\Delta_p^\chi(\eta) \equiv 0 \pmod p$'' ($\chi$ ration\-nel) ou encore  ``$\Delta_p^\theta (\eta) \equiv 0 \pmod p$'' qui n'est d\'efini que si l'on choisit un $\theta \div \chi$ pour chaque~$p$ (cf.\,\S\ref{sub6}).

\smallskip
Tout se joue alors sur la valeur ${\rm Pr} (E_p) = \frac{1}{p^{1+\epsilon(p,\eta)}}$, $\epsilon(p,\eta)> 0$ ou $\epsilon(p,\eta)= 0$, pour les cas de $p$-divisibilit\'e minimale $p^{\varphi(1)}$ (D\'efinition ci-dessus).

\smallskip
En l'absence d'arguments th\'eoriques pr\'ecis, il est d'usage de supposer que le cas $\Delta_p^\theta(\eta) \equiv 0 \pmod p$
se produit avec une probabilit\'e en $\frac{O(1)}{p}$.\,\footnote{Nous utilisons par commodit\'e un num\'erateur $O(1)$,  toujours effectif et souvent \'egal \`a~1.}  Il en r\'esulte alors que le nombre de solutions $p<x\,$ \`a $\,\Delta_p^G(\eta) = \prod_\theta \Delta_p^\theta (\eta)^{\varphi(1)} \equiv 0 \pmod p$ serait de l'ordre de $O(1)\sum_{p<x} \frac{1}{p}$, donc de 
$O(1){\rm log}{\rm log}(x)$.
 Voir la Remarque \ref{rema110},\,\S\ref{gene1} pour l'interpr\'etation en termes de $p$-divisibilit\'es des r\'egulateurs correspondants.

\smallskip
Or il est \'evident, puisque $\Delta_p^\theta(\eta)$ est une norme locale dans l'extension $C_\chi/\Q$,  qu'un tel r\'egulateur local est soit \'etranger \`a $p$ soit divisible par $p^f$ o\`u $f$ est le degr\'e r\'esiduel de $p$ dans cette extension~; de m\^eme, si le caract\`ere irr\'eductible $\varphi \div \theta$ est de degr\'e $\varphi (1) \geq 2$, la divisibilit\'e par $p^{f\varphi (1)}$ intervient puisque $\Delta_p^\theta(\eta) = \prod_{\varphi \div \theta} \Delta_p^\varphi(\eta)$ est \'elev\'e \`a la puissance $\varphi (1)$.
Cependant il n'est pas clair pour autant que la ``probabilit\'e'' d'\^etre divisible par~$p$ (ou  la ``densit\'e des cas observ\'es'') 
soit de fa\c con pr\'ecise fonction de $f\,\varphi (1)$. 
On verra que le degr\'e $\varphi (1)$ n'intervient pas directement mais que, par contre, le nombre $\delta $ tel que ${\mathcal L}^\theta \simeq \delta  V_\theta$ intervient, ainsi que $f$, sous la forme $\frac{O(1)}{p^{f \delta^2}}$ qui est la probabilit\'e d'avoir\ 
``$\ \Delta_p^\theta (\eta) \equiv 0 \pmod p\ $ \& $\ {\mathcal L}^\theta \simeq \delta  V_\theta\ $'' (cf.\,\S\ref{HP}). 
On admettra donc que tous les cas de probabilit\'e au plus en $\frac{O(1)}{p^2}$ sont en nombre fini et peuvent \^etre \'ecart\'es
pour $p$ assez grand.
En outre il faut s'assurer que les $\Delta_p^\theta(\eta)$ modulo $p$ sont ind\'ependants.

\subsection{Ind\'ependance probabiliste (sur $\theta$) des variables $\Delta^\theta_p(\eta)$}\label{sub12}
Nous traitons d'abord le cas du groupe $D_6$, par utilisation de la fonction {\it random}, pour d\'eterminer deux aspect~: 

\smallskip
(i) l'ind\'ependance des $\theta$-r\'egulateurs (probabilit\'e au plus en $\frac{O(1)}{p^2}$ d'avoir deux $\theta$-r\'egula\-teurs $\Delta^\theta_p(\eta)$ et $\Delta^{\theta'}_p(\eta)$ nuls modulo $p$, pour $\theta \ne \theta'$)~;

\smallskip
(ii) la probabilit\'e en $\frac{O(1)}{p}$ d'avoir la nullit\'e modulo $p$ de $\Delta_p^{\theta}(\eta)$ pour le caract\`ere $\theta = \chi_2$ de degr\'e 2, le cas des caract\`eres de degr\'e 1 \'etant analogue.

\smallskip
On consid\`ere le corps $K$ (compos\'e de $\Q(\sqrt[3] 2)$ et de $\Q(j)$) d\'efini par le polyn\^ome $$Q=x^6+9x^4-4x^3+27x^2+36x+31 .$$
On prend au hasard $\eta$ modulo $p^2$, \'etranger \`a $p$, ce qui donne des $\alpha$ r\'epartis
al\'eatoirement modulo $p$. On peut choisir $p$ \`a volont\'e (ici $p=37$).

\smallskip
Le programme calcule les conjugu\'es de $\alpha = \alpha_p(\eta)$ sur la base $\{x^5, x^4, x^3, x^2, x, 1\}$.
La variable $N_0$ compte le nombre de $\eta$  \'etrangers \`a $p$.
Ensuite, chacun des trois r\'egulateurs est calcul\'e, et dans les variables $N_1, N_2, N_3,   N_{12}, N_{13} , N_{23} , N_{123}$, 
on donne le nombre de cas de nullit\'es simultan\'ees de 1, 2 ou 3 r\'egulateurs.

\smallskip
La proc\'edure {\it nfgaloisconj} pour $D_6$ donne les automorphismes dans l'ordre~:
$$1, \tau, \sigma\tau = \tau \sigma^2,\sigma^2 \tau = \tau\sigma, \sigma, \sigma^2,$$
que nous avons permut\'e pour avoir (de $e_1$ \`a~$e_6$) la liste
$\ 1, \sigma, \sigma^2, \tau, \tau\sigma, \tau \sigma^2. $
\footnotesize

\medskip
$\{$$Q=x^6+9*x^4-4*x^3+27*x^2+36*x+31; p=37; p2=p^2;$\par
$e1= x ;  $\par
 $e2= 11/180*x^5 + 1/180*x^4 + 11/18*x^3 - 1/45*x^2 + 403/180*x + 419/180 ; $\par
 $e3= 13/180*x^5 - 7/180*x^4 + 13/18*x^3 - 38/45*x^2 + 509/180*x + 127/180 ; $\par
 $e4= -4/45*x^5 + 1/45*x^4 - 8/9*x^3 + 26/45*x^2 - 137/45*x - 91/45 ;  $\par
 $e5= -1/60*x^5 - 1/60*x^4 - 1/6*x^3 - 4/15*x^2 - 73/60*x - 79/60 ; $\par
 $e6= -1/36*x^5 + 1/36*x^4 - 5/18*x^3 + 5/9*x^2 - 65/36*x + 11/36 ; $\par
 $N0=0; N1=0; N2=0; N3=0; N12=0; N13=0; N23=0; N123=0;$\par
 $ for(i=1, 1000000,  $\par
 $a=random(p2); b=random(p2);  c=random(p2); $\par
 $aa=random(p2); bb=random(p2); cc=random(p2); $\par
 $Eta= Mod(a*e1^5+b*e1^4+c*e1^3+aa*e1^2+bb*e1+cc,Q); $\par
 $N=norm(Eta); if (Mod(11*N,p)!=0, N0=N0+1; $\par
 $u=Mod(a,p2); v=Mod(b,p2); w=Mod(c,p2);  $\par
 $uu=Mod(aa,p2); vv=Mod(bb,p2); ww=Mod(cc,p2);  $\par
 $P=x^6+9*x^4-4*x^3+27*x^2+36*x+Mod(31,p); $\par
 $P2=x^6+9*x^4-4*x^3+27*x^2+36*x+Mod(31,p2);$\par
 $y=Mod(u*x^5+v*x^4+w*x^3+uu*x^2+vv*x+ww ,P2); $\par
 $z=y^{(p-1)} ; t=z; for(i=1, 5, z=z^p*t); U=component(z-1,2);  $\par
 $u1=component(U,1); if(u1==0, u1=Mod(0,p2)); $\par
 $u2=component(U,2); if(u2==0, u2=Mod(0,p2)); $\par
 $u3=component(U,3); if(u3==0, u3=Mod(0,p2)); $\par
 $u4=component(U,4); if(u4==0, u4=Mod(0,p2)); $\par
 $u5=component(U,5); if(u5==0, u5=Mod(0,p2)); $\par
 $u6=component(U,6); if(u6==0, u6=Mod(0,p2)); $\par
 $x1=component(u1,2)/p; x2=component(u2,2)/p; x3=component(u3,2)/p;  $\par
 $x4=component(u4,2)/p; x5=component(u5,2)/p; x6=component(u6,2)/p;  $\par
 $X1=Mod(x1,p); X2=Mod(x2,p); X3=Mod(x3,p);  $\par
 $X4=Mod(x4,p); X5=Mod(x5,p); X6=Mod(x6,p);  $\par
 $E1= Mod(X1+X2*e1+X3*e1^2+X4*e1^3+X5*e1^4+X6*e1^5, P); $\par
 $E2=Mod(X1+X2*e2+X3*e2^2+X4*e2^3+X5*e2^4+X6*e2^5, P); $\par
 $E3=Mod(X1+X2*e3+X3*e3^2+X4*e3^3+X5*e3^4+X6*e3^5, P); $\par
 $E4=Mod(X1+X2*e4+X3*e4^2+X4*e4^3+X5*e4^4+X6*e4^5, P); $\par
 $E5=Mod(X1+X2*e5+X3*e5^2+X4*e5^3+X5*e5^4+X6*e5^5, P); $\par
 $E6=Mod(X1+X2*e6+X3*e6^2+X4*e6^3+X5*e6^4+X6*e6^5, P); $\par
 $F1=component(E1,2); F2=component(E2,2); F3=component(E3,2); $\par 
 $F4=component(E4,2); F5=component(E5,2); F6=component(E6,2);  $\par
 $SP=F1+ F2+F3; SM=F4+ F5+ F6;  FP= SP+SM; FM=SP-SM;   $\par
 $ RP=E1^2+E2^2+E3^2 - E4^2-E5^2-E6^2-E1*E2-E2*E3 -E3*E1 $\par
 \hfill  $+E4*E5+E5*E6+E6*E4; $\par
  $if(FP==0, N1=N1+1); $\par
   $if(FM==0,N2=N2+1); $\par
  $if(RP==0,N3=N3+1); $\par
  $if(FP==0 \& FM==0, N12=N12+1); $\par
  $if(FP==0 \& RP==0, N13=N13+1); $\par
   $if(FM==0 \& RP==0, N23=N23+1); $\par
  $if(FP==0 \& FM==0 \& RP==0, N123=N123+1) ) ); $\par
   $print("p = ",p,"     ","N1 = ",N1,"   ","N2= ",N2, "     ","N3 = ",N3,"    ", $\par
     $(N1+0.0)/N0,"    ",(N2+0.0)/N0,"    ",(N3+0.0)/N0 );   $\par
   $print("N12 = ",N12,"   ","N13= ",N13, "     ","N23 = ",N23,"    ","N123 = ",N123,"    ", $\par
   $  (N12+0.0)/N0,"    ",(N13+0.0)/N0,"    ",(N23+0.0)/N0,"         ", (N123+0.0)/N0);   $\par
   $print("      ","1/p = ",1./p,"        ","1/p^2 = ",1./p^2,"    ","1/p^3 = ",1./p^3)$$\}$
\normalsize

\medskip
Pour $p = 13$ on obtient les valeurs suivantes~: 

\smallskip
$N_0= 999115$ ; $N_1= 76820$ ; $N_2= 77009$ ; $N_3= 82239$ ; 

\smallskip
$N_{12 }= 5898$ ; $N_{13 }= 6301$  ; $N_{23 }= 6453$ ; $N_{123 }= 442$, et les densit\'es respectives~:

\smallskip
$\frac{N_1}{N_0} =  0.076888$ ; $\frac{N_2}{N_0} =   0.07707 $ ; $\frac{N_3}{N_0} = 0.0823$ ; 

\smallskip
$\frac{N_{12}}{N_0} = 0.00590$ ; $\frac{N_{13}}{N_0} =   0.006306$ ; $\frac{N_{23}}{N_0} =  0.006458$ ; 
$\frac{N_{123}}{N_0} =  0.0004424$ ; 

\smallskip
avec $\frac{1}{p} = 0.07692$,  $\frac{1}{p^2} = 0.005917$,  $\frac{1}{p^3} = 0.000455$, 
d'o\`u les probabilit\'es attendues.

\medskip
Pour $p = 37$ on obtient les valeurs suivantes~: 

\smallskip
$N_0= 999952$ ; $N_1= 27153$ ; $N_2= 27054$ ; $N_3= 27747$ ; 

\smallskip
$N_{12 }= 718$ ; $N_{13 }= 761$  ; $N_{23 }= 755$ ; $N_{123 }= 16$,  et les densit\'es respectives~:

\smallskip
$\frac{N_1}{N_0} =  0.0271543$ ; $\frac{N_2}{N_0} =  0.027055$ ; $\frac{N_3}{N_0} = 0.0277483$ ; 

\smallskip
$\frac{N_{12}}{N_0} =  0.000718$ ; $\frac{N_{13}}{N_0} =   0.000761$ ; $\frac{N_{23}}{N_0} =  0.000755$ ; 
$\frac{N_{123}}{N_0} =   1.600 \times 10^{-5}$,

\smallskip
avec $\frac{1}{p} = 0.027027$,  $\frac{1}{p^2} = 0.00073046$,  $\frac{1}{p^3} =  1.97 \times 10^{-5}$.

\medskip
On donnera au\,\S\ref{sub112} des programmes effectuant ces statistiques \`a partir du calcul du rang de la matrice des coefficients des conjugu\'es de $\alpha$ (groupes $C_3$, $C_5$ et $D_6$). Pour les groupes cycliques, le degr\'e
r\'esiduel intervient contrairement au cas de $D_6$, o\`u $f=1$ quel que soit $p$, mais pour $D_6$ l'entier $\delta$ vaut 0, 1 ou~2.

\subsection{Principes d'analyse -- Lin\'earisation du probl\`eme}\label{sub13}
Soit $\eta \in K^\times$ donn\'e~; on suppose pour simplifier que le $G$-module engendr\'e par $\eta$ est de $\Z$-rang 
$n = \vert G\vert$ (sinon on doit tenir compte des nullit\'es triviales).

\smallskip
On consid\`ere le $G$-module ${\mathcal L}$ engendr\'e par les relations \`a coefficients $p$-entiers rationnels 
(cf. D\'efinition \ref{defi11},\,\S\ref{sub155}). 
Comme il a \'et\'e vu dans le\,\S\ref{sub9}, la condition $\Delta_p^\theta(\eta) \equiv 0 \pmod p$ \'equivaut \`a
la non trivialit\'e de ${\mathcal L}^\theta$
(cf. Th\'eor\`eme \ref{theo24},\,\S\ref{sub15}) et c'est cet aspect ``lin\'earis\'e'' qui va permettre l'analyse probabiliste.

\smallskip
Le cas $\chi =\theta= 1$ a lieu pour tout groupe $G$ et $\Delta^1_p(\eta)$ est congru modulo $p$ au $p$-quotient de Fermat 
$q_p(a)$ de $a = \No_{K/\Q}(\eta)$. On peut admettre a priori que ce quotient de Fermat a la probabilit\'e 
$\frac{1}{p}$ d'\^etre \'egal \`a une valeur $t\pmod p$ donn\'ee, mais nous reviendrons sur cette question au\,\S\ref{subalt}
pour sugg\'erer que ceci semble arbitraire (voir cependant dans \cite{Hat},\,\S\,2, l'\'etude statistique de la distribution de $q_p(2)$ modulo $p \leq 1010783$, mais $p$ n'est ici pas tr\`es grand).

\smallskip
La relation $\No_{K/\Q}(\eta)=a$ se transmet \`a $\alpha = \alpha_p(\eta)$ via la formule $\sum_{\nu \in G} \alpha^\nu \equiv q_p(a)  \pmod p$ qui constitue, pour $\theta=1$, la $\theta$-relation lin\'eaire non triviale associ\'ee \`a  la nullit\'e modulo $p$ de $\Delta_p^1(\eta)$ lorsque $q_p(a) \equiv 0 \pmod p$.

\smallskip
 L'existence d'une $\theta$-relation lin\'eaire non triviale entre les conjugu\'es de $\alpha$ peut toujours, pour la commodit\'e num\'erique des programmes, s'exprimer  sur $n$ variables rationnelles de la forme $A_1, \ldots, A_n$, o\`u les $A_i$ sont par exemple les composantes de $\alpha$ sur une $\Z_{(p)}$-base de $Z_{K,(p)}$. 

\smallskip
Pour introduire cet aspect statistique par le calcul, nous  reprenons en d\'etail  le cas cubique cyclique,\,\S\ref{sub19}, 
puis le cas Ab\'elien en toute g\'en\'eralit\'e,\,\S\ref{sub27}. Dans le \S\ref{sub29}, nous traitons un cas non Ab\'elien via le groupe $D_6$.

\subsubsection{Principes heuristiques et probabilistes}\label{sub14}
On peut  poser les principes heuristiques suivants ($p$ fix\'e assez grand et 
utilisation de la fonction {\it random} de PARI pour d\'efinir $\eta$ \'etranger \`a $p$)~:

\smallskip
(i) On peut se limiter \`a conna\^itre le nombre $\eta$ modulo $p^2$ puisque $\alpha$ est d\'efini par
$\alpha \equiv \frac{-1}{p} {\rm log}_p (\eta) \pmod p$.

Bien que la correspondance entre $\eta$ modulo $p^2$ ($\eta$ \'etranger \`a $p$) et $\alpha \equiv \frac{-1}{p} {\rm log}_p(\eta)$
modulo $p$ soit {\it alg\'ebriquement} canonique, nous avons pr\'ef\'er\'e partir de $\eta$ afin de respecter l'aspect diophantien, d'autant que par utilisation de {\it random}, tous les $\eta$ ne sont pas atteints. Par exemple, n\'egliger cet aspect pour le cas 
$K = \Q$ reviendrait \`a utiliser la fonction  {\it random} sur l'intervalle $[0, p-1]$ en notant le nombre de fois o\`u 0 est pris, ce qui n'a plus rien \`a voir avec le sujet !

\smallskip
Si d'un point de vue $p$-adique, $\alpha$ parcourt de fa\c con \'equiprobable l'anneau
quotient $Z_{K,(p)}/(p) \simeq \F_p^{\,n}$, l'exp\'erience montre que les r\'esultats statistiques restent excel\-lents si pour $p$ assez grand on limite la fonction  {\it random} (pour d\'efinir $\eta$) \`a un petit domaine de $(\Z/p^2\Z)^n$, ce qui pr\'eserve l'aspect diophantien et d\'emontre une uniformit\'e (limitation obligatoire lorsque $p$ est tr\`es grand).

\smallskip
(ii) Soit $(e_i)_{i=1,\ldots,n}$ une $\Z_{(p)}$-base de $Z_{K,(p)}$ et posons $\alpha = \sum_{i=1}^n A_i e_i$, $A_i \in \Z_{(p)}$~;
alors, modulo $p$,  les variables $A_i$ sont ind\'ependantes et \'equiprobables dans $\F_p$, et ceci ne d\'epend pas du groupe 
$G$ ni du choix de la base.

\smallskip
(iii) Toute relation non triviale de la forme $\sum_{\nu \in G} u(\nu)\,\alpha^{\nu^{\!-1}}
\equiv 0 \pmod p$ se traduit par une relation non triviale analogue sur les $A_i$
(ceci r\'esulte du fait que les conjugu\'es des $e_j$ sont des formes lin\'eaires en les $e_i$ ind\'ependantes de $p$).

\subsubsection{Heuristique principale.} \label{HP} La probabilit\'e de $\Delta_p^\theta (\eta)\equiv 0 \pmod p$ est celle de ${\mathcal L}^\theta \ne \{0\}$ (cf. Th\'eor\`eme \ref{theo24},\,\S\ref{sub15}). Elle est induite par le nombre de relations $\F_p[G]$-ind\'e\-pen\-dantes. 
Compte tenu de la Remarque \ref{remadim},\,\S\ref{sub15}, si ${\mathcal L}^\theta \simeq \delta  V_\theta$, $1 \leq \delta  \leq \varphi(1)$,  
on peut justifier, de la fa\c con suivante, que l'on doit affecter \`a ce cas une proba\-bilit\'e en $\frac{O(1)}{p^{f \delta^2}}$, o\`u $f$ est le degr\'e r\'esiduel de $\theta$ (i.e., celui de $p$ dans le corps des valeurs $C_\chi$ de $\varphi \div \theta$), o\`u l'on consid\`ere $V_\theta$ comme $\F_p$-repr\'esentation et ensuite, par extension des scalaires, $V_\theta$ et $V_\varphi $ 
comme $\F_{p^f}$-repr\'esentations~:

\smallskip
On a ${\mathcal L}^\theta = \bigoplus_{\varphi \div \theta} {\mathcal L}^\varphi$ o\`u ${\mathcal L}^\varphi \simeq \delta V_\varphi$. L'id\'ee est que si ${\mathcal L}^\varphi \simeq \varphi(1) V_\varphi \simeq e_\varphi \F_{p^f}[G]$, (i.e., $e_\varphi \alpha \equiv 0 \pmod p$) alors la probabilit\'e correspondante est minimale en $\frac{O(1)}{p^{f\varphi(1)^2}}$ puisque $e_\varphi \alpha$ est d\'efini par $f \varphi(1)^2$ composantes ind\'ependantes dans  $\F_p$ (cf. (ii)).

\smallskip
 Or $e_\varphi \F_{p^f}[G] \simeq {\rm End}(V_\varphi)$ comme alg\`ebre d'endomorphismes d'un $\F_{p^f}$-espace vecto\-riel de dimension $\varphi(1)$.
Par cons\'equent,  ${\mathcal L}^\varphi \simeq \delta V_\varphi$ est alors vue comme sous-alg\`ebre d'endomorphismes d'un 
$\F_{p^f}$-espace vectoriel de dimension $\delta$, d'o\`u une probabilit\'e  en $\frac{O(1)}{p^{f\delta^2}}$
d'avoir ${\mathcal L}^\varphi \simeq \delta V_\varphi$. 
On notera que  la probabilit\'e d'avoir tous les $\Delta_p^\theta (\eta)\equiv 0 \!\!\pmod p$ avec chaque fois $\delta=\varphi(1)$ 
 (i.e., $\alpha\equiv 0 \pmod p$, ou encore aux $n$ composantes de $\alpha$ nulles modulo $p$) est $\frac{O(1)}{p^n}$ puisque $\sum_\theta f \varphi(1)^2 = \vert G \vert=n$.

\smallskip
 Le cas non trivial le plus fr\'equent est $\delta =1$ (le degr\'e r\'esiduel $f$ d\'epend canoni\-quement de $p$ contrairement \`a $\delta $ qui est ``num\'erique''). Par exemple, le passage de $\delta =1$ \`a $\delta =2$ (pour $f=1$) fait passer les probabilit\'es de $\frac{O(1)}{p}$ \`a $\frac{O(1)}{p^{4}}$, quasi nulle pour $p$ grand (tr\`es bien confirm\'e par les statistiques num\'eriques,
cf. \S\,\ref{subD6}).

\begin{example}\label{ex11}{\rm 
Cas de $G=D_6$ ($f=1$, $1 \leq \delta  \leq 2$). Soit $\theta$ le 
caract\`ere irr\'eductible de degr\'e 2~; la repr\'esentation $e_\theta \F_p[G]$ est isomorphe \`a $2\,V_\theta$ 
o\`u $V_\theta$ est de $\F_p$-dimension 2. 
On peut par exemple engendrer $e_\theta \F_p[G]$ de la fa\c con suivante (cf. Remarque \ref{rema511},\,\S\ref{sub5})~:

\smallskip
On consid\`ere les $\theta$-relations $U_1 = 1-\sigma^2 + \tau -\tau\sigma$ et $U_2 = 1-\sigma - \tau +\tau\sigma$, 
et on v\'erifie que l'on a les expressions~:
\begin{eqnarray*}
             U_1 &=& 1-\sigma^2 + \tau -\tau\sigma, \ \ \ \sigma U_1 = \sigma-1 + \tau\sigma^2 -\tau ,  \ \ \ \ \,
\sigma^2 U_1 = -U_1- \sigma U_1, \\
           U_2 &=& 1-\sigma - \tau +\tau\sigma, \ \ \   \sigma U_2 = -\sigma^2+\sigma + \tau -\tau\sigma^2  \ \ \ 
 \sigma^2 U_2 =  -U_2- \sigma U_2,
\end{eqnarray*}
\centerline{ $\tau U_1 = -\sigma U_1$,  $\ \tau\sigma U_1 = - U_1$, $\ \tau\sigma^2 U_1 = -\sigma^2 U_1$, }

\smallskip
\centerline{ $\tau U_2 = - U_2$, $\ \tau\sigma U_2 = - \sigma^2 U_2$, $\ \tau\sigma^2 U_2 = -\sigma U_2$. }

\medskip
Les quatre \'el\'ements $U_1, \sigma U_1, U_2, \sigma U_2$ forment une $\F_p$-base de l'espace des $\theta$-relations 
possibles, ce qui  justifie la probabilit\'e en $\frac{O(1)}{p}$ seulement pour le cas $\delta \geq 1$ mais en $\frac{O(1)}{p^4}$
pour $\delta = 2$. }
\end{example}

\subsection{Statistiques sur le rang de la matrice des coefficients}\label{sub112}

Une premi\`ere statistique consiste \`a d\'eterminer la probabilit\'e d'avoir au moins une relation non triviale entre les conjugu\'es
de $\alpha$~; si $\alpha^\nu = \sum_{i=1}^n A_i(\nu) \,e_i$, alors la matrice $\big( A_i(\nu) \big)_{i, \nu}$ doit \^etre de 
$\F_p$-rang strictement inf\'erieur \`a $n$. 

\smallskip
Nous avons vu que pour $\theta \div \chi$ la probabilit\'e de nullit\'e modulo $p$ de $\Delta_p^\theta(\eta)$ 
seul est en $\Frac{1}{p^f}$, o\`u $f$ est le degr\'e r\'esiduel de $\theta$ (i.e., celui de
$p$ dans $C_\chi/\Q$)~; comme il y a $h = \frac{[C_\chi : \Q]}{f}$ tels caract\`eres
$p$-adiques $\theta \div \chi$, la probabilit\'e d'avoir au moins un $\Delta_p^\theta(\eta)$ nul modulo $p$ 
pour $\theta \div \chi$ est en $\Frac{h}{p^f}$. 

\smallskip
Par cons\'equent si l'on d\'esigne par $\theta_i$, $h_i$, $f_i$ les param\`etres ci-dessus pour la totalit\'e des 
caract\`eres $p$-adiques de $G$ (regroup\'es par caract\`eres rationnels $\chi_i$), la probabilit\'e th\'eorique d'obtenir une matrice de $\F_p$-rang $< n$ est donn\'ee par~:
$$\sm_{i} \Frac{h_i}{p^{f_i}} - \sm_{i< j} \Frac{h_i}{p^{f_i}}\Frac{h_j}{p^{f_j}} + 
\sm_{i< j< k} \Frac{h_i}{p^{f_i}}\Frac{h_j}{p^{f_j}} \Frac{h_k}{p^{f_k}} - \cdots \, ,$$

ce que l'on peut v\'erifier au moyen des programmes suivants calculant, pour des $\eta$  al\'eatoires,
le nombre de cas de  $\F_p$-rang $< n$ ($G \simeq C_3, C_5, D_6$ respectivement)~:

\smallskip
\subsubsection {Cas $G$ cyclique d'ordre 3\ \rm (deux caract\`eres rationnels)}\label{subC3}
Dans le cas $p\equiv 1 \pmod 3$, on a trois caract\`eres
$p$-adiques de degr\'e r\'esiduel $f=1$, dans  le cas $p\equiv 2\pmod 3$, on a un caract\`ere
$p$-adique de degr\'e r\'esiduel $f=2$ et le caract\`ere unit\'e.
\footnotesize

\medskip
  $\{$$t=11;  Q=x^3-t*x^2-(t+3)*x-1;  p=43;  p2=p^2; N0=0;  N3=0; $\par$ 
   for(i=1, 500000,  
   aa=random(p2);  bb=random(p2);  cc=random(p2); $\par$  
  e=Mod(aa*x^2+bb*x+cc, Q); N=norm(e);    if(Mod(N,p)!=0, N0=N0+1; $\par$ 
  a=Mod(aa,p2);   b=Mod(bb,p2);   c=Mod(cc,p2); $\par$   
  P=x^3-t*x^2-(t+3)*x-Mod(1,p);  P2=x^3-t*x^2-(t+3)*x-Mod(1,p2) ; $\par$  
  y=Mod(a*x^2+b*x+c,P2);   u= y^{(p+1)} ;   v=u^p*y ;   z=v^{(p-1)}  -1; $\par$  
  U=component(z,2); $\par$   
  u1=component(U,1);   if(u1==0, u1=Mod(0,p2)); $\par$  
  u2=component(U,2);   if(u2==0, u2=Mod(0,p2)) ; $\par$  
  u3=component(U,3);   if(u3==0, u3=Mod(0,p2)); $\par$  
  x1=component(u1,2)/p;   x2=component(u2,2)/p;   x3=component(u3,2)/p; $\par$  
  X1=Mod(x1,p);   X2=Mod(x2,p);   X3=Mod(x3,p); $\par$   
  SX1=X1-2*X2-X3*(t-2); $\par$   
  SX2=-X3*(t^2+t+1)-X2*(t+1); $\par$   
  SX3=X3*t+X2; $\par$  
  TX1=X3*(t^2+3*t+4)+X2*(t+2)+X1; $\par$   
  TX2=X3*(t^2+t+1)+X2*t; $\par$  
  TX3=-X3*(t+1)-X2; $\par$  
  E1=Mod(X1+X2*x+X3*x^2, P); $\par$   
  E2=Mod(SX1+SX2*x+SX3*x^2, P); $\par$  
  E3=Mod(TX1+TX2*x+TX3*x^2, P); $\par$ 
 F1=component(E1,2);   F2=component(E2,2);   F3=component(E3,2); GG=[F1, F2, F3]; $\par$ 
  M=matrix(3, 3, i, j, component ( component (GG, i), j) );   r=matrank(M); $\par$ 
 if(r<3, N3=N3+1 ))) ; print("p = ",p,"     ",N0,"   ","N3 = ",N3, "    ",(N3+0.0)/N0); $\par$ 
  res=Mod(p,3);  if(res==Mod(1,3), print(3./p -3./p^2 +1./p^3)); $\par$
  if(res!=Mod(1,3), print(1./p +1./p^2-1./p^3))$$\}$
\normalsize

\medskip
On obtient les exemples suivants~: 

$p = 43$,    $N_0=  4999952$,   $N3 = 341000 $,    $\frac{N_3}{N_0} = 0.068200$,
probabilit\'e $0.068685$.

$p = 41$,    $N_0= 4999931 $,   $N3 = 124889$,    $\frac{N_3}{N_0} = 0.024978$,
probabilit\'e $0.024970$.

\medskip
\subsubsection {Cas $G$ cyclique d'ordre 5\ \rm (deux caract\`eres rationnels)}\label{subC5}
 Ce cas est le seul des cas \'etudi\'es pour lequel 
il y a (pour $p \equiv -1 \pmod 5$) deux caract\`eres $p$-adiques de degr\'e r\'esiduel $f=2$.
\footnotesize 

\medskip
  $\{$$Q=x^5+x^4-4*x^3-3*x^2+3*x+1;     p=13;   p2=p^2; $\par$
  s1=x;   s2=x^2-2;   s3=x^4 - 4*x^2 + 2;   s4= x^3-3*x;   s5=-x^4 - x^3 + 3*x^2 + 2*x - 1; $\par$ 
  N0=0;   N5=0; $\par$ 
  for(i=1, 500000, $\par$ 
  aa=random(p2);   bb=random(p2);   cc=random(p2);   dd=random(p2);  ee=random(p2); $\par$
  Eta=Mod(aa*x^4+bb*x^3+cc*x^2+dd*x+ee,Q); $\par$  
  N=norm(Eta);    if(Mod(N,p)!=0, N0=N0+1; $\par$ 
  a=Mod(aa,p2);   b=Mod(bb,p2);   c=Mod(cc,p2);   d=Mod(dd,p2);   e=Mod(ee,p2); $\par$   
  P=x^5+x^4-4*x^3-3*x^2+3*x+Mod(1,p); P2=x^5+x^4-4*x^3-3*x^2+3*x+Mod(1,p2); $\par$  
  y=Mod(a*x^4+b*x^3+c*x^2+d*x+e,P2); $\par$   
  u=y^{(p+1)}  ;   v=u^p*y;   w=v^p*y;   s=w^p*y;   t=s^{(p-1)}  -1; $\par$  
  U=component(t,2); $\par$  
  u1=component(U,1);   if(u1==0, u1=Mod(0,p2)); $\par$  
  u2=component(U,2);   if(u2==0, u2=Mod(0,p2)); $\par$  
  u3=component(U,3);   if(u3==0, u3=Mod(0,p2)); $\par$  
  u4=component(U,4);   if(u4==0, u4=Mod(0,p2)); $\par$  
  u5=component(U,5);   if(u5==0, u5=Mod(0,p2)); $\par$  
  x1=component(u1,2)/p;   x2=component(u2,2)/p;   x3=component(u3,2)/p;  $\par
  \hfill $x4=component(u4,2)/p;   x5=component(u5,2)/p; $\par$  
  X1=Mod(x1,p);   X2=Mod(x2,p);   X3=Mod(x3,p);   X4=Mod(x4,p); X5=Mod(x5,p); $\par$ 
  E1=  Mod(X5*s1^4+X4*s1^3+X3*s1^2+X2*s1+X1,P); $\par$ 
  E2=  Mod(X5*s2^4+X4*s2^3+X3*s2^2+X2*s2+X1,P); $\par$ 
  E3=  Mod(X5*s3^4+X4*s3^3+X3*s3^2 +X2*s3+X1,P); $\par$ 
  E4=  Mod(X5*s4^4+X4*s4^3 +X3*s4^2+X2*s4+X1,P); $\par$ 
  E5=  Mod(X5*s5^4+X4*s5^3 +X3*s5^2+X2*s5+X1,P); $\par$ 
  F1=component(E1,2);   F2=component(E2,2);   F3=component(E3,2);   $\par$ 
 \hfill F4=component(E4,2);   F5=component(E5,2); $\par$
  GG=[F1, F2, F3, F4, F5]; $\par$ 
 M=matrix(5, 5, i, j, component ( component (GG, i), j) );   r=matrank(M); $\par$ 
  if(r<5, N5=N5+1 ))) ;  print("p = ",p,"     ",N0,"   ","N5 = ",N5, "    ",(N5+0.0)/N0); $\par$ 
  res=Mod(p,5);   if(res==Mod(1,5), print(5./p-10./p^2 +10./p^3 -5./p^4 + 1./p^5 ) ); $\par$
  if(res^2==Mod(-1,5), print(1./p+1./p^4-1./p^5) ); $\par$
  if(res==Mod(-1,5), print(1./p+2./p^2-2./p^3 - 1./p^4 + 1./p^5) )$$\}$
 \normalsize

\medskip
Valeurs num\'eriques obtenues~:

\smallskip
$p = 7\ \,$,    $N_0= 499977$,   $N5 = 71650 $,    $\frac{N_5}{N_0} = 0.14330$,
probabilit\'e $0.143214$.

$p = 19$,    $N_0= 500000$,   $N5 = 29033$,    $\frac{N_5}{N_0} = 0.05806$,
probabilit\'e $0.057880$.

$p = 31$,    $N_0= 500000$,   $N5 = 75737$,    $\frac{N_5}{N_0} = 0.15147$,
probabilit\'e $0.151214$.

\medskip
En modifiant la fin du programme comme suit~:
\footnotesize

\medskip
  $F1=component(E1,2); F2=component(E2,2); F3=component(E3,2); $\par$ 
  \hfill F4=component(E4,2); F5=component(E5,2); $\par$
  H= F1+ F2+F3+ F4+ F5; $\par$ 
  R1 = F1+ 2*F2+4*F3+ 8*F4+ 16*F5; R2 = F1+ 4*F2+16*F3+ 2*F4+ 8*F5; $\par$ 
  R3 = F1+ 8*F2+2*F3+ 16*F4+ 4*F5; R4 = F1+ 16*F2+8*F3+ 4*F4+ 2*F5; $\par$ 
  M=matrix(5, 5, i, j, component ( component (GG, i), j) ); $\par$ 
  if(H!=0 \& R1==0 \& R2==0 \& R3!=0 \& R4!=0, N1=N1+1 ))) ; $\par$ 
  print("p = ",p,"     ",N0,"   ","N1 = ",N1, "    ",(N1+0.0)/N0); print(1./p^2) $
\normalsize

\medskip
on teste la fr\'equence de nullit\'e modulo $p$ des $\theta$-r\'egulateurs relatifs 
\`a deux caract\`eres $p$-adiques ($p=31$ totalement d\'ecompos\'e), et deux seulement parmi les cinq, \`a savoir
par exemple pour $\theta_1$ et $\theta_2$ d\'efinis par $\theta_1(\sigma^{\!-1}) \equiv 2$, 
$\theta_2(\sigma^{\!-1}) \equiv 4 \pmod p$~:

\medskip
\centerline{$R_1 = \alpha + 2\alpha^\sigma + 4\alpha^{\sigma^2} + 8\alpha^{\sigma^3} +16 \alpha^{\sigma^4}$ et 
$R_2 = \alpha + 4\alpha^{\sigma} + 16\alpha^{\sigma^2} +2 \alpha^{\sigma^3} + 8\alpha^{\sigma^4}$.}

\medskip
Pour $N_0 = 1000000$, $N_1 = 943$, on a $\frac{N_1}{N_0} = 0.000943$, et
la probabilit\'e $0.001040$, ce qui montre l'ind\'ependance des r\'egulateurs relatifs 
\`a des caract\`eres $p$-adiques d'un m\^eme caract\`ere rationnel.
Les autres combinaisons donnent des r\'esultats semblables.

\medskip
\subsubsection {Cas $G$ di\'edral d'ordre 6\ \rm (trois caract\`eres rationnels et $p$-adiques)}\label{subD6}
Dans ce cas on a $h = f =1$ pour tous les caract\`eres. On doit \'egalement tester les probabilit\'es relevant de la valeur de 
$\delta$.

\footnotesize
\medskip
  $\{$$Q=x^6+9*x^4-4*x^3+27*x^2+36*x+31;   p=17;   p2=p^2; $\par$ 
  e1= x ; $\par$   
  e2= 11/180*x^5 + 1/180*x^4 + 11/18*x^3 - 1/45*x^2 + 403/180*x + 419/180 ; $\par$  
  e3= 13/180*x^5 - 7/180*x^4 + 13/18*x^3 - 38/45*x^2 + 509/180*x + 127/180 ; $\par$  
  e4= -4/45*x^5 + 1/45*x^4 - 8/9*x^3 + 26/45*x^2 - 137/45*x - 91/45 ; $\par$   
  e5= -1/60*x^5 - 1/60*x^4 - 1/6*x^3 - 4/15*x^2 - 73/60*x - 79/60 ; $\par$  
  e6= -1/36*x^5 + 1/36*x^4 - 5/18*x^3 + 5/9*x^2 - 65/36*x + 11/36 ; $\par$  
  N0=0; N1=0;  N6=0; $\par$
  for(i=1, 50000, 
  a=random(p2);   b=random(p2);    c=random(p2);   aa=random(p2);   $\par$ 
\hfill bb=random(p2);   cc=random(p2); $\par$  
  Eta= Mod(a*e1^5+b*e1^4+c*e1^3+aa*e1^2+bb*e1+cc,Q); $\par$  
  N=norm(Eta);  if (Mod(N,p)!=0, N0=N0+1; $\par$  
  u=Mod(a,p2);   v=Mod(b,p2);   w=Mod(c,p2);  uu=Mod(aa,p2);   $\par$ 
\hfill  vv=Mod(bb,p2);   ww=Mod(cc,p2); $\par$   
  P=x^6+9*x^4-4*x^3+27*x^2+36*x+Mod(31,p); $\par$  
  P2=x^6+9*x^4-4*x^3+27*x^2+36*x+Mod(31,p2); $\par$ 
  y=Mod(u*x^5+v*x^4+w*x^3+uu*x^2+vv*x+ww ,P2); $\par$  
  z=y^{(p-1)}  ;   t=z;   for(i=1, 5, z=z^p*t); $\par$   
  U=component(z-1,2); $\par$   
  u1=component(U,1);   if(u1==0, u1=Mod(0,p2)) ; $\par$  
  u2=component(U,2);   if(u2==0, u2=Mod(0,p2)) ; $\par$  
  u3=component(U,3);   if(u3==0, u3=Mod(0,p2)) ; $\par$  
  u4=component(U,4);   if(u4==0, u4=Mod(0,p2)) ; $\par$  
  u5=component(U,5);   if(u5==0, u5=Mod(0,p2)) ; $\par$   
  u6=component(U,6);   if(u6==0, u6=Mod(0,p2)) ; $\par$  
  x1=component(u1,2)/p;   x2=component(u2,2)/p;   x3=component(u3,2)/p; $\par$
  x4=component(u4,2)/p; x5=component(u5,2)/p;   x6=component(u6,2)/p; $\par$   
  X1=Mod(x1,p);   X2=Mod(x2,p);   X3=Mod(x3,p); $\par$
  X4=Mod(x4,p);   X5=Mod(x5,p);   X6=Mod(x6,p); $\par$   
  E1= Mod(X1+X2*e1+X3*e1^2+X4*e1^3+X5*e1^4+X6*e1^5, P); $\par$  
  E2=Mod(X1+X2*e2+X3*e2^2+X4*e2^3+X5*e2^4+X6*e2^5, P); $\par$  
  E3=Mod(X1+X2*e3+X3*e3^2+X4*e3^3+X5*e3^4+X6*e3^5, P); $\par$  
  E4=Mod(X1+X2*e4+X3*e4^2+X4*e4^3+X5*e4^4+X6*e4^5, P); $\par$  
  E5=Mod(X1+X2*e5+X3*e5^2+X4*e5^3+X5*e5^4+X6*e5^5, P); $\par$  
  E6=Mod(X1+X2*e6+X3*e6^2+X4*e6^3+X5*e6^4+X6*e6^5, P); $\par$  
  F1=component(E1,2);   F2=component(E2,2);   F3=component(E3,2);$\par$
  F4=component(E4,2);   F5=component(E5,2);   F6=component(E6,2); $\par$   
  GG=[F1, F2, F3, F4, F5, F6]; $\par$   
   M=matrix(6, 6, i, j, component ( component (GG, i), j) );   r=matrank(M); $\par$ 
  if(r<6, N6=N6+1) )  ); $\par$ 
   print("p = ",p,"     ",N0,"    ","N6 = ",N6, "    ",(N6+0.0)/N0);  print(3./p - 3./p^2 + 1./p^3)$$\}$
\normalsize

\medskip
Les r\'esultats ne d\'ependent pas de classes de congruences de $p$ car on a $C_\chi = \Q$~:

\smallskip
$p = 13$,    $N_0= 49954$,   $N6 = 10794$,    $\frac{N_6}{N_0} = 0.21607$,
probabilit\'e $0.21347$.

$p = 17$,    $N_0= 49516$,   $\ \,N6 = 8337$,    $\frac{N_6}{N_0} = 0.16836$,
probabilit\'e $0.16629$.

$p = 29$,    $N_0= 49815$,   $\ \,N6 = 5056 $,    $\frac{N_6}{N_0} = 0.10149$,
probabilit\'e $0.09992$.

$p = 31$,    $N_0= 40982$,   $\ \,N6 = 3854$,    $\frac{N_6}{N_0} = 0.09404$,
probabilit\'e $0.09368$.

$p = 37$,    $N_0= 49998$,   $\ \,N6 = 3959$,    $\frac{N_6}{N_0} = 0.07918$,
probabilit\'e $0.07890$.

\medskip
On reprend ensuite le m\^eme programme en faisant les statistiques du cas $\delta = 2$, ce qui peut se tester en recherchant les cas o\`u les r\'egulateurs $\Delta_p^1(\eta)$ et $\Delta_p^{\chi_1}(\eta)$ sont non nuls modulo $p$ et la matrice des coefficients 
de rang 2 ($\Delta_p^\theta(\eta) \equiv 0 \pmod p$ pour $\theta = \chi_2$ et ${\mathcal L}^\theta$ de dimension 4)~; nombre de cas dans $N_6$.

\smallskip
On remplace la fin du programme par la s\'equence suivante~:

\footnotesize
\medskip
 $ GG=[F1, F2, F3, F4, F5, F6]; $\par$   
  DD=(F1+ F2+ F3)^2 - (F4+ F5+ F6)^2; $\par$  
  RP=E1^2+E2^2+E3^2 - E4^2-E5^2-E6^2-E1*E2-E2*E3 -E3*E1 $\par$  
    \hfill   +E4*E5+E5*E6+E6*E4; $\par$  
  if(RP==0 \& DD !=0, N1=N1+1; $\par$  
  M=matrix(6, 6, i, j, component ( component (GG, i), j) );   r=matrank(M);   $\par$  
   if(r==2, N6=N6+1) ) ) );  $\par$  
   print("p = ",p,"     ",N0,"    ","N6 = ",N6, "    ",(N6+0.0)/N0,"    ",1./p^4);  $\par$  
print("N1 = ",N1, "    ",(N1+0.0)/N0,"    ",1./p) $$\}$

\normalsize
\medskip
On obtient les r\'esultats suivants pour $p=13$~:

\smallskip
$p = 13$~;   $N_0 = 499541$~;  $N_6 = 18$~;   $\frac{N_6}{N_0}= 3.60 \times 10^{-5}$~;
 $\frac{1}{p^4}=  3.50 \times 10^{-5}$~;
$N_1 = 34925$~;    $\frac{N_1}{N_0}=0.06991$~;  $\frac{1}{p}= 0.07692$.

\subsection{Ind\'ependance locale des composantes sur une base}\label{sub16}
  Il reste \`a v\'erifier le caract\`ere de ``variables al\'eatoires ind\'ependantes'' de $A_1, \ldots, A_n$~; 
nous ne donnerons que deux exemples num\'eriques ($G = C_3$ et $G=D_6$), 
avec des programmes PARI que le lecteur peut reprendre et exp\'erimenter comme nous allons le pr\'eciser ci-apr\`es.

\subsubsection{Cas cubique}\label{sub17}
Soit $K$ le corps cubique cyclique d\'efini par le polyn\^ome
$x^3-11 x^2-14 x-1$, de conducteur $163$. On rappelle que les nombres premiers $p$ consid\'er\'es
sont assez grands.

\smallskip
 Il s'agit de v\'erifier que les variables $A, B, C$, qui d\'efinissent $\alpha  \equiv A x^2+ B x + C \pmod p$ 
sont ind\'ependantes.

\smallskip
Le programme ci-dessous consid\`ere des entiers al\'eatoires $\eta$ modulo~$p^2$, \'etrangers \`a $p$~; il est clair que, {\it alg\'ebriquement}, $\frac{-1}{p}{\rm log}_p(\eta)$ donne uniform\'ement tous les $\alpha$ possibles modulo $p$, mais m\^eme la restriction de la fonction {\it random} \`a un petit sous-domaine de $(\Z/p^2\Z)^3$ donne des r\'esultats analogues.

\smallskip
Ensuite on calcule par exemple
le nombre de couples $(A, B)$ (resp. $(B, C)$, $(C, A)$) ayant une valeur fix\'ee arbitrairement dans $\F_p^2$, puis le nombre de 
cas o\`u  $\Delta_p^\chi(\eta) \equiv 0 \pmod p$. On peut prendre $p \geq 11$ distinct de $163$.

\smallskip
On d\'esigne par $N_0$ le nombre d'entiers $\eta$ modulo $p^2$ \'etrangers \`a $p$ consid\'er\'es, par
$N_1$ le nombre de cas o\`u $\Delta_p^\chi(\eta) \equiv 0 \pmod p$ ($\chi$ rationnel $\ne 1$), 
par  $N_2$ le nombre de couples $(A, B)$ ayant la valeur
impos\'ee modulo $p$, et le programme calcule les proportions $\frac{N_1}{N_0}$, $\frac{N_2}{N_0}$, ainsi que $\frac{2}{p}$ ou $\frac{1}{p^2}$.
\footnotesize 

\medskip
$\{$$t=11; Q=x^3-t*x^2-(t+3)*x-1; p=11; p2=p^2;$\par
 $N0=0; N1=0; N2=0;$\par
 $ for(i=1, 500000, $\par
 $ aa=random(p2); bb=random(p2); cc=random(p2); $\par
 $e=Mod(aa*x^2+bb*x+cc, Q); N=norm(e);  if(Mod(N,p)!=0, N0=N0+1;$\par
 $a=Mod(aa,p2); b=Mod(bb,p2); c=Mod(cc,p2);  $\par
 $P=x^3-t*x^2-(t+3)*x-Mod(1,p); P2=x^3-t*x^2-(t+3)*x-Mod(1,p2) ; $\par
 $y=Mod(a*x^2+b*x+c,P2); u= y^{(p+1)} ; v=u^p*y ; z=v^{(p-1)}  -1; $\par
 $U=component(z,2);  $\par
 $u1=component(U,1); if(u1==0, u1=Mod(0,p2)); $\par
 $u2=component(U,2); if(u2==0, u2=Mod(0,p2)) ; $\par
 $u3=component(U,3); if(u3==0, u3=Mod(0,p2)); $\par
 $x1=component(u1,2)/p; x2=component(u2,2)/p; x3=component(u3,2)/p; $\par
 $X1=Mod(x1,p); X2=Mod(x2,p); X3=Mod(x3,p);  $\par
 $if(X3==Mod(4,p)\& X2==Mod(1,p), N2=N2+1); $\par
 $SX1=X1-2*X2-X3*(t-2);  $\par
 $SX2=-X3*(t^2+t+1)-X2*(t+1);  $\par
 $SX3=X3*t+X2; $\par
 $TX1=X3*(t^2+3*t+4)+X2*(t+2)+X1;  $\par
 $TX2=X3*(t^2+t+1)+X2*t; $\par
 $TX3=-X3*(t+1)-X2; $\par
 $F=Mod(X1+X2*x+X3*x^2, P);  $\par
 $SF=Mod(SX1+SX2*x+SX3*x^2, P); $\par
 $TF=Mod(TX1+TX2*x+TX3*x^2, P);$\par
 $RP= F^2+SF^2+TF^2-F*SF-SF*TF-TF*F;$\par
 $rp=component(RP,2); if(rp==0, N1=N1+1) ) )  ;$\par
 $print(p,"     ",N0,"   ",N1,"   ",N2, "    ",(N1+0.0)/N0,"    ",(N2+0.0)/N0,"   ",1./p2)$$\}$
\normalsize

\smallskip
Les r\'esultats sont inchang\'es quelles que soient les valeurs impos\'ees num\'eriquement aux couples de coefficients
(on commence dans le tableau ci-dessous par deux cas de degr\'e r\'esiduel $2$ dans $\Q(j)/\Q$ et ensuite par des cas totalement d\'ecompos\'es)~:
$$\begin{array}{lllllllll}
\ p  &\   N_0  &\   N_1 &\   N_2 &\ \ \frac{N_1}{N_0} & \ \ \frac{N_2}{N_0} &\ \  \frac{1}{p^2} & \\  \vspace{-0.4cm} \\
5 &  255562  & 10023  & 10155 &  0.039219& 0.039736&  0.04 &  \\
11 &  499624 & 4127 &  4191 &  0.00826& 0.008388 & 0.00826&  \\ 
\ p  &\   N_0  &\   N_1 &\   N_2 &\ \ \frac{N_1}{N_0} & \ \ \frac{N_2}{N_0} &\ \  \frac{1}{p^2} &\ \  \frac{2}{p}\\  \vspace{-0.4cm} \\
7 &  498553 &   132167  & 10275   &0.2651& 0.0206 & 0.0204 &0.286 \\
13 &  392751 &  57826  &  2401   & 0.1472 & 0.006113& 0.005917 & 0.154\\
19 &  499907  & 51293 & 1421  &  0.1025 & 0.00284 & 0.00277& 0.105
\end{array} $$

Toutes les proportions $\frac{N_2}{N_0}$ sont proches de $\frac{1}{p^2}$. 
Dans les cas $p\equiv 1 \pmod 3$  les proportions $\frac{N_1}{N_0}$ sont proches de $\frac{2}{p}$
(existence de deux caract\`eres $p$-adiques),
et proches de $\frac{1}{p^2}$ dans le cas $p\equiv 2 \pmod 3$.
Naturellement, si l'on impose seulement une valeur num\'erique (\`a $A, B$ ou $C$) on obtient 
$\frac{N_2}{N_0} \sim \frac{1}{p}$ et $\frac{1}{p^3}$ si l'on impose les trois valeurs. 
On a donc bien une ind\'ependance statistique des variables $A, B, C$.

\subsubsection{Cas de $D_6$}\label{sub18}
 Une \'etude analogue utilise le programme PARI~suivant, o\`u $nc$ indique le nombre de composantes impos\'ees
($1 \leq nc \leq 6$)~:
\footnotesize 

\medskip
 $\{$$Q=x^6+9*x^4-4*x^3+27*x^2+36*x+31; p=7; p2=p^2; N0=0; N1=0; $\par
  $e1= x ;$\par
  $e2=11/180*x^5 + 1/180*x^4 + 11/18*x^3 - 1/45*x^2 + 403/180*x + 419/180 ;  $\par
  $e3=13/180*x^5 - 7/180*x^4 + 13/18*x^3 - 38/45*x^2 + 509/180*x + 127/180 ;  $\par
  $e4=-4/45*x^5 + 1/45*x^4 - 8/9*x^3 + 26/45*x^2 - 137/45*x - 91/45 ;   $\par
  $e5=-1/60*x^5 - 1/60*x^4 - 1/6*x^3 - 4/15*x^2 - 73/60*x - 79/60 ;  $\par
  $e6=-1/36*x^5 + 1/36*x^4 - 5/18*x^3 + 5/9*x^2 - 65/36*x + 11/36 ;  $\par
  $for(i=1, 500000, $\par
  $a=random(p2); b=random(p2); c=random(p2);$\par
  $aa=random(p2); bb=random(p2); cc=random(p2);$\par
  $E= Mod(a*e1^5+b*e1^4+c*e1^3+aa*e1^2+bb*e1+cc,Q);  $\par
  $N=norm(E);  if (Mod(11*N, p)!=0, N0=N0+1;$\par
  $u=Mod(a,p2); v=Mod(b,p2); w=Mod(c,p2); $\par  
  $uu=Mod(aa,p2); vv=Mod(bb,p2); ww=Mod(cc,p2); $\par  
  $P=x^6+9*x^4-4*x^3+27*x^2+36*x+Mod(31,p);  $\par
  $P2=x^6+9*x^4-4*x^3+27*x^2+36*x+Mod(31,p2);$\par  
  $y=Mod(u*x^5+v*x^4+w*x^3+uu*x^2+vv*x+ww ,P2); $\par 
  $z=y^{(p-1)}  ; t=z; for(i=1, 5, z=z^p*t); U=component(z-1,2); $\par  
  $u1=component(U,1); if(u1==0, u1=Mod(0,p2));  $\par
  $u2=component(U,2); if(u2==0, u2=Mod(0,p2));  $\par
  $u3=component(U,3); if(u3==0, u3=Mod(0,p2));  $\par
  $u4=component(U,4); if(u4==0, u4=Mod(0,p2));  $\par
  $u5=component(U,5); if(u5==0, u5=Mod(0,p2));  $\par
  $u6=component(U,6); if(u6==0, u6=Mod(0,p2));  $\par
  $x1=component(u1,2)/p; x2=component(u2,2)/p; x3=component(u3,2)/p; $\par
  $x4=component(u4,2)/p; x5=component(u5,2)/p; x6=component(u6,2)/p;   $\par
  $X1=Mod(x1,p); X2=Mod(x2,p); X3=Mod(x3,p); $\par
  $X4=Mod(x4,p); X5=Mod(x5,p); X6=Mod(x6,p);   $\par
  $nc=3; if(X1==Mod(4,p) \& X2==Mod(4,p) \& X5==Mod(1,p) , N1=N1+1)  ) );$\par
  $print(p,"     ",N0,"    ",N1, "    ",(N1+0.0)/N0,"   ",1./p^{nc})$$\}$
\normalsize

\medskip
Pour $p=17$,  on obtient pour trois conditions sur les composantes de $\alpha$,
$N_0 =  494865$, $N_1 = 111$ et  $\frac{N_1}{N_0} = 0.0002243$, pour $\frac{1}{p^3} =0.0002035$.
Toutes les exp\'erimentations num\'eriques ont donn\'e les r\'esultats attendus.

\vspace{-0.2cm}
\subsection{Extra $p$-divisibilit\'es} \label{subex}
Examinons le cas des $p$-divisibilit\'es sup\'erieures ``non automatiques'', pour voir que leur probabilit\'e est 
aussi au plus en $\frac{1}{p^2}$.

Rappelons la d\'ecomposition du r\'egulateur de~$\eta$~:

\vspace{-0.5cm}
\begin{eqnarray*}
{\rm Reg}_p^G (\eta) &\!\!\!=\!\!\!&\prd_\theta  {\rm Reg}_p^\theta (\eta)^{\varphi(1)}\ \ {\rm et}\ \  \\
{\rm Reg}_p^\theta  (\eta) &\!\!\!=\!\!\!& \prd_{\varphi \div \theta} {\rm Reg}_p^\varphi(\eta) = 
\No_{\mathfrak p}\big (P^\varphi \big(\ldots, \hbox{$\frac{-1}{p}{\rm log}_p(\eta)$}, \ldots \big)\big) 
\end{eqnarray*}

\vspace{-0.2cm}
(cf. Remarques \ref{rema110}, \ref{rema11}).
 Lorsque l'on est dans le cas de $p$-divisibilit\'e minimale, on a par d\'efinition (cf. D\'efinition \ref{defidec}, \S\ref{sub10})
${\rm Reg}_p^\theta (\eta)  \sim p \ \ {\rm et}\ \  {\rm Reg}_p^G (\eta) \sim p^{\varphi(1)}$.

\smallskip
Si l'on suppose seulement que $p$ est totalement d\'ecompos\'e dans $C_\chi/\Q$ ($f=1$) et
qu'il existe un unique $\theta$ tel que ${\rm Reg}_p^\theta (\eta) \equiv \Delta_p^\theta (\eta) \equiv 0 \pmod p$ 
(avec $\delta=1$), on a de possibles extra $p$-divisibilit\'es ${\rm Reg}_p^\theta (\eta) \sim p^e$, $e\geq 2$ (donc
${\rm Reg}_p^G (\eta) \sim  p^{e\,\varphi(1)}$), dont on veut v\'erifier qu'elles sont de probabilit\'e 
en $\frac{O(1)}{p^2}$ (en r\'ealit\'e $\frac{1}{p^2} +\frac{1}{p^3} + \cdots$).
Le programme suivant (pour $G=D_6$, auquel cas tout $p$ assez grand convient) v\'erifie ce fait pour le r\'egulateur~:
\begin{eqnarray*}
 {\rm Reg}_p^{\varphi} (\eta) \!&\!\!=\!\!&\! \hbox{$\frac{1}{\sqrt {-3} }$} (E_1^2+E_2^2+E_3^2 - E_4^2-E_5^2-E_6^2  -E_1. E_2-E_2.E_3 -E_3.E_1 \\
&& \hspace{2cm} +E_4.E_5+E_5.E_6+E_6.E_4) \in \Z ,
\end{eqnarray*}

lorsque $\varphi=\theta = \chi_2$ est le caract\`ere de degr\'e 2, o\`u les $E_i$, $1\leq i \leq 6$, sont les conjugu\'es
d'un entier quelconque de $K$ (en effet, on peut supposer que $\frac{-1}{p}{\rm log}_p(\eta)$ est repr\'esent\'e par un 
entier arbitraire $E$ de $K$). La division par $\sqrt{-3}$ permet d'avoit un entier rationnel sans \'elever le r\'egulateur au carr\'e.

\smallskip
On peut changer \`a volont\'e le nombre premier $p \geq 5$ (ici $p=101$), et la borne $N$.

\medskip
\footnotesize
 $\{$$ Q=x^6+9*x^4-4*x^3+27*x^2+36*x+31; p=101; p2=p^2 ; $\par$
  N=10^6;  N0=0; $\par$
  e1= Mod(x,Q) ;    $\par$
  e2= Mod(11/180*x^5 + 1/180*x^4 + 11/18*x^3 - 1/45*x^2 + 403/180*x + 419/180,Q) ;   $\par$
  e3= Mod(13/180*x^5 - 7/180*x^4 + 13/18*x^3 - 38/45*x^2 + 509/180*x + 127/180,Q) ;   $\par$
  e4= Mod(-4/45*x^5 + 1/45*x^4 - 8/9*x^3 + 26/45*x^2 - 137/45*x - 91/45,Q) ; $\par$
  e5= Mod(-1/60*x^5 - 1/60*x^4 - 1/6*x^3 - 4/15*x^2 - 73/60*x - 79/60,Q) ;   $\par$
  e6= Mod(-1/36*x^5 + 1/36*x^4 - 5/18*x^3 + 5/9*x^2 - 65/36*x + 11/36,Q) ; $\par$
  rm= (e1+e2+e3)/3; "(rm=sqrt(-3))";  $\par$
  for(i=1,N, $\par$
  a=random(400)-200; b=random(400)-200; c=random(400)-200;   $\par$
  aa=random(400)-200; bb=random(400)-200; cc=random(400)-200;   $\par$
  E1= (a*e1^5+b*e1^4+c*e1^3+aa*e1^2+bb*e1+cc);   $\par$
  E2= (a*e2^5+b*e2^4+c*e2^3+aa*e2^2+bb*e2+cc);   $\par$
  E3= (a*e3^5+b*e3^4+c*e3^3+aa*e3^2+bb*e3+cc);   $\par$
  E4= (a*e4^5+b*e4^4+c*e4^3+aa*e4^2+bb*e4+cc);   $\par$
  E5= (a*e5^5+b*e5^4+c*e5^3+aa*e5^2+bb*e5+cc);   $\par$
  E6= (a*e6^5+b*e6^4+c*e6^3+aa*e6^2+bb*e6+cc);   $\par$
  Pphi=component((E1^2+E2^2+E3^2 - E4^2-E5^2-E6^2-E1*E2-E2*E3 -E3*E1   $\par$
   \hfill         +E4*E5+E5*E6+E6*E4) / rm, 2); $\par$
  Reg=component(Pphi, 1); $\par$
   if(Mod(Reg, p2)==0, N0=N0+1) ) ;   print (N,"   ",(N0+0.0)/N,"   ",1.0/p2) $$\}$

\normalsize
\medskip
Pour $p=101$ et $10^6$ essais via {\it random}, on obtient une densit\'e de cas $e\geq 2$ \'egale
\`a $0.000101$ pour une probabilit\'e th\'eorique en $0.0000980$.

Pour $p=149$, on obtient $4.60\times 10^{-5}$ pour une  probabilit\'e en $4.50 \times 10^{-5}$.

\smallskip
Le cas des caract\`eres de degr\'e 1 n'offre aucune difficult\'e (sous la condition $f=1$) et nous ferons l'hypoth\`ese
heuristique qu'il en va de m\^eme pour tout groupe et tout caract\`ere dans le cas de $p$-divisibilit\'e minimale.
On peut donc consid\'erer que pour tout $p$ assez grand il n'y a pas d'extra $p$-divisibilit\'es pour ${\rm Reg}_p^G (\eta)$.

\vspace{-0.4cm}
\section{Consid\'erations th\'eoriques directes sur quelques cas particuliers}
Cette Section 5 est redondante par rapport \`a l'\'etude g\'en\'erale des Sections 2 et 3, 
mais elle permet une approche \'el\'ementaire et une \'etude num\'erique approfondie. 

\vspace{-0.2cm}
\subsection{Etude directe utilisant des corps cubiques cycliques}\label{sub19}
Pour intro\-duire le cas Ab\'elien g\'en\'eral (cf.\,\S\ref{sub27}),
nous d\'etaillons le cas de corps cubiques cycliques $K$ 
d\'efinis par les polyn\^omes irr\'eductibles (``simplest cubic fields'' de Shanks \cite{Sh})
$P_t~:= X^3 - t\,X^2 - (t+3)\, X -1, \ \, t \in \Z$,
dont les racines (conjugu\'ees d'une unit\'e $\varepsilon$) sont~:
$x,\ \   x^\sigma = - 1-x^{-1},\ \  x^{\sigma^2} = - (1+x)^{-1}$.

\smallskip
Le discriminant de $P_t$ est $D _t= (t^2+3\,t+9)^2$.
En outre on utilisera $\{x^2, x,  1 \}$ comme $\Z_{(p)}$-base de $Z_{K,(p)}$ et on prendra
les nombres $\eta \in K^\times$ sous la forme $\eta = a\,x^2 + b\,x + c$, $a, b, c \in \Z_{(p)}$.

\smallskip
On peut v\'erifier facilement que sur cette base~:
$$\hbox{ $x^\sigma = x^2 - (t+1)\,x -2\ $ et 
$\ x^{\sigma^2} = -x^2 + t\,x +t+2$.} $$

Donc si $\eta = a\,x^2 + b\,x + c$, les conjugu\'es de $\eta$ sont donn\'es par~:
$$\begin{array}{lll}
&\hspace{-0.42cm} \eta^\sigma &\!\!\!\!\!=\  (at+b)\,x^2 - (a(t^2+t+1) + b(t+1))\,x -a(t-2)-2b+c ,
 \\
&\hspace{-0.42cm}\eta^{\sigma^2} &\!\!\!\!\!=\  (-a(t +1)-b)\,x^2 + (a(t^2+t+1) + bt)\,x
 + a(t^2+3t+4) + b(t+2) + c. 
\end{array} $$

 Soit $F$ le $\Z[G]$-module engendr\'e par $\eta$.
Le calcul de $\eta^{p^{n_p}-1} = 1 + p\,\alpha_p(\eta)$ conduit \`a $\alpha_p(\eta) \equiv
A\,x^2 + B\,x + C \pmod p$, $A, B, C \in \Z_{(p)}$. 

\smallskip
 Nous allons supposer $F$ de $\Z$-rang 3 et
nous int\'eresser au caract\`ere rationnel $\chi \ne 1$.

\smallskip
Posons $\alpha = \alpha_p(\eta)$, $\alpha' = \alpha^\sigma$,  $\alpha'' = \alpha^{\sigma^2}$~;
on rappelle que pour $\chi \ne 1$, le $\chi$-r\'egulateur local est~:
\vspace{-0.1cm}
$$\Delta^\chi_p(\eta)=\N_{\Q(j)/\Q} (\alpha +j^{-1}\alpha'  +j^{-2} \alpha'')
= \alpha^2 +\alpha'{}^2 + \alpha''{}^2 -\alpha\alpha' - \alpha'\alpha''  -  \alpha''\alpha  . $$

\vspace{-0.2cm}
\subsubsection{Exemples num\'eriques}\label{sub21}
Donnons d'abord des exemples num\'eriques (au moyen du programme PARI \'ecrit au\,\S\ref{sub26})~:

\medskip
(i) Cas $p$ inerte dans $\Q(j)$. Pour $t=41$, on a $P_t = X^3 - 41\,X^2 - 44\, X -1$, et son discriminant 
est $D=1813=7^2.37 $.

\smallskip
Prenons $\eta = 3\,x^2 -2\,x + 6$ dont la norme est $353731 = 7^2.7219$. 

\smallskip
On trouve, pour $\Delta^\chi_p(\eta)\equiv 0 \pmod p$, la solution $p=5$,  inerte dans $\Q(j)$. 
Les donn\'ees num\'eriques sont $\alpha \equiv   \alpha' \equiv   \alpha'' \equiv 3 \pmod{p}$
(provenant de la relation $\eta^{5^3-1} \equiv 1 + 5\,.\,3 \pmod {25}$).
L'ensemble des conjug\'es de $\alpha$ est astreint \`a v\'erifier deux relations (conjugu\'ees par $\sigma$) ind\'ependantes modulo~$p$~:
\begin{eqnarray*}
\alpha' - \alpha  &\equiv & 0\pmod p, \\
\alpha'' - \alpha' &\equiv & 0\pmod p, 
\end{eqnarray*}
selon le principe d\'efinissant une probabilit\'e en $\frac{O(1)}{p^2}$.

\medskip
(ii) Cas $p$ d\'ecompos\'e dans $\Q(j)$.
Pour $t=17$, le discriminant est $D=349$. Prenons encore $\eta = 3\,x^2 -2\,x + 6$ dont la norme est $66739$. On obtient une unique solution $p< 10^9$, \`a $\Delta^\chi_p(\eta)\equiv 0 \pmod p$, qui est  $p = 1309963 $ (totalement d\'ecompos\'e dans $\Q(j)$). 
Les donn\'ees num\'eriques sont~:
\begin{eqnarray*}
\alpha &\equiv&  627681 +  294668\,x +  143675\,x^2 \pmod{p}, \\
\alpha' &\equiv & 503146 +  366345\,x +  117217\,x^2 \pmod{p}, \\
\alpha'' &\equiv& 632127 +  648950\,x +  1049071\,x^2\!\! \pmod{p}.
\end{eqnarray*}

On v\'erifie que les 3 mineurs~:
$$
\left \vert\begin{matrix} 
294668 & 143675  \\
366345 & 117217
\end{matrix}\right \vert , \ \ \  \ \ \ 
\left  \vert\begin{matrix} 
 648950 & 1049071 \\
294668 & 143675  
\end{matrix}\right \vert , \ \ \  \ \ \ 
\left  \vert\begin{matrix} 
366345 & 117217  \\
648950 &1049071
\end{matrix}\right \vert , 
$$
sont nuls modulo $p$ et que le rang du syst\`eme est \'egal \`a 2 (une  relation non triviale).
Pour $r = 160549$, d'ordre 3 modulo $p$, et $\theta \div \chi$ d\'efini par $\theta(\sigma) = j \equiv r \pmod {\mathfrak p}$,
on obtient l'unique relation~:
$$\alpha + r^{-1} \alpha' + r^{-2}\alpha'' \equiv 0 \pmod p $$
donnant la nullit\'e modulo $p$ de 
$\Delta^\theta_p(\eta) = \No_ {\mathfrak p}
(\alpha + j^{-1} \alpha' + j^{-2}\alpha'' ). $

\subsubsection{Analyse th\'eorique des exemples pr\'ec\'edents}\label{sub22}
La nullit\'e de $\Delta^\chi_p(\eta)$ modulo $p$ s'\'ecrit~:
$$\Delta^\chi_p(\eta) = \No_{\Q(j)/\Q} \big( \alpha - \alpha'' - j (\alpha'' - \alpha' )\big) \equiv 0 \pmod p. $$

Posons $\alpha \equiv A\,x^2 + B\,x + C \pmod p$.
Toute relation entre $\alpha$, $\alpha'$, $\alpha''$ (modulo~$p$) 
peut se traduire exclusivement sur les coefficients  $A$, $B$, $C$~: en effet,
par conjugaison de $\alpha = A\,x^2 + B\,x + C$, on obtient 
(en posant $x' = x^\sigma$ et $x'' = x^{\sigma^2}$)~:
$$\alpha' = A\,x'{}^2 + B\,x' + C =: A'\,x^2 + B'\,x + C' , $$ 
$$\ \,\alpha'' = A\,x''{}^2 + B\,x'' + C =: A''\,x^2 + B''\,x + C'' , $$ 

ce qui conduit au r\'esultat via les formules explicites $x' = x^2 - (t+1)\,x -2$
et $x'' = -x^2 + t x + t+2$,  qui donnent $A', B', C'$, $A'', B'', C''$ par des
transformations lin\'eaires ind\'ependantes de $p$, \`a savoir~:
$$\begin{array}{lllll}
&\hspace{-0.62cm} A'  = At\!+\!B, &\hspace{-0.2cm}  B' = \!-\!A(t^2\!+\!t\!+\!1) \!-\!B(t\!+\!1),  &\hspace{-0.2cm} C' = \!-\!A(t\!-\!2) \!-\!2B \!+\! C \\
&\hspace{-0.62cm} A'' \! = \!-\!A(t\!+\!1)\!-\! B, &\hspace{-0.3cm}  B''\! = A(t^2\!+\!t\!+\!1) \!+\! Bt ,  &\hspace{-0.32cm} C''\! = A(t^2 \!+\! 3t \!+\!4) \!+\!B(t\!+\!2) \!+\! C. 
\end{array}$$

On en d\'eduit que l'ensemble des relations modulo $p$ entre $A$, $B$, $C$ 
conduit \`a la probabilit\'e cherch\'ee (cf.\,\S\S\ref{sub14}, \ref{HP}).

Consid\'erons le sch\'ema suivant et supposons  $\Delta^\chi_p(\eta) \equiv 0 \pmod p$~:
\unitlength=0.5cm
$$\vbox{\hbox{\hspace{-2cm}  \begin{picture}(11.5,5.4)
\put(4.0,4.50){\line(1,0){2.5}}
\put(4.0,0.50){\line(1,0){2.5}}
\put(3.50,1.1){\line(0,1){3.0}}
\put(7.50,1.1){\line(0,1){3.0}}

\put(6.85,0.4){$\Q(j)\ \ {\mathfrak p}$}
\put(6.85,4.4){$K(j)\ \ {\mathfrak P}$}
\put(3.3,4.4){$K$}
\put(-0.2,4.4){$\alpha,\,\alpha', \alpha'' \!\in $}
\put(3.3,0.40){$\Q$}

\put(8.0,2.4){$\langle \sigma\rangle$}
\put(5.0,5.0){$\langle s\rangle $}
\end{picture}   }} $$
\unitlength=1.0cm

Puisque $\Delta^\chi_p(\eta) = \No_{\Q(j) /\Q} (\alpha+j^{-1} \alpha' + j^{-2} \alpha'') =
\No_{\Q(j) /\Q} (\alpha - \alpha - j (\alpha'- \alpha'') ) \equiv 0 \pmod p$, 
n\'ecessairement $\alpha - \alpha' - j (\alpha'- \alpha'') \equiv 0 
\pmod {\mathfrak P}$ dans $K(j)$, pour au moins un id\'eal premier $ {\mathfrak P}$ au-dessus de $p$ dans $K(j)$.
Soit ${\mathfrak p}$ l'id\'eal premier de $\Q(j)$ au-dessous de ${\mathfrak P}$.

\smallskip
On obtient, par conjugaison par $\sigma$ (cf. Corollaire \ref{coro3},\,\S\ref{calp})~:
$$\alpha' - \alpha'' - j (\alpha''- \alpha)  \equiv
j \,(\alpha - \alpha' - j (\alpha'- \alpha'')) \equiv 0 \!\!\!\!\pmod {\mathfrak P^{\sigma}}, $$ 
et de m\^eme avec ${\mathfrak P^{\sigma^2}}$~; 
par cons\'equent $\alpha - \alpha' - j (\alpha'- \alpha'') $  
appartient \`a tous les id\'eaux premiers au-dessus de ${\mathfrak p}$ dans $K(j)$, d'o\`u
$\alpha - \alpha' - j (\alpha'- \alpha'') \equiv 0 \pmod {\mathfrak p}$ (\'etendu \`a $K(j)$). 
Distinguons selon la d\'ecomposition de $p$ dans $\Q(j) /\Q$. 

\smallskip
(i) Cas $p$ inerte dans $\Q(j) /\Q$. L'unique id\'eal ${\mathfrak p}$ est l'\'etendu de $(p)$ dans $\Q(j)$.
 N\'ecessairement $\alpha - \alpha' - j (\alpha'- \alpha'')  \equiv 0 \pmod p$ dans $K(j)$~; 
comme $\{1, j\}$ est une $K$-base de $K(j)$ et que 
$\alpha - \alpha' - j (\alpha'- \alpha'') \in Z_K$,  il vient $\alpha \equiv \alpha'\equiv \alpha''  \pmod p$ 
(en un sens, $\alpha$ est rationnel modulo $p$), ce qui conduit \`a~:
$$A \equiv 0 \pmod p, \ \ B \equiv 0 \pmod p$$
(cas de l'exemple (i) du\,\S\ref{sub21}). D'o\`u une probabilit\'e en $\Frac{O(1)}{p^2}$. Le coefficient $C$ n'est li\'e par aucune condition.

\smallskip
(ii) Cas $p$ d\'ecompos\'e dans $\Q(j) /\Q$. Soient ${\mathfrak p}$ et ${\mathfrak p}^s$ les deux id\'eaux 
premiers de $\Q(j)$ au-dessus de $p$. D'apr\`es ce qui pr\'ec\`ede, on a $\alpha - \alpha' - j (\alpha'- \alpha'') 
\equiv 0 \pmod {\mathfrak p}$ (\'etendu \`a $K(j)$).
Soit $r$ un rationnel tel que $j \equiv r \pmod {\mathfrak p}$ ; alors 
$\alpha - \alpha' - j (\alpha'- \alpha'')  \equiv \alpha - \alpha' - r (\alpha'- \alpha'') 
 \equiv 0 \pmod {\mathfrak p}$ (\'etendu \`a $K(j)$), et comme 
$\alpha - \alpha' - r (\alpha'- \alpha'')  \in K$, cette congruence est vraie par conjugaison par $s$, d'o\`u
$\alpha - \alpha' - r (\alpha'- \alpha'')  \equiv 0 \pmod p$. 
Ceci conduit dans $K$ \`a~:
$$\Delta^\theta_p(\eta)  = \No_ {\mathfrak p}(\alpha + j^{-1} \alpha' + j^{-2}  \alpha'')
\equiv \alpha + r^{-1} \alpha' + r^{-2}  \alpha''  \equiv 0 \pmod {\mathfrak p}. $$

\vspace{-0.1cm}

On en d\'eduit (une seule relation et deux choix pour $r$) une probabilit\'e en $\Frac{2}{p}$. Comme dans le cas inerte, 
on peut voir comment ces relations se transmettent \`a $A, B, C$~; il vient, puisque $r^{-1} \equiv r^2 \pmod p$~:
\begin{eqnarray*}
A +  r^2  A' +r A'' &\equiv &0 \pmod p, \\
B +  r^2   B' +r B'' &\equiv& 0 \pmod p, \\
C +  r^2  C' +r C''&\equiv&0 \pmod p.
\end{eqnarray*}

\vspace{-0.1cm}

En utilisant les relations pr\'ec\'edentes~:
$$\begin{array}{lllll}
&\hspace{-0.62cm} A'  = At\!+\!B, &\hspace{-0.2cm}  B' = \!-\!A(t^2\!+\!t\!+\!1) \!-\!B(t\!+\!1),  &\hspace{-0.2cm} C' = \!-\!A(t\!-\!2) \!-\!2B \!+\! C \\
&\hspace{-0.62cm} A'' \! = \!-\!A(t\!+\!1)\!-\! B, &\hspace{-0.3cm}  B''\! = A(t^2\!+\!t\!+\!1) \!+\! Bt ,  &\hspace{-0.32cm} C''\! = A(t^2 \!+\! 3t \!+\!4) \!+\!B(t\!+\!2) \!+\! C ,
\end{array}$$
on obtient le syst\`eme suivant~:
\begin{eqnarray*}
A (t(2r^2+1)+r^2 +2 )+ B(2 r^2+1) &\equiv& 0 \pmod p \\
A(r-r^2) (t^2+t+1) + B (r t - r^2(t+1)+1)& \equiv &0 \pmod p \\
A(r (t^2+3t+4)- r^2(t-2) ) + B ((t+2)-2r^2 ) &\equiv&0 \pmod p.
\end{eqnarray*}

Ce syst\`eme en $A$ et $B$ ($C$ ne figure pas) est de rang 1 et est alors \'equivalent \`a la seule relation 
$B \equiv  - (t + \frac{r-1}{2 +1})\,A  \equiv  (r - t )\,A \pmod p$. 
 En r\'esum\'e on obtient $B - (r^2 - t )\,A \equiv  0 \ \pmod p$.
Ceci est illustr\'e par l'exemple num\'erique (ii) du\,\S\ref{sub21}. On a obtenu une relation
entre $A$ et $B$ fonction de $r$ (probabilit\'e en $\frac{2}{p}$).

\subsubsection{Cas o\`u $F$ est de $\Z$-rang 2}\label{sub23}
Lorsque $\eta$ est un \'el\'ements non trivial de norme $\pm 1$, le $\Z$-rang de $F$ est \'egal \`a 2 
et on a la nullit\'e triviale modulo $p$ de $\Delta^1_p(\eta) =\alpha+\alpha'+\alpha''$
(provenant de $\No_{K/\Q}(\eta) =\pm1$).
Cette relation peut \^etre utilis\'ee pour simplifier les calculs, mais elle ne doit induire aucune
modification de la probabilit\'e de nullit\'e modulo $p$ de $\Delta^\chi_p(\eta)$ pour $\chi \ne 1$.
Le programme donn\'e ci-dessous permet de s'assurer de ce fait.

\smallskip
On est donc conduit au $\chi$-d\'eterminant plus simple~:

\smallskip\smallskip
\centerline{ $\Delta^\chi_p(\eta) = \No_{\Q(j)/\Q} (\alpha + j^{-1}\alpha' + j^{-2}\alpha'') =
\No_{\Q(j)/\Q} (\alpha (1-j) - \alpha'(j- j^2) ), $ }

\smallskip\smallskip
qui, au facteur 3 pr\`es, s'\'ecrit $\Delta^\chi_p(\eta) = \No_{\Q(j)/\Q} (\alpha - j\, \alpha')$.

\smallskip
Donnons un seul exemple dans le cas inerte, particuli\`erement rare~: pour 
$t=33$ ($D=1197 = 9. 7. 19$) et $p = 17$ on trouve que $\alpha$ et ses conjugu\'es sont nuls modulo~$p$
(en effet les relations $\alpha \equiv \alpha' \equiv \alpha'' \pmod p$ du cas inerte (i) vu pour le $\Z$-rang 3
deviennent ici $\alpha \equiv 0 \pmod p$  puisque $\alpha+\alpha'+\alpha'' \equiv 0 \pmod p$).

\smallskip
\subsubsection{Programme PARI g\'en\'eral}\label{sub26}
 Ce programme, valable pour les $\Z$-rangs 2 et 3, peut s'utiliser en modifiant les valeurs de $t$, $aa, bb, cc$~;
il recherche les solutions $p< 10^8$ et donne les conjugu\'es de $\alpha$~:
\footnotesize 

\medskip
$\{$$t=13; Q=x^3-t*x^2-(t+3)*x-1; D=t^2+3*t+9; $\par
$aa=0; bb=1; cc=0; e=Mod(aa*x^2+bb*x+cc, Q); N=norm(e); $\par
$print("t = ",t," \ \   ","Discriminant = ",D," \ \    ",aa,"   \ \   ",bb,"  \ \    ",cc,"    \ \     ", N);$\par
$for (k= 2, 5*10^7, p=2*k+1; if (isprime(p)==1 \& Mod(D*N,p)!=0, p2=p^2;$\par
$a=Mod(aa,p2); b=Mod(bb,p2); c=Mod(cc,p2); $\par
$P=x^3-t*x^2-(t+3)*x-Mod(1,p); P2=x^3-t*x^2-(t+3)*x-Mod(1,p2) ;$\par
$y=Mod(a*x^2+b*x+c,P2); u= y^{(p+1)}; v=u^p*y ; z=v^{(p-1)} -1; $\par
$U=component(z,2); $\par
$u1=component(U,1); if(u1==0, u1=Mod(0,p2));$\par
$u2=component(U,2); if(u2==0, u2=Mod(0,p2)) ;$\par
$u3=component(U,3); if(u3==0, u3=Mod(0,p2));$\par
$x1=component(u1,2)/p; x2=component(u2,2)/p; x3=component(u3,2)/p; $\par
$X1=Mod(x1,p); X2=Mod(x2,p); X3=Mod(x3,p); $\par
$SX1=X1-2*X2-X3*(t-2); $\par
$SX2=-X3*(t^2+t+1)-X2*(t+1); $\par
$SX3=X3*t+X2;$\par
$TX1=X3*(t^2+3*t+4)+X2*(t+2)+X1; $\par
$TX2=X3*(t^2+t+1)+X2*t;$\par
$TX3=-X3*(t+1)-X2;$\par
$F=Mod(X1+X2*x+X3*x^2, P); $\par
$SF=Mod(SX1+SX2*x+SX3*x^2, P); $\par
$TF=Mod(TX1+TX2*x+TX3*x^2, P);$\par
$RP=F^2+SF^2+TF^2-F*SF-SF*TF-TF*F; rp=component(RP,2); $\par
$if (rp==0,  print("p = ",p,"            ","p mod 3 =  ", component(Mod(p,3),2) ); $\par
$print("coefficients\,de\, alpha,\, alpha',\, alpha'' \,:\,  X1+X2*x+X3*x^2 :");$\par
$print("coefficients \,de  \, alpha " , \,X1," \ \   ", X2," \ \   ", X3); $\par
$print("coefficients \,de \,s.alpha  ",SX1," \ \   ", SX2,"  \ \  ", SX3); $\par
$print("coefficients\, de\, t.alpha  ",TX1," \ \   ", TX2,"  \ \  ", TX3)  ) ) )$$\}$
\normalsize

\vspace{-0.1cm}
\subsection{Etude directe du cas Ab\'elien  g\'en\'eral}\label{sub27}
On peut toujours se ramener au cas o\`u $G$ est cyclique d'ordre $n > 2$ (voir \S\,\ref{sub7} pour les cas $n \leq 2$).

\subsubsection{R\'esultat g\'en\'eral}
Soit  $\chi$ le caract\`ere rationnel de $G$ d'ordre $n$~; soit
$\zeta$ une racine primitive $n$-i\`eme de l'unit\'e. On a $C_\chi = \Q(\zeta)$ et le $\chi$-r\'egulateur local~:

\centerline {$\Delta^\chi_p(\eta) = \No_{\Q(\zeta)/\Q} \Big( \sm_{\nu\in G} \varphi (\nu) \alpha^{\nu^{\!-1}}\Big)$, }

 pour $\varphi \div \chi$ fix\'e.
La condition $\Delta^\chi_p(\eta) \equiv 0 \pmod p$ implique par des raisonnements connus 
(cf.  Corollaire \ref{coro3},\,\S\ref{calp})~:
$$\sm_{\nu\in G} \varphi (\nu) \alpha^{\nu^{\!-1}} \equiv 0 \!\!\pmod {\mathfrak p}\ \, \hbox{(id\'eal premier de 
$\Q(\zeta)$ \'etendu \`a $K(\zeta)$)}. $$

Soient $L$ le corps de d\'ecomposition de $p$ dans $\Q(\zeta)/\Q$ et $D = {\rm Gal}(\Q(\zeta)/L)$~; on d\'esigne par 
$f$ le degr\'e r\'esiduel en $p$. Alors $\{\zeta, \ldots, \zeta^f\}$ est une $L$-base de~$\Q(\zeta)$
(et une $KL$-base de $K(\zeta)$). Posons 
$\varphi (\nu)  = \sum_{i=1}^{f} a_i(\nu)\, \zeta^i$,  $a_i(\nu) \in Z_{L,(p)}$ pour tout $\nu \in G$.
 Il vient $\sum_{\nu\in G} \varphi (\nu)\, \alpha^{\nu^{\!-1}} = \sum_{i=1}^{f} \zeta^i \sum_{\nu\in G} a_i(\nu)\, \alpha^{\nu^{\!-1}}
\equiv 0 \pmod {\mathfrak p}$.

\smallskip
Comme il y a inertie dans $K(\zeta)/KL$, il en r\'esulte $\sum_{\nu\in G} a_i(\nu)\,\alpha^{\nu^{\!-1}}\equiv 0 
\pmod {\mathfrak p}$, pour $i = 1,\ldots, f$, en voyant ici ${\mathfrak p}$ dans $L$ et \'etendu \`a $KL$.

\smallskip
Il existe alors des $p$-entiers rationnels $r_i(\nu)$ tels que $a_i(\nu) \equiv r_i(\nu) \pmod {\mathfrak p}$
dans $L$,  ce qui conduit \`a $\sum_{\nu\in G} r_i(\nu) \,\alpha^{\nu^{\!-1}}\equiv 0 \pmod {\mathfrak p}$, $i = 1, \ldots, f $~;
d'o\`u finalement $\sum_{\nu\in G}r_i(\nu)\, \alpha^{\nu^{\!-1}} \equiv 0 \pmod p, \ \  i = 1, \ldots, f$.

\smallskip
 La matrice  $\big (r_i(\nu) \big )_{\nu \in G,\,1\leq i \leq f}$ est de $\F_p$-rang $f$~; en effet,
soit $H = \{\nu_1, \ldots , \nu_f\}$ un sous-ensemble de $G$ tel que $\varphi (\nu_j) = \zeta^j$, $j= 1, \ldots, f$~; 
alors la sous-matrice $\big(a_i (\nu_j) \big)_{i,j}$ et donc la sous-matrice $\big(r_i (\nu_j) \big)_{i,j}$,
  est la matrice unit\'e $I_f$ d'ordre~$f$.

\medskip
Il en r\'esulte un syst\`eme de $f$ relations ind\'ependantes d\'efinissant 
${\mathcal L}^\theta$ pour un caract\`ere $p$-adique $\theta \div \chi$ 
(cf. D\'efinitions \ref{defi01}\,(ii),\,\S\ref{calp} et \ref{defi11}\,(ii),\,\S\ref{sub155})~:
$$\sm_{\nu\in G} r_i(\nu) \alpha^{\nu^{\!-1}} \equiv 0  \pmod p,   \ \ i= 1, \ldots, f ,$$

correspondant \`a une probabilit\'e en $\frac{O(1)}{p^f}$. Chacun des $h=[L : \Q]$ r\'egulateurs 
$\Delta^\theta_p(\eta) = \No_{\mathfrak p}(\sum_{\nu\in G}\varphi (\nu) \,\alpha^{\nu^{\!-1}})$
(cf. Remarque \ref{rema11} (i))
a une probabilit\'e de nullit\'e modulo $p$ en $\frac{O(1)}{p^f}$. On peut donc se limiter au ca  $f=1$
qui conduit \`a la probabilit\'e la plus grande (en $\frac{O(1)}{p}$ car $f=\delta=1$).

\subsubsection{Exemple du sous-corps r\'eel maximal de $\Q(\mu_{11})$}\label{sub28}
Les programmes PARI suivants traitent le cas du sous-corps r\'eel maximal $K$ de $\Q(\mu_{11})$. 

\medskip
a) {Recherche des solutions $p$ telles que $\Delta_p^\chi(\eta)\equiv 0 \pmod p$}.\label{sub281}
On peut changer \`a volont\'e les coefficients $aa, bb, cc, dd, ee$ sur la base $\{x^4, x^3, x^2,  x,  1\}$
des puissances de $x = \zeta_{11}+ \zeta_{11}^{-1}$.
Attention, la liste des conjugu\'es de $x$~:
$$s1=x,  s2=x^2-2,  s3=x^4 - 4*x^2 + 2,  s4= x^3-3*x,  s5=-x^4 - x^3 + 3*x^2 + 2*x - 1, $$

qui correspond \`a  l'ordre naturel $1$, $\sigma$, $\sigma^2$, $\sigma^3$, $\sigma^4$, est donn\'ee dans le d\'esordre par 
la proc\'edure {\it nfgaloisconj} de PARI.
\footnotesize 

\medskip
$\{$$Q=x^5+x^4-4*x^3-3*x^2+3*x+1; $\par
$s1=x; s2=x^2-2; s3=x^4 - 4*x^2 + 2; s4= x^3-3*x; s5=-x^4 - x^3 + 3*x^2 + 2*x - 1;$\par
 $aa=2; bb=1; cc=2; dd=0; ee=-3; Eta=Mod(aa*x^4+bb*x^3+cc*x^2+dd*x+ee,Q); $\par
 $N=norm(Eta); print("norme(eta) = ",N); print("   "); $\par
 $for (k=1, 5*10^6, p=2*k+1; if (isprime(p)==1 \& Mod(11*N,p)!=0, p2=p^2; $\par
 $a=Mod(aa,p2); b=Mod(bb,p2); c=Mod(cc,p2); d=Mod(dd,p2);  e=Mod(ee,p2);  $\par
$P=x^5+x^4-4*x^3-3*x^2+3*x+Mod(1,p); P2=x^5+x^4-4*x^3-3*x^2+3*x+Mod(1,p2); $\par
 $y=Mod(a*x^4+b*x^3+c*x^2+d*x+e,P2);  $\par
 $u=y^{(p+1)} ; v=u^p*y; w=v^p *y; s=w^p*y; t=s^{(p-1)} -1; $\par
 $U=component(t,2); $\par
 $u1=component(U,1); if(u1==0, u1=Mod(0,p2)); $\par
 $u2=component(U,2); if(u2==0, u2=Mod(0,p2)); $\par
 $u3=component(U,3); if(u3==0, u3=Mod(0,p2)); $\par
 $u4=component(U,4); if(u4==0, u4=Mod(0,p2)); $\par
 $u5=component(U,5); if(u5==0, u5=Mod(0,p2)); $\par
 $x1=component(u1,2)/p; x2=component(u2,2)/p; x3=component(u3,2)/p; $\par
\hfill $x4=component(u4,2)/p; x5=component(u5,2)/p; $\par
 $X1=Mod(x1,p); X2=Mod(x2,p); X3=Mod(x3,p); X4=Mod(x4,p); X5=Mod(x5,p);$\par
 $e1=  Mod(X5*s1^4+X4*s1^3+X3*s1^2+X2*s1+X1,P);$\par
 $e2=  Mod(X5*s2^4+X4*s2^3+X3*s2^2+X2*s2+X1,P);$\par
 $e3=  Mod(X5*s3^4+X4*s3^3+X3*s3^2 +X2*s3+X1,P);$\par
 $e4=  Mod(X5*s4^4+X4*s4^3 +X3*s4^2+X2*s4+X1,P);$\par
 $e5=  Mod(X5*s5^4+X4*s5^3 +X3*s5^2+X2*s5+X1,P);$\par
$A=e5*e4+e4*e3+e3*e2+e2*e1+e1*e5; $\par
$B=e5*e3+e4*e2+e3*e1+e2*e5+e1*e4; $\par
$C=e5^2+e4^2+e3^2+e2^2+e1^2;$\par
$RP=C*(C-A-B) - (A+B)^2 + 5*A*B; rp=component(RP,2); if(rp==0, print(p); $\par
 $print(component(e1,2)); print(component(e2,2)); print(component(e3,2)); $\par
 $print(component(e4,2)); print(component(e5,2)); g=Mod(p,5); print("g =  ", g)  )  )) $$\}$
\normalsize

\medskip
(i)  Pour $aa=-2; bb=1; cc=0; dd=0; ee=-3$,
on a $\No_{K/\Q}(\eta) = -60589$, et les solutions pour $p< 10^7$ sont $p=31, 101, 39451$,
 tous d\'ecompos\'es dans $\Q(\zeta_5)$.     
Consid\'erons les donn\'ees num\'eriques pour $p=31$~:
$$\begin{array}{llllllllllllll}
\alpha &\equiv&  25  x^4 & \!\!\!+\!\!\! &   10  x^3 & \!\!\!+\!\!\! &  7  x^2 & \!\!\!+\!\!\! &   21  x & \!\!\!+\!\!\! & 29 &\pmod p \\
\alpha^{\sigma}  &\equiv&  4  x^4 & \!\!\!+\!\!\! &  15  x^3 & \!\!\!+\!\!\! &  25  x^2 & \!\!\!+\!\!\! &  7 x & \!\!\!+\!\!\! &  16 &\pmod p\\
\alpha^{\sigma^2}  &\equiv&  26  x^4 & \!\!\!+\!\!\! &  20  x^3 & \!\!\!+\!\!\! &  26 x^2 & \!\!\!+\!\!\! &  18  x & \!\!\!+\!\!\! &  22 & \pmod p\\
 \alpha^{\sigma^3}  &\equiv&  17  x^4 & \!\!\!+\!\!\! & 6 x^3 & \!\!\!+\!\!\! & 21  x^2 & \!\!\!+\!\!\! & 24  x & \!\!\!+\!\!\! &  4 &\pmod p\\
 \alpha^{\sigma^4} &\equiv& 21  x^4 & \!\!\!+\!\!\! &  11  x^3 & \!\!\!+\!\!\! &  14  x^2 & \!\!\!+\!\!\! &  23  x & \!\!\!+\!\!\! & 19 &\pmod p
\end{array}$$

Pour $r = 4$, qui est tel que $\theta(\sigma) \equiv r \pmod p$, on a imm\'ediatement,
comme pr\'evu~:
$\Delta^\theta_p(\eta) =\alpha + r^{-1}\alpha^{\sigma} + r^{-2}\alpha^{\sigma^2} +r^{-3}\alpha^{\sigma^3} +  
r^{-4}\alpha^{\sigma^4} \equiv 0 \pmod p$
 identiquement sur la base $\{x^4, x^3, x^2, x, 1\}$.

\smallskip
(ii) Pour $aa=10; bb=-7; cc=0; dd=1; ee=-2$,
on a $\No_{K/\Q}(\eta) = 303623$, et on trouve l'unique solution $p=7$,
premier cas totalement inerte dans  $\Q(\zeta_5)$.  Le programme donne tous les conjugu\'es de $\alpha$ nuls modulo $p$
(d'o\`u en plus $\Delta^1_p(\eta) \equiv 0 \pmod p $).

\smallskip
Il est clair que le cas inerte dans  $\Q(\zeta_5)/\Q$ est tr\`es rare. D'ailleurs, $p$ est petit pour compenser
 une probabilit\'e en $\Frac{O(1)}{p^4}$.

\smallskip
(iii) Pour $aa=10; bb=-7; cc=-3; dd=1; ee=-2$,
on a $\No_{K/\Q}(\eta) = 1358171$, et on trouve la solution $p=79$
(deux caract\`eres $p$-adiques $\theta$ de degr\'e r\'esiduel  $f=2$~; $p$ d\'ecompos\'e dans $L = \Q(\sqrt 5)$).

\smallskip
La r\'esolvante $\alpha + \zeta_5 \alpha^\sigma + \zeta_5^2\alpha^{\sigma^2} + \zeta_5^3\alpha^{\sigma^3}
 + \zeta_5^4\alpha^{\sigma^4}$ (qui correspond \`a $\Delta^\varphi_p(\eta)$ pour $\varphi(\sigma) = \zeta_5^{-1}$) se d\'ecompose de la fa\c con suivante sur la base relative $\{1, \zeta_5\}$~:

\smallskip
On a la relation $\zeta_5^2 -\zeta_5 \frac{\sqrt 5 - 1}{2} + 1 =0$ qui d\'efinit le polyn\^ome irr\'eductible de
$\zeta_5$ sur $\Q(\sqrt 5)$. On obtient alors $\zeta_5^3 =  -\zeta_5 \frac{\sqrt 5 - 1}{2} + \frac{1-\sqrt 5}{2}$,
$\zeta_5^4 =  -\zeta_5 + \frac{\sqrt 5 - 1}{2}$ et le syst\`eme de relations dans $K(\zeta_5)$
exprimant $\Delta^\varphi_p(\eta) \equiv 0 \pmod  {\mathfrak p}$~:
\begin{eqnarray*}
\alpha - \alpha^{\sigma^2} + \Frac{\sqrt 5 - 1}{2} (\alpha^{\sigma^4}-\alpha^{\sigma^3}) 
&\equiv& 0 \pmod {\mathfrak p} \\
\alpha^{\sigma} - \alpha^{\sigma^4} + \Frac{\sqrt 5 - 1}{2} (\alpha^{\sigma^2}-\alpha^{\sigma^3}) 
&\equiv& 0 \pmod {\mathfrak p}.
\end{eqnarray*}

Ensuite, l'id\'eal ${\mathfrak p}$ est par exemple d\'efini par la congruence $\sqrt 5 \equiv 20 \pmod {\mathfrak p}$, 
d'o\`u $\frac{\sqrt 5 - 1}{2} \equiv 49 \pmod {\mathfrak p}$ ce qui d\'efinit les coefficients $r_i(\nu)$, $i = 1, 2$ et le couple
$(\theta, {\mathfrak p})$. 

\smallskip
On a donc obtenu deux relations lin\'eaires \`a coefficients rationnels ind\'ependantes~:
\begin{eqnarray*}
\alpha - \alpha^{\sigma^2} + 49\, (\alpha^{\sigma^4}-\alpha^{\sigma^3}) 
&\equiv& 0 \pmod p \\
\alpha^{\sigma^4}  - \alpha^{\sigma}  + 49\, (\alpha^{\sigma^3} - \alpha^{\sigma^2}) 
&\equiv& 0 \pmod p.
\end{eqnarray*}

Les donn\'ees num\'eriques pour $\alpha$ et ses conjugu\'es 
sont respectivement~:
$$\begin{array}{llllllllllllll}
\alpha &\equiv&  37  x^4 & \!\!\!+\!\!\! &   13  x^3 & \!\!\!+\!\!\! &   19  x^2 & \!\!\!+\!\!\! &   3  x & \!\!\!+\!\!\! &   10  &\pmod p \\
\alpha^{\sigma}  &\equiv&  75  x^4 & \!\!\!+\!\!\! &   24  x^3 & \!\!\!+\!\!\! &   45  x^2 & \!\!\!+\!\!\! &   73  x & \!\!\!+\!\!\! &   33  &\pmod p\\
\alpha^{\sigma^2}  &\equiv&  5  x^4 & \!\!\!+\!\!\! &   51  x^3 & \!\!\!+\!\!\! &   22  x^2 & \!\!\!+\!\!\! &   60  x & \!\!\!+\!\!\! &   1  &\pmod p\\
 \alpha^{\sigma^3}  &\equiv&  70  x^4 & \!\!\!+\!\!\! &   33  x^3 & \!\!\!+\!\!\! &   40  x^2 & \!\!\!+\!\!\! &   8  x & \!\!\!+\!\!\! &   77 & \pmod p\\
 \alpha^{\sigma^4} &\equiv& 50  x^4 & \!\!\!+\!\!\! &   37  x^3 & \!\!\!+\!\!\! &   32  x^2 & \!\!\!+\!\!\! &   14  x & \!\!\!+\!\!\! &    22 & \pmod p
\end{array}$$

qui v\'erifient le syst\`eme de deux congruences ci-dessus.

\smallskip
On a les deux relations ind\'ependantes, d\'efinissant ${\mathcal L}^\theta \simeq V_\theta$ de $\F_p$-dimension 2,
$\ 1 - {\sigma^2} + 49\, ({\sigma^4}-{\sigma^3})\ \ \& \ \ {\sigma^4} - {\sigma} + 49\, ({\sigma^3}-{\sigma^2})$,
la seconde \'etant conjugu\'ee par $\sigma^4$ de la premi\`ere.
D'o\`u  une probabilit\'e en $\frac{2}{p^2}$ (deux choix pour la congruence $\sqrt 5 \equiv 20 \pmod {\mathfrak p}$).

\medskip
b) {Calcul de la densit\'e des $\Delta_p^\theta (\eta) \equiv 0 \pmod p$}.\label{sub282}
Le programme ci-dessous reprend le cas pr\'ec\'edent du sous-corps r\'eel maximal de $\Q(\mu_{11})$ et s'int\'eresse aux diff\'erents
degr\'es r\'esiduels possibles pour v\'erifier que la probabilit\'e pour $\Delta_p^\theta (\eta) \equiv 0 \pmod p$
est bien en $\frac{O(1)}{p^f}$. 

On affiche les probabilit\'es th\'eoriques selon le cas ($f =1,2,4$).
\footnotesize 

\bigskip
  $\{$$Q=x^5+x^4-4*x^3-3*x^2+3*x+1;   p=19; p2=p^2;  N0=0; N1=0; $\par
 $ s1=x; s2=x^2-2; s3=x^4 - 4*x^2 + 2; s4= x^3-3*x; s5=-x^4 - x^3 + 3*x^2 + 2*x - 1; $\par
 $ for(i=1, 5*10^5,$\par
 $ aa=random(p2); bb=random(p2); cc=random(p2); dd=random(p2); ee=random(p2);$\par
  $Eta=Mod(aa*x^4+bb*x^3+cc*x^2+dd*x+ee,Q);  $\par
  $N=norm(Eta);  if(Mod(N,p)!=0, N0=N0+1; $\par
  $a=Mod(aa,p2); b=Mod(bb,p2); c=Mod(cc,p2); d=Mod(dd,p2); e=Mod(ee,p2); $\par
  $P=x^5+x^4-4*x^3-3*x^2+3*x+Mod(1,p); P2=x^5+x^4-4*x^3-3*x^2+3*x+Mod(1,p2);  $\par
  $y=Mod(a*x^4+b*x^3+c*x^2+d*x+e,P2);   $\par
  $u=y^{(p+1)}  ; v=u^p*y; w=v^p *y; s=w^p*y; t=s^{(p-1)}  -1;  $\par
  $U=component(t,2);  $\par
  $u1=component(U,1); if(u1==0, u1=Mod(0,p2));  $\par
  $u2=component(U,2); if(u2==0, u2=Mod(0,p2));  $\par
  $u3=component(U,3); if(u3==0, u3=Mod(0,p2));  $\par
  $u4=component(U,4); if(u4==0, u4=Mod(0,p2));  $\par
  $u5=component(U,5); if(u5==0, u5=Mod(0,p2));  $\par
  $x1=component(u1,2)/p; x2=component(u2,2)/p; x3=component(u3,2)/p; $\par
 \hfill  $ x4=component(u4,2)/p; x5=component(u5,2)/p;  $\par
  $X1=Mod(x1,p); X2=Mod(x2,p); X3=Mod(x3,p);  X4=Mod(x4,p); X5=Mod(x5,p); $\par
  $e1=  Mod(X5*s1^4+X4*s1^3+X3*s1^2+X2*s1+X1,P); $\par
  $e2=  Mod(X5*s2^4+X4*s2^3+X3*s2^2+X2*s2+X1,P); $\par
  $e3=  Mod(X5*s3^4+X4*s3^3+X3*s3^2 +X2*s3+X1,P); $\par
  $e4=  Mod(X5*s4^4+X4*s4^3 +X3*s4^2+X2*s4+X1,P); $\par
  $e5=  Mod(X5*s5^4+X4*s5^3 +X3*s5^2+X2*s5+X1,P); $\par
 $A=e5*e4+e4*e3+e3*e2+e2*e1+e1*e5;  $\par
 $B=e5*e3+e4*e2+e3*e1+e2*e5+e1*e4;  $\par
 $C=e5^2+e4^2+e3^2+e2^2+e1^2; $\par
 $RP=C*(C-A-B) - (A+B)^2 + 5*A*B; rp=component(RP,2);  if(rp==0, N1=N1+1) ) ) ; $\par
  $print(p,"     ",N0,"   ",N1,"    ",(N1+0.0)/N0) ; $\par
 $print("4/p-6/p^2+4/p^3 -1/p^4 = ",  4./p-6./p^2+4./p^3 -1./p^4  );$\par
 $print("2/p^2 -1/p^4 = ", 2./p^2 -1./p^4  );$\par
 $print("1/p^4 = ", 1./p^4  ) $$\}$
\normalsize

\medskip
Pour $p=31$, le degr\'e r\'esiduel est \'egal \`a 1 et on trouve les valeurs suivantes~:
$N_1 = 61505$, $\frac{N_1}{N_0} = 0.1230$ pour 
$\frac{4}{p} - \frac{6}{p^2} + \frac{4}{p^3} - \frac{1}{p^4}= 0.12292$. 

\smallskip
Pour $p=19$, le degr\'e r\'esiduel est \'egal \`a 2 et on trouve les valeurs suivantes~:
$N_1 = 2756$, $\frac{N_1}{N_0} = 0.005512$ pour $\frac{2}{p^2}-  \frac{1}{p^4}= 0.00553$.

\smallskip
Pour $p=13$, le degr\'e r\'esiduel est \'egal \`a 4 et on trouve les valeurs suivantes~:
$N_1 = 17$, $\frac{N_1}{N_0} = 3.40\times 10^{-5}$ pour $\frac{1}{p^4} = 3.50 \times 10^{-5}$.

\subsection{Etude directe du cas du groupe $D_6$} \label{sub29}
On d\'esigne par $k = \Q(\sqrt m)$ le sous-corps quadratique de $K$ et par $\chi_1$ et $\chi_2$
les deux caract\`eres rationnels (et $p$-adiques) irr\'eductibles non triviaux de $D_6$. On pose $\alpha =\alpha_p(\eta)$.

\subsubsection{Rappels}
On \'etudie les trois $\chi$-r\'egulateurs locaux $\Delta^\chi_p(\eta)$, chaque fois suppos\'es non trivialement nuls modulo $p$
(pas de $\chi$-relations dans $F$). 

\smallskip
On pose $\alpha'= \alpha^\sigma$,
$\alpha''= \alpha^{\sigma^2}$, $\beta = \alpha^\tau$, $\beta' =  \alpha^{\tau\sigma} = \alpha'{}^\tau$, 
$\beta'' =  \alpha^{\tau\sigma^2} = \alpha''{}^\tau$.

\medskip
(i) Cas de $\Delta^1_p(\eta)$~; on a donc $\No_{K/\Q} (\eta)= a \ne \pm1$, auquel cas, $\Delta_p^1 (\eta)$ est le $p$-quotient de 
Fermat de $a$.

\medskip
(ii) Cas $\Delta^{\chi_1}_p(\eta)$~; on a $\No_{K/k} (\eta) \in k^\times\,\Sauf \Q^\times$ et on suppose que~:
$$\Delta_p^{\chi_1}  (\eta)=  \alpha +\alpha' +\alpha'' -\beta -\beta' -\beta'' \equiv 0 \pmod p. $$

Si $A = \alpha +\alpha' +\alpha'' =: u+v \sqrt m$, alors $\Delta_p^{\chi_1}  (\eta)=A - A^\tau = 2v \sqrt m\equiv 0 \pmod p$~; 
on a donc la seule condition $v \equiv 0 \pmod p$, ce qui conduit \`a une probabilit\'e en $\frac{O(1)}{p}$, la trace modulo $p$
dans $K/k$ \'etant surjective.

\medskip
(iii) Cas $\Delta^{\chi_2}_p(\eta)$ (consid\'er\'e au facteur $\sqrt m$ pr\`es)~; on a  ${\rm dim}((F\otimes \Q)^{e_\chi}) = 4$
(cas d'un caract\`ere de degr\'e 2), ce qui conduit, pour $\varphi = \theta=\chi_2$, \`a la condition  
(cf. Exemple \ref{ex5},\,\S\ref{sub111})~:
\begin{eqnarray*}
\Delta_p^{\theta} (\eta)&=& \alpha^{2} +\alpha'{}^2 +\alpha''{}^{2} -\beta^{2} -\beta'{}^2 -\beta''{}^{2}\\
&&\hspace{2cm} - \alpha  \alpha' - \alpha' \alpha'' -\alpha''  \alpha + \beta  \beta'+ \beta' \beta''+\beta'' \beta \equiv 0 \pmod p.
\end{eqnarray*}

Le calcul des trois repr\'esentations ${\mathcal L}^{\theta'}\simeq \delta' V_{\theta'}$, $0\leq \delta' \leq \varphi'(1)$,
permet de savoir quels sont les $\Delta_p^{\theta'}(\eta)$ nuls modulo $p$, m\^eme si l'on peut \'ecarter les cas o\`u $\Delta_p^1(\eta)$ ou $\Delta_p^{\chi_1}(\eta)$ est nul modulo $p$ (sinon, la nullit\'e suppl\'ementaire de $\Delta_p^{\theta}(\eta)$ conduit \`a une probabilit\'e au plus en $\frac{O(1)}{p^2}$, et on peut exclure ce nombre premier). 

\smallskip
Commen\c cons par des exemples num\'eriques ($\theta$ d\'esigne le caract\`ere $p$-adique $\chi_2$).

\smallskip
\subsubsection{ Programme PARI pour $D_6$ et le calcul de ${\mathcal L}^{\theta}$ et $\Delta_p^{\theta} (\eta)$}
Ce programme calcule les conjugu\'es de $\alpha$ sur la base des puissances de $x = \sqrt[3] 2 + j$.
 Ceci permet de trouver les relations de $\F_p$-d\'ependance de ces conjugu\'es (attention, on doit calculer le noyau 
de la transpos\'ee de la matrice des coefficients sur la base $\{x^5, x^4, x^3, x^2, x, 1\}$). 
Le r\'esultat de ``matker(MM)'' donne les vecteurs 
$(c_1, c_2, c_3, c_4, c_5, c_6)$ tels que~:

\smallskip
\centerline{$c_1 \alpha + c_2 \alpha^{\sigma} + c_3\alpha^{\sigma^2}+ c_4\alpha^\tau + c_5\alpha^{\tau\sigma}+ c_6\alpha^{\tau\sigma^2} \equiv 0 \pmod p. $}
\footnotesize 

\medskip
 $\{$$   Q=x^6+9*x^4-4*x^3+27*x^2+36*x+31; D=poldisc(Q); $\par
 $e1= x ;  $\par
 $e2=11/180*x^5 + 1/180*x^4 + 11/18*x^3 - 1/45*x^2 + 403/180*x + 419/180 ; $\par
 $e3=13/180*x^5 - 7/180*x^4 + 13/18*x^3 - 38/45*x^2 + 509/180*x + 127/180 ; $\par
 $e4=-4/45*x^5 + 1/45*x^4 - 8/9*x^3 + 26/45*x^2 - 137/45*x - 91/45 ;  $\par
 $e5=-1/60*x^5 - 1/60*x^4 - 1/6*x^3 - 4/15*x^2 - 73/60*x - 79/60 ; $\par
 $e6=-1/36*x^5 + 1/36*x^4 - 5/18*x^3 + 5/9*x^2 - 65/36*x + 11/36 ; $\par
 $a=1; b=-3 ; c=0 ; aa=-7; bb=1 ; cc=-1 ;  $\par
 $Eta= Mod(a*e1^5+b*e1^4+c*e1^3+aa*e1^2+bb*e1+cc,Q); $\par
 $N=norm(Eta); print("norme(eta) = ",N); print("   "); $\par
 $for (k= 3,5* 10^6, p=2*k+1;  if (isprime(p)==1  \& Mod(D*N,p)!=0, p2=p^2; $\par
 $u=Mod(a,p2); v=Mod(b,p2); w=Mod(c,p2);  $\par
 $uu=Mod(aa,p2); vv=Mod(bb,p2); ww=Mod(cc,p2);  $\par
 $P=x^6+9*x^4-4*x^3+27*x^2+36*x+Mod(31,p); $\par
 $P2=x^6+9*x^4-4*x^3+27*x^2+36*x+Mod(31,p2); $\par
 $y=Mod(u*x^5+v*x^4+w*x^3+uu*x^2+vv*x+ww ,P2); $\par
 $z=y^{(p-1)} ; t=z; for(i=1, 5, z=z^p*t); $\par
 $U=component(z-1,2);   $\par
 $u1=component(U,1); if(u1==0, u1=Mod(0,p2)); $\par
 $u2=component(U,2); if(u2==0, u2=Mod(0,p2)); $\par
 $u3=component(U,3); if(u3==0, u3=Mod(0,p2)); $\par
 $u4=component(U,4); if(u4==0, u4=Mod(0,p2)); $\par
 $u5=component(U,5); if(u5==0, u5=Mod(0,p2)); $\par
 $u6=component(U,6); if(u6==0, u6=Mod(0,p2)); $\par
 $x1=component(u1,2)/p; x2=component(u2,2)/p; x3=component(u3,2)/p;  $\par
 $x4=component(u4,2)/p; x5=component(u5,2)/p; x6=component(u6,2)/p;  $\par
 $X1=Mod(x1,p); X2=Mod(x2,p); X3=Mod(x3,p);  $\par
 $X4=Mod(x4,p); X5=Mod(x5,p); X6=Mod(x6,p);  $\par
 $E1= Mod(X1+X2*e1+X3*e1^2+X4*e1^3+X5*e1^4+X6*e1^5, P); $\par
 $E2=Mod(X1+X2*e2+X3*e2^2+X4*e2^3+X5*e2^4+X6*e2^5, P); $\par
 $E3=Mod(X1+X2*e3+X3*e3^2+X4*e3^3+X5*e3^4+X6*e3^5, P); $\par
 $E4=Mod(X1+X2*e4+X3*e4^2+X4*e4^3+X5*e4^4+X6*e4^5, P); $\par
 $E5=Mod(X1+X2*e5+X3*e5^2+X4*e5^3+X5*e5^4+X6*e5^5, P); $\par
 $E6=Mod(X1+X2*e6+X3*e6^2+X4*e6^3+X5*e6^4+X6*e6^5, P); $\par
 $ RP=E1^2+E2^2+E3^2 - E4^2-E5^2-E6^2-E1*E2-E2*E3 -E3*E1 $\par
  \hfill $   +E4*E5+E5*E6+E6*E4;  $\par
 $rp=component(RP,2);  if(rp==0, print(p)  ; $\par
$F1=component(E1,2); F2=component(E2,2); F3=component(E3,2); $\par
$F4=component(E4,2); F5=component(E5,2); F6=component(E6,2); $\par
 $print(F1); print(F2);  print(F3); print(F4);  print(F5); print(F6);$\par
$GG=[F1, F2, F3, F4, F5, F6]; $\par
$ M=matrix(6, 6, i, j, component ( component (GG, i), j) ); print(M); $\par
 $MM=mattranspose(M) ; K=matker(MM) ;   print(matker(MM))  )) ) $$\}$
\normalsize

\smallskip
\subsubsection{Cas $\eta = x^5-3x^4-7x^2+x -1$}
On a $\No(\eta) = 12393229477$ et on trouve les solutions 
$p = 7, 13, 69677, 387161$, pour~$p< 10^7$.

\medskip
a)  Pour $p=7$, on a les donn\'ees num\'eriques suivantes~:
$$\begin{array}{lllllllllllllll}
  \alpha &\equiv&     &0x^5& \!\!\!+\!\! \!  &2x^4 & \!\!\!+\!\! \!  &1 x^3 & \!\!\!+\!\! \!  &1 x^2 & \!\!\!+\!\! \!  & 5x & \!\!\!+\!\! \!  & 0 & \pmod p \\
 \alpha^\sigma &\equiv&       & 1 x^5 & \!\!\!+\!\! \!  &1 x^4 & \!\!\!+\!\! \!  & 6x^3 & \!\!\!+\!\! \!  & 3x^2 & \!\!\!+\!\! \!  & 5x & \!\!\!+\!\! \!  & 2  &\pmod p\\
 \alpha^{\sigma^2} &\equiv&                &   0x^5 & \!\!\!+\!\! \!  &   2x^4 & \!\!\!+\!\! \!  & 3x^3 & \!\!\!+\!\! \!  &0x^2& \!\!\!+\!\! \!  &  4x & \!\!\!+\!\! \!  &0 &\pmod p \\
   \alpha^\tau &\equiv&              & 0x^5 & \!\!\!+\!\! \!  &  5x^4 & \!\!\!+\!\! \!  & 6x^3 & \!\!\!+\!\! \!  & 6x^2 & \!\!\!+\!\! \!  & 2x & \!\!\!+\!\! \!  & 6 &\pmod p \\
  \alpha^{\tau\sigma}  &\equiv&      & 0x^5 & \!\!\!+\!\! \!  &  5x^4 & \!\!\!+\!\! \!  & 4x^3 & \!\!\!+\!\! \!  &  0x^2& \!\!\!+\!\! \!  & 3x & \!\!\!+\!\! \!  & 6  &\pmod p\\
 \alpha^{\tau\sigma^2} &\equiv&         & 6x^5 & \!\!\!+\!\! \!  & 6x^4 & \!\!\!+\!\! \!  &1 x^3 & \!\!\!+\!\! \!  & 4x^2 & \!\!\!+\!\! \!  & 2x & \!\!\!+\!\! \!  & 4 &\pmod p,
\end{array}$$
qui conduisent imm\'ediatement aux deux $\F_p$-relations lin\'eaires ind\'ependantes~:

\medskip
$\alpha -\alpha^{\sigma}  +  \alpha^\tau- \alpha^{\tau\sigma^2}\equiv 0 \pmod p\ \ \ \&\ \ \ 
\alpha  - \alpha^{\sigma^2} + \alpha^\tau - \alpha^{\tau\sigma}  \equiv 0 \pmod p$,
et leur rel\`evements $\eta_1^{1 - \sigma+ \tau -\tau \sigma^2 } \equiv 1 \pmod {p^2}$, 
$\eta_1^{1 - \sigma^2 + \tau - \tau\sigma } \equiv 1 \pmod {p^2}$. 

\medskip
Pour la $\theta$-relation $U= 1-\sigma+\tau-\tau \sigma^2$ on obtient $\sigma^2U = -U-\sigma U$,
$\tau U = -\sigma^2 U$, $\tau \sigma U = - \sigma U$, $\tau \sigma^2 U = - U$, et $U$ engendre
un espace de dimension 2 (${\mathcal L}^\theta \simeq V_\theta$).

\smallskip
b) Pour $p=13$, on a les donn\'ees num\'eriques suivantes~:
$$\begin{array}{lllllllllllllll}
  \alpha &\equiv&                                 &9x^5&  \!\!\!+\!\! \!     &0x^4 & \!\!\!+\!\! \!  &1 x^3 & \!\!\!+\!\! \!  &2 x^2 & \!\!\!+\!\! \!  & 10x & \!\!\!+\!\! \!  & 0 & \pmod p \\
 \alpha^\sigma &\equiv&                 & 3 x^5 & \!\!\!+\!\! \!  &8 x^4 & \!\!\!+\!\! \!  & 4x^3 & \!\!\!+\!\! \!  & 7x^2 & \!\!\!+\!\! \!  & 9x & \!\!\!+\!\! \!  & 6  &\pmod p\\
 \alpha^{\sigma^2} &\equiv&          & 11x^5 & \!\!\!+\!\! \!  & 9x^4 & \!\!\!+\!\! \!  & 4x^3 & \!\!\!+\!\! \!  &4x^2& \!\!\!+\!\! \!  & 7x & \!\!\!+\!\! \!  &0 &\pmod p \\
 \alpha^\tau &\equiv&                     & 0x^5 & \!\!\!+\!\! \!  & 2x^4 & \!\!\!+\!\! \!   & 11x^3 & \!\!\!+\!\! \!  & 4x^2 & \!\!\!+\!\! \!  & 7x & \!\!\!+\!\! \!  & 0 &\pmod p \\
 \alpha^{\tau\sigma}  &\equiv&      & 5x^5 & \!\!\!+\!\! \!  &1x^4 & \!\!\!+\!\! \!  & 11x^3 & \!\!\!+\!\! \!  & 7x^2& \!\!\!+\!\! \!  & 9x & \!\!\!+\!\! \!  & 6  &\pmod p\\
 \alpha^{\tau\sigma^2} &\equiv&    & 11x^5 & \!\!\!+\!\! \!  & 6x^4 & \!\!\!+\!\! \!  &8 x^3 & \!\!\!+\!\! \!  & 2x^2 & \!\!\!+\!\! \!  & 10x & \!\!\!+\!\! \!  & 0 &\pmod p ,
\end{array}$$

qui conduisent ici \`a~:

\smallskip
$\alpha - \alpha^{\sigma^2}+ \alpha^{\tau} - \alpha^{\tau\sigma^2} \equiv 0\ \pmod p\ \ \ \  \ \& \ \  \ \ \
 \alpha^{\sigma^2} - \alpha^{\sigma} + \alpha^{\tau\sigma}  - \alpha^{\tau} \equiv 0 \ \pmod p\ $
 et leur rel\`evements $\eta_1^{1-  \sigma^2 + \tau - \tau\sigma^2  } \equiv 1 \pmod {p^2}$, 
$\eta_1^{ \sigma^2 -\sigma + \tau\sigma - \tau  } \equiv 1 \pmod {p^2}$. 

\smallskip
 Pour la $\theta$-relation
$U=1-  \sigma^2 + \tau - \tau\sigma^2$, on a $\sigma^2 U = -U - \sigma U$, $\tau U = U$, $\tau \sigma U = \sigma^2 U$, $\tau \sigma^2 = \sigma U$  (${\mathcal L}^\theta \simeq V_\theta$).  

\smallskip
c) Dans le cas de $p=69677$, on trouve (modulo $p$)~:
$$\begin{array}{lllllllllllllll}
  \alpha &\!\!\!\equiv&     59144 x^5 &\!\!\!+\!\!\!&   25387 x^4 &\!\!\!+\!\!\!&   5395 x^3 &\!\!\!+\!\!\!&   69261 x^2 &\!\!\!+\!\!\!&   5560 x &\!\!\!+\!\!\!&   65182  \\
 \alpha^\sigma &\!\!\!\equiv&   33305 x^5 &\!\!\!+\!\!\!&   7449 x^4 &\!\!\!+\!\!\!&   67108 x^3 &\!\!\!+\!\!\!&     3673 x^2 &\!\!\!+\!\!\!&   9338 x &\!\!\!+\!\!\!&   28342 \\
 \alpha^{\sigma^2} &\!\!\!\equiv&   18122 x^5 &\!\!\!+\!\!\!&   61456 x^4 &\!\!\!+\!\!\!&   57729 x^3 &\!\!\!+\!\!\!&   9640 x^2 &\!\!\!+\!\!\!&   34177 x &\!\!\!+\!\!\!&   61498 \\
   \alpha^\tau &\!\!\!\equiv&  1290 x^5 &\!\!\!+\!\!\!&   27633 x^4 &\!\!\!+\!\!\!&   41529 x^3 &\!\!\!+\!\!\!&   16366 x^2 &\!\!\!+\!\!\!&   3270 x &\!\!\!+\!\!\!&   35656  \\
  \alpha^{\tau\sigma}  &\!\!\!\equiv&   47848 x^5 &\!\!\!+\!\!\!&   11928 x^4 &\!\!\!+\!\!\!&   47652 x^3 &\!\!\!+\!\!\!&   55209 x^2 &\!\!\!+\!\!\!&   6965 x &\!\!\!+\!\!\!&   8976 \\
 \alpha^{\tau\sigma^2} &\!\!\!\equiv& 49322 x^5 &\!\!\!+\!\!\!&   5501 x^4 &\!\!\!+\!\!\!&   59295 x^3 &\!\!\!+\!\!\!&   54882 x^2 &\!\!\!+\!\!\!&   10367 x &\!\!\!+\!\!\!&   61315\, ,
\end{array}$$

et les combinaisons $(c_1, c_2, c_3, c_4, c_5, c_6)$~:
$$(53404, 39540, 46410, 69676, 1, 0)\ \   \&  \ \  (23267, 16273, 30137, 69676, 0, 1) . $$

\subsubsection{Cas $p=7$, $\eta= x^5-x^4-7x^2+x-1$}
On obtient quatre relations $\F_p$-lin\'eaires
 ind\'ependantes dont $\alpha^\tau - \alpha\equiv 0 \pmod p$, 
$\alpha+\alpha^\sigma+\alpha^{\sigma^2}\equiv 0 \pmod p$, et leurs conjugu\'ees~:
$$\begin{array}{lllllllllllllll}
 \alpha &\!\!\!\equiv&6  x^5 &   &      &       \!\!\!+\!\!\!&  4  x^3 &\!\!\!+\!\!\!&  5  x^2 &\!\!\!+\!\!\!&   &  & 3  \\
 \alpha^\sigma &\!\!\!\equiv&  4  x^5 &\!\!\!+\!\!\!&  4  x^4 &\!\!\!+\!\!\!&  1  x^3 &\!\!\!+\!\!\!&  3  x^2 &\!\!\!+\!\!\!&  5  x &\!\!\!+\!\!\!&  1 \\ 
\alpha^{\sigma^2} &\!\!\!\equiv& 4  x^5 &\!\!\!+\!\!\!&  3  x^4 &\!\!\!+\!\!\!&  2  x^3 &\!\!\!+\!\!\!&  6  x^2 &\!\!\!+\!\!\!&  2  x &\!\!\!
 +\!\!\! &    3   \\
 \alpha^\tau &\!\!\!\equiv&  6  x^5 &\!\!\!+\!\!\!&  &  & 4  x^3 &\!\!\!+\!\!\!&  5  x^2 &\!\!\!+\!\!\!&   &  & 3  \\
   \alpha^{\tau\sigma}  &\!\!\!\equiv&4  x^5 &\!\!\!+\!\!\!&  3  x^4 &\!\!\!+\!\!\!&  2  x^3 &\!\!\!+\!\!\!&  6  x^2 &\!\!\!+\!\!\!&  2  x &\!\!\!+\!\!\!&   3  \\
\alpha^{\tau\sigma^2} &\!\!\!\equiv& 4  x^5 &\!\!\!+\!\!\!&  4  x^4 &\!\!\!+\!\!\!&  1  x^3 &\!\!\!+\!\!\!&  3  x^2 &\!\!\!+\!\!\!&  5  x &\!\!\!+\!\!\!&  1  \ .
\end{array} $$

Donc les trois r\'egulateurs  sont nuls modulo $p$. Mais pour $\theta$, ${\mathcal L}^\theta$ est engendr\'e par
$U= e_\theta (1-\tau)$ et par $\sigma U$~; on a $\sigma^2 U = -U - \sigma U$, $\tau U = -U$, $\tau\sigma U = -\sigma^2 U$,
$\tau\sigma^2 U = -\sigma U$ (${\mathcal L}^\theta \simeq V_\theta$).

\subsubsection{Cas $p=61$, $\eta= x^5-2x^4+4x^3-3x^2+x-1$}\label{p=61} 
On obtient~:
$$\begin{array}{lllllllllllllll}
 \alpha &\!\!\!\equiv&   8  x^5 &\!\!\!+\!\!\!&  32  x^4 &\!\!\!+\!\!\!&  26  x^3 &\!\!\!+\!\!\!&  31  x^2 &\!\!\!+\!\!\!&  55  x &\!\!\!+\!\!\!&  18  \\
 \alpha^\sigma &\!\!\!\equiv&  15  x^5 &\!\!\!+\!\!\!&  44  x^4 &\!\!\!+\!\!\!&  11  x^3 &\!\!\!+\!\!\!&  21  x^2 &\!\!\!+\!\!\!&  34  x &\!\!\!+\!\!\!&  25   \\
\alpha^{\sigma^2} &\!\!\!\equiv&  38  x^5 &\!\!\!+\!\!\!&  46  x^4 &\!\!\!+\!\!\!&  24  x^3 &\!\!\!+\!\!\!&  9  x^2 &\!\!\!+\!\!\!&  33  x &\!\!\!+\!\!\!&  44   \\
  \alpha^\tau &\!\!\!\equiv&  5  x^5 &\!\!\!+\!\!\!&  58  x^4 &\!\!\!+\!\!\!&  43  x^3 &\!\!\!+\!\!\!&  17  x^2 &\!\!\!+\!\!\!&  47  x &\!\!\!+\!\!\!&  16   \\
  \alpha^{\tau\sigma}  &\!\!\!\equiv&  31  x^5 &\!\!\!+\!\!\!&  7  x^4 &\!\!\!+\!\!\!&  22  x^3 &\!\!\!+\!\!\!&  55  x^2 &\!\!\!+\!\!\!&  36  x &\!\!\!+\!\!\!&  56   \\
 \alpha^{\tau\sigma^2} &\!\!\!\equiv& 25  x^5 &\!\!\!+\!\!\!&  57  x^4 &\!\!\!+\!\!\!&  57  x^3 &\!\!\!+\!\!\!&  50  x^2 &\!\!\!+\!\!\!&  39  x &\!\!\!+\!\!\!&  15\ .
\end{array} $$

D'o\`u trois relations $\F_p$-lin\'eaires ind\'ependantes, conjugu\'ees (par $\sigma$ et $\sigma^2$) de la relation 
$19\alpha+56\alpha^\sigma+46\alpha^{\sigma^2}+\alpha^\tau\equiv 0 \pmod p$~;
la somme des trois relations donne $\alpha+\alpha^\sigma+\alpha^{\sigma^2}- \alpha^\tau-\alpha^{\tau\sigma}-\alpha^{\tau\sigma^2} \equiv 0 \pmod p$ (nullit\'e de $\Delta_p^{\chi_1} (\eta)$). 

\smallskip
Pour $\theta=\chi_2$ de degr\'e 2, le $\theta$-r\'egulateur $\Delta_p^\theta (\eta)$ est nul modulo $p$ 
(voir  Exemple \ref{ex18},\,\S\ref{sub10} pour les d\'etails num\'eriques et de fait la nullit\'e triviale de $\Delta_p^\theta (\eta)$
car dans cet exemple on a par hasard $\eta^{1+\sigma+\sigma^2-\tau-\tau\sigma-\tau\sigma^2}=1$).

\subsubsection{Cas $p=7$, $\eta=3 x^5-20x^4+15 x^3+16x^2+9x+21$}
On obtient~:
$$\alpha \equiv \alpha^{\sigma}  \equiv \alpha^{\sigma^2} \equiv 6  x^5 +  2  x^4 +  4  x^3 +  3  x^2 +  6 \pmod p,$$
$$\alpha^\tau\equiv \alpha^{\tau\sigma} \equiv\alpha^{\tau\sigma^2}\equiv  x^5 +  5  x^4 +  3  x^3 +  4  x^2 +  6 
\pmod p. $$

Ce cas o\`u le $G$-module ${\mathcal L}^\theta$ est de $\F_p$-dimension 4 
(${\mathcal L}^\theta \simeq 2 \,V_\theta \simeq e_{\theta}\,\F_p[G]$) est tr\`es rare, comme on l'a vu au \S\ref{subD6}
(probabilit\'e en $\frac{O(1)}{p^4}$),
 car on doit prendre $\eta$ de telle sorte que $F$ soit de $\Z$-rang 6 et qu'aucun des $\Delta^\chi_p(\eta)$, $\chi = 1, \chi_1$,
ne soit nul modulo $p$, ce qui est le cas ici.

\smallskip
Il est facile de voir que cela correspond au cas o\`u $\eta$ est proche (modulo $p^2$) d'un \'el\'ement non trivial du sous-corps quadratique de $K$. 

\smallskip
 Il se trouve que $p=13$ est aussi solution et donne les conjugu\'es suivants~:
$$\begin{array}{lllllllllllllll}
 \alpha &\!\!\!\equiv&  2 x^5 &\!\!\!+\!\!\!&  3  x^4 &\!\!\!+\!\!\!& 7  x^3 &\!\!\!+\!\!\!&  8  x^2 &\!\!\!+\!\!\!& 3  x &\!\!\!+\!\!\!&  12  \\
\alpha^{\sigma} &\!\!\!\equiv&  2  x^5 &\!\!\!+\!\!\!& 5  x^4 &\!\!\!+\!\!\!&    x^3 &\!\!\!+\!\!\!&  3  x^2 &\!\!\!+\!\!\!&  8 x &\!\!\!+\!\!\!&  4   \\
 \alpha^{\sigma^2} &\!\!\!\equiv& 7  x^5 &\!\!\!+\!\!\!&  12  x^4 &\!\!\!+\!\!\!&  11  x^3 &\!\!\!+\!\!\!&  2  x^2 &\!\!\!+\!\!\!&  2  x &\!\!\!+\!\!\!& 10   \\
  \alpha^\tau &\!\!\!\equiv& 12  x^5 &\!\!\!+\!\!\!&  7 x^4 &\!\!\!+\!\!\!&  3 x^3 &\!\!\!+\!\!\!&  8  x^2 &\!\!\!+\!\!\!&  3  x &\!\!\!+\!\!\!&  12  \\
  \alpha^{\tau\sigma}  &\!\!\!\equiv&  4  x^5 &\!\!\!+\!\!\!&  3  x^4 &\!\!\!+\!\!\!&  7 x^3 &\!\!\!+\!\!\!&  2  x^2 &\!\!\!+\!\!\!&  2  x &\!\!\!+\!\!\!&  10   \\
 \alpha^{\tau\sigma^2} &\!\!\!\equiv& 12  x^5 &\!\!\!+\!\!\!&  9 x^4 &\!\!\!+\!\!\!& 10  x^3 &\!\!\!+\!\!\!&  3 x^2 &\!\!\!+\!\!\!&  8 x &\!\!\!+\!\!\!&  4\ ,
\end{array} $$

d'o\`u les trois $\F_p$-relations lin\'eaires ind\'ependantes~:
$$4\alpha+5\alpha^\sigma+5\alpha^{\sigma^2}+\alpha^\tau\! \equiv
5\alpha+5\alpha^\sigma+4\alpha^{\sigma^2}+\alpha^{\tau\sigma} \!\equiv
5\alpha+4\alpha^\sigma+5\alpha^{\sigma^2}+\alpha^{\tau\sigma^2}\! \equiv 0\!\!\!\pmod p, $$
donnant la nullit\'e modulo $13$ de  $\Delta_p^{1}(\eta)$ et  $\Delta_p^{\theta}(\eta)$
(relation associ\'ee d\'efinissant ${\mathcal L}^\theta$, $U=1 - {\sigma^2} - \tau + {\tau\sigma}$), 
et non  celle de $\Delta_p^{\chi_1}(\eta)$~; $F$ est bien de $\Z$-rang 6.

\begin{rema}\label{rema512} Les  \'el\'ements
$U_1 = 1-\sigma^2 + \tau -\tau\sigma, \ U_2 = 1-\sigma - \tau +\tau\sigma$
 de $e_\theta\,\F_p[G]$ (cf. Remarque \ref{rema511},\,\S\ref{sub5}) forment une base, et permettent d'expri\-mer 
toutes les $\theta$-relations 
trouv\'ees par combinaisons $\F_p[G]$-lin\'eaires (des coefficients de la forme $a+b\sigma$ \'etant suffisants en raison 
des relations de conjugaison de $U_1$,~$U_2$)~; ceci s'applique \`a tous les exemples num\'eriques pr\'ec\'edents.

\smallskip
Le $\F_p$-espace obtenu est de dimension $\delta \varphi(1)$, $1\leq \delta \leq 2$,
et la probabilit\'e associ\'ee est en $\frac{O(1)}{p^{\delta^2} }$ puisqu'ici $f=1$ (cf.\,\S\ref{HP}).
\end{rema}

\vspace{-0.7cm}
\section{Interpr\'etation locale concernant $\eta$}\label{sub32}
 L'id\'ee est de voir dans quelle mesure $\eta$ est  ``en partie'' une puissance 
$p$-i\`eme locale en $p$ lorsque $\Delta^{\theta}_p(\eta) \equiv 0 \pmod p$ 
pour un caract\`ere $p$-adique $\theta$, tout particu\-li\`erement dans le cas $f = \delta = 1$ et pour une
$p$-divisibilit\'e minimale $p^{\varphi(1)}$, d'apr\`es la D\'efinition \ref{defidec}, \S\ref{sub10}. 
Commen\c cons par un cas particulier bien connu~:

\vspace{-0.15cm}
\subsection{Cas d'un caract\`ere de degr\'e 1, d'ordre $n \div p-1$}\label{sub33}
Reprenons le r\'esultat du Corollaire~\ref{coro25},\,\S\ref{sub15} avec $\eta^{U_\theta} \in \prod_{v \div p} K_v^{\times p}$, 
o\`u $U_\theta = \sum_{k=0}^{n-1} r^{k}\,s^{-k}$, $r$ d'ordre $n$ modulo $p$, $s$ g\'en\'erateur de $G$. 
Ici $\Delta_p^\theta(\eta) = \No_{\mathfrak p}\big( \sum_{k=0}^{n-1} \varphi(s^k)\, \alpha^{s^{-k}}\big)
\equiv  \sum_{k=0}^{n-1} r^{k}\,\alpha^{s^{-k}} \pmod {\mathfrak p}$.
L'\'ecriture $\eta^{U_\theta} \in \prod_{v \div p} K_v^{\times p}$ 
conduit \`a $\eta^{{U_\theta}(r-s)} = \lambda^{p(r-s)}$, $\lambda\in \prod_{v \div p} K_v^{\times}$~; 
comme dans $\Z_{(p)}[G]$ on a ${U_\theta}\,(r-s) = c\,p$, $c\in \Z_{(p)}^\times$\,\footnote{\,On peut toujours choisir
$r$ modulo $p$ de telle sorte que $r^n - 1 \not\equiv 0 \pmod {p^2}$.},  il vient $\eta =  \Lambda^{r-s}$,
$\Lambda \in \prod_{v \div p} K_v^{\times}$ (pas de $p$-torsion dans $\prod_{v \div p} K_v^{\times}$ pour $p$ assez grand).

\smallskip
Le caract\`ere $\varphi \div \theta$ est tel que $\varphi (s) = \xi$,  racine primitive 
$n$-i\`eme de l'unit\'e  dans~$\C$, et il d\'efinit ${\mathfrak p} = (p, r-\xi)$ associ\'e \`a $\theta$~;
on consid\`ere la $\varphi$-composante $(K^\times \otimes C_\chi)^{e_\varphi}$~; alors $\xi = \varphi(s)$ op\`ere par 
$y^\xi = y^s$, pour tout $y\in (K^\times \otimes C_\chi)^{e_\varphi}$. 
On peut alors 
(en vue des aspects conjecturaux de la Section 7) dire par abus de langage que 
$\eta = \Lambda^{r-\xi}$ est une ``puissance ${\mathfrak p}$-i\`eme'' dans $\prod_{v \div p} K_v^{\times}$. 

Dans le cas $\chi$ d'ordre 1 ou 2, voir les\,\S\ref{sub366},\,\S\ref{sub37} qui reprennent cet aspect.

\vspace{-0.1cm}
\subsection{Cas d'un caract\`ere de degr\'e quelconque}\label{sub34} Soit $G$ un groupe fini et soit 
$\theta$ l'un de ses caract\`eres $p$-adiques irr\'eductibles tel que $\Delta_p^{\theta}(\eta) \equiv 0 \pmod p$
 (i.e.,  ${\mathcal L}^\theta \ne \{0\}$).
Le rel\`evement $\eta_1^{U_\theta} \equiv 1 \pmod {p^2}$ a lieu avec une $\theta$-relation
non triviale ${U_\theta} := \sm_{\nu \in G} u (\nu)\,\nu^{\!-1} \in {\mathcal L}^\theta$. 
On a $\eta_1^{U_\theta} = (1+p\,\gamma)^p \in\prd_{v \div p} K_v^{\times p}$, et 
 la relation $\eta = \eta^{p^{n_p}} \eta_1^{-1}$ conduit aussi  \`a $\eta^{U_\theta} \in \prd_{v \div p} K_v^{\times p}$,
mais l'interpr\'etation de $\eta$ comme ``puissance ${\mathfrak p}$-i\`eme'' dans $\prd_{v \div p} K_v^{\times}$ n'est
plus possible lorsque $\varphi \div \theta$ est de degr\'e $\geq 2$. 

Dans la section suivante on consid\`erera cette \'ecriture
comme une propri\'et\'e de ``puissance $p$-i\`eme locale partielle en $p$'', selon la d\'efinition suivante 
qui fait le lien avec le Lemme \ref{nul} sur les questions de nullit\'es triviales~:

\begin{defi}\label{defi28}  Soit $\eta \in K^\times$. On suppose que le $\Z[G]$-module $F$ engendr\'e par $\eta$
est de $\Z$-rang $n = \vert G\vert$.  Soit $p$ un nombre premier fix\'e assez grand et soit~:

\smallskip
\centerline{$F_0 :=\Big \{\eta_0 \in F, \  \eta_0 \in \prd_{v \div p} K_v^{\times p} \Big \} . $}

On dira que $\eta$ est puissance $p$-i\`eme locale partielle en $p$ si ${\rm dim}_{\F_p}(F/F_0) < n$.

\smallskip
Cette d\'efinition est \'equivalente \`a l'existence d'un caract\`ere $p$-adique irr\'eductible $\theta$ tel que 
${\rm dim}_{\F_p} (F/F_0)^{e_\theta} < f\,\varphi(1)^2$, donc de la forme $t f\,\varphi(1)$, $0\leq t \leq \varphi(1)-1$,
o\`u $f$ est le degr\'e r\'esiduel de $\theta$ (cf. D\'efinition \ref{defi01} (ii),\,\S\ref{calp}).
\end{defi}

\begin{rema}\label{rema260} 
 Comme par hypoth\`ese $F$ est de $\Z$-rang $n$ et 
sans $p$-torsion  (pour $p$ assez grand), on a $F/F^p \simeq \F_p[G]$ (en parti\-culier, ${\rm dim}_{\F_p}(F/F^p)^{e_\theta}  = f\,\varphi(1)^2$ pour tout $\theta$).

\smallskip
(i) Il en r\'esulte que la condi\-tion ${\rm dim}_{\F_p} (F/F_0) <n$ indique qu'il existe $\theta$ tel que la composante $(F/F_0)^{e_\theta}$
n'est pas maximale, donc qu'il existe $U_\theta \in e_\theta \Z_{(p)}[G]$ non triviale telle que
$\eta^{U_\theta}$ est  puissance $p$-i\`eme locale en $p$, non puissance $p$-i\`eme globale dans $K^\times$
(car $F\cap K^{\times p} = F^p$)~;  
ceci \'equivaut \`a ${\rm dim}_{\F_p} ({\mathcal L}^\theta) = \delta f\,\varphi(1)$, $\delta \geq 1$.

\smallskip
(ii) De fait on a $F/F_0 \simeq \F_p[G]/{\mathcal L}$, d'o\`u $(F/F_0)^{e_\theta} \simeq e_\theta\F_p[G]/{\mathcal L}^\theta$, ce
qui donne la relation $t = \varphi(1) - \delta$. Dire que $\eta  \in \prod_{v \div p} K_v^{\times p}$, c'est dire que $F_0 = F$, donc que ${\mathcal L} = \F_p[G]$ de probabilit\'e $\frac{O(1)}{p^n}$, cas qui peut \^etre \'ecart\'e pour~$n>1$.
\end{rema}

\begin{rema}\label{rema27} 
 En r\'esum\'e, l'obstruction fondamentale pour l'\'eventuelle finitude de l'ensemble des $p$ tels que 
$\Delta_p^G(\eta) = \prod_\theta \Delta_p^\theta(\eta)^{\varphi(1)} \equiv 0 \pmod p$ provient des cas de $p$-divisibilit\'e 
minimale $p^{\varphi(1)}$, \`a savoir (cf. D\'efinition \ref{defidec},\,\S\ref{sub10})~:

\smallskip
(i) il existe un unique caract\`ere $p$-adique irr\'eductible $\theta$ tel que $\Delta_p^\theta(\eta) \equiv 0 \pmod p$,

\smallskip
(ii) $p$ est totalement d\'ecompos\'e dans le corps des valeurs $C_\chi$ du caract\`ere irr\'eductible $\varphi \div \theta$
(i.e., $f=1$) et ${\rm Reg}_p^G(\eta) \sim p^{\varphi(1)}$  (cf. Remarque \ref{rema110}, \S\,\ref{gene1}),

\smallskip
(iii)  la repr\'esentation ${\mathcal L}^\theta \simeq \delta V_\theta$ des $\theta$-relations (cf. D\'efinition \ref{defi11} (iii),\,\S\ref{sub155}) est irr\'eductible, ce qui est \'equivalent \`a ${\rm dim}_{\F_p} ({\mathcal L}^\theta) = \varphi(1)$ (i.e., $\delta = 1$, 
cf. Th\'eor\`eme \ref{theo24}, \S\ref{sub15}).
\end{rema}

\section{Conjectures et sp\'eculations $p$-adiques} 

\subsection{G\'en\'eralit\'es}\label{prem}
Nous proposons diverses conjectures en partant de la plus ``\'evidente'' (Conjecture~\ref{conj291},\,\S\ref{sub366}) 
en relation avec la conjecture $ABC$, pour aller vers des g\'en\'erali\-sations aux corps de nombres  (Conjectures \ref{conj31},\,\S\ref{sub38} et \ref{conj341},\,\S\ref{sub39}).
L'obstruction essentielle \'etant donn\'ee par les cas de probabilit\'es en $\frac{O(1)}{p}$ pour les cas de $p$-divisibilit\'e minimale $p^{\varphi(1)}$ de ${\rm Reg}_p^G(\eta)$ (cf. D\'efi\-nition \ref{defidec},\,\S\ref{sub10}, et Remarque ci-dessus), nous proposons\,\S\ref{subalt} une approche pouvant modifier ce point de vue (\'etude limit\'ee au cas du quotient de Fermat).

\smallskip
Dans le cadre de la th\'eorie probabiliste des nombres (cf. \S\ref{sub11}), il convient de pr\'eciser des questions de densit\'es
au niveau des hypoth\`eses des \'enonc\'es. 
En th\'eorie alg\'ebrique des nombres on parle par exemple de {\it ``presque tout nombre premier $p$''} pour
signifier {\it ``pour tout nombre premier $p$ sauf pour un ensemble $\Sigma$ fini''}. Or on peut remplacer cette propri\'et\'e par la suivante, plus faible~:

\medskip
{\ \ \ \it pour tout nombre premier $p$ sauf pour un ensemble $\Sigma$ de premiers $p$ tels que~: }
$$\big \vert \{ p\in \,\Sigma,\  p \leq x  \}\big \vert  = o \Big(\Frac{x}{{\rm log}(x)} \Big),$$
qui permet un ensemble infini d'exceptions de densit\'e nulle (cf. \cite{T}, Chap.\,III.3.1).
On peut \'egalement envisager l'hypoth\`ese  plus faible ``{\it pour un ensemble infini de~$p$}''
(non n\'ecessairement de densit\'e 1).
Les \'enonc\'es de la Section 7 sont donn\'es sous la forme forte (alg\'ebrique), mais on pourra consid\'erer les formes faibles
pour le choix de $\Sigma$ (infini de densit\'e nulle), ou le point de vue ``{\it pour une infinit\'e de~$p$}''.

\subsection{Retour sur le cas des caract\`eres d'ordre 1 ou 2}\label{sub35} 
Nous revenons sur des cas particuliers d\'ej\`a \'evoqu\'es (cf.\,\S\ref{sub7}).

\subsubsection{Cas d'un rationnel}\label{sub366}
On consid\`ere $K=\Q$ et un rationnel $a \in \Q^\times$, $a \ne \pm 1$. Si $p$ est un nombre premier impair \'etranger \`a $a$, on a le r\'esultat \'el\'ementaire suivant qui est un cas particulier du Corollaire \ref{coro25},\,\S\ref{sub15} (pour $\theta=1$ et 
$U_\theta = 1$)~: 

\begin{lemm}\label{lemm29}
 Le $p$-quotient de Fermat de $a$ est nul modulo $p$ si et seulement si $a \in \Q_p^{\times p}$.
\end{lemm}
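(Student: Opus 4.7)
The plan is to use the structure of the local units at $p$. Since $p \nmid a$, we have $a \in \Z_p^\times$ with $v_p(a) = 0$, so the condition $a \in \Q_p^{\times p}$ is equivalent to $a \in (\Z_p^\times)^p$, and the Fermat quotient $q_p(a) = (a^{p-1}-1)/p$ is a well-defined element of $\Z_p$.

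First I would invoke the canonical decomposition $\Z_p^\times = \mu_{p-1} \times U^{(1)}$ with $U^{(1)} = 1 + p\,\Z_p$. Write $a = \omega\cdot u$, where $\omega \in \mu_{p-1}$ is the Teichmüller lift of $a \!\!\pmod p$ and $u = 1 + p\,\alpha \in U^{(1)}$, $\alpha \in \Z_p$. Since ${\rm p.g.c.d.}(p,p-1) = 1$, the $p$-th power map is a bijection on $\mu_{p-1}$; and for $p$ odd, the $p$-adic logarithm identifies $U^{(1)} \simeq p\,\Z_p$ and sends the $p$-th power map to multiplication by $p$, so $(U^{(1)})^p = U^{(2)} := 1 + p^2\,\Z_p$. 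Hence $(\Z_p^\times)^p = \mu_{p-1}\cdot U^{(2)}$, and $a \in \Q_p^{\times p}$ if and only if $u \in U^{(2)}$, i.e., $\alpha \equiv 0 \pmod p$.

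Next I would compute the Fermat quotient modulo $p$ directly. Since $\omega^{p-1} = 1$, one has $a^{p-1} - 1 = u^{p-1} - 1$, and the binomial expansion
\[
(1 + p\,\alpha)^{p-1} \equiv 1 + (p-1)\,p\,\alpha \equiv 1 - p\,\alpha \pmod {p^2}
\]
gives $q_p(a) = (a^{p-1}-1)/p \equiv -\alpha \pmod p$. Combining the two observations, $q_p(a) \equiv 0 \pmod p$ if and only if $\alpha \equiv 0 \pmod p$, if and only if $a \in \Q_p^{\times p}$.

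There is no serious obstacle: the lemma reduces to the standard structure theory of $\Q_p^\times$. The only point requiring care is the hypothesis $p$ odd, which is used to ensure $(U^{(1)})^p = U^{(2)}$ via the $p$-adic logarithm; the case $p=2$ would need an additional filtration step and is anyway excluded by the standing assumption that $p$ is assez grand.
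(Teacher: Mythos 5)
Your proof is correct and complete; it differs from the paper's in presentation rather than in substance. The paper dispatches the lemma in two lines: for the direction ``$q_p(a)\equiv 0 \Rightarrow a \in \Q_p^{\times p}$'' it writes $a^{p-1}=1+p^2b$ and uses the identity $a = a^p\,(1+p^2b)^{-1}$, so that $a$ is exhibited as a $p$-th power times an element of $1+p^2\Z_p$, which is itself a $p$-th power for $p\ne 2$; for the converse it expands $a^{p-1}=(u^{p-1})^p \equiv 1 \pmod{p^2}$ directly. You instead develop the canonical decomposition $\Z_p^\times = \mu_{p-1}\times(1+p\Z_p)$, identify $(\Z_p^\times)^p = \mu_{p-1}\cdot(1+p^2\Z_p)$ via the $p$-adic logarithm, and compute $q_p(a) \equiv -\alpha \pmod p$ where $a = \omega(1+p\alpha)$. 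Both arguments hinge on the same key fact (for odd $p$, every element of $1+p^2\Z_p$ is a $p$-th power of a principal unit), so neither is more general; the paper's route is shorter and avoids invoking the Teichm\"uller/logarithm machinery, while yours yields the sharper quantitative statement $q_p(a)\equiv -\alpha \pmod p$, which meshes nicely with the normalisation $\eta^{p^{n_p}-1}=1+p\,\alpha_p(\eta)$ and the sign convention $\Delta_p^1(\eta)\equiv \frac{-1}{p}\log_p$ used throughout the paper. Your remark that $p=2$ must be excluded matches the paper's parenthetical ``car $p\ne 2$''.
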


\begin{proof}   Si $a^{p-1} = 1+ p^2\,b$, $b$ $p$-entier, alors $a = a^p(1+ p^2\,b)^{-1} \in \Q_p^{\times p}$
(car $p\ne 2$). Si $a = u^p$, $u \in \Q_p^\times$, $a^{p-1} = (u^{p-1} )^p \equiv 1 \pmod {p^2}$.
\end{proof}

Une question cruciale, pour $a \ne \pm 1$, est l'existence ou non d'une infinit\'e de nombres premiers $p$ tels que
$a^{p-1} \equiv 1 \pmod {p^2}$. 

\smallskip
On sait, d'apr\`es un r\'esultat de Silverman \cite{Si}, que sous la conjecture $ABC$ l'ensemble des nombres premiers $p$ 
tels que $a^{p-1} \not\equiv 1 \pmod {p^2}$ est infini.\,\footnote{\,Silverman  prouve que pour tout $a \geq 2$, l'ensemble de ces nombres premiers $p\leq x$ est de cardinal $\geq c \,{\rm log}(x)$. 
Ce r\'esultat a \'et\'e \'etendu par Graves et Murty dans \cite{GM} aux $p \equiv 1 \pmod k$, pour tout
$k\geq 2$ fix\'e, auquel cas, l'ensemble de ces $p \leq x$ est de cardinal $\geq c \, \frac{ {\rm log} (x) }{ {\rm log} ( {\rm log} (x))}$, 
toujours sous la conjecture $ABC$. }
L'\'etude statistique montre que ce r\'esultat est une forme tr\`es faible de la r\'ealit\'e. 

\smallskip
Autrement dit, on a la propri\'et\'e conjecturale tr\`es raisonnable suivante qui devient 
un th\'eor\`eme si l'on admet la conjecture $ABC$ (cf.\,\S\ref{prem} pour les diff\'erentes acceptions
de {\it ``pour presque tout $p$''} \`a partir de {\it ``pour tout $p$ sauf un nombre fini''})~:

\begin{conj}\label{conj291}
 Soit $a \in \Q^\times$; si $a \in \Q_p^{\times p}$ pour presque tout $p$ alors $a = \pm 1$. 
\end{conj}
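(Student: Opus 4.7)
L'id\'ee est de r\'eduire la conjecture au th\'eor\`eme de Silverman \cite{Si} via le crit\`ere \'el\'ementaire d\'ej\`a \'etabli au Lemme~\ref{lemm29}.

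D'abord, par le Lemme~\ref{lemm29} (pour $p$ impair \'etranger \`a $a$), la condition $a \in \Q_p^{\times p}$ \'equivaut \`a $a^{p-1} \equiv 1 \pmod{p^2}$. \'Ecrivant $a = u/v$ avec $u, v \in \Z$ non nuls premiers entre eux, cela se r\'e\'ecrit $p^2 \div u^{p-1} - v^{p-1}$. L'hypoth\`ese ``$a \in \Q_p^{\times p}$ pour presque tout $p$'' (prise au sens alg\'ebrique fort de\,\S\ref{prem}, \`a savoir pour tout $p$ sauf un nombre fini) se traduit donc en~: $p^2 \div u^{p-1} - v^{p-1}$ pour tout $p$ assez grand.

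Ensuite, je raisonne par contrapos\'ee en supposant $a \ne \pm 1$, i.e.\ $\vert u\vert \ne \vert v\vert$. Le th\'eor\`eme de Silverman \cite{Si} sous la conjecture $ABC$ affirme~: pour $a \geq 2$ entier, il y a au moins $c\, {\rm log}(x)$ premiers $p \leq x$ tels que $a^{p-1} \not\equiv 1 \pmod{p^2}$, l'argument appliquant la forme multiplicative de $ABC$ au triplet $(1,\, a^{p-1} - 1,\, a^{p-1})$, une $p^2$-divisibilit\'e du terme central for\c cant le radical de ce triplet \`a \^etre trop petit par rapport \`a l'in\'egalit\'e $ABC$. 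L'extension au cas rationnel est imm\'ediate en rempla\c cant ce triplet par $(v^{p-1},\, u^{p-1} - v^{p-1},\, u^{p-1})$ (apr\`es division par leur pgcd)~: la condition $p^2 \div u^{p-1} - v^{p-1}$ force, via $ABC$, un radical trop petit, ce qui l'exclut pour au moins $c\, {\rm log}(x)$ premiers $p \leq x$. Ces exceptions sont en nombre infini, ce qui contredit l'hypoth\`ese et donne bien $a = \pm 1$.

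L'obstruction essentielle est, bien entendu, la conjecture $ABC$ elle-m\^eme, sur laquelle repose int\'egralement l'argument~; aucune approche inconditionnelle ne semble accessible, ce qui justifie le statut conjectural retenu par l'auteur. Une obstruction secondaire, technique mais non vide, concerne les acceptions plus faibles de ``presque tout'' mentionn\'ees au\,\S\ref{prem}~: pour l'hypoth\`ese ``$a \in \Q_p^{\times p}$ pour une partie de $p$ de densit\'e $1$'', la borne $\gg {\rm log}(x)$ de Silverman--Graves--Murty ne suffit plus, et il faudrait un contr\^ole du type $\gg x/({\rm log}\,x)^{1+\epsilon}$ sur l'ensemble d'exception, actuellement hors de port\'ee m\^eme sous $ABC$~; seul l'\'enonc\'e ``pour tout $p$ sauf un nombre fini'' est donc atteint par la m\'ethode propos\'ee.
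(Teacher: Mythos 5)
Your argument is essentially the paper's own: the statement is left as a conjecture precisely because it only becomes a theorem under $ABC$, via the equivalence of Lemme~\ref{lemm29} ($a \in \Q_p^{\times p} \Leftrightarrow a^{p-1} \equiv 1 \pmod{p^2}$) combined with Silverman's result \cite{Si} producing infinitely many primes with $a^{p-1} \not\equiv 1 \pmod{p^2}$, which is exactly your contrapositive. Your caveats — the extension from integers $a\geq 2$ to $a=u/v\in\Q^\times$, and the fact that only the strong reading of \og presque tout \fg{} (tout $p$ sauf un nombre fini) is reached, the density-zero-exception version of \S\ref{prem} remaining open even under $ABC$ — are consistent with the paper's discussion, so there is nothing to correct.
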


On peut consid\'erer cet \'ennonc\'e comme un principe local--global tr\`es particulier par rapport \`a ceux qui existent
en th\'eorie du corps de classes (alors purement alg\'ebriques comme le ``principe de Hasse'' pour les puissances rappel\'e
Proposition~\ref{prop32},\,\S\ref{sub38}). On pourra le qualifier de {\it principe local--global diophantien}.

\smallskip
Ces principes alg\'ebriques restent valables lorsqu'on remplace
 ``presque tout $p$'' par un ensemble de $p$ infini (ou fini) convenable restreint, mais non effectif en g\'en\'eral.  
Citons par exemple une forme tr\`es simple du th\'eor\`eme de Schmidt--Chevalley (un cas de principe de Hasse) 
qui s'\'enonce pour tous les corps de nombres et qui est \`a la base des propri\'et\'es id\'eliques du corps de classes 
(cf. \cite{Gr1}, III.4.3)~:

\begin{prop}\label{prop30}
Soit $A$ un sous-$\Z$-module de type fini de $\Q^\times$. Soit $e$ un entier fix\'e~;
alors il existe une infinit\'e d'ensembles finis $T$, de nombres premiers \'etrangers \`a $A$,
qui ont la propri\'et\'e suivante~: 
$$\{x \in A, \ \,x \equiv 1 \!\!\pmod p, \ \forall p\in T\} \subseteq  \Q^{\times e} . $$
\end{prop}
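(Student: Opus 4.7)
The plan is to produce a single finite set $T$ of primes coprime to $A$ satisfying the conclusion; infinitely many such $T$ then follow for free, since adjoining any additional prime $q$ coprime to $A$ to a valid $T$ only shrinks the intersection $\bigcap_{p\in T'}\{x\in A:x\equiv 1\pmod p\}$. First I would reduce to the torsion-free case $A\simeq\Z^r$ (the factor $\{\pm 1\}$, if present, is automatically killed by any odd prime in $T$ since $-1\not\equiv 1\pmod p$). Setting $B:=A\cap\Q^{\times e}\supseteq A^e$, the quotient $A/B$ is a finite abelian group of exponent dividing $e$; writing $K_p:=\{x\in A:x\equiv 1\pmod p\}$, the containment $\bigcap_{p\in T}K_p\subseteq\Q^{\times e}$ is equivalent to $\bigcap_{p\in T}K_p\subseteq B$, which in turn amounts to the following finite condition: for each nontrivial coset $\bar x\in A/B$, some $p\in T$ witnesses that no representative of $\bar x$ is $\equiv 1\pmod p$.

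The main construction uses Chebotarev's theorem on the Kummer--cyclotomic tower $L=\Q(\mu_e,a_1^{1/e},\ldots,a_r^{1/e})/\Q$, where $a_1,\ldots,a_r$ generate $A$ modulo torsion. By Kummer theory ${\rm Gal}(L/\Q(\mu_e))$ pairs nondegenerately with the image of $A$ in $\Q(\mu_e)^\times/\Q(\mu_e)^{\times e}$, and restricting to $A/B$ recovers the $e$-th power residue character of each $x\in A$. For each nontrivial $\bar x\in A/B$ I would select a Frobenius element pairing nontrivially with a chosen Kummer character of $\bar x$, then invoke Chebotarev to supply infinitely many rational primes $p$ with $e\mid p-1$ whose Frobenius realizes that element; for such $p$ the image of $x$ in $\F_p^\times$ lies outside $(\F_p^\times)^e$, and hence outside the image of $B$ (which is contained in $(\F_p^\times)^e$ because $B\subseteq\Q^{\times e}$). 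Picking one such prime per nontrivial coset, and ensuring all chosen primes are coprime to $A$, produces the required finite $T$.

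The main obstacle is the Grunwald--Wang phenomenon: for exceptional $e$ (most notably when $8\mid e$) there can exist $x\in A\setminus\Q^{\times e}$ with $x\in(\Q(\mu_e)^\times)^e$, so that $\Q(\mu_e,x^{1/e})/\Q(\mu_e)$ is trivial and the naive Chebotarev argument above produces no prime distinguishing $\bar x$ from the identity via $e$-th power residues. Since the hypothesis ``$x\equiv 1\pmod p$'' is strictly stronger than ``$x\in(\F_p^\times)^e$'', the needed separation can nonetheless be achieved, but only by enlarging the tower to $\Q(\mu_{eN},a_i^{1/eN})$ with $N$ sufficiently divisible so that the Frobenius at $p$ records not merely the $e$-th power residue character of $x$ but the precise order of $x$ in $\F_p^\times$. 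Equivalently, one invokes idelic class field theory: the open subgroup $A^e\cdot U$ of the id\`ele class group (with $U\subseteq\prod_p\Z_p^\times$ a suitable compact neighborhood of $1$) corresponds to an explicit abelian extension of $\Q$, and one takes $T$ to consist of primes whose Frobenius classes in that extension separate the finite group $A/B$. This is the technical heart of Schmidt--Chevalley and is what forces the use of global class field theory rather than purely Kummer-theoretic manipulation.
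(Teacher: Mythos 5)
First, a point of reference: the paper does not prove this proposition at all — it is quoted as a standard form of the Schmidt--Chevalley lemma with a citation to [Gr1], III.4.3 — so your argument has to stand on its own. Its skeleton is the right one: reduce to the finite quotient $A/B$ with $B=A\cap\Q^{\times e}$, detect a nontrivial coset by an $e$-th power residue condition at a prime $p\equiv 1\pmod e$ supplied by Chebotarev in the Kummer tower, and you correctly identify the Grunwald--Wang special case as exactly where the naive version breaks. The genuine gap is that your repair of that breakdown is only gestured at, and the one concrete claim you make about it is false: the Frobenius at $p$ in $\Q(\mu_{eN},a_i^{1/eN})$ does not record ``the precise order of $x$ in $\F_p^\times$''; it records only the class of $x$ in $\F_p^\times/(\F_p^\times)^{eN}$ (no fixed extension can see the exact order). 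What the enlarged tower actually buys, and what is missing from the write-up, is the chain: (a) if $p\equiv 1\pmod M$ and $x\equiv 1\pmod p$ then the $M$-th power residue symbol of $x$ at $p$ is trivial, so finitely many primes whose Frobenius elements generate $\mathrm{Gal}\big(\Q(\mu_M,a_1^{1/M},\ldots,a_r^{1/M})/\Q(\mu_M)\big)$ force every $x\in\bigcap_{p\in T}K_p$ into $\Q(\mu_M)^{\times M}$ — note this replaces, rather than patches, your coset-by-coset argument; (b) the descent lemma $\Q^\times\cap\Q(\mu_M)^{\times M}\subseteq\pm\,\Q^{\times M/2}$, obtained by showing $H^1(\mathrm{Gal}(\Q(\mu_M)/\Q),\mu_M)$ has order at most $2$ and applying it to the cocycle $\sigma\mapsto\sigma(y)/y$ for $y^M=x$; taking $M=2e$ this lands in $\pm\Q^{\times e}$; (c) the residual sign, which is not automatic (e.g.\ $-4\equiv 1\pmod 5$ with $e=2$), so one more prime in a congruence class modulo $2e$ for which $-1\notin(\F_p^\times)^e$ must be adjoined to $T$. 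Step (b) is the technical heart of the lemma and is entirely absent.

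Two smaller points. Your reformulation ``for each nontrivial coset $\bar x$, some $p\in T$ kills every representative of $\bar x$'' is sufficient for $\bigcap_{p\in T}K_p\subseteq B$ but not equivalent to it (the containment only requires a witness per element, possibly varying within a coset); it is the right target, but ``amounts to'' should be ``is implied by''. The closing idelic sentence does not parse: $A^e$ is a subgroup of $\Q^\times$ and hence trivial in the id\`ele class group, so ``$A^e\cdot U$'' is not the norm subgroup you want; since you do not actually use it, drop it. Finally, producing infinitely many $T$ by adjoining primes to one valid $T$ does meet the literal statement, but for the id\'elic applications this proposition serves one wants $T$ avoiding any prescribed finite set of primes — which your Chebotarev step delivers anyway, since each required Frobenius class is realized by a positive density of primes.
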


On peut  imaginer qu'un principe  local--global  diophantien existe dans le cas des $p$-quotients 
de Fermat de $a\in \Q^\times$, auquel cas on peut conjecturer que si l'on dispose d'un ensemble infini (m\^eme de
densit\'e nulle) de nombres premiers~$p$ tels que $a^{p-1} \equiv 1 \pmod {p^2}$, 
ceci est suffisant pour assurer que $a$ est \'egal \`a $\pm 1$.

\subsubsection{Cas de l'unit\'e d'un corps quadratique $K=\Q(\sqrt m)$}\label{sub37}  Si $\eta =x+  y\sqrt m$ est non rationnel 
de norme $a$, on a  $\eta^{p^2 - 1} = 1 + p\,\alpha$, $\alpha = u + v\sqrt m$, d'o\`u 
$\Delta^\chi_p (\eta)\equiv 2v\sqrt m \pmod p$ pour $\chi \ne 1$. 
On  a $\Delta^\chi_p(\eta)\equiv 0 \!\pmod p$ si et seulement si $v \equiv 0 \!\pmod p$. 

\smallskip
Supposons $m>0$ et que $\eta$ est une unit\'e $\varepsilon$ non triviale de $\Q(\sqrt m)$ (nullit\'e triviale modulo $p$ de 
$\Delta^{1}_p(\varepsilon)$)~; on a $u  \equiv 0 \pmod p$ et 
encore $\Delta^\chi_p (\varepsilon)\equiv 2v\sqrt m \pmod p$~; la nullit\'e modulo $p$ de $\Delta^\chi_p (\varepsilon)$ 
implique $\alpha\equiv 0 \pmod p$, et $\varepsilon$ est puissance $p$-i\`eme locale.  
Au plan conjectural, on est ramen\'e \`a la situation pr\'ec\'edente d'un rationnel. 
Autrement dit, il suffirait que l'on d\'emontre (via une forme ad\'equate de la conjec\-ture $ABC$) que la relation
$\varepsilon^{p^2-1} \not\equiv 1  \pmod {p^2}$ a lieu pour une infinit\'e de $p$, pour pouvoir \'enoncer l'analogue
pour $\varepsilon$ de la Conjecture ci-dessus.

On peut envisager que ce processus est valable pour le cas g\'en\'eral o\`u $\eta \in K^\times$ serait ``puissance
$p$-i\`eme locale partielle en $p$''  (cf. D\'efinition \ref{defi28},\,\S\ref{sub34}) pour une infinit\'e de~$p$.

\smallskip
Ceci suppose d'abord la r\'esolution du cas des puissances $p$-i\`emes locales en $p$ au sens commun, condition \'evidemment 
plus forte que celle de puissance $p$-i\`eme locale partielle en $p$~; c'est l'objet du point suivant.

\subsubsection{Conjecture g\'en\'erale sur les puissances $p$-i\`emes locales}\label{sub38} Le cas rationnel 
(Conjecture \ref{conj291},\,\S\ref{sub366}) a montr\'e la vraissemblance du type d'\'enonc\'e suivant qui cor\-respond \`a 
 l'\'ecriture $\eta^{p^{n_p} - 1} - 1 = p\, \alpha$ dans le cas (statistiquement tr\`es exceptionnel) o\`u $\alpha \equiv 0 \pmod p$
qui correspond \`a la condition ${\mathcal L} = \F_p[G]$ ($\delta =\varphi(1)$  pour tout caract\`ere $p$-adique $\theta$ de $G$) 
et une probabilit\'e en $\prod_\theta\frac{O(1)}{p^{f\varphi(1)^2}} = \frac{O(1)}{p^n}$, $n=\vert G\vert$, 
puisque $\sum_\theta f\,\varphi(1)^2 = \sum_\varphi \varphi(1)^2 = n$ (cf. Remarque \ref{rema260},\,\S\ref{sub34} et\,\S\ref{HP}).

\smallskip
Ceci pourrait \^etre une g\'en\'eralisation du th\'eor\`eme de Silverman (cas rationnel), qui utiliserait ici la conjecture $ABC$ pour les corps de nombres, mais la conjecture peut \^etre pos\'ee ind\'ependamment (se reporter aussi  au\,\S\ref{prem} pour les diff\'erentes acceptions de l'hypoth\`ese {\it ``pour presque tout nombre premier $p$''})~:

\begin{conj}\label{conj31}  Soit $K$ un corps de nombres et soit $\eta \in K^\times$. Si pour presque tout nombre premier $p$
on a $\eta \in  \prd_{v \div p} K_v^{\times p}$, alors $\eta$ est une racine de l'unit\'e de $K$.
\end{conj}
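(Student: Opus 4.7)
The plan is to reduce the statement to the rational case via a norm argument, invoke the $ABC$-conditional rational version (Conjecture~\ref{conj291}) to conclude that $\eta$ is a unit, and then promote ``unit'' to ``root of unity'' by a direct $ABC$-style argument over $K$. First I would replace $K$ by its Galois closure $\widetilde K/\Q$: the hypothesis is preserved since each completion $\widetilde K_w$ with $w\mid p$ contains some $K_v$, so local $p$-th powers lift, and $\eta\in\mu(K)$ iff $\eta\in\mu(\widetilde K)$. We may therefore assume $K/\Q$ Galois with group $G$. By \S\ref{log}, the assumption then translates to $\alpha_p(\eta)\equiv 0\pmod p$ in $Z_{K,(p)}$, equivalently $\eta^{p^{n_p}-1}\equiv 1\pmod{p^2}$, for almost every $p$; this is exactly the extreme case ${\mathcal L}^\theta \simeq \varphi(1)\, V_\theta$ for \emph{every} $p$-adic character $\theta$ of $G$, whose heuristic probability is $O(1)/p^n$ by \S\ref{HP}.

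The second step is to take the absolute norm. Since $\No_{K_v/\Q_p}(\gamma^p)=\No_{K_v/\Q_p}(\gamma)^p$, the rational number $a:=\No_{K/\Q}(\eta)$ lies in $\Q_p^{\times p}$ for almost every $p$. Invoking Conjecture~\ref{conj291} (the rational case, which follows from $ABC$ via Silverman's theorem) forces $a=\pm 1$, so $\eta \in Z_K^\times$.

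The decisive step is the passage from ``$\eta$ is a unit'' to ``$\eta$ is a root of unity'', and this is where the main obstacle lies. Further relative norms give only $\pm 1$ and produce no new constraint, so I would instead apply $ABC$ over $K$ (Masser--Oesterl\'e for number fields) directly to the triple $(1,\ p^2\beta_p,\ \eta_1)$ arising from $\eta_1:=\eta^{p^{n_p}-1}=1+p^2\beta_p$ with $\beta_p\in Z_{K,(p)}$. If $\eta$ had infinite order, Dirichlet's unit theorem would furnish an archimedean embedding $\sigma$ with $|\sigma(\eta)|\ne 1$, so the $K$-height of $\eta_1$ would grow like $(p^{n_p}-1)\cdot h(\eta)$, while the radical supported on $p$ and on the finite places dividing $\beta_p$ remains essentially polynomial in $p$. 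In the spirit of Silverman (and Graves--Murty), this should produce infinitely many ``good'' primes $p$ where $\eta^{p^{n_p}-1}\not\equiv 1\pmod{p^2}$, contradicting the hypothesis; Kronecker's theorem then yields $\eta \in \mu(K)$.

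The hard part is precisely to produce enough ``good'' primes to contradict the various acceptions of ``almost every $p$'' recalled in \S\ref{prem}. The local $p$-th power condition must hold simultaneously at every $v\mid p$, which strengthens the $p$-adic constraint in the $ABC$-triple over $K$ by a factor proportional to $n = [K:\Q]$; these local contributions, together with the ramification data and the residue degrees $n_p$, must be tracked carefully. A Silverman-style bound $\gg \log x$ on good primes up to $x$ should suffice for the ``cofinite exceptions'' reading, whereas the density-zero reading seems to require a Graves--Murty-type refinement $\gg \log x/\log\log x$ adapted to the stronger congruence $\eta^{p^{n_p}-1}\equiv 1\pmod{p^2}$ in $Z_K$.
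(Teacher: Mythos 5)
Be careful about the status of this statement: in the paper it is a \emph{conjecture} (Conjecture \ref{conj31}), deliberately left unproved. The paper only motivates it --- by the rational case (Conjecture \ref{conj291}, a theorem under $ABC$ via Silverman), by the comparison with the Hasse principle (Proposition \ref{prop32}), by the quadratic-unit discussion of \S\ref{sub37}, and by the class-field-theoretic reformulation of \S\ref{grunit} --- and even in the simplest non-rational case (a fundamental unit of a real quadratic field) it says only that it \emph{would suffice} to prove, ``via une forme ad\'equate de la conjecture $ABC$'', that $\varepsilon^{p^2-1}\not\equiv 1\pmod{p^2}$ for infinitely many $p$. So there is no paper proof to compare with, and your text should be judged as a proof attempt on its own; as such it has genuine gaps.

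Two concrete problems. First, the norm reduction proves less than you claim: from $\No_{K/\Q}(\eta)=\pm1$ you conclude ``$\eta\in Z_K^\times$'', but $\eta$ is only assumed in $K^\times$ and the hypothesis constrains it at the places $v\div p$ only, so nothing forces $\eta$ to be integral (e.g.\ $\eta=(3+\sqrt2)/(3-\sqrt2)$ has norm $1$ and is not a unit); the intermediate statement must be reformulated, and in any case it contributes little. Second, and decisively, the passage from ``unit of infinite order'' to a contradiction is exactly the open content of the conjecture, not a consequence of $ABC$ over $K$ as you use it. Applying Masser--Oesterl\'e to the triple $(1,\,p^2\beta_p,\,\eta_1)$ requires a lower bound on the height of $\eta_1$ \emph{against the radical of $p^2\beta_p$}, and the radical of $\beta_p=(\eta^{p^{n_p}-1}-1)/p^2$ is not ``essentially polynomial in $p$'': a priori it can be as large as the height of $\eta_1$ itself, which grows like $p^{n_p}h(\eta)$. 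Over $\Q$, Silverman (and Graves--Murty) get around this precisely through the multiplicative structure of $a^m-1$, i.e.\ the cyclotomic factorization $\prod_{m}\Phi_m(a)$ and a squarefree-kernel count --- the very structure the paper recalls in \S\ref{subalt} --- and no analogue is available for the simultaneous congruence $\eta^{p^{n_p}-1}\equiv1\pmod{p^2}$ in $Z_K$ when the unit group has positive rank (this is why \S\ref{grunit} passes instead to unramified $p$-decomposed extensions of $K(\mu_p)$ and a reflection-type obstruction, again without proof). Finally, as you note yourself, even a Silverman-type bound $\gg\log x$ would only address the cofinite reading of ``presque tout $p$''; the paper's \S\ref{prem} explicitly allows an infinite exceptional set of density zero, for which your strategy offers no handle. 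So the proposal is a plausible heuristic road map, essentially the one the author already sketches, but not a proof.
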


Cet \'enonc\'e est \`a comparer au ``principe de Hasse'' pour les puissances, beaucoup plus fort, et qui est le suivant (cf. \cite{Gr1}, II.6.3.3). Soit $\Pl_K$ (resp. $\Pl_p$) l'ensemble des places de $K$ (resp. des $p$-places de $K$)~:

\begin{prop}\label{prop32} Soit $\eta \in K^{\times}$. Soit $p$ un nombre premier fix\'e et soit $\Sigma$ un 
ensemble fini de places de $K$. 
Si $\eta$ est puissance $p$-i\`eme locale pour toute place $v \in \Pl_K\,\Sauf\,\Sigma$, alors $\eta \in K^{\times p}$.

Il existe une infinit\'e d'ensembles finis $T$ (non effectifs) de places de $K$  tels que si $\eta$ est puissance $p$-i\`eme locale 
pour toute place $v \in T$, alors $\eta \in K^{\times p}$.\,\footnote{Les \'enonc\'es classiques supposent toujours que $\Sigma$ est fini (simple \'elimination de places pathologiques) afin de pouvoir utiliser les th\'eor\`emes de densit\'e (Chebotarev) qui s'expriment en termes de progressions particuli\`eres de densit\'es canoniques~; or pour \^etre certain que de telles suites (en nombre fini), n\'ecessaires \`a la preuve, rencontrent bien le compl\'ementaire de $\Sigma$, celui-ci doit \^etre ``presque tout'', d\`es lors que s'il lui manquait une famille infinie (inconnue), il se pourrait que ``par hasard'' 
elle contienne les Frobenius dont on a besoin. On voit bien la distance qu'il peut y avoir entre un raisonnement alg\'ebrique g\'en\'eral et un raisonnement sur des hypoth\`eses nettement moins fortes portant par exemple sur des ensembles $\Sigma$ de densit\'e 
nulle~(cf.~\S\ref{prem}).}
\end{prop}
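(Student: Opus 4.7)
The strategy is the Chebotarev density theorem applied to a Kummer extension, with a cohomological descent step to handle the case $\mu_p \not\subset K$. Set $M := K(\mu_p)$; since $[M:K]$ divides $p-1$, it is coprime to $p$, which is what will make the descent at the end work (this is also why the statement is for a prime exponent $p$ and not a prime power, where Grunwald--Wang counterexamples appear). The key object is the Kummer extension $L := M(\sqrt[p]{\eta})$, which is either trivial or cyclic of degree $p$ over $M$.

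For part (a), I would first reinterpret the local hypothesis. If $v \in \Pl_K \setminus \Sigma$ and $w$ is any place of $M$ above $v$, then $\eta \in K_v^{\times p}$ forces $\eta \in M_w^{\times p}$, which by Kummer theory is equivalent to saying $w$ splits completely in $L/M$. Because $\Sigma$ is finite, the set of places of $M$ lying above $\Pl_K \setminus \Sigma$ has density $1$ in $\Pl_M$, so almost all primes of $M$ split completely in $L/M$. The Chebotarev density theorem gives that the primes splitting completely in $L/M$ have density $1/[L:M]$; hence $[L:M]=1$, i.e.\ $\eta \in M^{\times p}$. To descend, write $\eta = \xi^p$ with $\xi \in M$; for every $s \in \mathrm{Gal}(M/K)$, the ratio $\xi^s / \xi$ lies in $\mu_p$, defining a $1$-cocycle with values in $\mu_p$. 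Since $|\mathrm{Gal}(M/K)|$ divides $p-1$ and hence is coprime to $p = |\mu_p|$, we have $H^1(\mathrm{Gal}(M/K), \mu_p) = 0$ (vanishing of cohomology on prime-to-order modules), so the cocycle is a coboundary and $\xi$ can be modified by an element of $\mu_p$ to lie in $K$. Therefore $\eta \in K^{\times p}$.

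For part (b), the point is a simple density observation built on part (a). If $\eta \in K^{\times p}$, any finite $T$ works trivially, so assume $\eta \notin K^{\times p}$. Apply part (a) with $\Sigma = \emptyset$: there must exist at least one place $v_0$ of $K$ at which $\eta \notin K_{v_0}^{\times p}$. In fact, returning to $L = M(\sqrt[p]{\eta}) \ne M$, Chebotarev gives that the set of primes of $M$ whose Frobenius in $L/M$ is nontrivial has density $1 - 1/p > 0$, so infinitely many $v_0 \in \Pl_K$ witness the non-$p$-th-power condition. Any finite set $T$ containing such a $v_0$ makes the hypothesis "$\eta \in K_v^{\times p}$ for all $v \in T$" false, so the implication is vacuously satisfied; taking $T = \{v_0\} \cup T'$ for arbitrary finite $T'$ produces infinitely many admissible sets $T$, which are non-effective precisely because Chebotarev locates the $v_0$ only within a density-theoretic statement. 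The main obstacle in the whole argument is the cohomological descent step in (a): without the coprimality $\gcd(p, [M:K])=1$, the passage from $\eta \in M^{\times p}$ to $\eta \in K^{\times p}$ fails, and this is exactly the phenomenon underlying the Grunwald--Wang exception highlighted in the paper's footnote.
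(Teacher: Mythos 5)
Your argument is correct, but note that the paper does not actually prove Proposition \ref{prop32}: it is quoted as a classical statement with a reference to the author's book ([Gr1], II.6.3.3), so there is no internal proof to compare with. What you give is the standard Kummer--Chebotarev argument, and each step checks out: the places of $M=K(\mu_p)$ above $\Pl_K\setminus\Sigma$ have density $1$, complete splitting in $L=M(\sqrt[p]{\eta})/M$ is equivalent to $\eta\in M_w^{\times p}$, Chebotarev forces $L=M$, and the descent works because $H^1({\rm Gal}(M/K),\mu_p)=0$ as $[M:K]\mid p-1$ is prime to $p$ (a shortcut: from $\eta=\xi^p$ in $M$ one gets $\eta^{[M:K]}=N_{M/K}(\xi)^p\in K^{\times p}$, and $\gcd([M:K],p)=1$ gives $\eta\in K^{\times p}$ directly, with no cohomology). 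For the second assertion, with $\eta$ fixed as in the statement, your observation is the honest content: either $\eta\in K^{\times p}$ and every finite $T$ works, or Chebotarev applied to $L/M$ produces infinitely many places $v$ of $K$ with $\eta\notin K_v^{\times p}$, and any finite $T$ containing such a place satisfies the implication; the non-effectivity is exactly the density-theoretic caveat of the paper's footnote. Two small points: the footnote does not mention Grunwald--Wang (it discusses the role of Chebotarev and of density-zero exceptional sets), so attribute that remark to yourself rather than to the paper; and in the source [Gr1] the statement sits in an idelic/class-field-theoretic framework (compare Proposition \ref{prop30}, stated for a finitely generated subgroup), which is where the sets $T$ acquire a less vacuous meaning than for a single fixed $\eta$.
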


La diff\'erence op\`ere en deux temps pour la Conjecture \ref{conj31} ci-dessus~: partant de $p$ et de l'ensemble $\Pl_p$, 
on commence par dire que $\eta$ est puissance $p$-i\`eme locale pour tout $v \in \Pl_p$
 (i.e., on prend l'ensemble infini $\Sigma = \Pl_K\,\Sauf \Pl_p$, ce qui 
n'entra\^ine pas $\eta \in K^{\times p}$ comme on le v\'erifie facilement~; ou encore on peut dire qu'on essaye de prendre 
$T = \Pl_p$),  mais ensuite dans la Conjecture \ref{conj31} on suppose que cette propri\'et\'e locale (de type ``Hasse faible'') 
est vraie pour presque tout $p$, auquel cas $\eta$ serait conjecturalement dans $K^{\times p}$ pour presque 
tout $p$, donc une racine de~l'unit\'e.

\smallskip
La conjecture ``ultime'' qui fait le lien avec la th\'eorie des $\Delta^\theta_p(\eta)$ est la Conjecture \ref{conj34}
de la section suivante,\,\S\ref{sub39}. 

\smallskip
Auparavant, examinons le cas g\'en\'eral des unit\'es qui conforte l'analyse pr\'ec\'edente.

\subsubsection{Cas particulier du groupe des unit\'es -- Spiegelungssatz}\label{grunit}
Le cas o\`u $\eta$ est une unit\'e d'un corps de nombres $K$ quelconque met en jeu l'\'enonc\'e sp\'ecifique suivant 
(cf. \cite{Gr1}, II.6.3.8)~:

\begin{prop}\label{prop33} Soit $\eta$ une unit\'e de $K$. Soit $p$ un nombre premier fix\'e et soit $S$ un 
ensemble fini de places de $K$ tel que le $p$-groupe des classes de $K' := K(\mu_p)$ soit engendr\'e par les 
$p$-classes des id\'eaux premiers ${\mathfrak P_{v'}}$ de $K'$ pour les places $v'$ de $K'$ au-dessus de celles de $S$.

\smallskip
Si $\eta \in K_v^{\times p}$ pour toute place $v\in S \cup \Pl_p$, alors $\eta \in K^{\times p}$.
\end{prop}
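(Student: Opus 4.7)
The strategy is to pass to $K' = K(\mu_p)$ and apply Kummer theory together with class field theory on $K'$. Set $L := K'(\eta^{1/p})$; then $L/K'$ is cyclic of degree dividing $p$, and $\eta \in K'^{\times p}$ is equivalent to $L = K'$. Once one has $\eta \in K'^{\times p}$, the conclusion $\eta \in K^{\times p}$ follows from $\gcd([K':K], p) = 1$: writing $\eta = \alpha^p$ with $\alpha \in K'^\times$, taking $N = \No_{K'/K}$ gives $\eta^{[K':K]} = N(\alpha)^p$, and a B\'ezout identity $1 = a[K':K] + bp$ yields $\eta = (N(\alpha)^a \eta^b)^p$.

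The first substantial step is to show that $L/K'$ is unramified everywhere. Since $\eta$ is a unit, any ramification in $L/K'$ must occur at places $v' \div p$ of $K'$. For such a place, let $v$ be its restriction to $K$; by hypothesis $v \in \Pl_p$, so $\eta \in K_v^{\times p}$, and a fortiori $\eta \in K'{}_{v'}^{\times p}$. Thus $L \otimes_{K'} K'_{v'}$ splits completely over $K'_{v'}$, so in particular $L/K'$ is unramified (even totally split) at every place above $p$. At the remaining places, which are tame and away from $\eta$, unramification is automatic because $\eta$ is a global unit.

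The second step is to use the hypothesis on $S$. For any $v \in S$, the same local argument shows that $L/K'$ is completely split at every place $v' \div v$ of $K'$. Via the Artin map of class field theory, $L/K'$ corresponds to a quotient of the $p$-part of the class group of $K'$, and the Frobenius at $\mathfrak{P}_{v'}$ is trivial in $\Gal(L/K')$ for every place $v'$ of $K'$ above $S$. By the hypothesis of the proposition, the $p$-classes of these $\mathfrak{P}_{v'}$ generate the entire $p$-class group of $K'$, so the Artin map is identically trivial. Hence $L = K'$, and $\eta \in K'^{\times p}$.

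\textbf{Main obstacle.} The delicate point is the local analysis at places dividing $p$: one must argue that the local hypothesis $\eta \in K_v^{\times p}$ at $v \div p$ in $K$ really forces $L/K'$ to be unramified at all $v' \div v$ in $K'$, not merely that $\eta$ becomes a $p$-th power locally in $K'_{v'}$. This is immediate from $K \subseteq K'$ and the definition of $L$, but it is the place where the Spiegelungssatz flavor meets Kummer theory in wild characteristic, and it explains why $\Pl_p$ must be included in $S \cup \Pl_p$ without exception. The rest of the argument (class field theory on $K'$ and descent via $\gcd([K':K],p)=1$) is standard once this ramification control is in place.
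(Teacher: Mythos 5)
Your argument is correct and is essentially the standard one: the paper itself gives no proof of this Proposition (it is quoted from \cite{Gr1}, II.6.3.8), and the proof there runs along exactly the lines you propose — Kummer theory over $K'=K(\mu_p)$, the observation that $L=K'(\eta^{1/p})$ is unramified (because $\eta$ is a unit and is a local $p$-th power at all $v'\div p$) and splits at the places above $S$, the Artin map onto ${\rm Gal}(L/K')$ factoring through the $p$-class group of $K'$ and hence vanishing by the generation hypothesis, and finally descent from $\eta\in K'^{\times p}$ to $\eta\in K^{\times p}$ via the norm and B\'ezout since $[K':K]\div p-1$. The only caveat is the archimedean one: your claim that unramification is automatic away from $p$ uses that a cyclic extension of degree $p$ cannot ramify at real places, which requires $p$ odd (for $p=2$ one would have to work with the narrow class group of $K'$); this is harmless here since the paper assumes $p$ odd and large throughout.
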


Par exemple, pour $p=3$, $\eta = 1+ \sqrt 2$  (unit\'e fondamentale), on a  $\eta^8 \equiv 1+3\sqrt 2 \pmod 9$, et comme le $3$-groupe de classes de $\Q(\sqrt {-6})$ (donc celui de $K'$) est trivial ($S=\ev$), $\eta^m$ est localement un cube en 3 si et seulement si $3 \div m$. Autrement dit, le $3$-quotient de Fermat de $\eta$ ne pouvait pas \^etre nul modulo 3.

\smallskip
La Conjecture \ref{conj31},\,\S\ref{sub38} porte seulement sur $\Pl_p$ au lieu de $S \cup \Pl_p$ pour $S$ fini bien choisi (insuffisant pour avoir une puissance $p$-i\`eme globale), mais on suppose dans la conjecture que cette hypoth\`ese faible est vraie pour presque tout~$p$. 

\smallskip
Les deux syst\`emes d'hypoth\`eses co\"incident si  le $p$-groupe des classes du corps $K'$
est trivial ($S = \ev$), mais on peut \^etre plus pr\'ecis \`a ce sujet.

\medskip
Faisons quelques rappels classiques (voir par exemple \cite{Gr1}, I.6.3.1 et II.1.6.3) en supposant pour fixer les id\'ees
que $\eta$ est une unit\'e de Minkowski du corps totalement r\'eel $K$~; on peut toujours faire en sorte que $\eta$ soit non puissance $\ell$-i\`eme globale pour tout premier $\ell$.  Comme d'habitude les premiers $p$ consid\'er\'es sont assez grands et en particulier non ramifi\'es dans $K/\Q$.

\smallskip
S'il existe une $\theta$-relation ${U_\theta} \not \equiv 0 \pmod {p\Z_{(p)}[G]}$ pour laquelle
$\eta^{U_\theta} \in  \prod_{v \div p} K_v^{\times p}$ (cf.\,\S\ref{sub34}) alors l'extension $K'(\sqrt[p] {\eta^{U_\theta}})/K'$ 
est non ramifi\'ee et $p$-d\'ecompos\'ee ce qui conduit, par la th\'eorie du corps de classes, \`a une information sur
le $p$-groupe des classes $\Cl_{K'}$ de la fa\c con suivante~: soit $\Cl_{K'}^{\Pl'_p}$ le quotient de
$\Cl_{K'}$ par le $p$-sous-groupe des classes des id\'eaux premiers ${\mathfrak P}' \div p$ dans $K'$ et soit
$\theta^* := \omega \theta^{-1}$, o\`u $\omega$ est le caract\`ere de Teichm\"uller $p$-adique d\'efini, \`a partir d'une racine 
primitive $p$-i\`eme de l'unit\'e $\zeta_p$, par $\zeta_p^s = \zeta_p^{\omega(s)}$ pour tout $s \in {\rm Gal}(K'/K)$. 
Alors c'est la $\theta^*$-composante de $\Cl_{K'}^{\Pl'_p}$ qui est non triviale.

\smallskip
Revenons \`a l'\'ecriture $\eta_1 := \eta^{p^{n_p}-1} = 1 + p\,\alpha$, $\alpha \in Z_K$, et consid\'erons l'extension 
$K'(\sqrt[p] {\eta})= K'(\sqrt[p] {\eta_1})$ de $K'$,
qui est $p$-ramifi\'ee (i.e., non ramifi\'ee en dehors de $p$)~; pour chaque ${\mathfrak P}' \div p$ dans $K'$
le ${\mathfrak P}'$-conducteur est ${\mathfrak P}'{}^{p+1-r}$ o\`u $r$ est le plus grand entier tel que la congruence
$\eta_1 \equiv x'{}^p \pmod {{\mathfrak P}'{}^r}$, $x' \in K'{}^\times$, ait une solution $x'$. 
Comme $p$ est totalement ramifi\'e dans $K'/K$ (indice de ramification $p-1$ pour $p$ assez grand), $(p)$ s'\'ecrit $\prod_{{\mathfrak P}' \div  p} {\mathfrak P}'{}^{p-1}$ dans $K'$. 

Soit ${\mathfrak P} \div p$ dans $K$. Si $\alpha \in {\mathfrak P}$, alors $\alpha \in {\mathfrak P}'{}^{p-1}$ dans $K'$
et on v\'erifie facilement que, localement, $\eta_1$ est puissance $p$-i\`eme en ${\mathfrak P}'$ (non ramification
de $K'(\sqrt[p] {\eta})/K'$ en ${\mathfrak P}'$ et ${\mathfrak P}'$-d\'ecomposition). Si $\alpha \notin {\mathfrak P}$, 
on v\'erifie que la congruence $1+p\,\alpha \equiv  x'{}^p \pmod {{\mathfrak P}'{}^r}$ implique n\'ecessairement $r=p-1$, 
d'o\`u un ${\mathfrak P}'$-conducteur \'egal \`a ${\mathfrak P}'{}^2$ (ramification).

\smallskip
Appliquant ceci \`a $\eta^{U_\theta}$, il en r\'esulte que toute $\theta$-relation non triviale sur les 
conjugu\'es de $\alpha$, c'est-\`a-dire toute nullit\'e modulo $p$ d'un $\theta$-r\'egulateur $\Delta_p^\theta (\eta)$ 
($\theta \ne 1$), est \'equivalente \`a l'existence d'une $\theta^*$-extension non ramifi\'ee $p$-d\'ecompos\'ee de $K'$, de degr\'e une puissance non triviale $p$, contenue dans $K'(\sqrt[p] {F})/K'$, o\`u $F$ (ind\'ependant de $p$) est le $G$-module engendr\'e 
par $\eta$.
Une telle situation pour une infinit\'e de $p$ peut para\^itre excessive. 

\smallskip
En dehors du cas des unit\'es on a une situation un peu diff\'erente~:
prenons pour $K=\Q$ l'exemple de $a\in \Q^\times$, $a \ne \pm 1$. Alors la Proposition \ref{prop33},\,\S\ref{grunit} n'est plus valable car elle ne s'applique que si 
l'id\'eal $(a)$ est puissance $p$-i\`eme d'id\'eal dans $K$, mais si $a^{p-1} \equiv 1 \pmod {p^2}$ l'extension $\Q'(\sqrt[p] a)/\Q'$ 
est non ramifi\'ee en $p$ (et $p$-d\'ecompos\'ee) mais ramifi\'ee en les places de $\Q'$ divisant~$a$~; si $T$ est l'ensemble des diviseurs premiers de $a$, on doit alors remplacer le $p$-corps de classes de Hilbert $H'$ de $\Q'$ par sa g\'en\'eralisation $H'{}^{T'}$ comme $p$-extension Ab\'elienne maximale non ramifi\'ee en dehors des places de l'ensenble $T'$ des id\'eaux premiers de $\Q'$ au-dessus de $T$ ($T$ ne d\'epend pas de $p$).

\smallskip
Cette $p$-extension $H'{}^{T'}/\Q'$ est finie (c'est essentiellement un $p$-corps de rayon $K'_{\mathfrak m'}$, ${\mathfrak m'}$
construit sur ${T'}$) et joue donc un r\^ole analogue \`a celui de $H'$. Bien que l'aspect conjectural paraisse ici diff\'erent, 
l'infinitude des $p$ donnant un $p$-quotient de Fermat nul modulo $p$, peut se voir dans ce cadre.

\subsection{Conjectures sur les $\theta$-r\'egulateurs locaux $\Delta^\theta_p(\eta)$}\label{sub39}
 Les r\'esultats de la Section~3 (Th\'eor\`eme \ref{theo24} et Corollaire \ref{coro25},\,\S\ref{sub15}  g\'en\'eralisant en un sens le Lem\-me \ref{lemm29},\,\S\ref{sub366}) invitent \`a proposer les conjectures suivantes plus fortes que celles du\,\S\ref{sub35}, 
m\^eme si aucun cas particulier de preuve ne semble connu. 

\smallskip
Pour \'eviter le cas des $\chi$-r\'egulateurs trivialement nuls (cf. Remarques \ref{rema51}, \ref{rema511} et Lemme \ref{nul}, \S\ref{sub5}), on suppose le $G$-module engendr\'e par $\eta$ de $\Z$-rang $n$.

\smallskip
La premi\`ere conjecture se justifie en raison de l'\'ecriture de type ``quotient de Fermat''~: $\eta^{p^{n_p} - 1} - 1 = p\, \alpha$,
lorsque $\alpha$ satisfait \`a des relations de la forme $U_\theta \,. \,\alpha \equiv 0 \pmod p$
avec $U_\theta \not\equiv 0 \pmod {p\Z_p[G]}$ (i.e., ${\mathcal L}^\theta \simeq \delta V_\theta$, $\delta \geq 1$), 
pour au moins un $\theta$, condition beaucoup plus faible que $U =1$ qui correspond au 
``$\alpha \equiv 0 \pmod p$'' \`a la base de la Conjecture  \ref{conj31},\,\S\ref{sub38}, c'est-\`a-dire \`a 
${\mathcal L} \simeq \F_p[G]$ de probabilit\'e $\frac{O(1)}{p^n}$).

\begin{conj}\label{conj341} Soit $K/\Q$ une extension Galoisienne de degr\'e $n$, de groupe de Galois $G$.
Soit $\eta \in K^\times$ tel que le $\Z[G]$-module engendr\'e par $\eta$ soit de $\Z$-rang~$n$.
Alors pour tout $p$ assez grand, $\eta$ n'est pas une puissance $p$-i\`eme locale partielle en~$p$
(cf. D\'efinition \ref{defi28},\,\S\ref{sub34}).
\end{conj}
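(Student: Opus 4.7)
The plan is to translate the conjecture into the language of $\theta$-relations via Theorem~\ref{theo24} and then dispose of each type of $\theta$ separately, the residual obstruction being exactly the Silverman-type statement discussed in Section~7. By Remark~\ref{rema260} combined with Theorem~\ref{theo24}(ii), saying that $\eta$ is a partial local $p$-th power at $p$ is equivalent to saying that there exists some $\Q_p$-irreducible character $\theta$ with ${\mathcal L}^\theta \simeq \delta V_\theta$, $\delta \geq 1$; that is, $\Delta_p^\theta(\eta) \equiv 0 \pmod p$ for at least one $\theta$. So the target is to show that for $p$ large enough all the $\Delta_p^\theta(\eta)$, $\theta$ ranging over the $\Q_p$-irreducible characters of $G$, are units modulo $p$.

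First I would dispose of every case in which $f\delta^2 \geq 2$, where $f$ is the residue degree of $\theta$ and $\delta$ is the multiplicity of ${\mathcal L}^\theta$ as a sub-representation of $e_\theta \F_p[G]$. The heuristic of \S\ref{HP} assigns to this event a probability in $\frac{O(1)}{p^{f\delta^2}}$, which is summable, so Borel--Cantelli will dispose of these $p$ at the heuristic level. To make this rigorous, I would use the $\F_p$-linear description of ${\mathcal L}^\theta$ and the principle of Hasse for $p$-th powers (Proposition~\ref{prop32}): a relation of type $\delta \geq 2$ or $f \geq 2$ translates, via the construction of \S\ref{grunit}, into a Galois-equivariant $p$-extension of $K(\mu_p)$ of rank $\geq 2$ inside $K(\mu_p)(\sqrt[p]{F})$, unramified outside the support of $\eta$ and $p$; the codimension of the space of such extensions grows with $\delta$ and $f$, and an effective Chebotarev-type bound together with the finiteness of the $p$-torsion in the appropriate ray class groups for the finite set of ramified primes attached to $\eta$ should produce a genuinely finite list of exceptional~$p$.

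There remains the delicate case $f=\delta=1$ (minimal $p^{\varphi(1)}$-divisibility of ${\rm Reg}_p^G(\eta)$). Here the relation $U_\theta$ is unique up to scalar and, by Corollary~\ref{coro25}, $\eta^{U_\theta} \in \prd_{v \div p} K_v^{\times p}$. In the degree~1 case of \S\ref{sub33}, this reads $\eta = \Lambda^{r - s}$ in $\prd_{v\div p} K_v^\times$ and, after replacing $\eta$ by $\eta' = \No_{K/K'}(\eta)$ in the cyclic subfield $K'$ fixed by $\ker(\theta)$, reduces to the statement that the single element $\eta' \in K'{}^\times$ is a local $p$-th power at $p$ for infinitely many~$p$; by Conjecture~\ref{conj31} (a form of Silverman's theorem under $ABC$ extended to number fields via the construction of \S\ref{sub38}), this forces $\eta'$ to be a root of unity, contradicting the $\Z$-rank $n$ assumption on $F$. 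For $\varphi \div \theta$ of degree $\geq 2$, the element $\eta^{U_\theta}$ is no longer a literal $\mathfrak p$-th power in $\prd_{v\div p} K_v^\times$ but only a $p$-th power; one would pass to the normal form of $U_\theta$ inside $e_\theta \F_p[G]$, project onto a cyclic quotient of $G$ detected by a suitable $\varphi$-component, and reduce again to the cyclic-subfield case.

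The hard part will be precisely this last reduction in the non-Abelian minimal case: there is no analogue of Silverman's theorem valid for $\eta^{U_\theta}$ when $U_\theta$ lives in the non-commutative part of $\F_p[G]$, and the $ABC$ conjecture applied to the number field $K$ gives a lower bound on the multiplicative discriminant of $\eta^{p^{n_p}-1}-1$ that must be compared with a $p^2$-divisibility coming from a single $\varphi$-isotypic component rather than from the whole module $F$. Making this comparison effective seems to require, in addition to $ABC$, a quantitative control of the $\theta^\ast$-part of the class group of $K(\mu_p)^{T'}$ in the style of \S\ref{grunit}, uniform in $p$. Everything else in the plan is either a formal consequence of Theorem~\ref{theo24} or a standard Chebotarev/Hasse argument on the finitely many ramified primes attached to $\eta$.
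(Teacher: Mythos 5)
The statement you are trying to prove is a \emph{conjecture} of the paper (Conjecture \ref{conj341}), not a theorem: the paper offers no proof, only the heuristic count of \S\ref{HP} (probabilities in $\frac{O(1)}{p^{f\delta^2}}$, Borel--Cantelli), numerical experiments, and the link with $ABC$/Silverman; the author explicitly calls it ``t\'em\'eraire'' and a priori inaccessible. Your proposal is essentially a restatement of that heuristic programme, and as a proof it has two fatal gaps. First, the disposal of the cases $f\delta^2\geq 2$ is not a proof: Borel--Cantelli is invoked here only as a heuristic (there is no probability space, and the needed independence is itself only experimentally supported), and your claim that an ``effective Chebotarev-type bound together with the finiteness of the $p$-torsion in the appropriate ray class groups'' yields a finite exceptional set is unsubstantiated --- the construction of \S\ref{grunit} relates the nullity of $\Delta_p^\theta(\eta)$ to a $\theta^*$-component of the class group of $K(\mu_p)$, a field that changes with $p$, and no uniform-in-$p$ control of these class groups is known or sketched in the paper.

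Second, and more decisively, your treatment of the minimal case $f=\delta=1$ misapplies Conjecture \ref{conj31}. What you would need is that the set of $p$ for which $\eta'$ (or $\eta^{U_\theta}$) is a local $p$-th power at $p$ is \emph{finite}; what Conjecture \ref{conj31} asserts is only that if this happens for \emph{presque tout} $p$ (all $p$ outside a finite, or density-zero, set --- cf.\,\S\ref{prem}), then $\eta'$ is a root of unity. ``Infinitely many $p$'' does not imply ``almost all $p$'', so no contradiction with the $\Z$-rank hypothesis follows; Silverman's theorem under $ABC$ likewise only produces infinitely many non-exceptional $p$, not finiteness of the exceptional set. This is precisely why the paper regards the minimal-divisibility case (whose heuristic probability $\frac{O(1)}{p}$ has divergent sum) as the essential obstruction, and why Conjecture \ref{conj341} is proposed as a \emph{stronger} statement than what $ABC$-type results give, possibly requiring the alternative multiplicative analysis of \S\ref{subalt}. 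In short, your argument proves nothing beyond what the paper already presents as motivation, and the reduction you describe in the non-Abelian case is, as you yourself note, not carried out.
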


L'\'enonc\'e suivant est \'equivalent au pr\'ec\'edent. 
On rappelle (cf. D\'efinition \ref{defi10}, Remarque \ref{rema110},\,\S\ref{gene1}) que pour tout caract\`ere $p$-adique 
$\theta$, ${\rm Reg}_p^\theta(\eta) \equiv \Delta_p^\theta(\eta) \pmod p$
et que ${\rm Reg}_p^G(\eta) = p^{-n}\, {\rm det} \big ({\rm log}_p(\eta^{\tau\sigma})\big )_{\sigma, \tau \in G}$ (le r\'egulateur $p$-adique normalis\'e de $\eta$, cf. D\'efinition \ref{defimodif} (i),\,\S\ref{sub1}) se factorise en ${\rm Reg}_p^G(\eta) = \prod_\theta {\rm Reg}_p^\theta(\eta)^{\varphi(1)}$.

\begin{conj}\label{conj34} Soit $K/\Q$ une extension Galoisienne de degr\'e $n$, de groupe de Galois $G$.
Soit $\eta \in K^\times$ tel que le $\Z[G]$-module engendr\'e par $\eta$ soit de $\Z$-rang~$n$, et 
soit ${\rm Reg}_p^G(\eta)$ le  r\'egulateur $p$-adique normalis\'e de $\eta$.

Alors pour tout premier $p$ assez grand ${\rm Reg}_p^G(\eta)$ est une unit\'e $p$-adique.
\end{conj}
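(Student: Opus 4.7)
Le plan consiste à ramener la Conjecture \ref{conj34} à sa forme équivalente, la Conjecture \ref{conj341}, puis à procéder en deux temps selon la décomposition ${\rm Reg}_p^G(\eta) = \prod_\theta {\rm Reg}_p^\theta(\eta)^{\varphi(1)}$. D'après la Remarque \ref{rema110},\,\S\ref{gene1}, et le Théorème \ref{theo24},\,\S\ref{sub15}, la condition $p \div {\rm Reg}_p^G(\eta)$ équivaut à l'existence d'un caractère $p$-adique irréductible $\theta$ avec $\Delta_p^\theta(\eta) \equiv 0 \pmod p$, i.e.\ avec ${\mathcal L}^\theta \simeq \delta\,V_\theta \ne \{0\}$ ($\delta \geq 1$), elle-même équivalente à la propriété que $\eta$ soit puissance $p$-ième locale partielle en $p$ (Définition \ref{defi28} et Remarque \ref{rema260}). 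Il s'agit donc d'exclure, pour tout $p$ assez grand, toute $\theta$-relation non triviale.

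D'abord, je traiterais toutes les éventualités autres que le cas de $p$-divisibilité minimale $p^{\varphi(1)}$ (Définition \ref{defidec}). D'après l'analyse heuristique du\,\S\ref{HP}, chacune de ces éventualités (soit $f\,\delta^2 \geq 2$, soit simultanéité de nullités pour deux caractères $p$-adiques distincts) a une probabilité au plus en $\frac{O(1)}{p^2}$~; la convergence de $\sum_p \frac{1}{p^2}$ et un principe de Borel--Cantelli effectif permettraient, sous les hypothèses d'équidistribution des classes de $\alpha_p(\eta)$ modulo $p$ déjà étayées par les statistiques des \S\S\ref{sub12},\,\ref{sub16}, d'affirmer la finitude de l'ensemble des $p$ concernés. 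Il reste donc à écarter, pour $p$ assez grand, le cas $f=\delta=1$ avec un unique $\theta$ responsable.

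Pour ce dernier cas, le Corollaire \ref{coro25},\,\S\ref{sub15}, fournit une $\theta$-relation $U_\theta \in \Z_{(p)}[G]$, non triviale modulo $p$, telle que $\eta^{U_\theta} \in \prod_{v \div p} K_v^{\times p}$. En procédant comme au\,\S\ref{grunit}, cette puissance $p$-ième locale partielle engendre dans $K' := K(\mu_p)$ une sous-extension de $K'(\sqrt[p]{\eta^{U_\theta}})/K'$ qui est soit non ramifiée et $p$-décomposée, soit ramifiée uniquement en les places divisant $\eta$~; par la théorie du corps de classes, ceci force l'existence, dans la $\theta^*$-composante (pour $\theta^* = \omega \theta^{-1}$) du $p$-groupe de classes généralisé $\Cl_{K'}^{T'}$ --- où $T'$ désigne l'ensemble des places de $K'$ au-dessus des diviseurs premiers de $\eta$ --- d'un élément non trivial associé à $\eta$. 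L'étape décisive consiste alors à établir, en suivant et généralisant la méthode de Silverman \cite{Si} pour le quotient de Fermat classique, qu'une telle situation ne peut se produire qu'un nombre fini de fois (voire, dans le sens faible du\,\S\ref{prem}, sur un ensemble de densité nulle).

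L'obstacle principal réside précisément dans cette dernière étape~: elle requiert soit une forme effective et uniforme de la conjecture $ABC$ pour le corps $K'$ permettant de contrôler les hauteurs des $\eta^{U_\theta} - 1$ indépendamment de $p$ (à l'image de la preuve de Silverman, étendue par Graves--Murty \cite{GM}), soit un argument analytique fin sur la répartition des $p$ pour lesquels la $\theta^*$-composante du groupe de classes généralisé $\Cl_{K'}^{T'}$ possède un élément d'ordre $p$. Aucune de ces deux approches n'est actuellement disponible en toute généralité, et c'est précisément cette difficulté qui maintient le statut conjectural de l'énoncé~; les vérifications numériques des Sections 4 et 5, ainsi que la cohérence des heuristiques probabilistes du\,\S\ref{HP}, en constituent néanmoins un appui solide.
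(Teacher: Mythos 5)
Rappelons d'abord que l'\'enonc\'e vis\'e est la Conjecture \ref{conj34} : l'article n'en donne (et ne pr\'etend en donner) aucune d\'emonstration, seulement un faisceau de justifications heuristiques. Votre texte n'est donc pas une preuve, et il ne saurait l'\^etre ; il reproduit d'ailleurs fid\`element l'\'echafaudage m\^eme de l'article : r\'eduction \`a la forme \'equivalente (Conjecture \ref{conj341}) via la factorisation ${\rm Reg}_p^G(\eta)=\prod_\theta {\rm Reg}_p^\theta(\eta)^{\varphi(1)}$ et le Th\'eor\`eme \ref{theo24}, \'elimination heuristique des cas de probabilit\'e au plus $\frac{O(1)}{p^2}$ (\S\ref{HP}, \S\ref{sub11}), isolement du cas de $p$-divisibilit\'e minimale (D\'efinition \ref{defidec}, Remarque \ref{rema27}), traduction par le Corollaire \ref{coro25} et la th\'eorie du corps de classes dans $K(\mu_p)$ (\S\ref{grunit}), puis appel esp\'er\'e \`a un argument de type Silverman/$ABC$ (\S\ref{sub366}). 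Vous vous arr\^etez exactement l\`a o\`u l'article s'arr\^ete, et vous le dites honn\^etement ; c'est coh\'erent avec le texte, mais le statut reste celui d'un plan heuristique.

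Deux points m\'eritent d'\^etre nomm\'es pr\'ecis\'ement. D'une part, l'\'elimination des cas $f\delta^2\geq 2$ ou des nullit\'es simultan\'ees par Borel--Cantelli n'est pas un argument math\'ematique : les $\Delta_p^\theta(\eta)$ ne sont pas de vraies variables al\'eatoires, l'ind\'ependance et l'\'equir\'epartition invoqu\'ees ne sont \'etay\'ees que par les exp\'erimentations des \S\S\ref{sub12}, \ref{sub16} ; m\^eme cette premi\`ere moiti\'e de votre plan ne fournit donc aucune finitude d\'emontr\'ee. D'autre part, et c'est le point le plus s\'erieux, la m\'ethode de Silverman \cite{Si} (\'etendue par Graves--Murty \cite{GM}), m\^eme sous la conjecture $ABC$, ne donne que l'\emph{infinitude} des $p$ pour lesquels le quotient de Fermat (ou son analogue) est non nul --- ce qui suffit pour la Conjecture \ref{conj291}, mais est tr\`es loin de la \emph{finitude} de l'ensemble exceptionnel exig\'ee par la Conjecture \ref{conj34} ; l'article le souligne lui-m\^eme (\S\ref{sub366} : ce r\'esultat est une forme tr\`es faible de la r\'ealit\'e). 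Dire que l'on conclurait \og en suivant et g\'en\'eralisant la m\'ethode de Silverman \fg\ masque donc le fait qu'il faudrait une id\'ee r\'eellement nouvelle, et non une simple adaptation technique ; c'est pr\'ecis\'ement cette lacune qui fait que l'\'enonc\'e demeure une conjecture dans l'article.
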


\begin{rema}\label{rema35}
(i) Les Conjectures \ref{conj341}, \ref{conj34}, peuvent s'\'enoncer sous une hypoth\`ese plus faible qui consiste \`a remplacer ``{\it presque tout $p$}''  au sens alg\'ebrique par ``{\it presque tout $p$}''  au sens  du\,\S\ref{prem}.

\smallskip
 (ii) La Conjecture \ref{conj34} implique celle de Leopoldt--Jaulent \cite{J}, au moins pour presque tout $p$, mais il est pr\'ef\'erable d'admettre cette derni\`ere et de dire que la Conjecture \ref{conj34} en est une version plus forte (voir la Remarque \ref{global}, (a) et (b)).

\smallskip
(iii) La version tr\`es raisonnable des conjectures pr\'ec\'edentes est que les $p$ assez grands tels que $\eta$ est puissance $p$-i\`eme locale partielle en $p$, sont les cas de $p$-divisibilit\'e minimale $p^{\varphi(1)}$ (cf. Remarques \ref{rema260}, \ref{rema27},
 \S\,\ref{sub34}).
\end{rema}

\vspace{-0.4cm}
\subsection{Conjectures en $p$-ramification Ab\'elienne} \label{sub360}
Soit $H^{p\rm ra}_{\!(p)}$ la $p$-extension Ab\'elienne $p$-ramifi\'ee (i.e., non rami\-fi\'ee en dehors de $p$)  maximale
d'un corps de nombres Galoisien $K$ (r\'eel). Soit $\wh K_{\!(p)}$ le compos\'e des $\Z_p$-extensions de $K$ et soit 
${\mathcal T}_p = {\rm Gal} \big (H^{p\rm ra}_{\!(p)}/\wh K_{\!(p)}\big )$.  Dans ce cas, pour tout $p$ assez grand, 
$\vert {\mathcal T}_p\vert $ a m\^eme valuation $p$-adique que le r\'egulateur normalis\'e du corps $K$,
$p^{1-n}\,{\rm Reg}_p(K)  \sim \prd_{\theta \ne 1}  {\rm Reg}_p^\theta(\varepsilon)^{\varphi(1)}$, o\`u $\varepsilon$ est une unit\'e de Minkowski fix\'ee (voir \cite{Gr1},\,\S\,III.2.6.5,  \cite{Gr2},\,\S\,3, pour des compl\'ements sur ces questions).

\smallskip
Alors la Conjecture \ref{conj34} est reformul\'ee par la conjecture suivante (au moins pour les corps r\'eels, toute partie ${\mathcal T}_p^-$ \'eventuelle \'etant triviale d\`es que $p$ est assez grand)~:

\begin{conj}\label{conj345}
L'invariant $\prd_p {\mathcal T}_p$ est fini pour tout corps de nombres.
\end{conj}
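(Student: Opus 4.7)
The plan is to reduce Conjecture \ref{conj345} to Conjecture \ref{conj34} (equivalently \ref{conj341}) applied to a Minkowski unit $\varepsilon$ of the Galois closure. First, I would invoke the reminder stated at the beginning of \S\ref{sub360}: for all sufficiently large primes $p$, the $p$-adic valuation of $|\mathcal{T}_p|$ coincides with that of the normalized regulator $p^{1-n}\,{\rm Reg}_p(K) \sim \prd_{\theta \ne 1}{\rm Reg}_p^\theta(\varepsilon)^{\varphi(1)}$. Thus $\prd_p \mathcal{T}_p$ is finite as soon as, for all but finitely many $p$, every ${\rm Reg}_p^\theta(\varepsilon)$ (for $\theta \ne 1$) is a $p$-adic unit; by the congruence ${\rm Reg}_p^\theta(\varepsilon) \equiv \Delta_p^\theta(\varepsilon) \pmod p$ (Remark \ref{rema110}), this is exactly the content of Conjecture \ref{conj34} applied to $\eta = \varepsilon$, up to the reduction of Remark \ref{rema26} that allows one to replace $\varepsilon$ by $\No_{K/K'}(\varepsilon)$ when necessary. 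The case of a possible ${\mathcal T}_p^-$ part is trivial for $p$ large enough, so attention can be restricted to the real/plus setting.

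Next, I would reorganize the set of ``bad'' primes $p$ (those with $\Delta_p^G(\varepsilon) \equiv 0 \pmod p$) according to the pair $(f,\delta)$ attached to the offending $\theta$-relation module ${\mathcal L}^\theta \simeq \delta V_\theta$, using Theorem \ref{theo24}. The heuristic of \S\ref{HP} assigns to each such event a probability $\frac{O(1)}{p^{f\delta^2}}$, and independence of the $\Delta_p^\theta(\varepsilon)$ as $\theta$ varies (verified experimentally in \S\ref{sub12}) together with independence of the coordinates on a normal $\Z_{(p)}$-basis (\S\ref{sub16}) shows that the events corresponding to $f\delta^2 \geq 2$ contribute a convergent series $\sum_p \frac{O(1)}{p^{f\delta^2}}$. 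The Borel--Cantelli principle (\S\ref{sub11}) then removes all but finitely many of these primes, and the extra $p$-divisibility phenomenon analyzed in \S\ref{subex} is similarly controlled in $\frac{O(1)}{p^2}$.

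The truly hard part, and the only real obstacle, is the case of \emph{minimal $p$-divisibility} $p^{\varphi(1)}$ (Definition \ref{defidec}), where $f=\delta=1$ and the expected probability is merely $\frac{O(1)}{p}$, so the naive Borel--Cantelli heuristic diverges and one expects in principle $O(\log\log x)$ exceptional primes up to $x$. Here one has to upgrade from probabilistic evidence to an arithmetic principle: by Corollary \ref{coro25} and the interpretation of \S\ref{sub32}, the failure of $\Delta_p^\theta(\varepsilon)$ to be a unit in this regime means that $\varepsilon$ is a ``partial local $p$-th power'' at $p$. The route I would take is to invoke the local--global diophantine principle discussed in \S\ref{sub35}--\S\ref{sub38} (Conjectures \ref{conj291}, \ref{conj31}, \ref{conj341}), itself inspired by the $ABC$-based theorem of Silverman for Fermat quotients: a unit of $K$ that is a partial $p$-th local power at $p$ for infinitely many $p$ (or even for a set of positive lower density) should be forced to be a root of unity. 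The Spiegelungssatz argument of \S\ref{grunit} supplies supporting evidence, translating each such exceptional $p$ into the existence of an unramified $p$-decomposed cyclic $p$-extension of $K(\mu_p)$ of a very specific $\theta^*$-type, whose persistence for infinitely many $p$ would be extraordinarily restrictive. Absent a proof of this diophantine principle, the statement remains conjectural; but any unconditional advance on the Silverman/Graves--Murty type lower bounds for the number of non-Wieferich primes attached to $\varepsilon$, made uniform in the $\theta$-components, would directly feed into a proof of Conjecture \ref{conj345}.
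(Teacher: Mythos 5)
Votre réduction est exactement la justification donnée par le papier lui-même : la Conjecture \ref{conj345} y est présentée comme une reformulation de la Conjecture \ref{conj34} via la relation $\vert {\mathcal T}_p\vert \sim p^{1-n}{\rm Reg}_p(K) \sim \prod_{\theta \ne 1}{\rm Reg}_p^\theta(\varepsilon)^{\varphi(1)}$ pour une unité de Minkowski $\varepsilon$, avec la même élimination probabiliste des cas $f\delta^2 \geq 2$ (Théorème \ref{theo24}, \S\ref{HP}, \S\ref{subex}, principe de Borel--Cantelli) et la même obstruction résiduelle des cas de $p$-divisibilité minimale renvoyée aux conjectures diophantiennes de type $ABC$/Silverman. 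L'énoncé étant une conjecture, ni vous ni le papier n'en donnez de preuve ; votre proposition reproduit fidèlement la chaîne heuristique du texte et identifie correctement le cas $f=\delta=1$ comme le seul véritable obstacle.
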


 La version raisonnable nettement plus faible consiste \`a dire que pour tout $p$ assez grand, $\vert {\mathcal T}_p\vert $
 est soit trivial soit \'egal \`a $p^{\varphi(1)}$ (\`a une unit\'e $p$-adique pr\`es),
en raison d'un unique facteur ${\rm Reg}_p^\theta(\varepsilon)$ divisible par $p$ avec $\theta \ne 1$ de degr\'e r\'esiduel $f=1$
et ${\mathcal L}^\theta \simeq V_\theta$ (i.e., $p$-divisibilit\'e minimale).
En effet, tout autre cas r\'esulte ou bien de la nullit\'e modulo $p$ d'au moins deux $\Delta^\theta_p(\varepsilon)$ pour des  caract\`eres $p$-adiques distincts (probabilit\'e au plus en $\frac{O(1)}{p^2}$), 
ou bien de la nullit\'e modulo $p$ d'un unique $\Delta^\theta_p(\varepsilon)$ avec $f \delta^2 \geq 2$ 
(probabilit\'e en $\frac{O(1)}{p^{f \delta^2}}$, cf.\,\S\ref{HP}),
le cas $p^{e} \div {\rm Reg}_p^\theta(\varepsilon)$, $e \geq 2$, lorsque $f=\delta=1$, \'etant
de probabilit\'e au plus en $\frac{O(1)}{p^2}$ (cf. \S\,\ref{subex}).

\smallskip
Dans le cas Ab\'elien, ceci impliquerait que les seuls $p$ assez grands pour lesquels $\vert {\mathcal T}_p\vert \equiv 0 \pmod p$ 
sont ceux pour lesquels un unique ${\rm Reg}_p^\theta(\varepsilon)$ est exactement divi\-sible par $p$ 
pour $p \equiv 1 \pmod d$ o\`u $d$ est l'ordre de $\varphi$.

\smallskip
De fait, dans le cadre des ``Th\'eor\`emes principaux'', on aurait (par exemple pour l'invariant ${\mathcal T}_p$ pr\'ec\'edent)
la conjecture raisonnable suivante~:

\begin{conj}\label{conj3451} Soit $K/\Q$ une extension Galoisienne r\'eelle finie de groupe de Galois $G$. 
Alors, pour tout $p$ assez grand il existe au plus un caract\`ere $p$-adique irr\'e\-ductible $\theta \ne 1$ de $G$ de degr\'e r\'esiduel $f=1$ pour lequel ${\mathcal L}^\theta \simeq V_\theta$ (i.e., $\delta=1$), tel que
${\mathcal T}_p \simeq e_\theta\, \F_p[G]$ (module $G$-cyclique d'ordre $p^{\varphi(1)}$, $\varphi \div \theta$).
\end{conj}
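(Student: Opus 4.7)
Le plan consiste à ramener l'énoncé à une analyse probabiliste sur les $\theta$-régulateurs locaux de l'unité de Minkowski $\varepsilon$, puis à appliquer le principe de Borel--Cantelli tel qu'exposé au \S\ref{sub11}. D'abord, d'après le \S\ref{sub360}, pour tout $p$ assez grand, $\vert {\mathcal T}_p \vert$ a même valuation $p$-adique que $p^{1-n}{\rm Reg}_p(K) \sim \prd_{\theta\ne 1}{\rm Reg}_p^\theta(\varepsilon)^{\varphi(1)}$, et la structure de $G$-module de ${\mathcal T}_p$ se lit sur les composantes isotypiques correspondantes. Pour chaque $\theta \ne 1$, on a ${\rm Reg}_p^\theta(\varepsilon) \equiv \Delta_p^\theta(\varepsilon) \pmod p$, et le Théorème \ref{theo24}(ii),\,\S\ref{sub15}, caractérise la nullité modulo $p$ par ${\mathcal L}^\theta \simeq \delta V_\theta$, avec $1\leq \delta\leq \varphi(1)$. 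L'énoncé à prouver se traduit ainsi en~: pour $p$ assez grand, il existe au plus un $\theta\ne 1$ tel que $\Delta_p^\theta(\varepsilon) \equiv 0 \pmod p$, et ce $\theta$ vérifie $f=\delta=1$ et ${\rm Reg}_p^\theta(\varepsilon) \sim p$ (non pas $\sim p^e$ avec $e\geq 2$).

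Ensuite, on énumère les événements \emph{exceptionnels} qu'il faut exclure. Posant pour $\eta=\varepsilon$~:

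$E_p^{(1)}$~: il existe deux caractères $p$-adiques $\theta\ne\theta'$ avec $\Delta_p^\theta(\varepsilon) \equiv \Delta_p^{\theta'}(\varepsilon) \equiv 0 \pmod p$~;

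$E_p^{(2)}$~: il existe $\theta\ne 1$ avec $\Delta_p^\theta(\varepsilon) \equiv 0 \pmod p$ et $f\delta^2 \geq 2$~;

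$E_p^{(3)}$~: il existe $\theta\ne 1$ avec $f=\delta=1$, $\Delta_p^\theta(\varepsilon) \equiv 0 \pmod p$, et extra $p$-divisibilité $p^2 \div {\rm Reg}_p^\theta(\varepsilon)$.

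D'après l'heuristique principale du \S\ref{HP}, justifiée par la dimension de ${\mathcal L}^\theta$ comme sous-algèbre d'endomorphismes, on a ${\rm Pr}(\Delta_p^\theta(\varepsilon)\equiv 0)$ avec ${\mathcal L}^\theta \simeq \delta V_\theta$ en $\frac{O(1)}{p^{f\delta^2}}$, et par indépendance des $\Delta_p^\theta$ (\S\ref{sub12}), ${\rm Pr}(E_p^{(1)}) = O(1)/p^2$. De même ${\rm Pr}(E_p^{(2)}) = O(1)/p^2$, et ${\rm Pr}(E_p^{(3)}) = O(1)/p^2$ (vérifié numériquement au \S\ref{subex}). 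En outre, sous la condition $f=\delta=1$ et exclusion de $E_p^{(3)}$, on a nécessairement ${\mathcal T}_p^{e_\theta} \simeq e_\theta\F_p[G]$ (car ${\rm Reg}_p^G(\varepsilon)\sim p^{\varphi(1)}$ avec ${\mathcal L}^\theta$ de dimension minimale $\varphi(1)$), ce qui donne la conclusion sur la structure de module.

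La conclusion s'obtiendrait alors par le principe de Borel--Cantelli~: la série $\sm_p \big({\rm Pr}(E_p^{(1)}) + {\rm Pr}(E_p^{(2)}) + {\rm Pr}(E_p^{(3)})\big) = \sm_p O(1)/p^2$ converge, donc presque tout $p$ (et même, sous une forme effective, tout $p$ assez grand) échappe à ces événements, ce qui est précisément l'énoncé de la conjecture.

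L'\emph{obstacle essentiel} réside évidemment dans la justification rigoureuse de l'heuristique probabiliste elle-même~: il s'agit de prouver que, pour un $\eta\in K^\times$ arithmétiquement donné (ici l'unité de Minkowski $\varepsilon$, qui n'est pas une variable aléatoire), les valeurs $\Delta_p^\theta(\varepsilon) \in \F_p$ se comportent statistiquement comme des variables équidistribuées et mutuellement indépendantes lorsque $p$ parcourt les nombres premiers, avec les probabilités prescrites par la structure des représentations ${\mathcal L}^\theta$. Les expérimentations numériques du \S\ref{sub12} et du \S\ref{sub112} confortent cette hypothèse, mais aucun outil analytique ne semble actuellement disponible pour la démontrer~; la difficulté est du même ordre que celle qui apparaît pour le quotient de Fermat classique (\S\ref{sub35}), lequel relève conjecturalement de la conjecture $ABC$. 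Toute preuve inconditionnelle devrait probablement passer par une version forte d'un principe local--global diophantien analogue à la Conjecture~\ref{conj31},\,\S\ref{sub38}, combinée à un contrôle Spiegelungssatz (\S\ref{grunit}) des $\theta^*$-composantes des $p$-groupes de classes des corps $K(\mu_p)$ pour $p$ variable.
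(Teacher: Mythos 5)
Votre argumentation reprend essentiellement la justification que le papier donne lui-même juste avant l'énoncé~: les trois types de cas exceptionnels (deux $\theta$-régulateurs nuls simultanément, un seul avec $f\delta^2\geq 2$, extra $p$-divisibilité) de probabilité au plus en $\frac{O(1)}{p^2}$ via l'heuristique du \S\ref{HP}, l'indépendance du \S\ref{sub12}, le \S\ref{subex}, puis le principe de Borel--Cantelli du \S\ref{sub11}. L'énoncé étant une conjecture, le papier n'offre rien de plus qu'un tel argument heuristique, et vous identifiez correctement que l'obstacle réel est l'impossibilité actuelle de démontrer l'équidistribution et l'indépendance des $\Delta_p^\theta(\varepsilon)$ pour un $\varepsilon$ arithmétiquement fixé.
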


\begin{rema}
 On en d\'eduirait aussi des propri\'et\'es analogues sur le r\'esidu de la fonction z\^eta $p$-adique 
(cf. \cite{Coa} (Appendix) et  \cite{Se2}). Dans  \cite{Hat}, est utilis\'e le fait que lorsque la $p$-valuation de 
$\zeta_K(2-p)$ est n\'egative, alors elle vaut~$-1$ (cf. \cite{Se2}, Th\'eor\`eme 6)~; dans le cadre de la Conjecture 
\ref{conj345} ci-dessus, on aurait alors conjecturalement
$\Frac{\zeta_K(2-p)}{\zeta_\Q(2-p)} \sim \vert {\mathcal T}_p\vert \sim 1$ pour tout $p$ assez grand, et
sur un plan cohomologique, on aurait ${\rm H}^2({\mathcal G}_p, \Z_p) \simeq {\mathcal T}_p^*=1$ 
pour tout $p$ assez grand, o\`u ${\mathcal G}_p$ est le groupe de Galois de la pro-$p$-extension $p$-ramifi\'ee maximale de $K$. Plus faiblement, on aurait
 $\Frac{\zeta_K(2-p)}{\zeta_\Q(2-p)} \sim  p^{\varphi(1)}$ et
${\rm H}^2({\mathcal G}_p, \Z_p) \sim  p^{\varphi(1)}$ (pour les cas de $p$-divisibilit\'e minimale).
\end{rema}

\subsection{Analyse probabiliste alternative pour $q_p(a) \equiv 0 \pmod p$}\label{subalt}
Soit  $a \in \Q^\times$, $a\ne \pm 1$ fix\'e. Soit $p$ un nombre premier \'etranger \`a $a$ (de toutes fa\c cons $p$
est assez grand selon le point de vue adopt\'e). 
Soit $m = o_p(a)\,\big \vert \, p-1$ l'ordre de $a$ modulo $p$ et soit $\xi$ une racine primitive 
$m$-i\`eme de l'unit\'e dans $\C$~; alors on peut \'ecrire $a^m-1 = \prod_{j=1}^m (a- \xi^j) \equiv 0 \pmod p$. 
Comme $m$ est l'ordre de $a$ modulo $p$, c'est le facteur de $a^m-1$ d\'efini par~:
$$\Phi_m(a) = \prd_{t \in (\Z/m\Z)^\times} (a- \xi^t)$$

\vspace{-0.1cm}
qui est dans $p\,\Z_{(p)}$,  o\`u $\Phi_m$ est le $m$-i\`eme polyn\^ome cyclotomique.
De fa\c con pr\'ecise on a la relation $q_p(a) := \Frac{a^{p-1} - 1}{p}  = 
\Frac{\Phi_m(a)}{p} \times \prd_{\mathop{d\vert p-1}\limits_{d \ne m}} \Phi_d(a)$,
$\prd_{\mathop {d\vert p-1}\limits_{d \ne m}} \Phi_d(a) \not\equiv 0 \pmod p$. 

\vspace{-0.1cm}
Si $a = \frac{u}{v}$, $\Phi_m(a) = v^{-\phi(m)} \prod_{t} (u- v\,\xi^t)$ et  le num\'erateur 
$\Phi_m (u,v):= \prod_{t} (u- v\,\xi^t)$ est seul concern\'e. On pose~:
$$\wt\Phi_m(a) := \Frac{\Phi_m(u,v)}{{\rm p.g.c.d.}\,(\Phi_m(u,v),m )}$$

pour \'eliminer les facteurs premiers $p'$ ramifi\'es dans $\Q(\xi)/\Q$ qui n'interviennent pas 
car $m' := o_{p'}(a)$ est un diviseur strict de $m$ (e.g. $p=19$, $a=8$, $m=6$, 
$\Phi_6(8)=3\times 19$ qui introduit $p'=3$ avec $o_{p'}(8)=2$ et $\Phi_2(8)=9$ comme attendu).

On peut donc aussi d\'efinir un quotient de Fermat normalis\'e
$\wt q_p(a) := \frac{\wt \Phi_{o_p(a)}(a)}{p}$~; 
dans tous les cas on remplace $q_p(a)$ et $\wt q_p(a)$ par leurs repr\'esentants dans $[0, p[$.

\smallskip
Puisque dans le corps $\Q(\xi)$, $p$ est totalement d\'ecompos\'e, il existe un id\'eal premier
${\mathfrak p}\div p$ de $\Q(\xi)$ tel que $(u-v\,\xi)\,\Z[\xi] = {\mathfrak p}^{n_p}\,{\mathfrak a}$, $n_p \geq 1$ et 
${\mathfrak p} \notdiv {\mathfrak a}$, o\`u ${\mathfrak a}$ est un id\'eal entier de $\Q(\xi)$~; de plus, ${\mathfrak a}$
est \'etranger \`a $p$ car les congruences $u-v\,\xi \equiv 0 \pmod  {\mathfrak p}$ et $u-v\,\xi \equiv 0 \pmod 
{\mathfrak p' \div p}$, ${\mathfrak p'} \ne {\mathfrak p}$, conduisent, par la conjugaison non triviale $\xi\mapsto \xi^t$
qui transforme ${\mathfrak p'}$ en ${\mathfrak p}$, \`a 
$u-v\,\xi \equiv 0 \pmod  {\mathfrak p}$ et $u-v\,\xi^t \equiv 0 \pmod {\mathfrak p}$, d'o\`u $v \,(\xi^t - \xi)\equiv 0 \pmod {\mathfrak p}$ (absurde).

\smallskip
Si  ${\mathfrak l}$ est un id\'eal premier divisant ${\mathfrak a}$, la congruence $\xi \equiv a \pmod {\mathfrak l}$
montre que ${\mathfrak l}$ est totalement d\'ecompos\'e dans $\Q(\xi)/\Q$ (sauf les \'eventuels $ {\mathfrak l} \div \ell \div m$, ramifi\'es que l'on a \'ecart\'es). Autrement dit, si l'on pose~:

\smallskip
\centerline{ $\wt\Phi_m(a) = \prd_{k=1}^e \ell_k^{n_k}$, $\ \,\ell_1 < \ell_2< \ldots < \ell_e$,\ \ $n_k \geq 1, \ \ m = o_p(a)$,  } 

\smallskip
 tous les premiers $\ell_k$ sont congrus \`a 1 modulo $m$ et $p$ est l'un d'eux~; en outre on a $\ell \div \wt\Phi_m(a)$
si et seulement si $o_\ell(a) = m$. Il en r\'esulte aussi que pour un tel $\ell =\ell_k$ (en posant $\ell -1 = h\,m$), $\ell $ est totalemment d\'ecompos\'e dans l'extension Galoisienne $\Q(\mu_{\ell-1})(\sqrt[h] a)/\Q$ puisque $a$ est localement de la forme 
$b^h$ modulo~$\ell$ ($\ell$~ne divise pas $a$ et n'est pas ramifi\'e dans cette extension).
Ces questions d'ordres modulo $p$ sont li\'ees \`a des techniques issues de la conjecture d'Artin
sur les racines primitives et de la d\'emonstration de Hooley (voir \cite{Mo} pour un expos\'e exhaustif).

\smallskip
On voit alors que la condition $q_{\ell_k}(a) = 0$ est \'equivalente \`a $n_k \geq 2$, et pour un entier $m$ fix\'e
les seuls nombres premiers $p$ v\'erifiant \`a la fois $o_p(a) = m$ et $q_p(a)=0$ sont les 
nombres premiers ${\ell_k}$ tels que $n_k \geq 2$. 

\smallskip
Ainsi, la recherche des quotients de Fermat nuls est de nature 
multiplicative, a priori diff\'erente de  celle des quotients de Fermat $1, 2, \ldots , p-1$. En effet, 
le cas $q_{\ell_k}(a)=0$ se lit sur l'exposant $n_k$ tandis qu'autrement, $\wt q_{\ell_k}(a) \equiv \prd_{j\ne k} \ell_j^{n_j} \pmod{\ell_k}$ d\'epend des autres diviseurs premiers selon une congruence al\'eatoire.

 Il serait utile de conna\^itre d\'ej\`a la probabilit\'e pour les entiers $\wt\Phi_m(a)$
(qui ne sont pas ``quelconques'' car $\ell_k \equiv 1 \pmod m$ \'elimine les petits carr\'es) d'avoir un facteur carr\'e non trivial, celle  d'un entier pris au hasard \'etant assez grande
(\'egale \`a $1-\frac{6}{\pi^2} \sim 0.392$)~; nous renvoyons aux techniques de \cite{T} qui peuvent le permettre.

Les cas o\`u $\wt\Phi_m(a)$ est divisible par le carr\'e d'un nombre premier $\ell$ sont en effet rarissimes~; 
par exemple, pour $a=14$ et $p=29$, on a $m=o_p(a)=28$ avec 
$$\wt\Phi_m(a)=  29^2 \times 3361 \times176597, $$
 ce qui donne donc $q_{29}(14)=0$. 

\smallskip
Consid\'erons l'exemple num\'erique suivant~:  $a=12$ et $m=35$~; on a $\Phi_{35}(x) \ =\ $

$x^{24} - x^{23} + x^{19} - x^{18} + x^{17} - x^{16} + x^{14} - x^{13} + x^{12} - x^{11} + x^{10} - x^8 + x^7 - x^6 + x^5 - x + 1$,

qui conduit  \`a~:

\smallskip
$\wt\Phi_{35}(12)\! =\! 72872404828019704577129461 \! =\!  71\!  \times\!   491\!  \times\!  806821\!  \times \! 6089651\!  \times \! 425455031$.

\smallskip
Les premiers $p = 71,  491, 806821, 6089651, 425455031$ jouent des r\^oles sym\'etriques mais leurs quotients de Fermat sont tous non nuls.
Si l'on attribuait en g\'en\'eral la probabilit\'e $\frac{1}{\ell_k}$  d'avoir $n_k \geq 2$, il semble qu'on aboutirait \`a des contradictions num\'e\-riques vu la pr\'esence assez syst\'ematique de petits et grands nombres premiers et le fait que cette probabilit\'e d\'epend de l'ordre de grandeur de $\wt\Phi_m(a)$. 

\smallskip
 Le produit sur $m \geq 1$ des $\wt\Phi_m(a)$ a des propri\'et\'es de r\'egularit\'e int\'eressantes. 

\smallskip
Consid\'erons l'aspect num\'erique donn\'e par le programme suivant~: 

\footnotesize
\medskip
$\{$$a=14; A=1; for(m=1, 40, P=polcyclo(m); x=a; B=eval(P); $\par
$B=B/gcd(B,m); A=A*B); print(a); print(factor(A))$$\}$
\normalsize

\medskip
Pour $a=14$, on obtient
$\prod_{m=1}^{40} \wt\Phi_m(a)=$

\smallskip
\footnotesize
$3\!\times\! 5 \!\times\! 11 \!\times\! 13 \!\times\! 17 \!\times\! 19 \!\times\! 23 \!\times\! 29^2 \!\times\! 31 \!\times\! 37 \!\times\! 41 \!\times\! 43 \!\times\! 47 \!\times\! 61 \!\times\!  67 \!\times\! 71 \!\times\! 79 \!\times\! 101 \!\times\! 103 \!\times\! 113 \!\times\! 137 \!\times\! 157 \!\times\! 191 \!\times\! 193 \!\times\! 197 \!\times\! 211 \!\times\!  223 \!\times\! 397 \!\times\! 461 \!\times\! 547 \!\times\! 811 \!\times\! 911 \!\times\! 937 \!\times\! 1033 \!\times\! 1061 \!\times\! 2347 \!\times\! 2851\!\times\! 3361 \!\times\! 3761 \!\times\! 4027 \!\times\! 5393 \!\times\! 7307 \!\times\! 10627 \!\times\! 13109 \!\times\! 15511 \!\times\! 16097 \!\times\!  18973 \!\times\! 26981 \!\times\! 61001 \!\times\! 100621 \!\times\! 132049 \!\times\! 176597 \!\times\! 698521 \!\times\! 761437 \!\times\!  1154539 
\!\times\! 1383881 \!\times\! 1948981 \!\times\! 2249861 \!\times\! 7027567 \!\times\! 8108731 \!\times\! 14525237\!\times\! 51111761 \!\times\! 110256001 \!\times\! 1427145211 \!\times\! 1475750641 \!\times\! 2239000891 \!\times\!  11737870057 \!\times\! 25581350023 \!\times\! 29914249171 \!\times\! 141405986837 \!\times\!  299113818931 \!\times\! 56693904845761 \!\times\! 77312552100349 \!\times\! 758855846709601 \!\times\!  11284732320255809 \!\times\! 14837$

$638311110071 \!\times\! 22771730193675277 \!\times\!  396530555859061913 \!\times\! 459715689149916492091 \!\times\! 39211413$

$30646275580183\!\times\! 77720275181800334933851 \!\times\! 2984619585279628795345143571 \!\times\!  62233081779326$

$83558580086481 \!\times\!26063080998214179685167270877966651$.
\normalsize

\medskip
On constate une progression rapide mais r\'eguli\`ere des nombres premiers obtenus, qui doivent tous appara\^itre au fur et \`a mesure que la borne sur $m$ (ici $40$) augmente~; en effet, si $p$ est fix\'e, il appara\^it comme diviseur de $\wt\Phi_m(a)$ pour l'unique indice $m$ \'egal \`a $o_p(a)$. Seul $29$ est au carr\'e (le programme du\,\S\ref{sub7}.
donne pour $p< 10^9$ les deux solutions $29$ ($m= 28$) et $353$ ($m=352$)).

\begin{rema}\label{remademi} Signalons la propri\'et\'e d'uniformit\'e suivante qui concerne essentiellement la totalit\'e des 
quotients de Fermat non nuls, et non la valeur 0. Si l'on calcule la somme
$\Frac{1}{N}\sm_{j=1}^N  \Frac{q_{p_j}(a)}{p_j} , \ \  q_{p_j}(a) \in ] \,0, p_j \,[ $,
 pour la suite des nombres premiers \'etrangers \`a $a$ ($p_1=2$, $p_2=3$, \ldots , $p_N$),
 il y a une convergence vers $\frac{1}{2}$ tout \`a fait remarquable et ind\'ependante de $a$.
Ceci s'explique sans doute par le fait que la valeur moyenne de $q_{p}(a)$ pour $p$ fix\'e et $a$ variable est $\frac{p}{2}$.
Si l'on pose, pour $a$ fix\'e, $q_{p_j}(a) = u_j p_j$, $0 < u_j < 1$, la valeur moyenne de la variable al\'eatoire $u_j$ est $\frac{1}{2}$
et ces variables sont ind\'ependantes (cf. \cite{EM}, \cite{H-B}, \cite{He}, \cite{OS} pour d'autres r\'esultats et r\'ef\'erences
li\'es au quotient de Fermat).
\end{rema}

On peut v\'erifier la propri\'et\'e ci-dessus au moyen du programme suivant qui calcule aussi (dans $N_0$) le nombre de $q_{p_j}(a)$ 
\'egaux \`a une valeur donn\'ee (ici 0)~:

\smallskip
$\{$$for(i=1,100, e=random(1111); print("e = ",e); N=0; N0=0; S=0.0; $\par$
if(Mod(e,2)==Mod(1,2), f=Mod((e-1)/2,2); S=S+component(f,2)/2 ); $\par$
 for(n=1, 5*10^8, p=2*n+1;$\par$
 if(isprime(p)==1 \& Mod(e,p)!=Mod(0,p) , N=N+1; $\par$
 qp= Mod(e,p^2)^{(p-1)}-1; q=component(qp,2)/p; if(q==0, N0=N0+1); $\par$
 S=S+q/p )) ;  M=S/N; print(N,"   ",N0,"         ",M) )$$\}$

\smallskip
On trouve par exemple (dans l'ordre donn\'e par la fonction {\it random})~:

\smallskip
$e = 839$, $\ \ N=50847532$, $N_0 = 2$,  $M= 0.49999204844300$,

$e = 736$, $\ \ N=50847532$, $N_0 = 4$,  $M= 0.50002594430610$,
        
$e = 748$, $\ \ N=50847531$, $N_0 = 2$,  $M= 0.49998373833438$,

$e = 240$, $\ \ N=50847531$, $N_0 = 1$,  $M= 0.50001538843784$.

\begin{rema}\label{remaeps} 
On peut conjecturer que la probabilit\'e ${\rm Pr} \big (q_p(a) \! = \! 0)$ d\'epend d'une fonction de la forme
$ \Frac{1}{x^{1+\epsilon(x,y)}},  \hbox{\ \,  $\epsilon(x,y) > 0$, \ \  pour $x=p$, $y=a$. }$.

Si ${\rm Pr} \big (q_p(a)\!=\!0 \big) = \Frac{1}{p^{1+\epsilon(p,a)}}$ et 
${\rm Pr} \big (q_p(a)\!=\!q \big) = u$ pour tout $q \ne 0$, $u$ ind\'ependant de $q$, alors on obtient 
$(p-1)\,u +\Frac{1}{p^{1+\epsilon(p,a)}} = 1$, d'o\`u 
$u - \Frac{1}{p} = \Frac{1-p^{-\epsilon(p,a)} }{p\,(p-1)}$ et $u$
peu discernable de $\Frac{1}{p}$.
Il resterait \`a tester une fonction $\epsilon(x,y) >0$ telle que $\sm_p  \Frac{1}{p^{1+\epsilon(p,a)}} < \infty$.
Comme me l'a fait remarquer G. Tenenbaum, cette convergence a lieu d\`es que
$\epsilon(x,y) = \Frac{C \, {\rm log}({\rm log}({\rm log}(x))) }{{\rm log}(x)}, \ \ C>1.$
Il reste \`a tenter des statistiques en comparaison avec une probabilit\'e de ce type ($C$ pouvant d\'ependre de $a$)~;
on note que $u-\frac{1}{p} \sim \frac{1}{p^2}$ avec une telle fonction $\epsilon$.
\end{rema}

Ces observations sont tr\`es insuffisantes, mais la probabilit\'e $\frac{1}{p}$ para\^it tout aussi arbitraire car on est dans un cadre multiplicatif (comment factoriser $\wt\Phi_m(a)$) et non dans une structure additive de r\'esidus modulo $p$. 
Or si cette  probabilit\'e \'etait comme envisag\'e ci-dessus, les cons\'equences seraient importantes, y compris pour 
les corps de nombres pour lesquels les $\Delta_p^\theta(\eta)$ sont une g\'en\'eralisation tr\`es canonique des 
$q_p(a) = \Delta_p^1(a)$.

\vspace{-0.4cm}
\section{Conclusion} Nous avons  conscience du fait que les conjectures pr\'ec\'edentes sont particu\-li\`e\-rement t\'em\'eraires, 
mais nous avons essay\'e de donner un maximum de justifications, en particulier par le fait que lorsque les probabilit\'es sont 
au plus en $\Frac{O(1)}{p^2}$, les principes heuristiques de type Borel--Cantelli sugg\`erent un nombre fini de solu\-tions $p$
\`a $\Delta_p^G(\eta) \equiv 0 \pmod p$ et m\^eme aucune solution la plupart du temps puisque la somme des $\frac{1}{p^2}$ est tr\`es petite~:

\centerline{ $\,\sum_{p\geq 2} \frac{1}{p^2} = 0.45$, $\ \sum_{p\geq 100} \frac{1}{p^2} =0.0018$,    
et $\ \sum_{p\geq 10000} \frac{1}{p^2} = 9 \times 10^{-6}$. }

\medskip
De toutes fa\c cons il reste les cas de $p$-divisibilit\'e minimale $p^{\varphi(1)}$ pour ${\rm Reg}_p^G(\eta)$
(cf. D\'efi\-nition \ref{defidec},\,\S\ref{sub10}, ou Remarque \ref{rema27},\,\S\ref{sub34}, ainsi que \S\,\ref{subex}), c'est-\`a-dire lorsqu'un unique caract\`ere $p$-adique  $\theta$ est concern\'e par la congruence $\Delta^\theta_p(\eta) \equiv 0 \pmod p$, que $p$ est totalement d\'ecompos\'e dans le corps des valeurs $C_\chi$ de $\varphi \div \theta$, et que le $G$-module ${\mathcal L}^\theta$ est minimal. 
Rappelons que les $p$ totalement d\'ecompos\'es dans $C_\chi$ sont en nombre infini avec une densit\'e connue 
via le th\'eor\`eme de Dirichlet (tous les nombres premiers si $C_\chi = \Q$ comme dans le cas de $G=D_6$).

\smallskip
Dans ce cas, le ``nombre pr\'evisible'' de solutions $p< x$ est en $O(1) {\rm log}{\rm log}(x)$
sauf si le point de vue du\,\S\ref{subalt} g\'en\'eralis\'e aboutit \`a des constantes, m\^eme tr\`es grandes.

\smallskip
Mais s'il doit y avoir une certaine coh\'e\-rence des math\'ematiques, on peut alors croire que de telles conjectures (a priori inaccessibles \`a l'heure actuelle) sont l\'egitimes, comme la conjecture $ABC$ et ses nombreuses variantes. 
Voir par exemple \`a son sujet le texte de Waldschmidt \cite{W} pour l'importante liste d'applications et cons\'equences qui montrent bien que les aspects diophantiens de la th\'eorie des nombres reposent sur 
 un certain nombre de propri\'et\'es ``structurelles'' plus fortes que celles habituellement accessibles, mais h\'elas difficiles. 

\smallskip
Par exemple, on peut dire que la conjecture de Leopoldt--Jaulent sur la non nullit\'e des r\'egulateurs $p$-adiques semble 
une forme extr\^emement faible de la r\'ea\-lit\'e.

\smallskip
 De fait, ces conjectures suscitent de nombreuses propri\'et\'es, et il convient aussi d'envisager de vastes exp\'erimentations num\'eriques.

\end{document}